\documentclass[11pt]{article}

\usepackage[utf8]{inputenc}
\usepackage[T1]{fontenc}
\usepackage[english]{babel}

\usepackage{fullpage}

\usepackage{amsmath,amsthm,amsfonts,amssymb,upgreek,eufrak,dsfont}
\usepackage{mathtools}
\mathtoolsset{showonlyrefs}
\usepackage{mathabx}

\usepackage{graphicx}
\usepackage{caption}
\usepackage{float}
\usepackage{tikz}
\usepackage{tikz-cd}
\usepackage{tkz-tab}
\usepackage{ytableau}

\usepackage{needspace}
\usepackage{url}
\usepackage{cite}
\usepackage{braket}
\usepackage{physics}

\usepackage[
pdftex,
colorlinks=true,
linkcolor=blue,
citecolor=blue,
bookmarks=true,
bookmarksopen=true,
pagebackref=true
]{hyperref}

\newtheorem{theo}{Theorem}[section]
\newtheorem{prop}{Proposition}[section]
\newtheorem{lem}{Lemma}[section]
\newtheorem{coro}{Corollary}[section]
\newtheorem{defin}{Definition}[section]
\newtheorem{rem}{Remark}[section]
\newtheorem{ass}{Assumption}[section]

\newcommand{\bgamma}{\boldsymbol{\gamma}}
\newcommand{\bc}{\boldsymbol{c}}

\newcommand{\bCW}{\boldsymbol{\mathcal{W}}}
\newcommand{\bX}{\boldsymbol{X}}
\newcommand{\bx}{\boldsymbol{x}}
\newcommand{\bTheta}{\boldsymbol{\Theta}}
\newcommand{\btheta}{\boldsymbol{\theta}}
\newcommand{\bxi}{\boldsymbol{\xi}}
\newcommand{\bXi}{\boldsymbol{\Xi}}
\newcommand{\bP}{\boldsymbol{P}}
\newcommand{\br}{\boldsymbol{r}}
\newcommand{\bCC}{\boldsymbol{\mathcal{C}}}

\newcommand{\sss}[1]{\scriptscriptstyle{#1}}

\DeclareRobustCommand\longtwoheadrightarrow
{\relbar\joinrel\twoheadrightarrow}

\usepackage[draft]{fixme}
\fxusetheme{color}
\FXRegisterAuthor{yo}{ayo}{Yo}

\usepackage{todonotes}

\usepackage[blocks]{authblk}

\author{Yoann Offret}
\affil{\small Institut de Mathématiques de Bourgogne (IMB) - UMR CNRS 5584\\
	Université Bourgogne Europe, 21000 Dijon, France}

\date{}

\title{\bf \centering \Large From maximal entropy exclusion process to unitary Dyson Brownian motion and free unitary hydrodynamics
}

\author{Yoann Offret} 

\usepackage{tocloft}

\begin{document}

\maketitle

\vskip-40pt

\noindent
{\small {\bf Abstract.} We investigate the \emph{Maximal Entropy Simple Symmetric Exclusion Process} (MESSEP) on a discrete ring with $L$ sites and $N$ indistinguishable particles. Its eigenfunctions are Schur polynomials evaluated at the $L$-th roots of unity, yielding an explicit spectral decomposition. The analysis relies on this eigenstructure and on the link between Schur polynomials and irreducible characters of the symmetric group, which forms the core algebraic tool for the scaling limits.

In the low-density regime, where $N$ is fixed and $L$ tends to infinity, the rescaled dynamics converge to the \emph{Unitary Dyson Brownian Motion} (UDBM). The electrostatic repulsion then emerges as an entropic force, providing a canonical microscopic derivation of the UDBM.

In the hydrodynamic regime, where $N$ is equivalent to  $\alpha L$ with  $\alpha\in(0,1)$, the empirical measure converges to a density solving a nonlinear, nonlocal transport equation. Its moment generating function satisfies a complex Burgers-type equation. As $\alpha$ tends to $0$, this equation coincides with that governing the spectral distribution of the \emph{Free Unitary Brownian Motion} (FUBM), thereby bridging discrete entropic exclusion dynamics and free unitary hydrodynamics.

Overall, the MESSEP provides a unified canonical discrete framework connecting unitary Dyson motion and free unitary Brownian motion through nonlinear hydrodynamic limits, with Schur and character theory as the central algebraic structure.

\vspace{20pt}

\noindent
\rule{\linewidth}{1.5pt}

\vspace{5pt}

\noindent
{\small \textbf{Key words.}\; maximal entropy exclusion process; hydrodynamic limit; Dyson Brownian motion; nonlocal conservation law; complex Burgers-type equation; martingales; Schur polynomials; characters.}

\vspace{5pt}

\noindent
{\small\textbf{Mathematics Subject Classification (2020).}\; 
	Primary 60K35; 
	Secondary 60J10; 60F17; 35L65; 35Q49; 05E05; 82C22; 60B20.}

\noindent
\rule{\linewidth}{1.5pt}
}

\newpage  

{\small
\tableofcontents
}

\newpage  

\section{Introduction}

\setcounter{equation}{0}

The principle of maximum entropy selects probability distributions that encode the imposed constraints and nothing more. Born in statistical physics, it explains the emergence of equilibrium and bridges microscopic configurations with macroscopic laws. Entropy quantifies the combinatorial complexity of states compatible with macroscopic observables. 

In probability theory, it provides a canonical rule for modeling randomness under structural constraints. Across disciplines, entropy maximization serves not merely as a variational principle, but as a guideline for  natural models of complex systems.

The present work originates in Section~6 of \cite{DudaPhd}, where possible many-body extensions of the \emph{Maximal Entropy Random Walks} (MERWs) were discussed informally. In the setting relevant here -- a symmetric irreducible graph $G=(V,E)$ -- the MERW is the unique Markov chain $(X_n)_{n\geq 0}$ on $V$ compatible with the edge structure $E$ which maximizes the asymptotic entropy rate
\begin{equation*}
h=\lim_{n\to\infty}\frac{H(X_0,\cdots,X_n)}{n}=-\sum_{x\in V}\mu(x)\sum_{y\in V} P(x,y)\ln(P(x,y)),
\end{equation*}
where $H(Y)$ denotes the Shannon entropy of a discrete random variable $Y$, $P$ is the transition kernel of the Markov chain, and $\mu$ its invariant probability measure. It turns out that  $P$ admits a Doob $h$-transform structure built from the Perron--Frobenius eigenelements of the adjacency matrix $A$ of $G$, which also determine the entropy rate $h$ and the invariant probability measure $\mu$.  Denoting by $\rho$ the spectral radius of $A$ and by $\psi$ the associated positive eigenvector:
\begin{equation}\label{eq:MERW}
P(x,y)=A(x,y)\frac{\psi(y)}{\rho\,\psi(x)},\quad \mu(x)=\psi^2(x) \quad\text{and}\quad h=\ln(\rho).
\end{equation}
The positive function $\psi$, solution of $A\psi=\rho\psi$, can be viewed as the ground state of the discrete Schr\"odinger equation $H\psi=-\Delta\psi+V\psi$, where $V(x)=-\mathrm{deg}(x)$ and $\Delta$ is the graph Laplacian. 

We also refer  \cite{BurdaReview} for a broader account of MERWs from the viewpoint of quantum mechanics and statistical physics.

\subsection{Overview of the main results and  ideas}

We consider $N$ indistinguishable particles on the ring $\mathbb T_L=\{0,\ldots,L-1\}$, identified with $\mathbb Z_L=\mathbb Z/L\mathbb Z$ or with $\mathbb U_L\subset\mathbb C$, the $L$-th roots of unity. Particles jump to nearest empty sites, one at a time. This defines an undirected graph on the configuration space $\mathcal C_{L,N}$, identified either with the subsets of $\mathbb T_L$ of size $N$, or with strictly increasing $N$-tuples in $\mathbb T_L$ (see Figure~\ref{ring}).

\begin{figure}[!h]
	\centering
	\includegraphics[width=4.5cm]{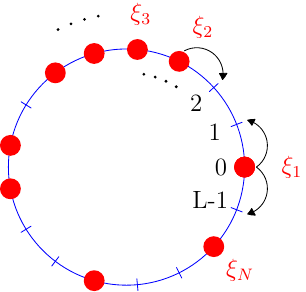}
	\captionsetup{width=11.5cm}
	\caption{\small  A configuration $\xi \in \mathcal C_{L,N}$ with $\xi_1 < \cdots < \xi_N$ on the ring $\mathbb{T}_L$, identified with $\mathbb U_L$. Particle positions are ordered counterclockwise. Arrows indicate possible particle transitions.}
	\label{ring}
\end{figure}

We study the associated MERW $(\Xi(n))_{n\geq 0}$, called the \emph{Maximum Entropy Simple Symmetric Exclusion Process} (MESSEP) on the circle (Definition~\ref{def:MERW}).

\bigskip

\noindent
{\textbf 1.} Section~\ref{sec:1} introduces the MESSEP kernel $P$ and tagged versions of this process (Definition~\ref{defmessep}), which are used to establish the scaling limits of Sections~\ref{sec:low} and~\ref{sec:hydro}. The MESSEP coincides with independent simple symmetric random walks on the circle conditioned  never to collide (Proposition~\ref{noncollinding}). We describe the spectrum of the graph $\mathcal C_{L,N}$ and of the MESSEP kernel  $P$, proving that $P$ admits an $\mathbb L^2(\mu)$-eigenbasis of Schur polynomials $s_\lambda$ evaluated at $N$-tuples of $L$-th roots of unity, and we describe the correspondence between partitions $\lambda$ and configurations $\xi\in\mathcal C_{L,N}$ (Proposition~\ref{prop:schureigen}, Remark \ref{rem:corres}). Spectral gap estimates (Proposition~\ref{gapestimate}) are obtained in the low-density limit ($N$ fixed, $L\to\infty$) and in the hydrodynamic limit ($N\sim \alpha L$, $\alpha\in(0,1)$). Proofs are deferred to Section~\ref{sec:proofsec1}.

\bigskip

\noindent
{\bf 2.} Section~\ref{sec:low} states our main result in the low-density regime (Theorem~\ref{scalingthm1}): a functional scaling limit of the MESSEP toward the \emph{Unitary Dyson Brownian Motion} (UDBM). For convenience, we give below a simplified version. 
\begin{theo}\label{thm:0}
	Let $(\Xi(t))_{t\geq 0}$ be the continuous-time linear interpolation of the MESSEP. Under standard assumptions, the following functional convergence holds:
	\begin{equation}\label{eq:thmconv}
	\left(\left\{e^{2i\pi\, {\Xi_1(L^2 t)}/{L}},\cdots, e^{2i\pi\, {\Xi_N(L^2 t)}/{L}}\right\}\right)_{t\geq 0}
	\xRightarrow[L\to\infty]{}
	\left(\left\{e^{i\boldsymbol{X}_1(t)},\cdots, e^{i\boldsymbol{X}_N(t)}\right\}\right)_{t\geq 0},
	\end{equation}
where $(\boldsymbol{X}(t))_{t\geq 0}$ is the solution of the system of stochastic differential equations
\begin{equation}\label{Dyson10}
\forall\,1\leq i\leq N,\quad
d\boldsymbol{X}_i(t)
= {\frac{2\pi}{\sqrt N}}\,dB_i(t)
+ \frac{2\pi^2}{N} \sum_{\substack{1 \leq j \leq N \\ j \neq i}}
\cot\!\left( \frac{\boldsymbol{X}_i(t) - \boldsymbol{X}_j(t)}{2} \right) dt.
\end{equation}

\end{theo}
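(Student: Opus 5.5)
The plan is to use the structure identified in Section~\ref{sec:1}. By Proposition~\ref{noncollinding}, the MESSEP is $N$ independent simple symmetric walks on $\mathbb T_L$ conditioned never to collide. Equivalently, it is the Doob $h$-transform of the free exclusion walk on $\mathcal C_{L,N}$ by the Perron--Frobenius eigenfunction $\psi$. By Proposition~\ref{prop:schureigen}, $\psi$ is (up to normalization) the Vandermonde-type function
\begin{equation*}
\psi(\xi)\propto\prod_{j<k}\bigl|e^{2i\pi\xi_j/L}-e^{2i\pi\xi_k/L}\bigr|,
\end{equation*}
that is, the Schur function of the empty partition divided appropriately. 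I would work with a tagged (ordered, lifted to $\mathbb Z^N$) version of the process from Definition~\ref{defmessep}, so that the positions $\Xi_i$ are genuine real-valued processes. The target is the Dyson SDE~\eqref{Dyson10} for the angles $2\pi\Xi_i/L$. Unordered-set convergence then follows by the continuous mapping theorem.

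The first step is the generator expansion. For the rescaled angles $x_i=2\pi\xi_i/L$ and a smooth symmetric test function $f$, I would compute $L^2(P-I)f$ via the $h$-transform formula
\begin{equation*}
P(\xi,\eta)=A(\xi,\eta)\,\frac{\psi(\eta)}{\rho\,\psi(\xi)},\qquad \rho=\sum_{j}2\cos\bigl(2\pi \lambda_j/L\bigr).
\end{equation*}
Here $\rho$ is the top eigenvalue, with the $\lambda_j$ centered half-integer frequencies, so $\rho=2N-O(N^3/L^2)$. The jump rate of particle $i$ is about $1/(2N)$ per direction. A Taylor expansion of $\psi(\xi\pm e_i)/\psi(\xi)$ gives the drift $\frac{1}{2N}\cdot\frac{(2\pi)^2}{L^2}\,\partial_i\ln\psi$, with
\begin{equation*}
\partial_{x_i}\ln\psi=\tfrac12\sum_{j\neq i}\cot\bigl((x_i-x_j)/2\bigr).
\end{equation*}
The diffusion coefficient is $(2\pi)^2/N$. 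After the time change $L^2t$ this yields the generator
\begin{equation*}
\mathcal L=\frac{2\pi^2}{N}\Delta+\frac{2\pi^2}{N}\sum_i\sum_{j\ne i}\cot\Bigl(\frac{x_i-x_j}2\Bigr)\partial_i,
\end{equation*}
which matches~\eqref{Dyson10}. The $\rho$-correction and the diagonal term cancel exactly because $\psi$ is an eigenfunction. This is the \emph{entropic force}. Combined with the martingale problem, it gives convergence of finite-dimensional dynamics on compacts away from the collision set.

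The second step is tightness and identification. Write $x_i(L^2t)=x_i(0)+M_i^L(t)+\int_0^t b_i^L\,ds$. The quadratic variation is deterministic, $\langle M_i^L\rangle_t\approx (2\pi)^2t/N$. Aldous' criterion gives tightness of the martingale parts. The drift parts need control of $\int \cot$ terms near collisions, which is the \textbf{main obstacle}. Here I would localize using stopping times $\tau_\varepsilon$ at which the minimal gap drops below $\varepsilon$. On $[0,\tau_\varepsilon]$ the generator converges uniformly, so by standard martingale problem arguments (Stroock--Varadhan / Ethier--Kurtz) limit points solve the UDBM martingale problem up to $\tau_\varepsilon$. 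Since the UDBM with $\beta=2$ never collides, $\tau_\varepsilon\to\infty$ as $\varepsilon\to0$ for the limit, and uniqueness of the SDE (smooth drift away from collisions) concludes.

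To control the discrete process uniformly, I would use the invariant measure $\mu=\psi^2\approx$ the $\beta=2$ CUE density. This shows gaps are rarely small, for example via $\mathbb E\int_0^T\sum_{j\ne i}|\cot|\,ds$ bounded uniformly in $L$ through a Lyapunov function $\sum_{j<k}-\ln|e^{ix_j}-e^{ix_k}|$. Alternatively, if the initial condition is not stationary, the Schur spectral decomposition gives explicit transition probabilities, and their convergence to the UDBM heat kernel (a Karlin--McGregor determinant of circle heat kernels times $\psi$ ratio) gives finite-dimensional convergence directly, with tightness from the martingale bounds. I would try the spectral route first for finite-dimensional distributions, since Proposition~\ref{prop:schureigen} makes it essentially a Riemann-sum/character computation.
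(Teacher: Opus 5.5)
Your route differs from the paper's. You expand the generator of the $h$-transform, solve a martingale problem localized away from the collision set, and conclude by uniqueness of the UDBM. When the limiting initial configuration $\bx$ has pairwise distinct angles, this is a sound and standard alternative. Before $\tau^L_\varepsilon$ the rescaled generator converges uniformly, so the stopped processes are tight. Since the limit never collides, the portmanteau theorem applied to the closed set of paths whose minimal gap on $[0,T]$ is at most $\varepsilon$ gives $\lim_{\varepsilon\to0}\limsup_{L}\mathbb P(\tau^L_\varepsilon\le T)=0$, which is what the localization needs.

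The paper never looks at the coordinate drift. It works only with the Schur eigenfunctions $\psi_\xi/\psi_c$. These are simultaneously eigenfunctions of the MESSEP kernel and of the UDBM generator, and they are bounded and smooth on the \emph{closed} chamber.
\begin{itemize}
\item Tightness comes from the discrete It\^o formula for $e_k=s_{(1^k)}$, whose drift $(r_k-1)e_k$ is $\mathcal O(L^{-2})$ uniformly in the configuration (Lemma~\ref{lem:gap-k}). This is combined with a Rouch\'e argument passing from coefficients to roots, and with the winding-number identity of Lemma~\ref{WN} for the lift.
\item Identification comes from $(\rho_\xi/\rho_c)^{\lfloor L^2t\rfloor}\to e^{-E_mt}$, the density of the Schur span in $C(\overline{\bCC}_{2\pi,N})$, and the Feller property via Ethier--Kurtz.
\end{itemize}

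The concrete gap is the initial condition. The precise form of the statement (Theorem~\ref{scalingthm1}) allows $\bx_1\le\cdots\le\bx_N\le\bx_1+2\pi$ with coincidences, e.g.\ a packed start. There your argument fails at $t=0$:
\begin{itemize}
\item $\tau^L_\varepsilon=0$ for every $\varepsilon>0$, and likewise for the limit.
\item The rescaled drift is of order $L$ initially.
\item Your Lyapunov bound degenerates, because $-\ln\psi(\xi_0)\asymp\binom{N}{2}\ln L$.
\end{itemize}
The ``tightness from the martingale bounds'' in your spectral alternative meets the same singular drift, since it is again phrased in coordinates. You would need a separate entrance-from-the-boundary estimate, and the proposal does not supply one.

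The idea that removes the difficulty is to test the dynamics only against symmetric functions. For these the $\cot$ singularity cancels; in fact $e_k$ and all Schur functions are exact eigenfunctions. So both the tightness bound (uniform over all initial configurations, cf.\ the supremum in \eqref{tightnesscriterion}) and the identification are insensitive to collisions. This also makes a heat-kernel local limit theorem unnecessary. Such a theorem would require tail control of the spectral sum and, for even $L$, handling the bipartite structure of $\mathcal C_{L,N}$. Instead, convergence of $\mathbb E[f(\Xi(L^2t))]$ for eigenfunctions $f$ already gives the one-dimensional marginals, and the Feller property upgrades this to finite-dimensional distributions.

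Minor points:
\begin{itemize}
\item The product formula for $\psi$ is Proposition~\ref{MESSEPspectre}, not Proposition~\ref{prop:schureigen}.
\item The per-step drift is $\frac{1}{N}\frac{(2\pi)^2}{L^2}\partial_i\ln\psi$, since both directions contribute; your final generator is nevertheless correct.
\item Only the total quadratic variation $\sum_i\langle M^L_i\rangle$ is deterministic, since a particle whose two neighbours are at distance one cannot move. That total bound is still enough for Aldous' criterion.
\end{itemize}
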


As a matter of fact, this convergence holds at the level of a suitable lift of the MESSEP (introduced in Section~\ref{sec:1}) toward the UDBM, and \eqref{eq:thmconv} follows by projection. Moreover, the orthonormal Schur eigenstructure of the MESSEP yields a Hilbert--Schur expansion of the Markov semigroup $(\bP_t)_{t\geq 0}$ associated with $(\bXi(t))_{t\geq 0}$, where $(\bXi(t))_{t\geq 0}$ is the projection of the UDBM defined in \eqref{Dyson10}  onto the circle, modulo permutations of the particles
 (see Proposition~\ref{orthonormal} and Theorem~\ref{spectraldecompodyson}). We also refer to Proposition~\ref{prop:schureigen}, where the Schur representation of the eigenfunctions slightly differs from the discrete case in Proposition~\ref{Schur}. As a consequence, the semigroup is infinitely smoothing on $\mathbb L^2(\boldsymbol \mu)$ and converges exponentially fast in $\mathbb L^2(\boldsymbol \mu)$ to its invariant measure $\boldsymbol \mu$ (see Corollary~\ref{coro:convergenceL2}). Proofs and the full framework are deferred to Section~\ref{sec:sec3proof}.

Tightness for the MESSEP and its extended dynamics is established through classical martingale arguments, combined with the fact that Schur polynomials form a basis of symmetric polynomials. Viewing the particles as roots of a polynomial on the unit circle and expressing elementary symmetric polynomials through Schur eigenfunctions, we control fluctuations via Rouché’s theorem. The limit is then identified from its marginals using the $\mathbb{L}^2$-spectral decomposition of the UDBM semigroup, inherited from that of the MESSEP.  Arguments that are typically analytic for interacting diffusions thus follow transparently here from the Schur eigenstructure.

\bigskip

\noindent
{\bf 3.} Section~\ref{sec:hydro} states the main result of this paper (Theorem~\ref{thm:hydro}) on the asymptotic behaviour of the empirical measures of the rescaled continuous-time MESSEP,
\begin{equation}\label{eq:occupmeasure0}
\upsilon_{L,N}(t,dx)=\frac{1}{N}\sum_{k=1}^N\delta_{{2\pi\,\Xi_k(L^2 t)}/{L}}(dx).
\end{equation}

\medskip

\noindent
{\it A nonlinear and nonlocal conservation law for the density.} We give below a streamlined formulation, split into two statements, together with concise versions of the main related results.

\begin{theo}\label{thm0}
	Assume $N\sim \alpha L$ with $\alpha\in(0,1)$ and that $\upsilon_{L,N}(0,dx)$ converges in probability (see Definition~\ref{def:cvgprob}) to a density $f_0$ on $\mathbb R/2\pi\mathbb Z$. Then, for all $t\geq 0$, $\upsilon_{L,N}(t,dx)$ converges in the same sense to a density $f(t,x)$ solving, in the weak sense specified later,
	\begin{equation}\label{PDE0}
	\frac{\partial f}{\partial t}+\frac{1}{\alpha\sin(\pi\alpha)} \frac{\partial \left(\sin(2\pi^2 \alpha \,f)\sinh(2\pi^2\alpha\, \mathcal Hf)\right)}{\partial x}
	=0,
	\end{equation}
	where $\mathcal H$ denotes the spatial Hilbert transform on the circle.
\end{theo}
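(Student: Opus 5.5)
Our approach is the moment (Fourier) method, driven by the Schur eigenstructure of the MESSEP kernel $P$ (Proposition~\ref{prop:schureigen}). For $m\ge 1$ set $p_m(\xi)=\sum_{k}\omega^{m\xi_k}$ with $\omega=e^{2i\pi/L}$. Then $\frac1N p_m(\Xi(L^2t))=\int e^{imx}\upsilon_{L,N}(t,dx)$, and these Fourier coefficients determine the limit measure on $\mathbb R/2\pi\mathbb Z$.

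\textbf{Step 1: generator of the power sums.} Each $p_m$ expands in Schur polynomials through Murnaghan--Nakayama, $p_m=\sum_{\text{hooks }\lambda\vdash m}\pm s_\lambda$. Since each $s_\lambda$ is an eigenfunction of $P$ with explicit eigenvalue, the generator $L^2(P-I)$ acts on $p_m$ explicitly. The eigenvalues are ratios $\psi$-weighted sums of cosines $\sum_k \cos(2\pi(\lambda_k+N-k)/L)$ relative to the ground state. After Taylor expansion in $1/L$, they give $L^2(P-I)p_m$ as an explicit combination of products $p_{a_1}\cdots p_{a_r}$ with $\sum a_j=m$, plus $O(1/L)$ errors. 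We expect the resulting closed hierarchy, in the hydrodynamic scaling $N\sim\alpha L$, to be a polynomial in the normalized moments $\frac1N p_a$ whose coefficients involve $\sin(\pi\alpha)$ and the $\alpha$-powers generated by the nonlinear sine/sinh expansion.

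\textbf{Step 2: martingale decomposition and concentration.} By Dynkin's formula, $M_m(t)=\frac1N p_m(\Xi(L^2t))-\frac1N p_m(\Xi(0))-\int_0^t \frac{L^2}{N}(P-I)p_m\,ds$ is a martingale. Its quadratic variation is of order $L^2\cdot\frac{1}{N^2}\cdot(m/L)^2\cdot N=O(m^2/N)\to0$, since one jump moves a single particle by $\pm1$. Doob's inequality then makes the martingale term negligible uniformly on compacts. The same estimate gives tightness of $\{\upsilon_{L,N}\}$ in $D([0,T],\mathcal M_1)$; compactness of the circle makes the spatial part free. Any limit point therefore satisfies the closed system of ODEs for its Fourier coefficients $\hat f_m(t)$, and its first marginal is $f_0$ by hypothesis.

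\textbf{Step 3: identification.} We package the moments into generating functions $G(t,z)=\sum_{m\ge 1}\hat f_m z^m$ and its conjugate for $|z|<1$, i.e.\ the Herglotz/Cauchy transform of $f$. The boundary values $f\pm i\mathcal Hf$ combine into $\exp(\pm 2i\pi^2\alpha(f+i\mathcal H f))$. We then check that the hierarchy of Step 1 is exactly the Fourier form of \eqref{PDE0}, i.e.\ a complex Burgers-type equation of the form $\partial_t G+\partial_x\Phi(G)=0$ with an entire $\Phi$. The identity $\sin(a)\sinh(b)=\operatorname{Im}$-part of $\cos$ of a complex argument reduces this to a comparison of Taylor coefficients. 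Uniqueness then gives convergence of the whole sequence in probability: a quasilinear holomorphic Burgers equation in the disc has unique holomorphic solutions by the method of characteristics, at least for short time and then iteratively using that $f$ stays a probability density with moments bounded by $1$. Because the limit is deterministic, convergence in law upgrades to convergence in probability for each fixed $t$.

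\textbf{Main obstacle.} We expect the hard step to be Step 1: extracting from the Schur eigenvalues an asymptotic formula for $L^2(P-I)p_m$ that closes in terms of power sums at density $\alpha$ fixed. Here the $1/N$ corrections are not small combinatorially, since all $N$ particles interact and the number of terms grows. Our first attempt would be to write the eigenvalue for a hook shape via the Slater-determinant (free-fermion) representation $s_\lambda=\det(\omega^{\xi_j(\lambda_k+N-k)})/\det(\omega^{\xi_j(N-k)})$. In that representation $p_m$ acts by shifting one fermion by $m$, and the energy change is $\cos(2\pi(\ell+m)/L)-\cos(2\pi\ell/L)$ summed against the Fermi sea occupation. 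This produces the $\sin(\pi\alpha)$ normalization from the Fermi sea $\{-N/2,\dots,N/2\}$ and the $\sin,\sinh$ nonlinearity from the exponential generating function of the shifts.
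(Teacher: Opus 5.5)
The gap is Step~1. It carries the whole difficulty, and you only announce it (``we expect''). What must be proved is that $\frac{L^2}{N}(P-I)p_m=\frac{L^2}{N}\sum_{k=0}^{m-1}(-1)^k(\br_{\{m|k\}}-1)\,s_{\{m|k\}}$ coincides, up to $o(1)$ uniformly in the configuration, with a fixed polynomial in $p_1/N,\dots,p_m/N$.

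A Taylor expansion of $\br_{\{m|k\}}$ in $1/L$ cannot simply be truncated; its leading-order truncation is the generator analogue of the misleading computation around \eqref{eq:false}. The reason is that $s_{\{m|k\}}/N$ is not bounded. By Frobenius it contains $p_\pi/N$, which is of order $N^{\ell(\pi)-1}$. So the order-$L^{-j}$ term of the expansion a priori contributes terms of size $L^{\ell(\pi)-1-j}$, and all those with $j<\ell(\pi)-1$ must cancel. The paper obtains this cancellation from its character identity, Lemma~\ref{lem:characteridentities}: $\sum_k(-1)^k\chi^{\{m|k\}}_\pi(m-2k-1)^j=0$ for $j<\ell(\pi)-1$, with an explicit nonzero value at $j=\ell(\pi)-1$. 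This is combined with the expansion of Lemma~\ref{lem:eigenhookdevasym} in the variable $\pi(m-2k-1)/L$. Your free-fermion picture (shift one fermion by $m$, sum the energy change over the Fermi sea) only restates $p_m=\sum_k(-1)^ks_{\{m|k\}}$ and the hook eigenvalue formula. It does not exhibit this cancellation.

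Your heuristic can be made rigorous, and more directly than in the paper. Write the $k$-dependence of $\br_{\{m|k\}}-1$, namely $\sin\big(\pi(N+m-1)/L-2\pi k/L\big)$, as a combination of $q^k$ and $\overline{q}^{\,k}$ with $q=e^{-2i\pi/L}$. Then use the classical hook generating function $\sum_k(-q)^k s_{\{m|k\}}=\frac{1}{1-q}[u^m]\exp\big(\sum_{r\geq1}\frac{1-q^r}{r}p_ru^r\big)$, which follows from $h_ae_b=s_{(a+1,1^{b-1})}+s_{(a,1^b)}$. Since $\frac{1-q^r}{r}=\frac{2i\pi}{L}+\mathcal O(r/L^2)$ and $|p_r|\leq N$, this gives, uniformly in the configuration, $\frac{L^2}{N}(P-I)p_m=\frac{\pi m}{\alpha\sin(\pi\alpha)}[u^m]\cos\big(\pi\alpha(1+2G(u))\big)+o(1)$ with $G(u)=\sum_r\frac{p_r}{N}u^r$. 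This is exactly the coefficientwise form of \eqref{eq:derivative0gbis000000}.

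With Step~1 supplied this way, the rest of your plan is sound and lighter than the paper's route, which computes exact expectations via the spectral decomposition plus a second-moment estimate:
\begin{itemize}
\item Each step moves one particle by one site, so $|\Delta p_m|\leq 2\pi m/L$ pathwise. This gives deterministic equicontinuity in $t$ and a martingale with quadratic variation $\mathcal O(m^2t/N^2)$, better than your $\mathcal O(m^2/N)$. That replaces the double-hook variance analysis of Lemmas~\ref{lem:variancehook}, \ref{lem:eigendoublehookdevasym} and~\ref{lem:finallemvar}.
\item Uniqueness of limit points is immediate because the hierarchy is triangular: $\tfrac{d}{dt}\hat f_m=-2\pi^2 m\hat f_m+(\text{polynomial in }\hat f_1,\dots,\hat f_{m-1})$. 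No characteristic or holomorphic-uniqueness argument is needed.
\end{itemize}
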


We develop a regularity theory for the Partial Differential Equation (PDE)  \eqref{PDE0}. We refer to Theorem~\ref{prop:regularityPDE}, as well as to the discussion below. The main features are:

\begin{itemize}
	\item Singularities originate from extremal (saturated) regions of the initial profile $f_0$, namely where $f_0$ vanishes (macroscopic voids) or reaches its maximum value $1/(2\pi\alpha)$ (particle congestion, see Remark~\ref{rem:packed}), and can propagate only through extremal regions of $f(t,x)$.

	\item There exists $t^\ast>0$ such that $f(t,x)$ stays away from its extremal values for $t> t^\ast$, yielding an analytic strong solution on $(t^\ast,\infty)\times \mathbb R/2\pi\mathbb Z$.
	\item For any initial density profile, the solution converges exponentially fast to the constant equilibrium profile $1/(2\pi)$.
\end{itemize}

Notably, even an entire analytic initial profile may develop derivative discontinuities (see Section~\ref{sec:periodicdens}). The proof relies on the method of moments and the Schur eigenstructure of the MESSEP kernel. To compute $\mathfrak m_n(t)$, the asymptotic expectation of the $n$th complex moment of \eqref{eq:occupmeasure0}, we use the Frobenius character formula \eqref{eq:frobenius}, relating power-sums $p_\pi$, Schur polynomials $s_\lambda$, and irreducible characters $\chi_\pi^\lambda$. For $p_n$, hook partitions $(n-k,1^k)$ (see Figure \ref{fig:confighook}) and the associated eigenfunctions $s_{\{n|k\}}$ play a central role. The argument relies on a precise Taylor expansion of the MESSEP eigenvalues for hook partitions (Lemma~\ref{lem:eigenhookdevasym}) and new character identities (Lemma~\ref{lem:characteridentities}), yielding the combinatorial formula \eqref{eq:momentasymptotic}.

To prove convergence in probability, we show that the variance of the $n$th moment vanishes asymptotically. Evaluating at $L$th roots of unity yields $|p_n|^2=p_n p_{L-n}$, whose decomposition in the Schur eigenbasis involves infinitely many terms. To overcome this difficulty, the key step is to rewrite this in terms of  products $s_{\{n_1|k_1\}}\overline{s_{\{n_2|k_2\}}}$, allowing us to reuse the Character identities in Lemma~\ref{lem:characteridentities}. The main additional inputs are the Schur decompositions in  Lemma~\ref{lem:variancehook} and the Taylor expansion of the eigenvalues for double-hook partitions $\{n|k,l\}$ (see Lemma~\ref{lem:eigendoublehookdevasym} and Figure~\ref{fig:doublehook}). The central references are \cite{Stembridge,Koike}. This yields the required decomposition and shows that the variance vanishes asymptotically.

\bigskip
 
\noindent
{\it A generalized Burgers equation for the moment generating function.} The moment generating function $g(t,z)=\sum_{n=1}^\infty \mathfrak m_n(t) z^n$ satisfies a generalized inviscid Burgers equation, a weak Fourier counterpart of \eqref{PDE0}. We denote by $\mathbb D$ the open unit disk and by $g_0(z)$ the generating function of $f_0(x)$.

\begin{theo}
	The  generating function $g(t,z)$ is the unique solution on $[0,\infty)\times\mathbb D$ of 
	\begin{equation}\label{eq:derivative0gbis00}
	\frac{\partial g(t,z)}{\partial t}+2\pi^2z\frac{\sin(\pi\alpha(1+2g(t,z)))}{\sin(\pi\alpha)} \frac{\partial g(t,z)}{\partial z}=0.
	\end{equation}
\end{theo}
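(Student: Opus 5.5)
The plan is to work entirely at the level of the coefficients $\mathfrak m_n(t)$ and to reduce both existence and uniqueness to a recursive system of ODEs in $t$. We take as given the combinatorial formula \eqref{eq:momentasymptotic}, which comes from the Frobenius expansion of $p_n$ into hook Schur functions $s_{\{n|k\}}$, the Taylor expansion of the hook eigenvalues (Lemma~\ref{lem:eigenhookdevasym}) and the character identities (Lemma~\ref{lem:characteridentities}). From it we first extract a differential system: differentiating in $t$ should give a closed system in which $\mathfrak m_n'(t)$ is a polynomial in $\mathfrak m_1,\dots,\mathfrak m_n$. Heuristically, the generator applied to $p_n$ acts through the exponential of the eigenvalue $L^2(\lambda_{\{n|k\}}-1)$. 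The content-sum structure of hooks then makes $\frac{d}{dt}\mathbb E[p_n/N]$ equal to $-2\pi^2 n$ times the coefficient of $z^n$ in
\[
\frac{\sin(\pi\alpha(1+2g))}{\sin(\pi\alpha)}
\]
with $g$ built from the lower moments. This is exactly the coefficient extraction of \eqref{eq:derivative0gbis00}, because
\[
z\,\partial_z g=\sum_n n\,\mathfrak m_n z^n .
\]

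\textbf{Existence.} We will show that
\[
\Phi(g)=\frac{\sin(\pi\alpha(1+2g))}{\sin(\pi\alpha)}
\]
has the Taylor expansion in $g$ that reproduces the combinatorial weights of \eqref{eq:momentasymptotic}. The character identities express $\sum_k(-1)^k\,\mathrm{(hook\ data)}$ as coefficients of products $\mathfrak m_{n_1}\cdots\mathfrak m_{n_r}$ with $n_1+\dots+n_r=n-$const. Matching the odd and even powers of $\sin$ and $\cos$ against the hook eigenvalue expansion $\cos(2\pi k/L)$-type terms, summed against $\alpha$, is the step I expect to be the main obstacle. It requires identifying a generating series of hook characters with $\sin(\pi\alpha(1+2g))$. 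I would first test the identity at order one ($\Phi(0)=1$ gives heat-type decay $\mathfrak m_n'=-2\pi^2 n\mathfrak m_n+\dots$) and at order two. Then I would prove the general case via the exponential formula, writing
\[
\sin\bigl(\pi\alpha(1+2g)\bigr)=\operatorname{Im} e^{i\pi\alpha}e^{2i\pi\alpha g}
\]
and expanding $e^{2i\pi\alpha g}$ as a sum over multisets of parts.

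\textbf{Analyticity.} Since $|\mathfrak m_n(t)|\le 1$, $g(t,\cdot)$ is holomorphic on $\mathbb D$, and termwise differentiation is justified by uniform bounds on $\mathfrak m_n'$ on compact subsets.

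\textbf{Uniqueness.} Any solution holomorphic in $z\in\mathbb D$ with $g(t,0)=0$ satisfies the equation coefficientwise. The coefficient of $z^n$ gives
\[
\mathfrak m_n'=-2\pi^2 n\,\mathfrak m_n+F_n(\mathfrak m_1,\dots,\mathfrak m_{n-1}),
\]
a triangular linear ODE system. With the initial data fixed by $g_0$, induction on $n$ determines every $\mathfrak m_n(t)$ uniquely. Alternatively, one can use characteristics: $z(t)$ solves
\[
\dot z=2\pi^2 z\,\Phi(g_0(z_0)),
\]
with $g$ constant along the characteristic, which also gives an implicit formula. The triangular recursion avoids the question of whether the characteristics stay in $\mathbb D$, so I would rely on it.
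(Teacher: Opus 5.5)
Your uniqueness argument is correct, and cleaner than the paper's appeal to Cauchy--Kovalevskaya. The $z^0$ coefficient forces $g(t,0)=g_0(0)=0$, so $\mathcal V_\alpha(g)$ has constant term $2\pi^2$. The $z^n$ coefficient then reads $c_n'=-2\pi^2 n\,c_n+F_n(c_1,\dots,c_{n-1})$, which fixes every coefficient by induction.

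\textbf{The gap is existence for $t>0$.} Formula \eqref{eq:momentasymptotic} expresses $\mathfrak m_n(t)$ through the \emph{initial} moments $m_\pi$. Differentiating it gives
\[
\mathfrak m_n'(t)=-2\pi^2 n\sum_{\pi\vdash n}\frac{(2\pi\alpha)^{\ell(\pi)-1}}{\prod_i l_i!}\,\partial_x^{\ell(\pi)-1}\bigl(u_\alpha h_t^n\bigr)(0)\,m_\pi ,
\]
which is a function of $t$ and $m_1,\dots,m_n$. Nothing in your plan shows that this equals $-[z^n]\bigl(z\,\mathcal V_\alpha(g)\,\partial_z g\bigr)$ with $g=g(t,\cdot)$. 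Matching Taylor weights of $\sin(\pi\alpha(1+2g))$ against \eqref{eq:momentasymptotic} only gives the equation at $t=0$. So the closed system you expect to get ``by differentiating'' is exactly what remains unproved, and your ``uniform bounds on $\mathfrak m_n'$'' also presuppose it.

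The paper closes this step in two stages. It uses the semigroup identity \eqref{eq:momentasymptoticbisbis}, obtained from the Markov property of the MESSEP and convergence in probability at intermediate times (Proposition~\ref{eq:pdeanyt}), so the $t=0$ computation of Lemma~\ref{lem:edptransportt0} applies at every time. Extension to all of $\mathbb D$ then uses $\operatorname{Re}A_0>0$ (Lemmas~\ref{lem:dilationrealpart} and~\ref{lem:anlytic1bis}).

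A purely algebraic repair also works:
\begin{itemize}
\item Faà di Bruno turns \eqref{eq:momentasymptotic} into $\mathfrak m_n(t)=\frac1n[w^{n-1}]\,g_0'(w)e^{-ntA_0(w)}$.
\item By Lagrange--Bürmann this equals $[z^n]\,g_0(w(t,z))$, where $w\,e^{tA_0(w)}=z$ near $z=0$.
\item Implicit differentiation gives the equation near $z=0$ for every $t$, hence your triangular system for all $t$.
\item Then $|\mathfrak m_k(t)|\le 1$ plus Cauchy estimates on $z\,\mathcal V_\alpha(g)\,\partial_z g$ justify termwise differentiation on all of $\mathbb D$.
\end{itemize}
Your particle-level generator heuristic could also be completed. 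That requires using convergence in probability of the empirical moments at time $t$ to replace $\mathbb E\bigl[G_n(\mathcal M_1^{L,N}(t),\dots,\mathcal M_n^{L,N}(t))\bigr]$ by $G_n(\mathfrak m_1(t),\dots,\mathfrak m_n(t))$.

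\textbf{The step you flag as the main obstacle is misplaced.} Once \eqref{eq:momentasymptotic} is granted, no character identities and no matching against $\cos(2\pi k/L)$-type terms remain. The $t=0$ identity is just the exponential formula
\[
\sum_{\pi\vdash n}\frac{c_{\ell(\pi)}}{\prod_i l_i!}\,m_\pi=[z^n]\sum_{k\ge 1}\frac{c_k}{k!}\,g_0(z)^k,\qquad c_k=(2\pi\alpha)^{k-1}u_\alpha^{(k-1)}(0),
\]
so that $\mathfrak m_n'(0)=-2\pi^2 n\,[z^n]\int_0^{g_0(z)}u_\alpha(2\pi\alpha s)\,ds=-[z^n]\bigl(z\,\mathcal V_\alpha(g_0)\,g_0'\bigr)$.

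Your heuristic ``$-2\pi^2 n$ times the coefficient of $z^n$ in $\sin(\pi\alpha(1+2g))/\sin(\pi\alpha)$'' is wrong as stated. Its linear part would be $-4\pi^3\alpha n\cot(\pi\alpha)\,\mathfrak m_n$ instead of $-2\pi^2 n\,\mathfrak m_n$; the antiderivative $\int_0^{g}$ is needed.
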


Formally, the PDE \eqref{PDE0} is obtained from \eqref{eq:derivative0gbis00} by writing 
\begin{equation}\label{eq:linkgfh}
1+2g(t,e^{ix})=2\pi (f(t,x)+i\,\mathcal H f(t,x)).
\end{equation}
This representation recovers the density $f(t,x)$ via the method of characteristics, clarifies its regularity, and provides a geometric interpretation (see Section~\ref{sec:PDEid}).

The guiding principle is as follows. The characteristic flow $\Phi_t$ associated with \eqref{eq:derivative0gbis00},
\begin{equation}\label{eq:flow0}
\Phi_t(w)=w \exp\left(2\pi^2 t \frac{\sin(\pi\alpha(1+2g_0(w)))}{\sin(\pi\alpha)}\right),
\end{equation}
defines a biholomorphism from an open set $V_t\subset\mathbb D$ onto $\mathbb D$, with $(V_t)_{t\geq 0}$ decreasing. Denoting by $w(t,\cdot)$ its inverse, \eqref{eq:linkgfh} and the identity $g(t,z)=g_0(w(t,z))$ formally yield
\begin{equation}\label{eq:represent}
f(t,x)=\frac{1+\operatorname{Re} g_0(w(t,e^{ix}))}{2\pi},
\end{equation}
provided $w(t,\cdot)$ extends continuously to $\partial V_t$. 

Such an extension follows from the regularity of $\partial V_t$ via classical results in complex analysis, such as the Carathéodory--Torhorst theorem, which require $\partial V_t$ to be locally connected, a Jordan curve, or sufficiently smooth. In our setting, $\partial V_t\cap\mathbb D$ is always analytic, which implies that $f(t,x)$ is analytic on the corresponding domain. Moreover, for any $w_0=e^{ix_0}\in \partial V_t\cap\partial\mathbb D$, one has $f_0(x_0)=0$ or $f_0(x_0)=1/(2\pi\alpha)$ when $f_0$ is continuous. Setting $x_t=\arg(\Phi_t(w_0))$, the function $f(t,x_t)$ attains the same extremal value. The regularity of $f(t,x)$ at $x_t$ is thus governed by that of $\partial V_t$ at $w_0$. We refer to Figure~\ref{fig:boundary} for an illustration of these principles.

\begin{figure}[!h]
	\centering
	\includegraphics[scale=0.55]{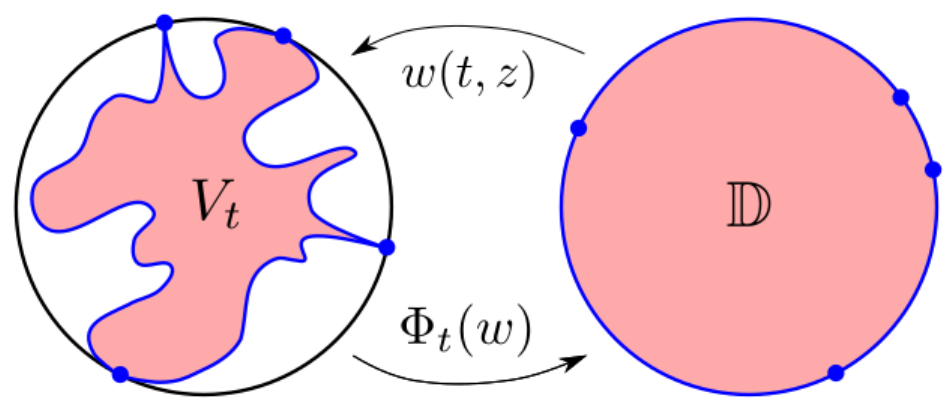}
	\captionsetup{width=14.7cm}
	\caption{ \small 
		Correspondence between $V_t$ and $\mathbb D$ under the conformal maps $\Phi_t$ and $w(t,\cdot)$. Here, boundary points match bijectively  since $\partial V_t$ is a Jordan curve. Consequently, $f(t,x)$ is continuous on the unit circle. The density has four extremal points, two singular and two smooth.
	}
	\label{fig:boundary}
\end{figure}

 \bigskip

\noindent
{\it The case of a maximally packed step initial profile.}  Finally, we analyze the maximally packed initial configuration
\begin{equation}\label{eq:packed}
f_0(x)=\frac{1}{2\pi\alpha}\,\mathds{1}_{[-\pi\alpha,\pi\alpha]}(x).
\end{equation}
The resulting evolution exhibits striking regularity, saturation, and support properties, detailed in Theorem~\ref{thm:pack} and illustrated in Figure~\ref{densites} with $\alpha<1/2$. Since Dirac measures are excluded, this initial profile plays the role of a Dirac mass $\delta_0$.  This configuration also hints at connections with Plancherel-type growth processes, discussed below.

In Figure~\ref{densites}, the time $t_\ast$ marks the disappearance of macroscopic congestion and $t^\ast$ that of macroscopic voids. For $\alpha>1/2$, the order is reversed, while for $\alpha=1/2$ the two times coincide. In all cases, $t^\ast$ is the first time the density becomes analytic. Singularities are either of square-root or cubic type, the latter occurring precisely at the critical times when a void or saturated region disappears. Sharp moment estimates follow from the stationary phase method, and the geometry of the associated open sets $(V_t)_{t\geq 0}$ is described explicitly in Section~\ref{sec:packedproof} (see Figure~\ref{fig:levelsetstep}).

\begin{figure}[!h]
	\centering
	\includegraphics[width=7cm]{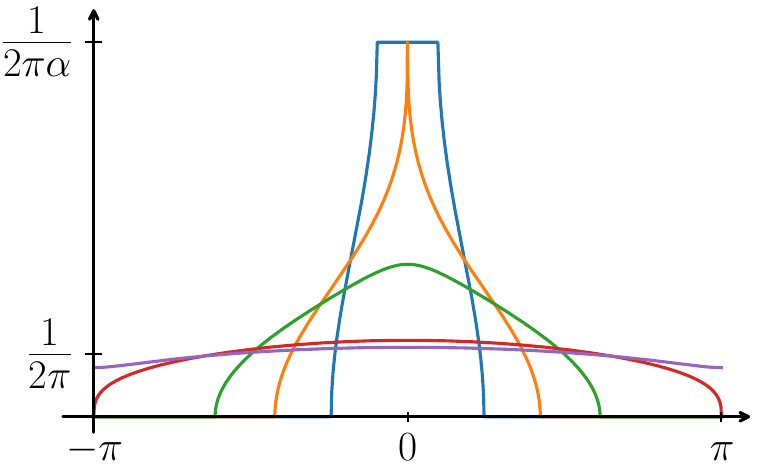}
	\captionsetup{width=12cm}
	\caption{\small  Evolution of $f(t,x)$ from the step initial profile \eqref{eq:packed} with $\alpha<1/2$, at times $t_1<t_\ast<t_2<t^\ast<t_3$. The density regularizes, converges to the uniform equilibrium $1/(2\pi)$, whereas the saturated plateau and support evolve.}
	
	\label{densites}
\end{figure}

\subsection{Related results and perspectives}

In this section, we take a step back from our results and place them in perspective by comparing them with related contributions from the literature.

\bigskip

\noindent
{\bf 1.} Independently and in parallel with the present work, results closely related to those of Sections~\ref{sec:1} and~\ref{sec:low} were obtained in \cite{GLT2023} using a different yet complementary algebraic approach, in contrast to the probabilistic and analytical methods developed here. The graph studied in \cite{GLT2023} is closely related to $\mathcal C_{L,N}$, but only positive (counterclockwise) jumps are permitted. In our framework, this model naturally corresponds to what may be termed the \emph{Maximum Entropy Totally Asymmetric Simple Exclusion Process} (METASEP).

They exploit that their graph is positively multiplicative, allowing  to  interpreted iterates of the METASEP kernel as products in a commutative algebra. This algebra, identified with the quantum cohomology of the Grassmannian, can be realized as generated by Schur functions specialized at $N$-tuples of $L$-th roots of unity.
This framework yields generalized Berry--Esseen estimates and a local limit theorem for fixed-time marginals.

Since the METASEP and MESSEP share the same eigenfunctions, the results of \cite{GLT2023} imply convergence in distribution of suitably centered fixed-time marginals to those of the UDBM. Our approach further yields functional convergence of the rescaled METASEP to the same limit. Although \cite{GLT2023} appears to extend to the symmetric setting (see their Example~2.5), it is not clear whether the symmetric graph $\mathcal C_{L,N}$ is positively multiplicative, and if so, with respect to which commutative algebra and basis. In particular, the classical Pieri rule no longer directly captures the symmetric adjacency structure.

Besides, our work also resolves a question raised in \cite[p.~14]{GLT2023} on the hydrodynamic limit. While that paper conjectures convergence of the marginals to the FUBM, the PDE \eqref{PDE0} shows that this does not occur: discretization introduces additional nonlinear terms in the limit. Nevertheless (see Point~{\bf 3.\@} below), the spectral distribution of the FUBM is recovered in the regime $\alpha\to 0$. It is therefore natural to analyze the hydrodynamic limit of the METASEP. For the usual Symmetric Simple Exclusion Process (SSEP) and the Totally/Weakly Asymmetric Simple Exclusion Process (TASEP/WASEP) on the circle, the hydrodynamic limits differ: heat diffusion in the symmetric case and Burgers-type equations in the asymmetric one (see paragraph {\bf 4.} below). It is thus not clear whether, after suitable centering, the limiting density for the METASEP satisfies the same PDE as in the MESSEP setting.

Finally, we point out that a connection between Hopf algebra Markov chains and MERW has recently been established in \cite{Marcolli25}. It would be interesting to further investigate possible similarities between this approach and that of \cite{GLT2023}.

\bigskip

\noindent
{\bf 2.} The UDBM \eqref{Dyson10}, introduced by Dyson \cite{Dyson}, arises in the study of Coulomb gases and random matrix theory (see, for instance, \cite{Serfaty,Forest}). It describes, up to deterministic time scaling, the evolution of the eigenvalues of the Unitary Brownian Motion $(U_t)_{t\ge0}$. 

The latter is a diffusion on the unitary group $U(N)$ solving the SDE
$dU_t = i\,U_t \circ dH_t$,
in the Stratonovich sense, where $(H_t)_{t\ge0}$ is a Hermitian Brownian motion with independent entries: real Brownian motions on the diagonal and complex Brownian motions above the diagonal. 

Its infinitesimal generator is (up to a multiplicative factor) the Laplace--Beltrami operator on $U(N)$. The irreducible characters of $U(N)$, that is  the Schur polynomials evaluated on the unit circle, are its eigenfunctions, and its radial part coincides with the generator of the UDBM (see \cite[Section~2.4.3]{BorodimPetrov}). In particular, the UDBM inherits the same symmetric eigenfunctions, with eigenvalues given by \eqref{energyspectrum}.

The UDBM can also be viewed as the Doob $h$-transform of the Calogero--Sutherland Hamiltonian 
\begin{equation}\label{eq:calo}
\mathcal H_{\mathrm{Suth}} 
= -\tfrac12 \sum_{i=1}^N \partial_{x_i}^2 
+ \sum_{i<j} \frac{1}{4 \sin^2\!\big(\tfrac{x_i-x_j}{2}\big)},
\end{equation}
via the ground-state transform with the Vandermonde eigenfunction $\Psi$ defined in \eqref{eq:Psi}. Conjugation by $\Psi$, up to an additive scalar, yields the generator of the UDBM. It is well known (see \cite{ForestCalo}) that the symmetric eigenfunctions of $\mathcal H_{\mathrm{Suth}}$ are the Schur polynomials on the unit circle. In light of the discussion below \eqref{eq:MERW}, the Hamiltonian \eqref{eq:calo} can thus be interpreted as a continuous analogue of the discrete one associated with  $\mathcal C_{L,N}$.

The UDBM is the law of $N$ Brownian motions on the circle conditioned never to collide (see  \cite{Werner,WangNCB}). This is in agreement with the interpretation of the MESSEP in Proposition~\ref{noncollinding}. Equivalently, it describes Brownian particles interacting via Coulomb repulsion (see \cite{CepaLepingle01}). In our setting, this repulsion emerges as an entropic force in the scaling limit of the discrete MESSEP, in a form even more explicit than in \cite{Coulomb}.

The novelty of our approach is that the UDBM arises from a simple, canonical discrete model  yielding a new microscopic derivation of this diffusion. This contrasts with earlier Markovian approximations built in more abstract frameworks in \cite[Section~4.1]{BorodimPetrov}, \cite{Boro}, \cite[Section~3]{Gorin}. It would be natural to ask whether these constructions admit a maximal entropy interpretation.

A natural extension is to consider exclusion dynamics on $\mathbb Z$, in both the symmetric and totally asymmetric regimes. On infinite graphs, MERWs are more delicate to define and study (see \cite{Duboux} and \cite{Dovgal}). Under diffusive scaling, one may then expect to recover the classical Dyson Brownian Motion for the eigenvalues of a Hermitian Brownian motion.

Another natural  direction is to introduce a confining potential, for instance quadratic, to connect the model with the Gaussian Unitary Ensemble. From our perspective, this confinement could be incorporated canonically by adding an energetic constraint to the maximal path entropy principle.

\bigskip
\noindent
{\bf 3.} Although the PDE \eqref{PDE0} does not describe the evolution of the spectral distribution of the FUBM, it admits a formal connection in the regime $\alpha\to0$, leading to
\begin{equation}\label{PDE00}
\partial_t f + 4\pi^3\,\partial_x\!\left(f\,\mathcal H f\right)=0.
\end{equation}

The PDE \eqref{PDE00} is a nonlocal continuity equation of the form $\partial_t f+\partial_x(fv)=0$, with velocity field proportional to $\mathcal H f$. On the real line, the corresponding equation is simply obtained by replacing $\mathcal H$ with the standard Hilbert transform.

In both geometries, such PDEs admit a $2$-Wasserstein gradient-flow interpretation for logarithmic interaction potential: $\log|x-y|$ on the real line and $\log|\sin((x-y)/2)|$ on the circle. We refer to \cite[Chap.~11]{Ambrosio} and \cite{Ferreira2,Ferreira1} for instance. Connections with isentropic Euler systems and optimal transport are discussed in \cite{Gangbo,Abanov2,Guionnet2,Majumdar,BilerCrystal}. From the random-matrix viewpoint, equations such as \eqref{PDE00} arise as large-$N$ hydrodynamic limits of Dyson-type systems: see \cite{Mallick,Katori,Guillin,Bertucci1-1,Bertucci1-2,Bertucci2hal} for recent developments, including the circular case. We briefly recall the corresponding complex Burgers equations in both settings to clarify the link with our work.

\medskip

\noindent
{\it On the real line.} Let $\rho_t$ be the large-$N$ limit of the empirical eigenvalue distribution of Hermitian Brownian motion $(H_t)_{t\ge0}$ started at $0$, that is, the law of the free additive Brownian motion. Its Stieltjes transform $G(t,z)$ solves the inviscid complex Burgers equation
\begin{equation}\label{BurgersReal}
\partial_t G(t,z) + G(t,z)\,\partial_z G(t,z)=0,
\end{equation}
yielding the semicircle law
\begin{equation}\label{eq:semicirc}
\rho(t,x)=\frac{1}{2\pi t}\sqrt{4t-x^2}.
\end{equation}

The same Burgers structure governs the asymptotics of matrix integrals such as the HCIZ integral (see \cite{Matytsin,Menon}), and is closely related to developments in combinatorics and representation theory (see \cite{Collins,GuionnetChar,Goulden,Novak}).

\medskip

\noindent
{\it On the unit circle.} Let $\nu_t$ be the large-$N$ limit of the empirical eigenvalue distribution of unitary Brownian motion $(U_t)_{t\ge0}$ started at $I_n$, that is, the spectral measure of the free unitary Brownian motion. One has $\nu_0=\delta_1$, or equivalently $\delta_0$ on $\mathbb R/2\pi\mathbb Z$. Its Herglotz transform $H(t,z)$ (see Remark~\ref{rem:herglotz}) determines the density via its non-tangential boundary values. Free-probabilistic arguments due to Biane \cite{BianeFree,BianeAnalogue} show that $H$ solves the Burgers-type equation
\begin{equation}\label{BurgersCircle}
\partial_t H(t,z) + \frac{z}{2} H(t,z)\,\partial_z H(t,z) = 0,
\end{equation}
a multiplicative analogue of \eqref{BurgersReal}. The density is supported on an arc whose opening increases with time and eventually covers the circle. The boundary value $H(t,e^{ix})$ is the unique solution with positive real part of
\begin{equation}\label{eq:herglotsolution}
\frac{H(t,e^{ix})-1}{H(t,e^{ix})+1}\,e^{\frac{t}{2} H(t,e^{ix})}=e^{ix},
\end{equation}
and $\nu(t,x)=\operatorname{Re} H(t,e^{ix})$. See also \cite[Theorem~2.3]{Hamdi} and \cite{Nizar}.

\medskip

Thus, in both the additive (real-line) and multiplicative (unit-circle) settings, the large-$N$ spectral evolution is governed by boundary values of analytic transforms (Stieltjes or Herglotz) solving inviscid complex Burgers equations. Our approach follows the same spirit: the nonlinear transport equation \eqref{PDE0} is a real-variable analogue of the Burgers-type equation \eqref{eq:derivative0gbis00} satisfied by the analytic transform $g(t,z)$, and provides an implicit description of the dynamics via boundary characteristics as in \eqref{eq:herglotsolution}. This formulation is reminiscent of the Loewner--Kufarev theory for conformal maps in the unit disk. Here, the analytic map $w(t,z)$ plays an analogous role and governs the evolution of the domains $V_t$ (see  Remark~\ref{rem:loewner}).

In particular, the maximally packed configuration in Figure~\ref{densites} mirrors the Dirac initial condition in \eqref{PDE00}: a moving support emerges, with edges determined by the characteristics of \eqref{eq:derivative0gbis00}, and square-root singularities form at the boundary, as the semicircle law \eqref{eq:semicirc} and its circular analogue.

Note that the limiting hydrodynamics depends on the order of the limits. Sending first the number of sites to infinity and then the number of particles yields \eqref{PDE00}, whereas taking the limits jointly leads to \eqref{PDE0}, where the density parameter $\alpha$ persists and generates additional nonlinear terms. Letting $\alpha\to0$ in \eqref{PDE0} formally connects the two regimes (classical Dyson gas and  free-unitary hydrodynamic) while retaining genuinely new nonlinear effects (see Figure~\ref{fig:diag}).

\begin{figure}[!h]
	\centering
	\begin{tikzcd}[row sep=1cm, column sep=2.25cm]
		\mathrm{UDBM}
		\arrow[r, "\substack{\text{hydrodynamic limit}\\ N\to\infty}"]
		&
		\mathrm{FUBM}
		\\
		\mathrm{MESSEP}
		\arrow[u, "\substack{N\ \text{fixed}\\ L\to\infty}"]
		\arrow[r, "{N}/{L}\to\alpha"']
		&
		\substack{\text{nonlinear}\\ \mathrm{FUBM}}
		\arrow[u, "\alpha\to 0"']
	\end{tikzcd}
	\caption{\small  Connections between  MESSEP, UDBM and FUBM}
	\label{fig:diag}
\end{figure}
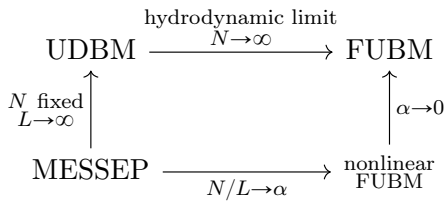

It would be natural to ask whether an analogous bridge exists on the real line. A free-probability interpretation in the spirit of Biane may also be fruitful.

\bigskip

\noindent
{\bf 4.} We need to compare our limiting PDE \eqref{PDE0} with that of the Simple Symmetric Exclusion Process (SSEP), where all allowed jumps are equally likely. We refer to the classical monograph \cite{KipnisLandim} and to the more recent developments in \cite{SimonS,SimonF,Goncalvez}. Most of the literature concerns the continuous-time version, in which each particle is equipped with an independent exponential clock and, when it rings, attempts to jump uniformly to one of its neighboring sites, if allowed.

Let $(\eta_t)_{t\geq 0}$ denote the continuous-time Markov process with state space $\{0,1\}^{\mathbb T_L}$, where $\eta_t(x)=1$ if and only if site $x$ is occupied at time $t$, and let  $\pi_{L,N}(t,dx)$ be the empirical measure. Under the assumptions of Theorem~\ref{thm0}, it is well known that $\pi_ {L,N}(L^2t,dx)$ converges to a density $\rho_c(t,x)$ solving the heat equation on the circle. However, to completely compare with the MESSEP, we need to derive heuristically the corresponding PDE for the discrete-time SSEP. 

To this end, observe that the instantaneous rate of effective jumps at time $t$ is proportional to
\begin{equation}
R(\eta_t)=\sum_{x\in\mathbb T_L}\mathds 1_{\{\eta_t(x)\neq \eta_t(x+1)\}}.
\end{equation}
Besides, under local equilibrium, one has
\begin{equation}
\mathbb P(\eta_{L^2 t}(x)\neq \eta_{L^2 t}(x+1))
\simeq 2\,\rho_c\!\left(t,\frac{2\pi x}{L}\right)
\left(1-\rho_c\!\left(t,\frac{2\pi x}{L}\right)\right).
\end{equation}
Hence, if $A_t$ denotes the total number of effective jumps up to time $t$, one can write
\begin{equation}
\mathbb E[A_{L^2 t}]
\simeq 2L^2\int_0^t \sum_{x\in\mathbb T_L}
\rho_c\!\left(s,\frac{2\pi x}{L}\right)
\left(1-\rho_c\!\left(s,\frac{2\pi x}{L}\right)\right)\,ds
\simeq \frac{L^3}{\pi}\int_0^t\int_0^{2\pi}
\rho_c(s,x)(1-\rho_c(s,x))\,dx\,ds.
\end{equation}
Considering the embedded discrete-time Markov chain $(\xi_n)_{n\geq 0}$ associated with effective jumps, a time-change shows that the density profile of $(\xi_{L^3 t})_{t\geq 0}$ converges to $\rho_d(t,x)$ solving
\begin{equation}
\partial_t\rho_d(t,x)
=\frac{\sigma_\alpha}{\int_{0}^{2\pi}\rho_d(t,x)(1-\rho_d(t,x))\,dx}
\partial_{xx}\rho_d(t,x),
\end{equation}
for some constant $\sigma_\alpha>0$ depending on $\alpha$. The convergence to equilibrium is therefore slower in the discrete-time SSEP (order $L^3$) than in the MESSEP (order $L^2$). Moreover, the factor $\rho(1-\rho)$ plays the role of a nonlinear flux in the TASEP case, whose limiting PDE is of transport type.

\bigskip

\noindent
{\bf 5.} Finally, the MESSEP dynamics at the interface between a saturated region and a macroscopic void  can be described via the Plancherel growth process (see for instance \cite{kerov}), in agreement with its maximal-entropy interpretation in \cite{Dovgal}.

More precisely, assume that only finitely many of the $N$ initially packed particles are displaced, with shifts negligible compared to $N$. Let $n^+$ (resp. $n^-$) denote the number of particles moving in the positive (resp. negative) direction on the unit circle. Their positions are denoted by $x_1,\dots,x_{n^+}$ on the right and $y_1,\dots,y_{n^-}$ on the left, ordered from farthest to nearest relative to the saturated region (see Figure~\ref{fig:plancherel}). The extremities of the moving boundaries are denoted by $\ell^+$ and $\ell^-$.

\begin{figure}[!h]
	\centering
	\includegraphics[width=0.85\textwidth]{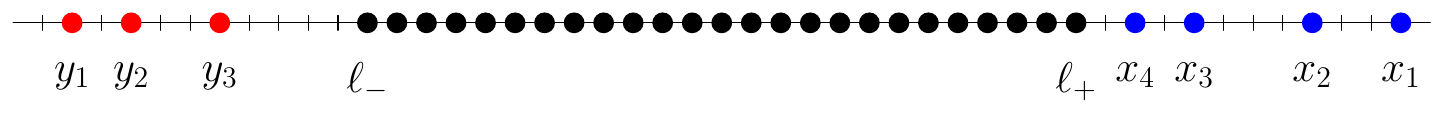}
	\captionsetup{width=15cm}
\caption{\small Interface between a saturated region and a macroscopic void.}
	\label{fig:plancherel}
\end{figure}

Combining the explicit product form of the Perron--Frobenius eigenfunction \eqref{PF} with the eigenvalue asymptotics in the hydrodynamic limit \eqref{eq:riemansum} yields that the probability that the $k$-th particle on the right jumps by $+1$ is asymptotically
\begin{equation}\label{eq:tasep}
\lim_{L,N\to\infty} \mathbb P\big(x_k \to x_k+1\big)
=
\frac{1}{2}
\frac{\prod_{i\neq k}|x_k+1-x_i|}{\prod_{i\neq k}|x_k-x_i|}
\frac{1}{|x_k-\ell_+|}.
\end{equation}
By symmetry, an analogous formula holds on the left for jumps of size $-1$, and the boundary cases at $\ell^+$ and $\ell^-$ are treated similarly.  In contrast, jumps of size $-1$ on the right and $+1$ on the left vanish asymptotically.

It turns out that, up to the factor $1/2$, the transition probability \eqref{eq:tasep} matches that of the TASEP--Plancherel growth process. This suggests  a link between the classical Plancherel limit shape of Young diagrams (see \cite[p.~699]{Romik}) and the limiting density profile in Figure~\ref{densites}.

\section{Model and preliminary results}

In this section, we introduce the MESSEP on the ring and its variants, summarize their basic properties, and present the full spectral decomposition.  Most of the proofs are given in Section \ref{sec:proofsec1}.

\label{sec:1}
\setcounter{equation}{0}

\subsection{The MESSEP on the ring}

\label{sec:model1}

\subsubsection{The graph structure}

Let $N, L$ be integers with $1 \leq N \leq L-1$, and set $\mathbb{T}_L := \{0, \dots, L-1\}$. We view $\mathbb{T}_L$ as a discrete ring, identified with $\mathbb{Z}_L \equiv \mathbb{Z}/L\mathbb{Z}$ or, when convenient, with $\mathbb{U}_L$ (the set of $L$th roots of unity). The parameter $N$ denotes the number of indistinguishable particles on the ring, subject to the exclusion rule (at most one particle per site). 

We endow $\mathbb{T}_L$ with the natural graph structure of a discrete ring, with edges $\{k,k\pm1\}$ for $k\in\mathbb{T}_L$ (where $-1\equiv L-1$ and $L\equiv0$), and identify the graph with its vertex set. A configuration is a subset $\xi=\{\xi_1,\dots,\xi_N\}\subset\mathbb{T}_L$ of cardinality $N$, represented by the ordered particle positions. Denoting by $\mathcal C_{L,N}$ the set of all configurations, we identify 
\begin{equation}\label{state}
\mathcal C_{L,N} = \mathcal P_N(\mathbb T_L) \simeq \left\{\xi \in \mathbb{T}^N_L : \xi_1 < \cdots < \xi_N \right\}.
\end{equation}

Each particle may jump to a nearest empty site, with only one particle moving at each step. This defines an undirected graph structure on $\mathcal C_{L,N}$: we connect $\xi$ and $\eta$ if and only if there exist $1 \leq k \leq N$ and $\epsilon \in \{\pm 1\}$ such that
\begin{equation}\label{edge}
(\eta_1, \cdots, \eta_N) = (\xi_1, \cdots, \xi_{k-1}, \xi_k + \epsilon, \xi_{k+1}, \cdots, \xi_N) \quad (\text{abbreviated as $\eta \sim \xi$}).
\end{equation}
To account for periodicity, we set
\begin{equation}\label{boundary}
(-1, \xi_2, \cdots, \xi_N) \equiv (\xi_2, \cdots, \xi_N, L-1)
\quad \mbox{and} \quad
(\xi_1, \cdots, \xi_{N-1}, L) \equiv (0, \xi_1, \cdots, \xi_{N-1}).
\end{equation}

See Figure \ref{ring} for an illustration. The graph $\mathcal C_{L,N}$ is connected. Let $A$ be its symmetric adjacency matrix, $\rho$ its spectral radius, and $\psi$ a corresponding positive eigenfunction (defined up to a positive multiplicative constant).

\subsubsection{The MERW}

\begin{defin}\label{def:MERW}
	The Maximal Entropy Simple Symmetric Exclusion Process (MESSEP) on the ring is the unique MERW on $\mathcal C_{L,N}$. It is denoted by $({\Xi(n)})_{n \geq 0}$.
\end{defin}

Its Markov kernel $P$ and reversible invariant measure $\mu$ are given, for all $\xi, \eta \in \mathcal C_{L,N}$, by
\begin{equation}\label{transition}
P(\xi, \eta) = A(\xi,\eta) \frac{\psi(\eta)}{\rho\,\psi(\xi)}
\quad \mbox{and} \quad
\mu(\xi) = \frac{\psi(\xi)^2}{Z},
\end{equation}
where $Z$ is the normalization constant. 

Via the map $x \mapsto e^{2i\pi x/L}$ applied componentwise, this process may equivalently be viewed as taking values in $\mathcal C_{L,N}^{\scriptscriptstyle \#} = \mathcal P_N(\mathbb U_L) \subset \mathcal P_N(\mathbb U)$, with $\mathbb U$ the unit circle. Accordingly, functions, measures, kernels, and related objects on $\mathcal C_{L,N}$ are identified with those on $\mathcal C_{L,N}^{\scriptscriptstyle \#}$.

\subsubsection{Extended dynamics}

\label{sec:extend}

The dynamics of $({\Xi(n)})_{n \geq 0}$ can be lifted to retain pathwise information on individual particles. Writing $\dot x$ for the class of $x \in \mathbb Z$ modulo $L$, we set
\begin{multline}\label{eq:weyldiscretes}
{\mathcal W_{\mathbb Z,L,N}}:=\left\{x\in \mathbb Z^N : x_1<\cdots <x_N< x_1+L\right\}\quad \mbox{and}\\ \quad
{\mathcal W_{L,N}}:=\left\{\theta \in \mathbb Z_L^N :  \exists\, x\in {\mathcal W_{\mathbb Z,L,N}},\, \theta_1=\dot x_1,\cdots, \theta_N=\dot x_N\right\}.
\end{multline}
As previously, we identify ${\mathcal W_{L,N}}$ with
\begin{equation}\label{eq:wstar}
{\mathcal W_{L,N}^{\scriptscriptstyle \#}}=\left\{\theta \in \mathbb U_L^N :  \exists\, x\in {\mathcal W_{\mathbb Z,L,N}},\, \theta_1=e^{{2i\pi x_1}/{L}},\cdots, \theta_N=e^{{2i\pi x_N}/{L}}\right\}.
\end{equation}

All these state space  carry a graph structure analogous to that of $\mathcal{C}_{L,N}$: one may add $\pm1$ to a coordinate (or rotate by $\pm2\pi/L$ in the $\mathbb U$-embedding), whenever admissible. The adjacency relation is still denoted by $\sim$ and written as in \eqref{edge}. Note that  the boundary identification \eqref{boundary} is no longer needed with this setting.

Besides, there is a canonical surjection $p_1 : \mathcal W_{\mathbb{Z},L,N} \longtwoheadrightarrow \mathcal W_{L,N}$, induced by the projection from $\mathbb{Z}$ onto $\mathbb{Z}_L $. Similarly, let $p_2$ be the canonical projection $\mathbb Z_L  \longtwoheadrightarrow \mathbb T_L$, again extended componentwise. Let $G = \langle \tau \rangle$ be the cyclic subgroup of $\mathfrak S_{\sss N}$ generated by $\tau = (1\ 2 \cdots N)$, and write $\sigma \cdot \theta$ for the natural action of $\sigma \in \mathfrak S_{\sss N}$ on  $\theta$. Then, for all $\theta \in \mathcal W_{L,N}$, there exists a unique $\sigma \in G$ such that $p_2(\sigma \cdot \theta) \in \mathcal C_{L,N}$. This defines a canonical surjection $\mathcal W_{L,N} \longtwoheadrightarrow \mathcal C_{L,N}$, still denoted by $p_2$.

To summarize, one has
\begin{equation}\label{diagram}
{\mathcal W_{\mathbb Z,L,N}} \overset{p_1}{\longtwoheadrightarrow} {\mathcal W_{L,N}} \simeq {\mathcal W_{L,N}^{\scriptscriptstyle \#}} \overset{p_2}{\longtwoheadrightarrow} \mathcal C_{L,N} \simeq \mathcal C_{L,N}^{\scriptscriptstyle \#},
\end{equation}
which corresponds to different levels of information concerning the particles on the graph:
\begin{enumerate}
	\item[--] ${\mathcal W_{L,N}} \simeq {\mathcal W_{L,N}^{\scriptscriptstyle \#}}$ captures the position on the ring of each particle at any time. Particles are no longer indistinguishable but tagged, in contrast with  $\mathcal C_{L,N} \simeq \mathcal C_{L,N}^{\scriptscriptstyle \#}$.
\item[--] ${\mathcal W_{\mathbb Z,L,N}}$ additionally records, for each particle, the difference between the numbers of its $+1$ and $-1$ moves. In particular, the  winding numbers can be recovered (see Remark \ref{rem:winding}).
\end{enumerate}

\begin{rem}\label{rem:lift1}
	Given a path $(\xi(n))_{n\geq 0}$ in ${\mathcal C_{L,N}}$,  $k \in \{1, \cdots, N\}$, $z \in \mathbb Z_{L}$, with $p_2(z) = \xi_k(0)$, there exists a unique lift of this path  in ${\mathcal W_{L,N}}$, say $(\theta(n))_{n\geq 0}$, such that $\theta_k(0) = z$. Similarly, given a path $(\theta(n))_{n\geq 0}$ in ${\mathcal W_{L,N}}$,  $k \in \{1, \cdots, N\}$,  $y \in \mathbb Z$, with $p_1(y) = \theta_k(0)$, there exists a unique lift of this path  in ${\mathcal W_{\mathbb Z,L,N}}$, say $(x(n))_{n\geq 0}$, such that $x_k(0) = y$.
\end{rem}

By Remark \ref{rem:lift1}, any stochastic process on ${\mathcal C_{L,N}}$ can be uniquely lifted to ${\mathcal W_{L,N}}$ and ${\mathcal W_{\mathbb Z,L,N}}$ by tagging one  particle -- say, the $k$th -- at time zero. If the original process is Markovian, so are its lifts. This motivates the following definition.

\begin{defin}\label{defmessep}
	We denote by $(\Theta(n))_{n\geq 0}$ and $(X(n))_{n\geq 0}$ the random walk counterparts on ${\mathcal W_{L,N}}$ and $\mathcal W_{\mathbb{Z},L,N}$ of the MESSEP $(\Xi(n))_{n\geq 0}$ on $\mathcal C_{L,N}$. All these processes will be referred to as MESSEP.
\end{defin}

The transition kernels and invariant measures of $(\Theta(n))_{n\geq 0}$ and $(X(n))_{n\geq 0}$ follow directly from those of $(\Xi(n))_{n\geq 0}$. In particular, when $\theta \sim \phi$ in ${\mathcal W_{L,N}}$ and $x \sim y$ in $\mathcal W_{\mathbb{Z},L,N}$,
\begin{equation}\label{eq:extendedtransition}
\mathbb{P}(X(n+1) = y \mid X(n) = x) = \frac{\psi_1(y)}{\rho\,\psi_1(x)}, \quad
\mathbb{P}(\Theta(n+1) = \theta \mid \Theta(n) = \phi) = \frac{\psi_2(\theta)}{\rho\,\psi_2(\phi)},
\end{equation}
where $\psi_1$ and $\psi_2$ are the corresponding lifts of $\psi$ which can be defined by
\begin{itemize}
	\item $\psi_2(\theta) = \psi(\tau^l \cdot \eta)$, where $\eta$ is the $N$-tuple in $\mathbb{T}_L$ such that $\theta_k = \dot \eta_k$, $1\leq k\leq N$ and $l$ is the unique integer  in $\{1,\cdots, N-1\}$  such that $(\eta_1, \cdots, \eta_{\tau^l(N)}) \in \mathcal{C}_{L,N}$.
\item $\psi_1(x) = \psi_2(\theta)$, where $\theta_k = \dot x_k$, $1 \leq k \leq N$. 
\end{itemize}

More generally, any function $\varphi$ on $\mathcal{C}_{L,N}$ can be lifted to $\mathcal{W}_{L,N}$ and $\mathcal{W}_{\mathbb{Z},L,N}$ via the projections \eqref{diagram}, and similarly for measures and kernels. By abuse of notation, we often use the same symbols whenever no confusion arises. Likewise, we identify $L$-periodic functions on integer tuples, functions on $\mathbb{Z}_L^N$, and functions on $\mathbb{U}_L^N$, writing for instance
\begin{equation}\label{eq:identificationfunction}
\psi(\eta_1,\cdots,\eta_N) \equiv \psi(\dot \eta_1,\cdots,\dot \eta_N) \equiv \psi\left(e^{2i\pi \eta_1 / L}, \cdots, e^{2i\pi \eta_N / L}\right).
\end{equation}

\begin{rem}\label{rem:Rrec}
	Both $(\Theta(n))_{n\geq 0}$ and $(X(n))_{n\geq 0}$ are the unique MERWs on ${\mathcal W_{L,N}}$ and $\mathcal W_{\mathbb{Z},L,N}$. The former case is immediate. For $\mathcal W_{\mathbb{Z},L,N}$, which is infinite, uniqueness follows from the fact that the graph is $R$-recurrent. We refer to \cite{VJ} together with the results of \cite{Duboux}.
\end{rem}

\subsubsection{A non-colliding random walk} 

The next result shows that the MERW on the ring coincides with the simple random walk conditioned on non-collision. We equip $\mathbb Z_L^N$ with its nearest-neighbour Cayley graph structure.

\begin{prop}\label{noncollinding} 
	Let $(Y(n))_{n\geq 0}$ be the simple random walk on $\mathbb{Z}_L^N$, and let $y \in \mathbb{Z}_L^N$ have pairwise distinct coordinates. Let $\sigma \in \mathfrak{S}_N$ be such that $\sigma \cdot y \in {\mathcal W_{L,N}}$ and set
	\begin{equation} 
	T := \inf\left\{n\geq 0 : \exists\, 1 \leq i \neq j \leq N,\; Y_i(n) = Y_j(n)\right\}.
	\end{equation}
	Then
	\begin{equation}\label{conditionalBM}
	\lim_{m\to\infty} \mathbb{P}_{\sigma\cdot y}(\sigma\cdot Y \in d\omega \mid T > m)
	= \mathbb{P}_{\sigma\cdot y}(\Theta \in d\omega).
	\end{equation}
\end{prop}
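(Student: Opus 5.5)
Up to the permutation $\sigma$, which only relabels coordinates and commutes with both the simple random walk and the collision time $T$, the statement reduces to the following. Take the simple random walk $Y$ on $\mathbb Z_L^N$ started at $y\in\mathcal W_{L,N}$. Before time $T$, as long as no coordinate coincides, the cyclic order of the tagged particles is preserved: a nearest-neighbour step of one coordinate cannot pass another without first creating a coincidence. Hence on $\{T>m\}$ the path $(Y(n))_{0\le n\le m}$ stays in $\mathcal W_{L,N}$ and moves along its edges.

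The main step is the classical identification of a walk conditioned to stay in a finite connected set with the Doob $h$-transform by the Perron--Frobenius eigenvector. Let $Q$ be the transition kernel of $Y$, with $Q(\theta,\phi)=1/(2N)$ for neighbours in $\mathbb Z_L^N$. Let $Q_W$ be its restriction to $\mathcal W_{L,N}$, so that $Q_W=\frac{1}{2N}A_W$, where $A_W$ is the adjacency matrix of $\mathcal W_{L,N}$ with the graph structure above. Then
\[
\mathbb P_y\bigl(Y(0)=\omega_0,\dots,Y(n)=\omega_n,\ T>m\bigr)=\prod_{j<n}Q_W(\omega_j,\omega_{j+1})\,\bigl(Q_W^{m-n}\mathbf 1\bigr)(\omega_n),
\]
and $\mathbb P_y(T>m)=(Q_W^m\mathbf 1)(y)$.

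$\mathcal W_{L,N}$ is finite, and it is connected: it is a cover of the connected graph $\mathcal C_{L,N}$ in which cyclic rotations of the labels are reachable by moving particles around the ring. Its adjacency matrix is symmetric, and I expect it to be aperiodic up to a parity issue; see below. Assume first that Perron--Frobenius applies with a simple dominant eigenvalue $\rho_W/(2N)$ that is strictly largest in modulus. Then $Q_W^k\mathbf 1\sim c\,(\rho_W/2N)^k\psi_W$, and the ratio of the two expressions converges to
\[
\prod_{j<n}A_W(\omega_j,\omega_{j+1})\,\frac{\psi_W(\omega_{j+1})}{\rho_W\,\psi_W(\omega_j)}.
\]
It remains to check that $\psi_W=\psi_2$ and $\rho_W=\rho$. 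The lift $\psi_2$ is positive on $\mathcal W_{L,N}$. Moreover $A_W\psi_2=\rho\psi_2$, because the neighbours of $\theta$ in $\mathcal W_{L,N}$ project bijectively onto the neighbours of $p_2(\theta)$ in $\mathcal C_{L,N}$: $p_2$ is a covering map of graphs, and the relabelling by $\tau^l$ is locally constant along edges, up to the boundary convention \eqref{boundary}. A positive eigenvector is necessarily the Perron one, so the limit is exactly the kernel \eqref{eq:extendedtransition} of $\Theta$. This identifies all finite-dimensional marginals, which is the meaning of \eqref{conditionalBM}.

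The expected obstacle is periodicity. $\mathcal W_{L,N}$ is bipartite whenever $L$ is even, since the parity of $\sum\theta_i$ (taking representatives) flips at each step. In that case $-\rho_W$ is also an eigenvalue and $Q_W^k\mathbf 1$ oscillates. Even so, the component along the eigenvalue $-\rho_W$, namely $\langle\mathbf 1,\psi_W^-\rangle\psi_W^-$ with $\psi_W^-=\pm\psi_W$ according to parity class, vanishes. This follows because the bipartite partition of $\mathcal W_{L,N}$ is balanced, which I would get from the involution reflecting one coordinate across a gap. Failing that, I would pass to the ratio $(Q_W^{m-n}\mathbf 1)(\omega_n)/(Q_W^m\mathbf 1)(y)$ along $m$ of fixed parity. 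Both parity subsequences give the same limit, since the oscillating term cancels in the ratio when $n$ is held fixed and the parity of $\omega_n$ is determined by that of $y$ and $n$. Connectedness of $\mathcal W_{L,N}$, which is needed for simplicity of $\rho_W$, is the other point to verify carefully.
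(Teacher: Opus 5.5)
Your argument is essentially the paper's. Both identify the conditioned walk as the Doob transform by the Perron vector, using the asymptotics of $A^m\mathbf{1}$ and splitting by the parity of $m$ in the bipartite case. The paper works on $\mathcal C_{L,N}$ rather than $\mathcal W_{L,N}$: by the covering property you invoke, $\mathbb P_\theta(T>k)=(2N)^{-k}A^k\mathbf{1}(p_2(\theta))$. This spares it from proving connectivity of $\mathcal W_{L,N}$ and identifying its Perron vector, though your route through $\mathcal W_{L,N}$ is also valid.

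There are two corrections, neither fatal.

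First, your primary way of disposing of the $-\rho$ component is false. The bipartition is not balanced in the $\psi$-weighted sense you need. Take $L=4$, $N=2$: $\mathcal C_{4,2}$ is $K_{4,2}$, and $\psi$ equals $c$ on the four adjacent pairs and $\sqrt2\,c$ on the two antipodal ones. Then $A^{2j}\mathbf{1}=8^j\mathbf{1}$, while $A^{2j+1}\mathbf{1}$ equals $2\cdot 8^j$ on the adjacent class and $4\cdot 8^j$ on the antipodal class. So $A^k\mathbf{1}$ is never asymptotically proportional to $\psi$, and the same holds on the lifts.

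Your fallback is therefore not optional, but it is correct, and it is exactly what the paper does with its class-dependent constants $\sum_{W_i}\psi/\sum_{W_i}\psi^2$. In $(Q_W^{m-n}\mathbf{1})(\omega_n)/(Q_W^m\mathbf{1})(y)$, both leading coefficients equal $2\sum_{W_i}\psi/\|\psi\|^2>0$ for the same class $W_i$. That class is the one containing the endpoint of a length-$m$ path from $y$, so the coefficients cancel.

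Second, $\mathcal W_{L,N}$ is bipartite if and only if $NL$ is even, not only when $L$ is even. The reason is that $\mathcal W_{L,N}=\mathcal W_{\mathbb Z,L,N}/L\mathbb Z(1,\dots,1)$, and this translation shifts $\sum x_i$ by $NL$. For example, $\mathcal W_{3,2}$ is a hexagon covering the triangle $\mathcal C_{3,2}$. Your aperiodic branch is therefore unavailable for $L$ odd and $N$ even. This is harmless, because the parity-split argument applies to every connected bipartite graph. Working on $\mathcal C_{L,N}$ as the paper does would also avoid it, since $\mathcal C_{L,N}$ is aperiodic when $L$ is odd.
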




\subsection{Full spectral decompositions and spectral gap estimates}\label{graphspec}

\subsubsection{The graph spectrum} 

Denote by $\Sigma_{L,N}$ the spectrum of ${\mathcal C_{L,N}}$ and by $\mathcal{F}_{L,N}$ an orthonormal basis of eigenfunctions in $\ell^2({\mathcal C_{L,N}})$ for the usual scalar product. Both will be described explicitly below. Let us set
\begin{equation}\label{eq:gamma}
{\bgamma}=\left\{\begin{array}{ll}
0, &\text{if $N$ is odd,}\\[5pt]
\frac{1}{2}, & \text{if $N$ is even},
\end{array}\right.
\end{equation}
and, for $\xi \in {\mathcal C_{L,N}}$ and any real $N$-tuple $\eta$, define
\begin{equation}\label{Spectre}
{\psi}_\xi(\eta)=
\frac{1}{L^{N/2}}\;{\rm det}\left(e^{\frac{2i\pi (\xi_{k}+\bgamma)\eta_{j}}{L}}\right)_{1\leq k,j\leq N}
\quad\mbox{and}\quad
\rho_\xi=2\sum_{i=1}^N\cos\left(\frac{2\pi(\xi_{i}+\bgamma)}{L}\right).
\end{equation}
We call the configuration $c \in \mathcal C_{L,N}$ defined below the compact configuration:
\begin{equation}
(c_i)_{1\leq i\leq N}=\left\{\begin{array}{ll}\label{compactconf}
(0,1,\cdots,p,L-p,\cdots,L-1), & \text{if $N=2p+1$},\\[10pt]
(0,1,\cdots,p-1,L-p,\cdots,L-1), & \text{if $N=2p$},
\end{array}\right.
\end{equation}

\begin{rem}\label{rem:compactrep}
	The configuration $\bc = (-p, \cdots, -p + N - 1)$ is a lift of $c$ in $\mathcal{W}_{L,N}$. If $N$ is even, $\bc$ is not symmetric about zero, contrary to the shifted one $\bc + \bgamma = (\bc_i + \bgamma)_{1 \leq i \leq N}$. We call $c + \bgamma = (c_i + \bgamma)_{1 \leq i \leq N}$ the symmetric compact configuration (see Figure \ref{partition}).
\end{rem}

\begin{prop}\label{MESSEPspectre} 
	The Perron--Frobenius eigenfunction $\psi$ is equal to $\psi_c$ (up to a constant of unit modulus), with eigenvalue $\rho=\rho_c$. More precisely, one has
	\begin{equation}\label{PF}
	\psi(\eta) 
	= \frac{2^{\frac{N(N-1)}{2}}}{L^{N/2}}  
	\prod_{1\leq i<j\leq N} \sin\left(\frac{(\eta_{j}-\eta_{i})\pi}{L}\right)  
	\quad \text{and} \quad  
	\rho= \frac{2\sin\left(\frac{N\pi}{L}\right)}{\sin\left(\frac{\pi}{L}\right)}.
	\end{equation}
	Besides, the map $\xi \longmapsto   (\eta \in {\mathcal C_{L,N}} \mapsto \psi_{\xi}(\eta))$ is a bijection	${\mathcal C_{L,N}}$ onto $\mathcal F_{L,N}$ and the eigenvalue associated with $\psi_\xi$ is $\rho_\xi$ for any $\xi \in {\mathcal C_{L,N}}$. In particular, one has $\Sigma_{L,N}=\{\rho_\xi : \xi\in {\mathcal C_{L,N}}\}$.
\end{prop}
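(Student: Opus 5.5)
The plan is to treat $\mathcal C_{L,N}$ as free fermions on the ring and to diagonalize its adjacency matrix by antisymmetrized plane waves, namely the Slater determinants $\psi_\xi$. The argument has three steps.

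\textbf{Step 1: eigenfunction equation.} Fix $\xi\in\mathcal C_{L,N}$ and regard $\psi_\xi(\eta)$, through \eqref{Spectre}, as a function of an arbitrary integer $N$-tuple $\eta$. It is antisymmetric in the $\eta_j$ and vanishes whenever two coordinates coincide.

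Next we check periodicity. Shifting one coordinate by $L$ multiplies column $j$ by $e^{2i\pi\bgamma}$, which is $(-1)^{N-1}$ by the choice of $\bgamma$. Moving a particle across the boundary as in \eqref{boundary} involves a cyclic permutation of $N$ entries, whose sign is $(-1)^{N-1}$. These two signs cancel, so the boundary identification \eqref{boundary} is compatible with $\psi_\xi$, and $\psi_\xi$ is a well-defined function on $\mathcal C_{L,N}$. This sign bookkeeping is exactly why $\bgamma$ is needed, and it is the step I expect to require the most care.

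Now consider the free Laplacian $\sum_j(\psi_\xi(\eta+e_j)+\psi_\xi(\eta-e_j))$. Expanding the determinant along column $j$, each exponential column is an eigenvector of the shift with eigenvalue $2\cos(2\pi(\xi_k+\bgamma)/L)$. Summing over $j$ and using the multilinearity of the determinant, we get $\rho_\xi\psi_\xi(\eta)$. The free sum contains the forbidden moves, that is, the terms where $\eta\pm e_j$ has a collision. On those terms $\psi_\xi$ vanishes, so the free sum equals $(A\psi_\xi)(\eta)$ and $A\psi_\xi=\rho_\xi\psi_\xi$.

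\textbf{Step 2: orthonormality and completeness.} Compute $\langle\psi_\xi,\psi_{\xi'}\rangle$ over $\mathcal C_{L,N}$. Since $|\psi_\xi|^2$ and $\psi_\xi\overline{\psi_{\xi'}}$ are symmetric functions that vanish on collisions, this inner product equals $\frac1{N!}$ times the sum over all of $\mathbb Z_L^N$. Expanding both determinants and using the character orthogonality $\frac1L\sum_{\eta\in\mathbb Z_L}e^{2i\pi(a-b)\eta/L}=\delta_{a\equiv b}$, the sum becomes $\frac{1}{N!}\sum_{\sigma,\sigma'}\operatorname{sgn}(\sigma\sigma')\prod_k\delta_{\xi_{\sigma(k)}=\xi'_{\sigma'(k)}}$. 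This equals $\delta_{\xi\xi'}$ because the $\xi_i$ are distinct residues.

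This gives $|\mathcal C_{L,N}|=\binom LN$ orthonormal eigenvectors in a space of that dimension. Hence $\xi\mapsto\psi_\xi$ is a bijection onto an orthonormal eigenbasis $\mathcal F_{L,N}$, and $\Sigma_{L,N}=\{\rho_\xi\}$.

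\textbf{Step 3: Perron--Frobenius.} First, $\rho_\xi$ is maximized exactly at $\xi=c$. Indeed, $\rho_\xi$ is a sum of $N$ distinct values $2\cos(2\pi(m+\bgamma)/L)$, and the $N$ largest of these correspond to the indices $m+\bgamma$ closest to $0$ modulo $L$. These are the symmetric compact configuration of Remark~\ref{rem:compactrep}. Summing the geometric series, $\sum_{m=-p}^{-p+N-1}e^{2i\pi(m+\bgamma)/L}$ is real and equals $\sin(N\pi/L)/\sin(\pi/L)$, which gives the stated $\rho$.

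Second, for the product formula, $\psi_c$ is a Vandermonde determinant in the variables $z_j=e^{2i\pi\eta_j/L}$, multiplied by $\prod_j z_j^{-p+\bgamma}$. Writing each difference as $z_j-z_i=2i\,e^{i\pi(\eta_i+\eta_j)/L}\sin(\pi(\eta_j-\eta_i)/L)$, the phases collect into a constant of unit modulus. What remains is $\frac{2^{N(N-1)/2}}{L^{N/2}}\prod_{i<j}\sin(\pi(\eta_j-\eta_i)/L)$. This is positive for $0\le\eta_1<\dots<\eta_N\le L-1$, since each argument lies in $(0,\pi)$.

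Third, the graph is connected, so the Perron--Frobenius eigenvector is the unique positive eigenvector up to scaling. Therefore $\psi=\psi_c$ up to a unimodular constant, and $\rho=\rho_c$.

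The main obstacle is Step 1's sign and periodicity verification, in particular matching the parity of $N$ with $\bgamma$ for wrap-around moves.
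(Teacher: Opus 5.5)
Your proposal is correct and follows essentially the same route as the paper: Slater determinants of plane waves with the $\bgamma$-twist chosen so that the factor $e^{2i\pi\bgamma}$ from shifting a coordinate by $L$ cancels the sign $(-1)^{N-1}$ of the cyclic permutation, the free-Laplacian (Bethe-type) identity combined with vanishing on collisions, and orthonormality via discrete Fourier orthogonality over $\{0,\dots,L-1\}^N$ with the factor $N!$. It then concludes, as the paper does, with a dimension count and by identifying $\psi_c$ as a Vandermonde determinant; your Step~3 is in fact slightly more explicit than the paper, which simply asserts that $\rho_c$ is the spectral radius, since you check that $\psi_c$ is positive up to a unimodular phase and invoke uniqueness of the positive eigenvector on the connected graph.
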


\begin{rem}
	One can check that the invariant probability distribution of the MESSEP is a determinantal point process whose reproducing kernel on $\mathbb R/2\pi\mathbb Z$  is given by
	\begin{equation}
	K(x,y)=\frac{1}{L}\frac{\sin\!\left(\frac{\pi N (y-x)}{L}\right)}{\sin\!\left(\frac{\pi (y-x)}{L}\right)}.
	\end{equation}
	This is the discrete Dirichlet kernel. In the regime $L,N\to\infty$ with $N/L\to \alpha$, it converges to the classical sine kernel, yielding the sine process.
\end{rem}

\subsubsection{The MESSEP spectrum} 

Let $P^*$ be the $\ell^2({\mathcal C_{L,N}})$-adjoint of $P$, and let $m$ be the counting measure on ${\mathcal C_{L,N}}$. The, it follows easily from Proposition \ref{MESSEPspectre} that 
\begin{equation}\label{eigenmessep}
\mathcal S_{L,N}:=\left\{\frac{\psi_{\xi}}{\psi_{c}} :  \xi\in {\mathcal C_{L,N}} \right\}
\quad\mbox{and}\quad 
\mathcal S^\ast_{L,N}:=\left\{\overline{\psi_{\xi}}\,{\psi_{c}} :  \xi\in {\mathcal C_{L,N}} \right\},
\end{equation}
form orthonormal eigenbases of $P$ and $P^*$ in $\ell^2({\mathcal C_{L,N}},\psi^2\,dm)$ and $\ell^2({\mathcal C_{L,N}},\psi^{-2}\,dm)$, respectively. The MESSEP spectrum is therefore given by
\begin{equation}\label{MESEPspectrum}
\Sigma_{L,N}^{\rm \sss ME}:=\left\{\frac{\rho_{\xi}}{\rho_{c}}: \xi\in {\mathcal C_{L,N}} \right\}.
\end{equation}
Accordingly, for any function $f$ on ${\mathcal C_{L,N}}$, $\eta\in {\mathcal C_{L,N}}$ and $n\geq 0$, one can write 
\begin{equation}\label{eq:fullspectralmarkov}
P^n f(\eta)
=\sum_{\xi\in {\mathcal C_{L,N}}}\left(\frac{\rho_\xi}{\rho_{c}}\right)^n
\left(\sum_{\zeta\in {\mathcal C_{L,N}}} f(\zeta)\psi_{c}(\zeta)\overline{\psi_\xi(\zeta)}\right)
\frac{\psi_\xi(\eta)}{\psi_{c}(\eta)}.
\end{equation}

\begin{prop}\label{gapestimate}
	Let $\lambda_{L,N}$ be the second eigenvalue of maximal modulus of $P$. Then, the spectral gap $1 - \lambda_{L,N}$ satisfies the following asymptotics as $L \longrightarrow \infty$:
	\begin{enumerate}
		\item Assume that $N/L \longrightarrow 0$ (low-density limit). Then, there exists a positive constant $C$, independent of $L$ and $N$, such that
		\begin{equation}\label{estimategapmessep}
		\left|1 - \lambda_{L,N} - \frac{2\pi^2}{L^2}\right|
		\leq C \left(\frac{N}{L}\right)^2 \frac{1}{L^2}.
		\end{equation}
		
		\item Assume that $N/L \longrightarrow \alpha$ (hydrodynamic limit), with $\alpha$ belonging to a compact set $K \subset (0,1)$. Then, there exists a positive constant $C_K$, independent of $L$, $N$, $\alpha$, such that 
		\begin{equation}\label{estimategapmessep2}
		\left|1 - \lambda_{L,N} - \frac{2\pi^2}{L^2}\right|
		\leq \frac{C_K}{L^3}.
		\end{equation}
	\end{enumerate}
\end{prop}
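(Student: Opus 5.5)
The plan is to read the gap directly off the explicit spectrum \eqref{MESEPspectrum}. By Proposition~\ref{MESSEPspectre}, every eigenvalue of $P$ has the form $\rho_\xi/\rho_c$, where $\rho_\xi=2\sum_i\cos(2\pi(\xi_i+\bgamma)/L)$. So the problem reduces to a discrete extremal problem: which $N$-point subsets of the shifted lattice $\mathbb T_L+\bgamma$ (viewed on the circle) maximise $\sum_i\cos(2\pi(\xi_i+\bgamma)/L)$, and what is the best value among subsets other than $c$?

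\textbf{Step 1 (the maximiser and an explicit competitor).} Let $d(x)\in[0,L/2]$ denote the circular distance of $x+\bgamma$ to $0$. The function $x\mapsto\cos(2\pi x/L)$ is a decreasing function of $d(x)$. Sort the values $d(\xi_1),\dots,d(\xi_N)$ increasingly. The $i$-th smallest is at least the $i$-th smallest for $c$, because $c+\bgamma$ consists exactly of the $N$ lattice points closest to $0$. Hence $\rho_\xi\le\rho_c$, which recovers the Perron--Frobenius statement.

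For the competitor, take $\xi^\ast$ obtained from $c$ by pushing one extreme particle one step outward. For $N=2p+1$, replace $p$ by $p+1$. For $N=2p$, replace $p-1$ by $p$, so that the shifted position $p-\tfrac12$ becomes $p+\tfrac12$. In both cases the product-to-sum formula gives
\begin{equation}
\rho_c-\rho_{\xi^\ast}=4\sin\!\Big(\frac{\pi N}{L}\Big)\sin\!\Big(\frac{\pi}{L}\Big).
\end{equation}
Using $\rho_c=2\sin(\pi N/L)/\sin(\pi/L)$ from \eqref{PF}, this yields
\begin{equation}
1-\frac{\rho_{\xi^\ast}}{\rho_c}=2\sin^2\!\Big(\frac{\pi}{L}\Big).
\end{equation}
This quantity does not depend on $N$.

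\textbf{Step 2 (optimality of $\xi^\ast$ among $\xi\neq c$).} If $\xi\neq c$, the sorted distance vector of $\xi$ differs from that of $c$. Then at least one entry is replaced by a lattice distance strictly larger than the largest distance occurring in $c$. The cheapest such replacement is the one realised by $\xi^\ast$: swap the outermost occupied level for the next level, and keep every other entry minimal. The monotonicity of $\cos$ in $d$ then gives $\rho_\xi\le\rho_{\xi^\ast}$. The step needing care is the tie structure. When $\bgamma=0$, the level sets $\{d=k\}$ contain two points, and each level is completely filled in $c$, so one checks that the cheapest move is moving an extreme point outward rather than any rearrangement. When $\bgamma=1/2$, the levels are again symmetric pairs.

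\textbf{Step 3 (modulus, and conclusion).} By the same rearrangement argument applied to the antipodal point, $\rho_\xi\ge-\rho_c$. Equality can hold exactly when the graph is bipartite, i.e.\ in a parity situation depending on $L$ and $N$. I expect this to be the main obstacle: one must either restrict to the aperiodic case, or interpret $\lambda_{L,N}$ as the second largest eigenvalue (equivalently, pass to a lazy version of the chain). Outside the equality case, I would show $|\rho_\xi|\le\rho_{\xi^\ast}$ for negative $\rho_\xi$ by the mirrored argument. That mirrored argument needs the gap between $\rho_c$ and the best negative configuration to be at least as large as the gap in Step 1, which I would verify by the same product-to-sum computation.

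Once $\lambda_{L,N}=1-2\sin^2(\pi/L)$ is established, the expansion
\begin{equation}
2\sin^2\!\Big(\frac{\pi}{L}\Big)=\frac{2\pi^2}{L^2}+O\!\Big(\frac{1}{L^4}\Big)
\end{equation}
holds uniformly in $N$. This is stronger than both \eqref{estimategapmessep} and \eqref{estimategapmessep2}: in the low-density case one needs $L^{-4}\le C(N/L)^2L^{-2}$, which holds since $N\ge1$, and in the hydrodynamic case $L^{-4}\le C_K L^{-3}$ holds trivially.
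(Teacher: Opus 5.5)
Your Step~3 does not work. Under the literal ``maximal modulus'' reading, the statement is false, and you should drop Step~3.

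\textbf{Why Step~3 fails.} Take $L$ odd; the graph is bipartite exactly when $L$ is even, regardless of $N$. The minimiser of $\rho_\xi$ puts the $N$ points $\xi_i+\bgamma$ as close as possible to $L/2\in\mathbb Z+\tfrac12$. Their distances from $L/2$ are $\tfrac12,\tfrac12,\tfrac32,\tfrac32,\dots$ when $\bgamma=0$, and $0,1,1,2,2,\dots$ when $\bgamma=\tfrac12$. Your Step~1 computation (Dirichlet kernel plus product-to-sum) then gives $\rho_c+\min_\xi\rho_\xi=2\sin(\pi N/L)\tan\bigl(\pi/(2L)\bigr)$, that is, $\min_\xi\rho_\xi/\rho_c=-\cos(\pi/L)$.

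For $L\ge 4$, the modulus $\cos(\pi/L)$ is strictly larger than $\cos(2\pi/L)$. So the inequality you plan to ``verify by the same product-to-sum computation'' is false. The gap to the best negative configuration is $1-\cos(\pi/L)\approx\pi^2/(2L^2)$, four times smaller than the gap in Step~1.

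For $L$ even, $-1$ is an eigenvalue. So with $\lambda_{L,N}$ taken as the second eigenvalue of maximal modulus, the estimate fails for every large $L$, and no mirrored argument can rescue it. The lazy chain is not an equivalent substitute either, since it halves the gap.

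\textbf{The fix.} The only provable reading is that $\lambda_{L,N}$ is the largest eigenvalue different from $1$. This is exactly what the paper's proof uses in its first line. State this convention explicitly.

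\textbf{Comparison with the paper.} Under that reading, your Steps~1--2 and final expansion form a correct proof that differs from, and sharpens, the paper's.
\begin{itemize}
\item The paper asserts without justification that the second largest eigenvalue is $\rho_{\tilde c}/\rho_c$, where $\tilde c$ is your $\xi^\ast$. It then Taylor-expands $\cos$ around $(N-1)/2$ and estimates $\rho$ separately in each regime: $\rho\approx 2N$ when $N/L\to0$, and a Riemann sum $\rho\approx 2L\sin(\pi N/L)/\pi$ when $N/L\to\alpha$. This is where the error terms $C(N/L)^2L^{-2}$ and $C_KL^{-3}$ come from.
\item Your rearrangement argument supplies the optimality of $\xi^\ast$ that the paper omits. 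Stated cleanly: any $\xi\neq c$ has some $\xi_i+\bgamma$ at circular distance at least $(N+1)/2$ from $0$. Its other $N-1$ points are no closer than the $N-1$ points of $c+\bgamma$ nearest to $0$.
\item Using the closed form of $\rho$ in \eqref{PF}, you get the exact value $\lambda_{L,N}=\cos(2\pi/L)$, independent of $N$. This matches \eqref{eq:hookeigenvalueprecise} for the hook $\{1|0\}$.
\item The result is an $O(L^{-4})$ error uniform in $N$, and the two regimes no longer need separate treatment.
\end{itemize}
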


\subsubsection{Schur polynomial eigenfunctions} 

\label{sec:schur1}

The eigenfunctions $\mathcal S_{L,N}$ of the MESSEP associated with the spectrum $\Sigma_{L,N}^{\rm \sss ME}$ given in \eqref{MESEPspectrum} are precisely the Schur polynomials in $N$ variables, restricted to $\mathbb U_L$. We briefly recall their main properties and refer to \cite{stanley1999enumerative,fulton1997young} for further details.

Let $\lambda = (\lambda_1, \cdots, \lambda_N)$ be an integer partition, that is intergers satisfying  $\lambda_1 \geq \cdots \geq \lambda_N \geq 0$, and set
\begin{equation}\label{eq:schurdet}
Q_{\lambda+\delta}(X_1,\cdots,X_N)=
{\rm det}\left(\begin{array}{llll}
X_1^{\lambda_1+N-1} & X_2^{\lambda_1+N-1} &\cdots & X_N^{\lambda_1+N-1}\\
X_1^{\lambda_2+N-2} & X_2^{\lambda_2+N-2} &\cdots  & X_N^{\lambda_2+N-2}\\
\vdots & \vdots  & \ddots  & \vdots \\
X_1^{\lambda_N} & X_2^{\lambda_N} & \cdots & X_N^{\lambda_N}
\end{array}\right),
\end{equation}
where  $\delta = (N-1, \cdots, 0)$.  Since $Q_{\lambda+\delta}(X_1, \cdots, X_N)=0$ whenever $X_i=X_j$ for some $i\neq j$, the Vandermonde determinant $Q_\delta(X_1, \cdots, X_N)$ (corresponding to $\lambda=0$) divides $Q_{\lambda+\delta}$. 

The Schur polynomial in $N$ variables associated with $\lambda$ is defined by
\begin{equation}\label{eq:schurdef}
s_{\lambda}(X_1, \cdots, X_N) = \frac{Q_{\lambda+\delta}(X_1, \cdots, X_N)}{Q_{\delta}(X_1, \cdots, X_N)}.
\end{equation}It admits the combinatorial expression
\begin{equation}\label{youngtab00}
s_{\lambda}(X_1, \cdots, X_N) = \sum_{T} X_1^{t_1} \cdots X_N^{t_N},
\end{equation}
where the sum runs over all semi-standard Young tableaux of shape $\lambda$, and $t_i$ denotes the number of entries equal to $i$ in $T$ (see Figure~\ref{SSYT}).

\begin{figure}[!h]
	\centering
	\includegraphics[width=3cm]{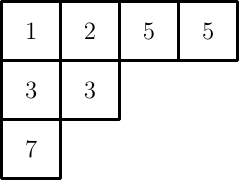}
		\captionsetup{width=10.5cm}
\caption{\small A semi-standard Young tableau $T$ of shape $\lambda=(4,2,1)$, that is, with weakly increasing rows and strictly increasing columns, associated with the monomial $X_1 X_2 X_3^2 X_5^2 X_7$.} 
	\label{SSYT}
\end{figure}

\begin{rem}\label{specialization}
	The schur polynomial $s_\lambda$ may be regarded as a polynomial in infinitely many variables such that  $s_\lambda(X_1,\cdots,X_N)= s_\lambda(X_1,\cdots,X_N,0,\cdots)
	$ and  $s_\lambda(X_1,\cdots,X_N)=0$ if $\lambda_{N+1}\neq 0$.
\end{rem}

The Schur polynomials form a linear  basis of the space of symmetric polynomials. When restricted to $\mathbb U_L^N$, they canonically define functions on $\mathcal{W}_{\mathbb{Z},L,N}$, $\mathcal{W}_{L,N} \simeq \mathcal{W}_{L,N}^{\scriptscriptstyle \#}$, and $\mathcal{C}_{L,N}$ (see the discussion preceding Remark~\ref{rem:Rrec}). Similarly to \eqref{eq:identificationfunction}, we write for arbitrary $k_1,\cdots,k_N \in \mathbb Z$,
\begin{equation}\label{eq:schurecriture}
s_\lambda(k_1,\cdots,k_N)\equiv s_\lambda(\dot k_1,\cdots,\dot k_N)\equiv s_\lambda\left(e^{2\pi i k_1 / L},\cdots,e^{2\pi i k_N / L}\right),
\end{equation}

\begin{prop}\label{Schur}
	The subset of Schur polynomials $\{s_{\lambda} : L-N \geq \lambda_1 \geq \cdots \geq \lambda_N \geq 0\}$ in $N$ variables, restricted to the set $\mathbb U_L$ of $L$th roots of unity, forms an $\ell^2({\mathcal C_{L,N}},\psi^2\,dm)$-orthonormal eigenbasis of the MESSEP kernel given in \eqref{transition}.
\end{prop}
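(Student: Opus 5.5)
The plan is to derive Proposition~\ref{Schur} directly from Proposition~\ref{MESSEPspectre} and the statement after \eqref{eigenmessep} that $\mathcal S_{L,N}=\{\psi_\xi/\psi_c\}$ is an orthonormal eigenbasis of $P$ in $\ell^2(\mathcal C_{L,N},\psi^2\,dm)$. It then suffices to show two things. First, each ratio $\psi_\xi/\psi_c$, evaluated at $\eta$, equals a Schur polynomial $s_\lambda$ at $(e^{2i\pi\eta_1/L},\dots,e^{2i\pi\eta_N/L})$, up to a factor of modulus one. Second, the resulting map $\xi\mapsto\lambda$ is a bijection from $\mathcal C_{L,N}$ onto the partitions with $L-N\ge\lambda_1\ge\cdots\ge\lambda_N\ge0$. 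Unimodular factors do not affect orthonormality or the eigenvalue property, so these two facts suffice.

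\textbf{Step 1: rewrite the determinants.} Set $X_j=e^{2i\pi\eta_j/L}$. Then
\[
\psi_\xi(\eta)=L^{-N/2}\det\bigl(X_j^{\xi_k+\bgamma}\bigr)_{k,j}.
\]
Factoring $X_j^{\bgamma}$ out of each column, the common factor $\prod_j X_j^{\bgamma}$ cancels in the ratio $\psi_\xi/\psi_c$. The point that needs care is that the exponents must be taken modulo $L$ for the identification to be consistent.

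\textbf{Step 2: choose representatives of the exponents.} Since $X_j^L=1$, each exponent $\xi_k$ may be replaced by any representative modulo $L$, and this changes the determinant only by the sign of the row permutation. For $\psi_c$, take the lift $\bc=(-p,\dots,-p+N-1)$ from Remark~\ref{rem:compactrep}. Then $\det(X_j^{\bc_k})=\prod_j X_j^{-p}\,Q_\delta(X)$ up to sign. For $\psi_\xi$, choose representatives $e_1>e_2>\cdots>e_N$ of the $\xi_k$ modulo $L$ lying in the window $[-p,\,-p+L-1]$. Define
\[
\lambda_i=e_i+p-(N-i).
\]
Then $\lambda_i\ge\lambda_{i+1}$, $\lambda_N\ge0$ and $\lambda_1\le L-1-(N-1)=L-N$. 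With these choices,
\[
\det\bigl(X_j^{e_k}\bigr)=\prod_j X_j^{-p}\,Q_{\lambda+\delta}(X),
\]
and therefore $\psi_\xi/\psi_c=\pm s_\lambda(X)$ by \eqref{eq:schurdef}. Note that $\psi_c$ does not vanish on $\mathcal C_{L,N}$ by \eqref{PF}.

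\textbf{Step 3: the correspondence is bijective.} The map $\xi\mapsto(e_k)$ is a bijection from $N$-subsets of $\mathbb Z_L$ to strictly decreasing $N$-tuples in a window of length $L$. The shift by $\delta$ and $p$ is a bijection from such tuples onto partitions fitting in the $N\times(L-N)$ box. Both sets have cardinality $\binom{L}{N}$, which gives a consistency check. The eigenvalue attached to $s_\lambda$ is $\rho_\xi/\rho_c$, with $\xi$ the preimage of $\lambda$; this is stated in Remark~\ref{rem:corres}.

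\textbf{Main obstacle.} The delicate point is the parity shift $\bgamma$ when $N$ is even. The determinants in \eqref{Spectre} are taken with exponents $\xi_k+\bgamma$, not $\xi_k$, and these are half-integers. So one must check that
\[
\eta\mapsto\det\bigl(e^{2i\pi(\xi_k+\bgamma)\eta_j/L}\bigr),
\]
lifted to integer tuples, is well defined on $\mathcal C_{L,N}$ after cyclic relabelling. The cyclic permutation $\tau$ has sign $(-1)^{N-1}$, and for $N$ even this sign is compensated by the factor $e^{2i\pi\bgamma}=-1$ that appears when a coordinate $\eta_j$ wraps around by $L$. In the ratio $\psi_\xi/\psi_c$ these signs cancel, so the Schur function is evaluated honestly at the roots of unity and no residual phase remains. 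I would verify this cancellation explicitly before carrying out Steps~2 and~3.
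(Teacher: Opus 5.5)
Your proof is correct and follows essentially the paper's route. Your window $[-p,-p+L-1]$ is exactly the paper's minimal lift (Definition~\ref{def:minimal}), so you get the same relation $\lambda_i+\delta_i=e_i+p$, the same identity $\psi_\xi/\psi_c=\pm s_\lambda$, and the same bijection onto partitions in the $N\times(L-N)$ box.

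The parity issue you single out as the main obstacle is harmless here. The factor $\prod_j X_j^{\bgamma}$ cancels in the ratio, as you note in Step~1, and the well-definedness of $\psi_\xi$ on $\mathcal C_{L,N}$ is already part of Proposition~\ref{MESSEPspectre}.
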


\begin{rem}\label{rem:corres}
	For the one-to-one correspondence between partitions $\lambda$ and configurations $\xi$, and the resulting correspondence between  $s_\lambda$ and $\psi_\xi$, we refer to the proof of Proposition \ref{Schur} in Section \ref{sec:schur} and in particular to Remark \ref{rem:minimalpartition}) and  Figure \ref{partition}.
\end{rem}

\begin{figure}[!h]
	\centering
	\includegraphics[width=5cm]{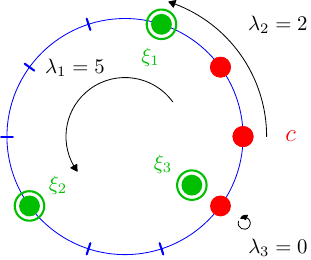}
		\captionsetup{width=10cm}
	\caption{\small  Correspondence between the partition $\lambda=(5,2,0)$ and the  configuration $\xi=(2,6,9)$ with $N=3$ and $L=10$.} 
	\label{partition}
\end{figure}

\section{Low-density limit: main results}

\label{sec:low}

This section is devoted to the low-density regime. We state convergence of the MESSEP to UDBM and its explicit spectral decomposition, the proofs are detailed  in Section~\ref{sec:sec3proof}. 

\subsection{The scaling limits}

The limiting process $(\bX(t))_{t\geq 0}$ in Theorem \ref{thm:0} and Theorem \ref{scalingthm1} below, namely the UDBM, is introduced rigorously in Section~\ref{sec:dyson}, together with its variants $(\boldsymbol{\Theta}(t))_{t\geq 0}$ and $(\boldsymbol{\Xi}(t))_{t\geq 0}$.

\begin{theo}\label{scalingthm1}
	Let $(X(n))_{n\geq 0}$ be the MESSEP on ${\mathcal W_{\mathbb Z,L,N}}$. Denote by $(X(t))_{t\geq 0}$ the continuous linear interpolation of this process, and assume that $X(0) = x$ depends on $L$ in such a way that  
	\begin{equation}\label{eq:convstaringpoint}
	\frac{2\pi x}{L} \xrightarrow[L\to\infty]{} \boldsymbol{x},
	\end{equation}
	for some $\boldsymbol{x} = (\boldsymbol{x}_1, \cdots, \boldsymbol{x}_N)$ such that $\boldsymbol{x}_1 \leq \cdots \leq \boldsymbol{x}_N \leq \boldsymbol{x}_1 + 2\pi$. Then, the following distributional convergence holds in the space of continuous functions $C(\mathbb R_+, \mathbb R^N)$: 
	\begin{equation}\label{scalinglim0}
	\left(\frac{2\pi X(L^2t)}{L}\right)_{t\geq 0} \xRightarrow[L\to\infty]{} (\boldsymbol{X}(t))_{t\geq 0},
	\end{equation}  
	where $(\boldsymbol{X}(t))_{t\geq 0}$ is the  UDBM, starting from $\bx$, satisfying the stochastic differential equations  \eqref{Dyson1} and \eqref{Dyson2},  and  taking its values in 
	\begin{equation}\label{eq:weylR}
	\overline{\boldsymbol {\mathcal W}}_{\mathbb R,2\pi,N} := \left\{\bx \in \mathbb R^N : \bx_1 \leq \cdots \leq \bx_N \leq \bx_1 + 2\pi \right\}.
	\end{equation}
\end{theo}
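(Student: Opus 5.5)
Our plan follows the strategy announced in the introduction: tightness via martingale arguments, then identification of the limit through the Schur eigenstructure. Set $Y^L(t)=2\pi X(L^2t)/L$. For each coordinate write $X_k(n)=X_k(0)+M_k(n)+\sum_{m<n}b_k(X(m))$. Here $b_k(x)=\mathbb E[X_k(n+1)-X_k(n)\mid X(n)=x]$ is the drift. By \eqref{eq:extendedtransition} and the product formula \eqref{PF} it equals
\[
b_k(x)=\frac{\psi_1(x+e_k)-\psi_1(x-e_k)}{\rho\,\psi_1(x)},
\]
with the convention that a term vanishes when the move is inadmissible. A Taylor expansion of the sine product gives
\[
\rho=2N-\tfrac{\pi^2}{3L^2}N(N^2-1)+O(L^{-4}),\qquad \psi_1(x\pm e_k)/\psi_1(x)=1\pm\tfrac{\pi}{L}\sum_{j\ne k}\cot\!\big(\tfrac{\pi(x_k-x_j)}{L}\big)+\dots .
\]
This yields $L^2\,b_k\cdot\frac{2\pi}{L}\approx\frac{2\pi^2}{N}\sum_{j\neq k}\cot\big(\frac{Y_k-Y_j}{2}\big)$ and conditional variance $\big(\frac{2\pi}{L}\big)^2L^2\cdot\frac{1}{N}$ per unit time, in agreement with \eqref{Dyson10}. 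The difficulty is that $b_k$ is unbounded near collisions, so it cannot be controlled uniformly. We therefore do not use this expansion for tightness directly; it serves only as a heuristic and for identification away from the boundary.

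\emph{Step 1 (tightness).} We use eigenfunctions instead of coordinates. By Proposition~\ref{Schur}, each $s_\lambda$ evaluated along the MESSEP satisfies
\[
s_\lambda(\Xi(n))-\sum_{m<n}\Big(\frac{\rho_\lambda}{\rho}-1\Big)s_\lambda(\Xi(m))
\]
is a martingale. Here $\rho_\lambda/\rho-1=O(L^{-2})$ for fixed $\lambda$ (cf.\ Proposition~\ref{gapestimate} and \eqref{Spectre}), and $|s_\lambda|$ is bounded on $\mathbb U^N$. Its quadratic variation per step is $O(L^{-2})$, since one jump changes $s_\lambda$ by $O(1/L)$. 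Aldous' criterion then gives tightness of $s_\lambda(e^{iY^L(\cdot)})$ for each fixed $\lambda$. Taking the elementary symmetric polynomials $e_r=s_{(1^r)}$, the polynomial $\prod_k(z-e^{iY^L_k})$ has tight coefficients, so its roots are tight as unordered sets on $\mathbb U$ by Rouché's theorem, i.e.\ by continuity of roots. To lift this to $C(\mathbb R_+,\mathbb R^N)$, we observe that the lift is determined continuously by the unordered path plus the starting tag, as in Remark~\ref{rem:lift1}. The increments of each $Y^L_k$ are at most $2\pi/L$, so the ordered lift is a continuous functional whenever no total collapse occurs; some care is needed for equicontinuity, which we get from the modulus of continuity of the root set together with the $1/L$ jump size.

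\emph{Step 2 (identification).} First, the finite-dimensional marginals. For any limit point $\bXi$ of the projected process, the martingale identity for $s_\lambda$ converges to
\[
s_\lambda(\bXi(t))-\int_0^t E_\lambda\,s_\lambda(\bXi(u))\,du \quad\text{a martingale},
\]
where $E_\lambda=\lim L^2(\rho_\lambda/\rho-1)$ is the UDBM eigenvalue \eqref{energyspectrum}. Hence $\mathbb E[s_\lambda(\bXi(t))\mid\mathcal F_u]=e^{E_\lambda(t-u)}s_\lambda(\bXi(u))$. Schur polynomials together with their conjugates span a dense subalgebra of continuous symmetric functions on $\mathbb U^N$ by Stone--Weierstrass. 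Combined with the $\mathbb L^2(\boldsymbol\mu)$ spectral decomposition of Theorem~\ref{spectraldecompodyson}, this pins down all finite-dimensional laws as those of the projected UDBM $(\bXi(t))$. The lifted limit is then the unique continuous lift started at $\bx$, which is the UDBM $\bX$.

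\emph{Main obstacle.} The delicate point is the starting condition in \eqref{eq:convstaringpoint}, which allows colliding initial points, together with the need for the lift to stay continuous. Here we rely on the fact that the UDBM instantly leaves the boundary and never returns. Under the semigroup identification, this holds because $\boldsymbol\mu$ vanishes on collisions and the law at positive times has an $\mathbb L^2(\boldsymbol\mu)$ density by the Hilbert--Schur expansion (Corollary~\ref{coro:convergenceL2}). On $[\varepsilon,\infty)$ the limit is therefore collision-free almost surely, the lift is continuous there, and letting $\varepsilon\to0$ finishes the proof.
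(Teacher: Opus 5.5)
The main problem is in Step~2: the eigenfunctions you use do not determine the law of the limit. Your tightness step is essentially the paper's: martingale decomposition of $e_r=s_{(1^r)}$, $O(L^{-2})$ increments, and continuity of roots via Rouch\'e. For the tagged and lifted process, what actually controls the modulus of continuity is the connected-component argument after Lemma~\ref{rouche}: a tagged particle cannot leave the component of $\bigcup_k B(\theta_k,\varepsilon)$ it starts in. It is not continuity of the lift map, which fails at collisions.

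In Step~2 you obtain $\mathbb E[s_\lambda(\bXi(t))\mid\mathcal F_u]=e^{E_\lambda(t-u)}s_\lambda(\bXi(u))$ only for \emph{fixed} partitions $\lambda$ and, since $P$ is real, for the conjugates $\overline{s_\lambda}$. You then assert that these span a dense subalgebra. They generate a dense algebra, but their \emph{linear span} is not dense in $C(\overline{\bCC}_{2\pi,N})$. A conditional-expectation identity extends only linearly, not to products.

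The missing functions are the eigenfunctions $f_m=\Psi_m/\Psi_{\bc}=e^{i\delta\sum_j x_j}s_\lambda$ with $\delta<0<\delta+\lambda_1$. Equivalently $m_1<\bc_1$ and $m_N>\bc_N$, meaning particles are displaced in both directions from the compact configuration. These are neither polynomial nor anti-polynomial. For example, for $N=2$ one has $f_{(-2,1)}=e^{-i(x_1+x_2)}s_{(2,0)}=1+2\cos(x_1-x_2)$. It is $\boldsymbol\mu$-orthogonal to every $s_\lambda$ and every $\overline{s_\lambda}$. Hence $\boldsymbol\mu$ and $(1+\epsilon f_{(-2,1)})\boldsymbol\mu$, with $0<\epsilon<1/3$, cannot be told apart by your class, so not even one-dimensional marginals are pinned down. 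Appealing to the spectral decomposition of Theorem~\ref{spectraldecompodyson} does not close this. That expansion runs over all $m\in\mathfrak W_N$, and you have not shown the limit satisfies the eigen-relation for the mixed $m$.

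The missing idea is that every fixed $f_m$, mixed ones included, is still (up to sign) a MESSEP eigenfunction on the lattice. However, it is attached to a configuration, hence a partition, that depends on $L$. Since $z^{-1}=z^{L-1}$ on $\mathbb U_L$, it is an $s_\lambda$ with $\lambda_1$ of order $L$, as with the double hooks \eqref{eq:doublehookk}. This lies outside your ``fixed $\lambda$'' regime, so its eigenvalue must be computed separately.

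The paper does this as follows. It reduces $m$ modulo $L$ to a configuration $\xi\in\mathcal C_{L,N}$ and checks $f_m(2\pi x/L)=\pm\psi_\xi(x)/\psi_c(x)$. It then proves \eqref{energy} from the cosine expansion \eqref{estimatecoslow}. With all $f_m$ available, Proposition~\ref{orthonormal} supplies density, and your conditional-expectation argument (or Ethier--Kurtz, as in the paper) identifies the finite-dimensional laws.

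Two smaller points:
\begin{itemize}
\item Your $E_\lambda=\lim L^2(\rho_\lambda/\rho-1)$ is minus the $E_m$ of \eqref{energyspectrum}.
\item Absolute continuity of the time-$t$ law only excludes collisions at each fixed $t$. Collision-freeness on $[\varepsilon,\infty)$, which the lift needs, should be taken from Proposition~\ref{cepa} once the projected law is identified.
\end{itemize}
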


Theorem~\ref{scalingthm1} admits analogous statements for $(\Theta(n))_{n\geq 0}$ (resp.\ $(\Xi(n))_{n\geq 0}$), viewed as processes in $\mathbb U \subset \mathbb C$ with values in ${\mathcal W}_{L,N}^{\scriptscriptstyle \#}$ (resp.\ ${\mathcal C}_{L,N}^{\scriptscriptstyle \#}$), and for their scaling limits $(\boldsymbol{\Theta}(t))_{t\geq 0}$ (resp.\ $(\boldsymbol{\Xi}(t))_{t\geq 0}$). See Definitions \ref{defmessep} and \ref{def:merwcontinuous}, and Corollary \ref{coro:scalinggen}. The functional limits read
\begin{equation}\label{scalinglim-theta}
\left(\Theta(L^2t)\right)_{t\geq 0} \xRightarrow[L\to\infty]{} (\boldsymbol{\Theta}(t))_{t\geq 0}
\quad \left(\text{resp.} \quad  
\left(\Xi(L^2t)\right)_{t\geq 0} \xRightarrow[L\to\infty]{} (\boldsymbol{\Xi}(t))_{t\geq 0} \right).
\end{equation}
Note that the factor $1/L$ does not appear in \eqref{scalinglim-theta}, unlike in \eqref{scalinglim0}, since the scaling is already incorporated into the corresponding state spaces. We refer to \eqref{eq:weyldiscretes} and \eqref{eq:wstar} for the discrete state spaces, and to \eqref{eq:weyl1} and \eqref{eq:weyl2} for their continuous analogues ${\bCW}_{2\pi,N}$ and ${\bCC}_{2\pi,N}$, together with their $\#$-counterparts. Their compactifications are denoted by $\overline{{\bCW}}_{2\pi,N}$ and $\overline{{\bCC}}_{2\pi,N}$. The boundary corresponds, as in \eqref{eq:weylR}, to configurations with colliding particles. 

Although particles may coincide at time $0$, this almost surely never occurs for $t>0$ (see Proposition~\ref{cepa} and the discussion after Definition~\ref{def:merwcontinuous}). Roughly speaking, $(\bXi(t))_{t\geq 0}$ models indistinguishable particles and, for $t>0$, one can write
\begin{equation}
\bXi(t) = \left\{ \bTheta_1(t), \cdots, \bTheta_N(t) \right\}
= \left\{ e^{i \bX_1(t)}, \cdots, e^{i \bX_N(t)} \right\}.
\end{equation}

\subsection{Full spectral decomposition}

This section parallels Section~\ref{graphspec}. We stress the correspondence between the eigenfunction constructions for the MESSEP and the UDBM (in the indistinguishable setting), using consistent notation to make the analogy transparent.

\subsubsection{Orthonormal Hilbert eigenbasis}

Let $\mathfrak{W}_N \subset \mathcal{P}(\mathbb{Z})$ be the set of subsets of $\mathbb{Z}$ of cardinality $N$, which can be viewed as strictly ordered $N$-tuples as in \eqref{state}.  For any $m \in \mathfrak{W}_N$, define for all $x\in \mathbb{R}^N$,
\begin{equation}\label{eq:Psim}
\Psi_m(x) := \frac{1}{(2\pi)^{N/2}} \det\left(e^{i(m_k + \bgamma) x_l}\right)_{1 \leq k, l \leq N}.
\end{equation}

Note that $\Psi_m$ is antisymmetric and invariant under translation by $2\pi(1,\cdots,1)$: 
\begin{equation}\label{eq:sym1}
\forall \ell\in\mathbb Z,\quad \Psi_m(x_1 + \ell 2\pi, \cdots, x_N + \ell 2\pi) = e^{2i\ell \pi N \bgamma}\, \Psi_m(x) = \Psi_m(x),
\end{equation}
and satisfy
\begin{equation}\label{eq:sym2}
\Psi_m(x_1, \cdots, x_N) = \Psi_m(x_2, \cdots, x_N, x_1 + 2\pi).
\end{equation}
We recall that $\bgamma$ is defined in \eqref{eq:gamma}. Remark also that  $\Psi_m$ vanishes on the boundary of ${\boldsymbol{\mathcal W}}_{\mathbb{R},2\pi,N}$, i.e.\ whenever $x_i = x_j$ for some $i \neq j$ or $x_N = x_1 + 2\pi$. 

These symmetries allow us to regard $\Psi_m$ as a function on ${\boldsymbol{\mathcal{W}}}_{2\pi,N}$ (defined in \eqref{eq:weyl1}) and ${\boldsymbol{\mathcal{C}}}_{2\pi,N}$ (defined in \eqref{eq:weyl2}) and their $\mathbb U$-embeddings, still vanishing on the boundary. In particular,
\begin{equation}\label{eq:symccl}
\Psi_m(\xi_1, \cdots, \xi_N)
= \Psi_m(\overline{x}_1, \cdots, \overline{x}_N)
\simeq \Psi_m(e^{i x_1}, \cdots, e^{i x_N})
= \Psi_m(x_1, \cdots, x_N),
\end{equation}
for $\xi \in \overline{\boldsymbol{\mathcal{C}}}_{2\pi,N}$ and $x \in \overline{\boldsymbol{\mathcal{W}}}_{\mathbb{R},2\pi,N}$ such that $\xi_k = \overline{x}_k \simeq e^{i x_k}$ for $1 \leq k \leq N$, where $\overline{x}$ denotes the representative of $x$ modulo $2\pi$ in $[0,2\pi)$. 

Similarly to \eqref{compactconf} and  Remark~\ref{rem:compactrep}, set
\begin{equation}
(\bc_i)_{1\leq i\leq N} =
\left\{
\begin{array}{ll}\label{symconf}
(-p, \cdots, 0, \cdots, p),   & \text{if $N = 2p+1$},\\[10pt]
(-p, \cdots, 0, \cdots, p-1), & \text{if $N = 2p$}.
\end{array}
\right.
\end{equation}
Since $\bc\in \mathfrak{W}_N$ is obtained from $\delta = (N-1, \cdots, 0)$ by permutation and translation, one can check that $\Psi_{\boldsymbol c}(x) = \varepsilon\, \Psi(x)$ for some $\varepsilon\in\mathbb U$  with
\begin{equation}\label{eq:Psi}
\Psi(x) := \frac{2^{\frac{N(N-1)}{2}}}{(2\pi)^{N/2}} \prod_{1 \leq i < j \leq N} \sin\left( \frac{x_j - x_i}{2} \right).
\end{equation}

Finally, consider 
\begin{equation}\label{eq:Psicarre}
\boldsymbol{\mu}(dx) = \Psi^2(x)\,\mathds 1_{\bCC_{2\pi,N}}(x)\,\lambda(dx),
\end{equation}
where $\lambda$ is the $N$-dimensional Lebesgue measure. Note that $\boldsymbol{\mu}$ coincides with the joint distribution of the eigenangles of Haar unitary matrices, i.e.\ the circular $\beta$-ensemble with $\beta=2$ (CUE).

\begin{prop}\label{orthonormal}
	The family $\{\Psi_m : m \in \mathfrak{W}_N\}$ is an orthonormal basis of $\mathbb{L}^2(\bCC_{2\pi,N}, \lambda)$. Moreover,
	\begin{equation}\label{inclusion}
	\boldsymbol{\mathcal{S}}_{2\pi,N} := \left\{ \frac{\Psi_m}{\Psi_{\bc}} : m \in \mathfrak{W}_N \right\},
	\end{equation}
	is an orthonormal basis of $\mathbb{L}^2(\bCC_{2\pi,N}, \boldsymbol{\mu})$, consisting of bounded smooth functions on $\overline{\bCC}_{2\pi,N}$. Its linear span is dense in $C(\overline{\bCC}_{2\pi,N})$ for the uniform topology.
\end{prop}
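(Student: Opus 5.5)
The plan is to reduce everything to the classical orthonormality and completeness of Fourier exponentials on the torus $\mathbb T^N=(\mathbb R/2\pi\mathbb Z)^N$, twisted by $\bgamma$.

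\emph{Step 1: orthonormality.} For $m,m'\in\mathfrak W_N$, expand both determinants in \eqref{eq:Psim} as sums over permutations. The integrand $\Psi_m\overline{\Psi_{m'}}$ is symmetric and $2\pi$-periodic in each variable separately, since $e^{i\bgamma x_l}\overline{e^{i\bgamma x_l}}=1$. Hence the integral over $\bCC_{2\pi,N}$, viewed as the set of $N$-point subsets of the circle (a fundamental domain of $\mathfrak S_N$ acting on $[0,2\pi)^N$ minus the diagonals), equals $1/N!$ times the integral over $[0,2\pi)^N$. Expanding, one gets
\[
\sum_{\sigma,\sigma'}\operatorname{sgn}(\sigma\sigma')\prod_{l=1}^N\int_0^{2\pi}e^{i(m_{\sigma(l)}-m'_{\sigma'(l)})x_l}\,dx_l ,
\]
which equals $(2\pi)^N N!\,\delta_{m,m'}$, because $m,m'$ are strictly ordered sets. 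The normalisation $(2\pi)^{-N/2}$ then gives $\langle\Psi_m,\Psi_{m'}\rangle=\delta_{m,m'}$.

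\emph{Step 2: completeness.} Let $F\in\mathbb L^2(\bCC_{2\pi,N},\lambda)$ be orthogonal to all $\Psi_m$. Extend $F$ to an antisymmetric function $\tilde F$ on $[0,2\pi)^N$ (via the ordering permutation), and set $G(x)=\tilde F(x)e^{-i\bgamma\sum_l x_l}$. One must check, using \eqref{eq:sym1}--\eqref{eq:sym2}, that the antisymmetric extension combined with the factor $e^{i\bgamma x}$ is consistent with $2\pi$-periodicity in each variable. The sign $(-1)^{N-1}$ produced by the cyclic shift in \eqref{eq:sym2} is exactly compensated by $e^{2i\pi\bgamma}=-1$ when $N$ is even, which is the reason for the choice of $\bgamma$. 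With this, $G\in\mathbb L^2(\mathbb T^N)$ and its Fourier coefficient at $k\in\mathbb Z^N$ is, up to a sign and the factor $N!$, $\langle F,\Psi_{\{k\}}\rangle$ when the $k_l$ are distinct. When two $k_l$ coincide, the coefficient vanishes by antisymmetry. So all Fourier coefficients vanish, hence $G=0$ and $F=0$ by Parseval. I expect this bookkeeping of signs and periodicity to be the main (though routine) obstacle.

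\emph{Step 3: weighted basis and regularity.} Since $\Psi_{\bc}=\varepsilon\Psi$ with $|\varepsilon|=1$ and $\boldsymbol\mu=\Psi^2\lambda$, the map $U:h\mapsto h\Psi$ (up to the unimodular constant) is a unitary map $\mathbb L^2(\boldsymbol\mu)\to\mathbb L^2(\lambda)$, because $\Psi\neq0$ on the open set $\bCC_{2\pi,N}$. Its inverse sends $\Psi_m$ to $\Psi_m/\Psi_{\bc}$, so $\boldsymbol{\mathcal S}_{2\pi,N}$ is an orthonormal basis. For boundedness and smoothness, translate $m$ to $\lambda+\delta$, as in Proposition~\ref{Schur} and Remark~\ref{rem:corres}. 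Writing $\Psi_m=e^{i(\min m+\bgamma)\sum x_l}Q_{\lambda+\delta}(e^{ix_1},\dots,e^{ix_N})$ (up to a constant and sign) and similarly for $\Psi_{\bc}$ with $Q_\delta$, one gets
\[
\Psi_m/\Psi_{\bc}=\mathrm{const}\cdot e^{i(\min m-\min\bc)\sum_l x_l}\,s_\lambda(e^{ix_1},\dots,e^{ix_N}).
\]
This is a trigonometric polynomial in $x$, hence smooth and bounded up to the boundary.

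\emph{Step 4: density in $C(\overline{\bCC}_{2\pi,N})$.} The span consists of the functions $(e_N)^{k}s_\lambda$ with $k\in\mathbb Z$, where $e_N=\prod_l z_l$. It is closed under products (by the Littlewood--Richardson rule or simply because symmetric polynomials form an algebra generated by Schur polynomials), under conjugation ($\overline{z_l}=z_l^{-1}$ on $\mathbb U$, so $\overline{s_\lambda}=e_N^{-\lambda_1}s_{\lambda'}$ for a suitable partition), and it contains constants. It is thus the algebra of symmetric Laurent polynomials in $z_1,\dots,z_N\in\mathbb U$. This algebra separates the points of $\overline{\bCC}_{2\pi,N}$, viewed as multisets of $N$ points on $\mathbb U$: the elementary symmetric functions determine the multiset as the roots of $\prod_l(X-z_l)$. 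Stone--Weierstrass on this compact space then concludes.
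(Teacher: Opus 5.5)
Your proof is correct. Steps~1, 2 and~4 essentially coincide with the paper's argument:
\begin{itemize}
\item Step~1 uses the same permutation expansion for orthonormality.
\item Step~2 uses the same completeness argument, namely the trigonometric basis on $[0,2\pi)^N$ applied to the antisymmetric extension twisted by $e^{-i\bgamma\sum_l x_l}$.
\item Step~4 uses the same Stone--Weierstrass argument, with closure under products from Littlewood--Richardson and closure under conjugation from the complement identity of Lemma~\ref{lem:closeconj}.
\end{itemize}

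The genuine difference is Step~3. You get completeness of $\boldsymbol{\mathcal S}_{2\pi,N}$ in $\mathbb L^2(\bCC_{2\pi,N},\boldsymbol\mu)$ directly from the unitary map $h\mapsto h\Psi$ onto $\mathbb L^2(\bCC_{2\pi,N},\lambda)$. This map carries $\Psi_m/\Psi_{\bc}$ to a unimodular multiple of $\Psi_m$. The paper only records orthonormality at that stage. It then obtains completeness afterwards from the uniform density, combined with density of $C(\overline{\bCC}_{2\pi,N})$ in $\mathbb L^2(\boldsymbol\mu)$, so its first assertion is never actually used for the second.

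Your route is shorter and purely Hilbertian, and it decouples the two conclusions. Stone--Weierstrass is then needed only for the uniform-density claim, which is the one used later for the Feller property and to extend identities to continuous functions. You also make the point-separation hypothesis explicit, via $e_k=s_{(1^k)}$ recovering the multiset as the roots of $\prod_l(X-z_l)$; the paper leaves this implicit.

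Two minor remarks:
\begin{itemize}
\item The periodicity bookkeeping you single out as the main obstacle in Step~2 is not needed. Since $\bCC_{2\pi,N}\subset[0,2\pi)^N$, the trigonometric system is complete in $\mathbb L^2([0,2\pi)^N)$ without any boundary compatibility, and multiplication by $e^{-i\bgamma\sum_l x_l}$ is simply unitary. The sign check behind \eqref{eq:sym2} only matters for viewing the functions on circle configurations, which your Laurent-polynomial form in Step~3 already settles.
\item In $\overline{s_\lambda}=e_N^{-\lambda_1}s_{\mu}$ the partition is the complement $\mu_i=\lambda_1-\lambda_{N+1-i}$. It is not the conjugate partition, which the paper denotes $\lambda'$.
\end{itemize}
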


\subsubsection{Spectral decomposition, smoothing properties and contractivity}

Let $(\bP_t)_{t\geq 0}$ be the Markov semigroup of $(\bXi(t))_{t\geq 0}$, defined for any measurable nonnegative $f$ on $\overline{\bCC}_{2\pi,N}$ and $\bxi \in \overline{\bCC}_{2\pi,N}$ by
\begin{equation}\label{semigroup}
\bP_t f(\bxi) = \mathbb{E}_{\bxi}[f(\boldsymbol{\Xi}(t))].
\end{equation}

\begin{rem}\label{rem:symper}
	Equivalently, given $\bx\in \overline{\boldsymbol{\mathcal W}}_{\mathbb R,2\pi,N}$  such that $\overline{\bx}_k=\bxi_k$ for $1\leq k\leq N$, one has, for every symmetric and componentwise $2\pi$-periodic function $g$ on $\mathbb R^N$,
	\begin{equation}\label{semigroup2}
	\bP_t g(\bxi) = \mathbb{E}_{\bx}[g(\boldsymbol{X}(t))].
	\end{equation}
\end{rem}

It turns out that family $\boldsymbol{\mathcal{S}}_{2\pi,N}$ forms a complete set of eigenfunctions of $(\bP_t)_{t\geq 0}$, and $\boldsymbol{\mu}$ is the corresponding invariant probability measure. More exactly, for any $m \in \mathfrak{W}_N$, set
\begin{equation}\label{energyspectrum}
E_m = \frac{2\pi^2}{N} \sum_{i=1}^N \left( (m_i + \bgamma)^2 - (\boldsymbol{c}_i + \bgamma)^2 \right).
\end{equation}
Note that  $E_m \in \{2\pi^2 n / N : n \geq 0\}$, $E_m=0$ if and only if $m=\boldsymbol{c}$, and
\begin{equation}\label{eq:spectralgap}
\inf_{m \neq \boldsymbol{c}} E_m = 2\pi^2.
\end{equation}
Then, the eigenvalues of the infinitesimal generator of the UDBM associated with the eigenfunctions in \eqref{inclusion} are precisely the $E_m$. The next result synthesizes and extends these statements, it is the continuous counterpart of the spectral decomposition \eqref{eq:fullspectralmarkov}.

\begin{theo}\label{spectraldecompodyson}
	The distribution $\boldsymbol{\mu}$ is the unique invariant probability measure of the continuous-time Markov process $(\boldsymbol{\Xi}(t))_{t \geq 0}$. Moreover, $\boldsymbol{\mu}$ is a reversible measure for this process, and for all $f \in \mathbb{L}^2(\bCC_{2\pi,N}, \boldsymbol{\mu})$, $\bxi \in \overline{\bCC}_{2\pi,N}$, and $t \geq 0$, one has
	\begin{equation}\label{decompo}
	\bP_t f(\bxi) = \sum_{m \in \mathfrak{W}_N} e^{-E_m t} \left( \int_{\bCC_{2\pi,N}} f\, \overline{\Psi_m} \Psi_{\bc}\, d\lambda \right) \frac{\Psi_m(\bxi)}{\Psi_{\bc}(\bxi)}.
	\end{equation}
\end{theo}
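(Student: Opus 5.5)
The plan is to work with the generator of the UDBM on the set of symmetric, componentwise $2\pi$-periodic smooth functions. From \eqref{Dyson10}, this generator is
\[
\mathcal L = \frac{2\pi^2}{N}\Delta + \frac{2\pi^2}{N}\sum_{i\neq j}\cot\Big(\frac{x_i-x_j}{2}\Big)\partial_i .
\]
The key algebraic identity is the ground-state transform $\mathcal L g = \Psi^{-1}(H-E_{\bc})(\Psi g)$, where
\[
H=\frac{2\pi^2}{N}\Delta
\]
acts on antisymmetric functions. This holds because $\Psi$ is itself an eigenfunction of the Laplacian: it is the determinant of exponentials $e^{i(\bc_k+\bgamma)x_l}$, so $\Delta\Psi=-\sum_k(\bc_k+\bgamma)^2\Psi$. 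In addition, $\nabla\Psi/\Psi$ has components $\tfrac12\sum_{j\neq i}\cot((x_i-x_j)/2)$. Expanding $\Delta(\Psi g)=g\Delta\Psi+2\nabla\Psi\cdot\nabla g+\Psi\Delta g$ then yields the drift of \eqref{Dyson10}.

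Each $\Psi_m$ satisfies $\Delta\Psi_m=-\sum_k(m_k+\bgamma)^2\Psi_m$ by the same computation. Hence $\mathcal L(\Psi_m/\Psi_{\bc})=-E_m\,\Psi_m/\Psi_{\bc}$ with $E_m$ as in \eqref{energyspectrum}. These ratios are smooth bounded functions on $\overline{\bCC}_{2\pi,N}$ by Proposition~\ref{orthonormal}; they are symmetric polynomials in the $e^{ix_k}$, namely Schur polynomials times a power of $e_N$. So Itô's formula applies to the SDE solution. The drift singularity is harmless because the ratio is a polynomial and the particles never collide for $t>0$ (Proposition~\ref{cepa}). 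The outcome is that $e^{E_m t}\,\Psi_m/\Psi_{\bc}(\bXi(t))$ is a bounded local martingale, hence a true martingale. This gives
\[
\bP_t\Big(\frac{\Psi_m}{\Psi_{\bc}}\Big)=e^{-E_mt}\frac{\Psi_m}{\Psi_{\bc}}
\]
for every starting point, including boundary points, by continuity.

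Next I extend to general $f$. Proposition~\ref{orthonormal} says $\boldsymbol{\mathcal S}_{2\pi,N}$ is an orthonormal basis of $\mathbb L^2(\boldsymbol\mu)$, with coefficients $\int f\,\overline{\Psi_m}\Psi_{\bc}\,d\lambda$ since $d\boldsymbol\mu=|\Psi_{\bc}|^2d\lambda$. For finite linear combinations, \eqref{decompo} holds by linearity.
\begin{itemize}
\item \emph{Continuous $f$.} The span is uniformly dense in $C(\overline{\bCC}_{2\pi,N})$ and $\bP_t$ is a sup-norm contraction. So \eqref{decompo} holds for continuous $f$, provided the series on the right converges pointwise for every $\bxi$.
\item \emph{Convergence of the series for $t>0$.} $E_m$ grows quadratically in $|m|$, while $\sup|\Psi_m/\Psi_{\bc}|$ grows at most polynomially (it is bounded by the Schur polynomial evaluated at $(1,\dots,1)$, i.e. a dimension count). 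The coefficients are bounded by $\|f\|_{\mathbb L^2(\boldsymbol\mu)}$. Hence the series converges absolutely and uniformly.
\item \emph{General $f\in\mathbb L^2(\boldsymbol\mu)$.} The right-hand side is a continuous function of $f$ in $\mathbb L^2$, pointwise in $\bxi$, by the same bound. The left-hand side also extends, once I know the law of $\bXi(t)$ has an $\mathbb L^2$ density with respect to $\boldsymbol\mu$. That density is identified by the same series, as the kernel $\sum_m e^{-E_mt}\,\frac{\Psi_m(\bxi)}{\Psi_{\bc}(\bxi)}\,\overline{\frac{\Psi_m}{\Psi_{\bc}}}$.
\end{itemize}

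Invariance and reversibility of $\boldsymbol\mu$ follow from the self-adjoint diagonal form of \eqref{decompo} on $\mathbb L^2(\boldsymbol\mu)$. Uniqueness of the invariant law follows because $E_m=0$ only for $m=\bc$, by \eqref{eq:spectralgap}. Thus $\bP_tf\to\int f\,d\boldsymbol\mu$ pointwise for all continuous $f$. Any invariant $\nu$ then satisfies $\int f\,d\nu=\int \bP_tf\,d\nu\to\int f\,d\boldsymbol\mu$.

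The main obstacle I expect is the rigorous martingale step at boundary starting points, where particles coincide and the drift blows up. I would handle it using the polynomial nature of the eigenfunctions in $e^{ix}$, the non-collision for $t>0$, and the Feller continuity of the process in its initial condition.
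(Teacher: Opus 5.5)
Your proof is correct. The eigenfunction step is essentially the paper's, and the routes diverge only after $\bP_tf_m=e^{-E_mt}f_m$ (with $f_m=\Psi_m/\Psi_{\bc}$).

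\textbf{Eigenfunction step.} The paper computes $\mathcal L^*(\Psi_{\bc}\Psi_m)=-E_m\Psi_{\bc}\Psi_m$ and transfers this to $\mathcal Lf_m$ through the Green identity \eqref{chainrule} and Proposition~\ref{orthonormal}. You conjugate $\mathcal L$ directly by $\Psi$, which is more economical. One correction: in the paper's normalisation $E_{\bc}=0$, so the constant in your identity $\mathcal Lg=\Psi^{-1}(H-E_{\bc})(\Psi g)$ must be the ground-state eigenvalue $-\frac{2\pi^2}{N}\sum_k(\bc_k+\bgamma)^2$ of $H$, not $E_{\bc}$.

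\textbf{The paper's order after \eqref{inv}.} It obtains reversibility and invariance by testing the eigenrelations against the basis and extending by density. Jensen then makes $\bP_t$ an $\mathbb L^2(\boldsymbol\mu)$-contraction, which gives \eqref{decompo} in $\mathbb L^2(\boldsymbol\mu)$. Uniqueness is algebraic, from $\boldsymbol\nu(f_m)=e^{-E_mt}\boldsymbol\nu(f_m)$. Only at the end does it use the bounds $\sup|f_m|=s_\lambda(1,\dots,1)$ and $\#\{m:E_m=\ell\}$ to get normal convergence.

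\textbf{Your order.} You use those growth bounds first. They make the right-hand side of \eqref{decompo} at a fixed $\bxi$ an $\mathbb L^2(\boldsymbol\mu)$-continuous functional of $f$. This gives \eqref{decompo} at every $\bxi$ for continuous $f$. It also identifies the law of $\bXi(t)$ under $\mathbb P_{\bxi}$ as $p_t(\bxi,y)\,\boldsymbol\mu(dy)$, with bounded kernel $p_t(\bxi,y)=\sum_m e^{-E_mt}f_m(\bxi)\overline{f_m(y)}$. Invariance, reversibility (symmetry of this real kernel) and uniqueness (via $\bP_tf\to\boldsymbol\mu f$ and dominated convergence) all follow from $p_t$.

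\textbf{What each buys.} The paper's order needs no summability for the qualitative statements. Yours makes the absolute continuity of the transition laws explicit. That is exactly what gives $\bP_tf(\bxi)$ a meaning for an $\mathbb L^2$ class, and makes \eqref{decompo} hold at every $\bxi$, boundary points included, rather than $\boldsymbol\mu$-a.e. The paper does not spell this step out. Both arguments give the pointwise identity only for $t>0$.

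\textbf{Boundary starting points.} You should not appeal to Feller continuity here, because the paper derives the Feller property from this very theorem, so the argument would be circular. It is also unnecessary. As in \eqref{Ito2}, It\^o's formula applies to the smooth function $f_m$ from any starting point: the drift is time-integrable by \eqref{estimation}, and $\nabla f_m$ is bounded, so the martingale part is a true martingale.
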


As a consequence of the above theorem and of \eqref{eq:spectralgap}, we may state:

\begin{coro}\label{coro:convergenceL2}
	The semigroup $(\bP_t)_{t \geq 0}$ maps $\mathbb{L}^2(\bCC_{2\pi,N}, \boldsymbol{\mu})$ continuously into ${\rm C}^\infty(\overline{\bCC}_{2\pi,N})$ for any $t > 0$. Moreover, for all $f \in \mathbb{L}^2(\bCC_{2\pi,N}, \boldsymbol{\mu})$ and $t \geq 0$, one has
	\begin{equation}\label{speed}
	\|\bP_t f - \boldsymbol{\mu} f\|_{\mathbb{L}^2(\bCC_{2\pi,N}, \boldsymbol{\mu})} 
	\leq e^{-2\pi^2 t} \|f - \boldsymbol{\mu} f\|_{\mathbb{L}^2(\bCC_{2\pi,N}, \boldsymbol{\mu})}.
	\end{equation}
\end{coro}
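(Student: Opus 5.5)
The plan is to derive both assertions of Corollary~\ref{coro:convergenceL2} from the spectral expansion \eqref{decompo} of Theorem~\ref{spectraldecompodyson}, together with the orthonormality in Proposition~\ref{orthonormal} and the gap identity \eqref{eq:spectralgap}. Write $e_m := \Psi_m/\Psi_{\bc}$ and $a_m(f) := \int f\,\overline{\Psi_m}\Psi_{\bc}\,d\lambda = \langle f, e_m\rangle_{\mathbb L^2(\boldsymbol\mu)}$, using $\boldsymbol\mu = \Psi_{\bc}^2\,\lambda$ up to the unit-modulus constant $\varepsilon$. Since $e_{\bc}=1$ and $E_{\bc}=0$, we have $a_{\bc}(f)=\boldsymbol\mu f$. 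Hence
\[
\bP_t f - \boldsymbol\mu f = \sum_{m\neq \bc} e^{-E_m t} a_m(f)\, e_m .
\]

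The contraction estimate \eqref{speed} is then Parseval's identity in the orthonormal basis $\boldsymbol{\mathcal S}_{2\pi,N}$. We get $\|\bP_t f-\boldsymbol\mu f\|^2 = \sum_{m\neq\bc} e^{-2E_m t}|a_m(f)|^2 \le e^{-4\pi^2 t}\sum_{m\neq \bc}|a_m(f)|^2$, because $E_m\ge 2\pi^2$ for $m\neq\bc$. The last sum equals $\|f-\boldsymbol\mu f\|^2$. This takes the square root to give \eqref{speed}.

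For the smoothing statement, fix $t>0$. The key point is to control $e_m$ and its derivatives uniformly on $\overline{\bCC}_{2\pi,N}$, polynomially in $|m|$. I would use the Schur description: after multiplying by a suitable power of $\prod_k e^{i x_k}$, which has unit modulus, $\Psi_m/\Psi_{\bc}$ is a Schur polynomial $s_\lambda$ evaluated at $(e^{ix_1},\dots,e^{ix_N})$. Here $\lambda$ is obtained from $m$ by shifting, in the spirit of Proposition~\ref{prop:schureigen} and Remark~\ref{rem:corres}. Then $e_m$ is a trigonometric polynomial in $x$, and from \eqref{youngtab00} we get $|s_\lambda|\le s_\lambda(1,\dots,1)$. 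This is bounded by $(\lambda_1+N)^{N^2}$ by Weyl's dimension formula, hence by $C(1+|m|)^{N^2}$. Each derivative $\partial^\beta$ adds at most a factor $(C|m|)^{|\beta|}$, since every monomial has degree at most $N(\lambda_1+\dots)$, which is linear in $|m|$.

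Meanwhile $E_m \ge c|m|^2 - C'$, since $E_m$ is a positive-definite quadratic in $m+\bgamma$ minus a constant. By Cauchy--Schwarz, $|a_m(f)|\le \|f\|_{\mathbb L^2(\boldsymbol\mu)}$. So the series for $\partial^\beta \bP_t f$ converges normally on $\overline{\bCC}_{2\pi,N}$, with bound
\[
\|f\|\sum_m e^{-E_m t}(C(1+|m|))^{N^2+|\beta|}<\infty .
\]
This gives $\bP_t f\in C^\infty$, together with $\|\partial^\beta \bP_t f\|_\infty \le C_{t,\beta}\|f\|$, which is the claimed continuity.

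The main obstacle I expect is the careful identification of $e_m$ with a Schur polynomial including the parity shift $\bgamma$, and hence the polynomial derivative bound near the boundary, where $\Psi_{\bc}$ vanishes. The Schur identity removes the division issue, because the quotient is a polynomial. Failing that, I would take the cruder route of bounding the determinant quotient via the divided-difference (Jacobi–Trudi) representation.
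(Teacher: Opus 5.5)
Your proposal is correct and follows essentially the paper's route: \eqref{speed} is Parseval's identity in the basis $\boldsymbol{\mathcal S}_{2\pi,N}$ combined with the gap \eqref{eq:spectralgap}, and the smoothing claim comes from normal convergence of \eqref{decompo} and of all its derivatives, using the paper's bound $|\partial^\beta (\Psi_m/\Psi_{\bc})|\le(|\delta|+|\lambda|)^{|\beta|}s_\lambda(1,\dots,1)$ against the quadratic growth of $E_m$ (here $\delta=m_1+p$, and the parity shift $\bgamma$ simply cancels in the quotient, so the obstacle you anticipated does not arise). The only cosmetic difference is that you sum $e^{-E_m t}$ directly over $m\in\mathbb Z^N$ using $E_m\ge c|m|^2-C'$, whereas the paper groups terms by the level sets $\{E_m=\ell\}$ and bounds their cardinality.
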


%

\subsubsection{Schur polynomial eigenfunctions}

A closely related result to Proposition~\ref{Schur} appears here: the eigenfunctions of $(\bP_t)_{t\geq 0}$ are Schur polynomials on $\mathbb U^N$ multiplied by an exponential factor involving the center of mass of the angles, as in the Calogero--Sutherland Hamiltonian \eqref{eq:calo}.

\begin{prop}\label{prop:schureigen} One has 
\begin{equation}\label{eq:schursetequal}
	\boldsymbol{\mathcal{S}}_{2\pi,N}=
\left\{ e^{i\delta \sum_{i=1}^N x_i}s_\lambda\left(e^{i x_1},\cdots,e^{i x_N}\right) : \delta\in \mathbb Z,\, \lambda_1\geq \cdots\geq \lambda_N\geq 0\right\}.
\end{equation}	
\end{prop}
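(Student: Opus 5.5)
We have to show that each ratio $\Psi_m/\Psi_{\bc}$ with $m\in\mathfrak W_N$ is of the form $e^{i\delta\sum_k x_k}s_\lambda(e^{ix_1},\dots,e^{ix_N})$ (up to a unimodular constant, which I will track or absorb), and that every such product arises. The idea is to factor the largest common power of $e^{ix_l}$ out of each determinant and then identify what remains via the bialternant definition \eqref{eq:schurdef}.

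\textbf{Step 1: normal form of a determinant.} Write $m\in\mathfrak W_N$ in decreasing order $m_{(1)}>\cdots>m_{(N)}$. Reordering rows changes $\Psi_m$ only by a sign, so I may assume this order. Set $\delta:=m_{(N)}-\bc_{\min}$ with $\bc_{\min}=-p$, and define $\lambda_k:=m_{(k)}-m_{(N)}-(N-k)$. Since the $m_{(k)}$ are strictly decreasing integers, $\lambda_1\ge\cdots\ge\lambda_N\ge0$.

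Factoring $e^{i(m_{(N)}+\bgamma)x_l}$ out of column $l$ gives
\[
\det\bigl(e^{i(m_{(k)}+\bgamma)x_l}\bigr)=e^{i(m_{(N)}+\bgamma)\sum_l x_l}\,Q_{\lambda+\delta}(e^{ix_1},\dots,e^{ix_N}),
\]
where on the right $\delta$ denotes the staircase $(N-1,\dots,0)$. Note the clash of notation with the integer shift $\delta$; I will rename the staircase $\delta_N$ in the write-up.

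\textbf{Step 2: the reference configuration.} Applying Step 1 to $m=\bc$, whose entries in decreasing order are $\bc_{\min}+N-1,\dots,\bc_{\min}$, gives $\lambda=0$. Hence
\[
\Psi_{\bc}=\pm(2\pi)^{-N/2}e^{i(\bc_{\min}+\bgamma)\sum_l x_l}\,Q_{\delta_N}.
\]
This matches the stated relation $\Psi_{\bc}=\varepsilon\Psi$.

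\textbf{Step 3: the quotient.} Dividing Step 1 by Step 2, and using \eqref{eq:schurdef},
\[
\frac{\Psi_m}{\Psi_{\bc}}=\pm\, e^{i(m_{(N)}-\bc_{\min})\sum_l x_l}\,s_\lambda(e^{ix_1},\dots,e^{ix_N}).
\]
The sign comes from ordering $m$ versus the fixed row order in the definition of $\Psi_m$. The set equality \eqref{eq:schursetequal} should be read up to these unimodular factors, which do not affect orthonormality. Alternatively, one can fix the row ordering of $m\in\mathfrak W_N$ decreasingly, as the paper's conventions suggest, and the sign disappears.

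\textbf{Step 4: bijectivity.} The map $m\mapsto(\delta,\lambda)$ is a bijection from $\mathfrak W_N$ onto $\mathbb Z\times\{\text{partitions with at most }N\text{ parts}\}$. Its inverse is $m_{(k)}=\lambda_k+N-k+\delta+\bc_{\min}$, and any $\delta\in\mathbb Z$ is allowed because $m$ ranges over all $N$-subsets of $\mathbb Z$. This gives both inclusions.

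\textbf{Main obstacle.} The only delicate point is bookkeeping of the phase: the factor $e^{i\bgamma\sum x_l}$ cancels exactly between numerator and denominator, and the sign/phase $\varepsilon$ must be handled consistently. The half-integer shift $\bgamma$ plays no role beyond this cancellation.
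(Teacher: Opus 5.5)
This is essentially the paper's argument: you factor a common exponential out of each column and recognize the bialternant \eqref{eq:schurdef}. Your inverse $m_{(k)}=\lambda_k+N-k+\delta-p$ is exactly the paper's choice $m_i=\lambda_{N+1-i}+i-1+\ell$ with $\ell=\delta-p$. Two points need correcting.

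\textbf{Step 4 is false as stated.} Your forward map always produces $\lambda_N=m_{(N)}-m_{(N)}=0$. So it is a bijection onto $\mathbb Z\times\{\lambda:\lambda_N=0\}$, which is the normalization of Remark~\ref{rem:mini}, and not onto all partitions with at most $N$ parts. Your ``inverse'' is also not injective: $(\delta,\lambda)$ and $(\delta+\lambda_N,\lambda-\lambda_N(1,\dots,1))$ give the same $m$. Consequently, for the inclusion $\supseteq$ with $\lambda_N>0$, Step 3 as written returns $e^{i(\delta+\lambda_N)\sum_l x_l}s_{\lambda-\lambda_N(1,\dots,1)}$ rather than $e^{i\delta\sum_l x_l}s_\lambda$. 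You need one more line to close this. Either factor $e^{i(\delta-p+\bgamma)x_l}$ out of column $l$ instead of the minimal exponent; this is what the paper does, and it yields $Q_{\lambda+\delta_N}$ for the given $\lambda$ directly. Or use $s_{\lambda+c(1,\dots,1)}(z)=(z_1\cdots z_N)^c s_\lambda(z)$, which follows from \eqref{eq:schurdef} by pulling $z_l^c$ out of each column.

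\textbf{There is no sign ambiguity to absorb.} In the paper, both $m\in\mathfrak W_N$ and $\bc$ are listed increasingly. Reordering both to decreasing order multiplies numerator and denominator by the same sign, namely that of the reversal permutation. Hence your ``$\pm$'' in Step 3 is $+1$, and the quotient equals $e^{i\delta\sum_l x_l}s_\lambda$ exactly. The set equality therefore holds exactly, not merely up to unimodular constants.
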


\begin{proof}[Proof of Proposition \ref{prop:schureigen}]
	As in the proof of Proposition~\ref{Schur} in Section~\ref{sec:schur},  one checks that every element of $\boldsymbol{\mathcal{S}}_{2\pi,N}$ can be written as in the right-hand side of \eqref{eq:schursetequal}. Conversely, given a partition $\lambda$ and $\delta \in \mathbb{Z}$, equality
	\begin{equation}\label{schureigen}
	\frac{\Psi_m(x)}{\Psi_{\bc}(x)} = e^{i\delta \sum_{i=1}^N x_i}\, s_\lambda\left(e^{i x_1}, \cdots, e^{i x_N}\right),
	\end{equation}
	holds by choosing $\ell \in \mathbb{Z}$ such that $p + \ell = \delta$ (recall that $N = 2p$ or $N = 2p + 1$), and by setting $m_i = \lambda_{N+1-i} + i - 1 + \ell$ for $1 \leq i \leq N$. Observe that $m_1 < \cdots < m_N$, so $m \in \mathfrak{W}_N$.
\end{proof}

\begin{rem}\label{rem:mini}
	The pair $(\lambda,\delta)$ is uniquely determined under the normalization $\lambda_N=0$. Such a $\lambda$ is called the minimal partition associated with $m\in\mathfrak{W}_N$, in analogy with Definition~\ref{def:minimal}. If $\lambda$ in \eqref{eq:schursetequal} is allowed to have negative parts (a signature, also called a mixed partition or staircase), one may take $\delta=0$, and $\lambda$ is again uniquely determined. In this case, $s_\lambda(z_1,\cdots,z_N)$ is a rational, rather than polynomial, symmetric function. See the proof of point~3 below \eqref{eq:prodsignature}.
\end{rem}

\section{Hydrodynamic limit: main results}
\label{sec:hydro}

This section presents the hydrodynamic limit of the MESSEP. We state the limiting PDEs for the density and its moment generating function, outline the strategy of proof, and highlight the main structural ingredients of the method. Detailed proofs are deferred to Section~\ref{sec:proofhydro}.

\subsection{Preliminaries}

We assume that $N\sim \alpha L$ for some $\alpha\in(0,1)$ as $L\to\infty$. Throughout this section, $L,N\to\infty$ refers to this joint limit. We study the convergence of the empirical (occupation) measure
\begin{equation}\label{eq:occupmeasure}
\upsilon_{L,N}(t,d\theta)
=\frac{1}{N}\sum_{k=1}^N\delta_{e^{2i\pi X_k(L^2 t)/L}}(d\theta)
=\frac{1}{N}\sum_{k=1}^N\delta_{\Theta_k(L^2 t)}(d\theta)
=\frac{1}{N}\sum_{k=1}^N\delta_{\Xi_k(L^2 t)}(d\theta).
\end{equation}

By a slight abuse of notation, we write $\upsilon_{L,N}(t,dx)$ for the associated $2\pi$-periodic probability measure on $\mathbb R/2\pi\mathbb Z\simeq[0,2\pi)$. Functions on the unit circle and their $2\pi$-periodic counterparts on $\mathbb R$ or $\mathbb R/2\pi\mathbb Z$ will be identified whenever convenient. In \eqref{eq:occupmeasure}, $(\Theta(n))_{n\geq 0}$ and $(\Xi(n))_{n\geq 0}$ are the $\mathbb U$-embedding versions of the MESSEP random walks.

\begin{defin}\label{def:cvgprob}
We say that $\upsilon_{L,N}(t,dx)$ converges in probability to a (possibly random) probability measure $\upsilon(t,dx)$ if, for every continuous function $\varphi$ on $\mathbb R/2\pi\mathbb Z$, one has
\begin{equation}\label{eq:convprob}
\forall \varepsilon>0,\quad
\lim_{L,N\to\infty}
\mathbb P\!\left(
\left|\int_0^{2\pi}\varphi(x)\,\upsilon_{L,N}(t,dx)
-\int_0^{2\pi}\varphi(x)\,\upsilon(t,dx)\right|
>\varepsilon
\right)
=0.
\end{equation}
\end{defin}

As a preliminary observation, any limit point of the empirical measures must be absolutely continuous, as explained in the following remark.

\begin{rem}\label{rem:packed}
	Since the proportion of particles lying in an interval $(a,b)\subset[0,2\pi)$ satisfies
	\begin{equation}
	\frac{\#\{1\leq k\leq N:\ X_k(L^2 t)\in (a,b)\;\mathrm{mod}\;2\pi\}}{N}
	\leq \frac{(b-a)L}{2\pi N},
	\end{equation}
	any limit point $\upsilon(t,dx)$ necessarily admits a random density $f(t,x)$ satisfying
	\begin{equation}\label{eq:bounddesnity}
	0\leq f(t,x)\leq \frac{1}{2\pi\alpha}.
	\end{equation}
\end{rem}

Given a density $f(t,x)$ as previously, we denote by $\mathcal H f(t,x)$ its spacial Hilbert transform, defined either in the Cauchy principal value sense or, equivalently, in Fourier series form by
\begin{equation}\label{eq:hilbert}
\mathcal H f(t,x)
=\operatorname{p.v.}\,\frac{1}{2\pi}\int_{-\pi}^{\pi}
\cot\!\left(\frac{y-x}{2}\right) f(t,y)\,dy 
\quad \left(= -i\sum_{n\in\mathbb Z}{\rm sgn}(n)\,c_n(t)e^{inx}\right),
\end{equation}
whenever $f(t,x)=\sum_{n\in\mathbb Z} c_n(t) e^{inx}$, with ${\rm sgn}(0)=0$. Finally, we define the complex moments $(\mathfrak m_n(t))_{n\geq1}$ of a density $f(t,x)$, together with the associated moment generating function, by 
\begin{equation}
\mathfrak m_n(t)
=\frac{1}{2\pi}\int_0^{2\pi} e^{inx}f(t,x)\,dx
=\int_{\mathbb U}\theta^n f(t,\theta)\,d\theta
\quad\text{and}\quad
g(t,z)=\sum_{n=1}^\infty \mathfrak m_n(t)\,z^n.
\end{equation}
Here $d\theta$ denotes the Lebesgue measure on the unit circle, normalized to be a probability measure. We write $g_0(z)=g(0,z)$ and $f_0(x)=f(0,x)$. The series $g(t,z)$ is absolutely convergent in the open unit disk $\mathbb D$ when $f(t,x)$ satisfies \eqref{eq:bounddesnity}.

%
%

\subsection{Hydrodynamic PDEs for the density and the moments}

We now state the main result of this paper, describing the macroscopic evolution of the particle density and of the moment generating function.

\begin{theo}\label{thm:hydro}
	Assume that $\upsilon_{L,N}(0,dx)$ converges in probability to a (possibly random)  density profile $f_0(x)$. Then $(\upsilon_{L,N}(t,dx))_{t\geq0}$ converges in probability to a family of (random) densities $(f(t,x))_{t\geq0}$ satisfying, in a weak sense, the nonlinear and nonlocal PDE
	\begin{equation}\label{PDE}
	\frac{\partial f}{\partial t}
	+\frac{1}{\alpha\sin(\pi\alpha)}\,
	\frac{\partial\,\mathcal J_\alpha f}{\partial x}
	=0,
	\quad\text{with}\quad
	\mathcal J_\alpha f
	=\sin\!\left(2\pi^2\alpha\,f\right)\,
	\sinh\!\left(2\pi^2\alpha\,\mathcal H f\right).
	\end{equation}

Equivalently, the associated (random) moment generating function $g(t,z)$ is the unique solution on $[0,\infty)\times\mathbb D$ of the nonlinear transport equation
	\begin{equation}\label{eq:derivative0gbis000000}
	\frac{\partial g(t,z)}{\partial t}
	+z\,\mathcal V_\alpha\left(g(t,z)\right)\,
	\frac{\partial g(t,z)}{\partial z}
	=0,
	\quad\text{with}\quad
	\mathcal V_\alpha(z)
	=2\pi^2\,\frac{\sin\!\left(\pi\alpha(1+2z)\right)}{\sin(\pi\alpha)}.
	\end{equation}
\end{theo}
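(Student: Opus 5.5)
The plan is to use the method of moments. Empirical measures on the circle are determined by their complex moments, and the bound \eqref{eq:bounddesnity} of Remark~\ref{rem:packed} gives tightness and absolute continuity for free. So it suffices to show that, for every fixed $n\geq 1$ and $t\geq0$, the empirical moment $M_n^{L,N}(t)=\frac1N p_n(\Xi(L^2t))$ converges in probability to a deterministic functional $\mathfrak m_n(t)$ of the initial moments. Since $|M_n^{L,N}|\leq 1$, it is enough to prove convergence of the conditional expectation $\mathbb E[M_n^{L,N}(t)\mid \Xi(0)]$ and to prove that the conditional variance tends to $0$. Trigonometric polynomials are dense in $C(\mathbb R/2\pi\mathbb Z)$, which then gives Definition~\ref{def:cvgprob}. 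Randomness of $f_0$ is handled by conditioning on $\Xi(0)$ throughout.

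\textbf{Step 1 (first moment).} By the Frobenius formula I expand $p_n=\sum_{k=0}^{n-1}(-1)^k s_{(n-k,1^k)}$ into hook Schur polynomials. By Proposition~\ref{Schur}, each hook Schur polynomial is an eigenfunction of $P$ with eigenvalue $\rho_{\xi(\lambda)}/\rho_c$, where $\xi(\lambda)$ is the configuration attached to the hook via Remark~\ref{rem:corres}. A hook moves one particle from the compact block $c$ outward by $n-k$ sites and one inward gap by $k$. Hence $\rho_\xi-\rho_c$ is a finite sum of cosine differences near the edges $\pm\pi N/L$, and a Taylor expansion gives
\[
\Big(\frac{\rho_\xi}{\rho_c}\Big)^{L^2t}\longrightarrow \exp\!\big(-t\,e_{n,k}\big),
\]
with an explicit $e_{n,k}$ involving $\sin(\pi\alpha)$ and $\cos(\pi\alpha)$ (this is the hook eigenvalue expansion). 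Next I evaluate $P^{L^2t}p_n(\xi_0)=\sum_k(-1)^k(\rho/\rho_c)^{L^2t}s_{\{n|k\}}(\xi_0)$. To do so I re-express $s_{\{n|k\}}(\xi_0)/N$ through products of power sums $p_\pi(\xi_0)$, using characters $\chi^{\{n|k\}}_\pi$. Only terms of order $N^{\ell(\pi)-1}$ survive, and they give polynomials in the initial moments $\mathfrak m_j(0)$, multiplied by $N^{\ell(\pi)-1}$ weighted against $e^{-te_{n,k}}$. Because the individual hook terms are of large size, I expect cancellations. These should be organised by character identities of the type $\sum_k(-1)^k\chi^{(n-k,1^k)}_\pi q^k=\prod_j(1-(-q)^{\pi_j})$-like generating formulas. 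This step produces an explicit combinatorial formula for $\mathfrak m_n(t)$ as a polynomial in $\mathfrak m_1(0),\dots,\mathfrak m_n(0)$.

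\textbf{Step 2 (identification).} I check that the resulting $g(t,z)$ satisfies \eqref{eq:derivative0gbis000000}. The cleanest route is Lagrange inversion on the characteristic flow \eqref{eq:flow0}: the solution is $g(t,z)=g_0(w(t,z))$, with $w$ the inverse of $\Phi_t$. Its $n$th Taylor coefficient is
\[
\frac1{2i\pi n}\oint g_0'(w)\,e^{-2\pi^2 nt\sin(\pi\alpha(1+2g_0(w)))/\sin(\pi\alpha)}\,\frac{dw}{w^n}.
\]
Expanding $\sin(\pi\alpha(1+2g_0))$ in exponentials $e^{\pm i\pi\alpha(1+2g_0)}$ should match the hook sum term by term. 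Uniqueness follows from the method of characteristics, because the coefficients are determined recursively in $n$ by the transport equation. The weak form of \eqref{PDE} then follows by taking boundary values via \eqref{eq:linkgfh}. Writing $\sin(\pi\alpha(1+2g))$ with $1+2g=2\pi(f+i\mathcal Hf)$ and taking real parts gives $\mathcal J_\alpha f$, tested against $e^{inx}$.

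\textbf{Step 3 (variance), the main obstacle.} I write $|p_n|^2=p_n\overline{p_n}$ and expand it as $\sum_{k,l}(-1)^{k+l}s_{\{n|k\}}\overline{s_{\{n|l\}}}$. Each product $s_{\{n|k\}}\overline{s_{\{n|l\}}}$ is then decomposed via Littlewood--Richardson/Koike rules for mixed signatures into finitely many Schur functions indexed by double hooks $\{n|k,l\}$ and their degenerations. This avoids the infinite decomposition of $p_np_{L-n}$. For each term I need eigenvalue asymptotics: a double hook perturbs both edges of the compact block, so the exponent should be $e_{n,k}+e_{n,l}+o(1)$, plus terms that are small after dividing by $N^2$. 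It then remains to show that $\mathbb E|M_n|^2-|\mathbb E M_n|^2\to0$, i.e. that the double-hook contributions factorise asymptotically. The delicate point is controlling the error terms uniformly over the $O(n^2)$ terms while keeping track of the large cancellations, and I expect this to be the hardest part.
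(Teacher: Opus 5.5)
Your architecture is the paper's: moments, the hook expansion \eqref{eq:powersumhook}, Frobenius and characters, Koike's double hooks for the variance, and characteristics/Lagrange inversion for the identification. However, the analytic core of Step~1 fails as written.

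By \eqref{eq:hookeigenvalueprecise}, $L^2(1-\br_{\{n|k\}})\to 2\pi^2 n$, so $\br_{\{n|k\}}^{\lfloor L^2t\rfloor}\to e^{-2\pi^2 nt}$ for \emph{every} $k$. The limiting exponent depends neither on $k$ nor on $\alpha$. Weighting the contributions $p_\pi(\vartheta)/N\approx m_\pi N^{\ell(\pi)-1}$ against these limits gives, for $\ell(\pi)\geq 2$, the indeterminate product of $e^{-2\pi^2nt}\sum_k(-1)^k\chi^{\{n|k\}}_\pi=0$ with a divergent factor. Passing to the limit termwise is exactly the flawed computation \eqref{eq:avantfalse}--\eqref{eq:limite_mu}, which gives the wrong answer $e^{-2\pi^2nt}m_n$. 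If your $e_{n,k}$ really depended on $k$, these terms would instead diverge.

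The missing idea is that all the $\alpha$-dependence sits in the subleading $k$-dependence of the eigenvalue. The quantity $\br_{\{n|k\}}^{\lfloor L^2t\rfloor}$ is a smooth function of $x_k=\pi(n-2k-1)/L$. It converges, with all derivatives and uniformly in $t\le T$, to $h_t^n(x)=\exp(-2\pi^2nt\sin(\pi\alpha+x)/\sin(\pi\alpha))$. It must be Taylor expanded in $x_k$ up to order $\ell(\pi)-1$ with remainder $\mathcal O(L^{-\ell(\pi)})$ (Lemma~\ref{lem:eigenhookdevasym}). Orders $j<\ell(\pi)-1$ are then killed by $\sum_k(-1)^k\chi^{\{n|k\}}_\pi(n-2k-1)^j=0$. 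Order $\ell(\pi)-1$ produces the factor $(2\pi\alpha)^{\ell(\pi)-1}\partial_x^{\ell(\pi)-1}h_t^n(0)$ in \eqref{eq:momentasymptotic}.

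Your generating-function instinct is the right algebraic tool here, and it is shorter than the paper's Murnaghan--Nakayama induction. The correct identity is $\sum_k(-1)^k\chi^{\{n|k\}}_\pi q^k=\prod_i(1-q^{\pi_i})/(1-q)$. At $q=e^{-2s}$ it reads $\sum_k(-1)^k\chi^{\{n|k\}}_\pi e^{(n-1-2k)s}=\prod_i 2\sinh(\pi_i s)/(2\sinh s)=2^{\ell(\pi)-1}\bigl(\prod_i\pi_i\bigr)s^{\ell(\pi)-1}\bigl(1+\mathcal O(s^2)\bigr)$, which is Lemma~\ref{lem:characteridentities} at once.

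Your Lagrange--B\"urmann coefficient in Step~2 is correct and coincides with \eqref{eq:momentasymptoticsuitefaadi2}. It matches \eqref{eq:momentasymptotic} by Fa\`a di Bruno, since both equal $[z^n]G(g_0(z))$ with $G'(y)=h_t^n(2\pi\alpha y)$. Alternatively, your exponential expansion can be run at finite $L$. Write $\br_{\{n|k\}}^{\lfloor L^2t\rfloor}$ in powers $q^{jk}$ with $q=e^{-2i\pi/L}$, and use $\sum_k(-y)^k s_{\{n|k\}}=(1-y)^{-1}[u^n]\prod_i(1-yz_iu)/(1-z_iu)$. This gives $\frac1N\sum_k(-1)^kq^{jk}s_{\{n|k\}}(\vartheta)\to\frac1n[u^{n-1}]g_0'(u)e^{2i\pi\alpha j g_0(u)}$ for each fixed $j$, and these terms reassemble into the Lagrange coefficient after a domination argument. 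Either way, some finite-$L$ treatment of the $k$-dependence is indispensable.

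Step~3 has the same defect and one more. Saying the double-hook exponent is $e_{n,k}+e_{n,l}+o(1)$ is again only leading order, whereas $s_{\{n|k\}}\overline{s_{\{n|l\}}}/N^2$ contains terms of size $N^{\ell(\pi)+\ell(\mu)-2}$. You need the two-variable expansion of $\br_{\{n|k,l\}}^{\lfloor L^2t\rfloor}$ in $(x_k,x_l)$ with limiting coefficients $\partial^{j_1}h_t^n(0)\,\partial^{j_2}h_t^n(0)$ (Lemma~\ref{lem:eigendoublehookdevasym}). This factorisation comes from the exact additivity $1-\br_{\{n|k,l\}}=(1-\br_{\{n|k\}})+(1-\br_{\{n|l\}})$ in \eqref{eq:doublehookeigenvalueprecise}, and it is what produces $|\mathfrak m_n(t)|^2$ in \eqref{eq:techlem1}.

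More seriously, whichever direction you run Koike's rule in, the terms with $\tau\neq\emptyset$ cannot be discarded. In the product direction they sum to the constant $n$ only after the signed summation over $k,l$; this is \eqref{eq:powersumdoublehook}. In any case they reappear when $s_{\{n|k,l\}}(\vartheta)$ is re-expanded into power sums via \eqref{eq:rationalschurconj}. They are \emph{not} small after dividing by $N^2$: for $m<n$, each of the four pieces in \eqref{eq:fourdecompo} is of order one after the Taylor expansion. They vanish only in the sum over $a,b\in\{0,1\}$. The reason is that the two Littlewood--Richardson components $s_{\{n-m|k-i\}}$ and $s_{\{n-m|k-i-1\}}$ of a skew hook produce shifted character moments of opposite signs: $\mathcal X^{(j_1,i)}_{\pi,1}=-\mathcal X^{(j_1,i)}_{\pi,0}$ and $\widetilde{\mathcal X}^{(j_2,i)}_{\mu,1}=-\widetilde{\mathcal X}^{(j_2,i)}_{\mu,0}$.

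You correctly flag these cancellations as the crux, but you supply neither mechanism. As it stands, neither the limit of the mean nor the vanishing of the variance is established.
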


As explained in the introduction, the regularity of $f(t,x)$ depends on whether the initial profile saturates the bounds~\eqref{eq:bounddesnity}, corresponding to macroscopic voids or fully packed regions. In all cases, the dynamics exhibit finite-time regularization, followed by exponential convergence to equilibrium.

\Needspace{2\baselineskip}
\begin{theo}\label{prop:regularityPDE}
	The PDE~\eqref{PDE} enjoys the following  properties:
	\begin{enumerate}
		\item Assume that there exists $\eta>0$ such that, for all $x\in\mathbb R/2\pi\mathbb Z$,
		\begin{equation}\label{eq:upperlowerf0}
		\eta \leq f_0(x)\leq \frac{1}{2\pi\alpha}-\eta.
		\end{equation}
		Then the solution $(t,x)\mapsto f(t,x)$ is analytic on $(0,\infty)\times \mathbb R/2\pi\mathbb Z$ and satisfies~\eqref{PDE} in the strong sense on this domain.
		
		\item For any initial density profile $f_0(x)$, the following hold:
		\begin{enumerate}
			\item There exists $t^\ast>0$ such that $(t,x)\mapsto f(t,x)$ is analytic on $(t^\ast,\infty)\times \mathbb R/2\pi\mathbb Z$ and satisfies~\eqref{PDE} in the strong sense on this domain.
			\item The solution relaxes exponentially fast toward the uniform equilibrium, that is
			\begin{equation}
			\limsup_{t\to\infty} \frac{1}{t}\log\!\left(\left\|f(t,\cdot)-\frac{1}{2\pi}\right\|_\infty\right)<0.
			\end{equation}
		\end{enumerate}
	\end{enumerate}
\end{theo}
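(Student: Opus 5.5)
The proof will work entirely on the analytic side, through the transport equation \eqref{eq:derivative0gbis000000} and the characteristic representation sketched in the introduction. Set $h(t,z)=1+2g(t,z)$; then $\operatorname{Re}h>0$ on $\mathbb D$ and, by \eqref{eq:linkgfh}, $h(t,e^{ix})=2\pi(f+i\mathcal Hf)$ in the boundary sense. The bounds \eqref{eq:bounddesnity} say that $\operatorname{Re}h(t,\cdot)$ has boundary values in $[0,1/\alpha]$, hence $h_0$ maps $\mathbb D$ into the strip $S=\{0<\operatorname{Re}\zeta<1/\alpha\}$. The method of characteristics gives $g(t,\Phi_t(w))=g_0(w)$, with $\Phi_t$ as in \eqref{eq:flow0}:
\[
\Phi_t(w)=w\exp\bigl(tV(h_0(w))\bigr),\qquad V(\zeta)=2\pi^2\frac{\sin(\pi\alpha\zeta)}{\sin(\pi\alpha)}.
\]
The basic computation is $|\Phi_t(w)|=|w|\exp\bigl(t\operatorname{Re}V(h_0(w))\bigr)$ together with
\[
\operatorname{Re}\sin(\pi\alpha\zeta)=\sin(\pi\alpha\operatorname{Re}\zeta)\cosh(\pi\alpha\operatorname{Im}\zeta)\ \ge 0 \quad\text{on } S.
\]
Thus $|\Phi_t(w)|\ge|w|$ and characteristics move outward. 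The domain $V_t=\{w:|\Phi_t(w)|<1\}$, or more precisely the component containing $0$, is mapped biholomorphically onto $\mathbb D$. This follows from the uniqueness statement of Theorem~\ref{thm:hydro}, or directly from an argument-principle count on level curves. Let $w(t,\cdot)$ denote the inverse map. Then $h(t,z)=h_0(w(t,z))$, and $f(t,x)=\operatorname{Re}h_0(w(t,e^{ix}))/(2\pi)$, which is \eqref{eq:represent}.

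\emph{Point 1.} Under \eqref{eq:upperlowerf0}, $\operatorname{Re}h_0$ takes values in $[2\pi\eta,1/\alpha-2\pi\eta]$ on $\overline{\mathbb D}$ by the maximum principle. Hence $\operatorname{Re}V(h_0(w))\ge c>0$ uniformly on $\overline{\mathbb D}$, with $c$ bounded away from zero provided $\operatorname{Im}h_0$ stays bounded. When $\mathcal Hf_0$ is unbounded I would still use the lower bound $\sin(\pi\alpha\operatorname{Re}\zeta)\cosh(\cdot)\ge\sin(2\pi^2\alpha\eta)$. Consequently $|\Phi_t(w)|\ge |w|e^{ct}$, so $\overline{V_t}\subset\{|w|\le e^{-ct}\}\Subset\mathbb D$. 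On that compact disk $h_0$ and $\Phi_t$ are analytic, so $w(t,\cdot)$ extends holomorphically to a neighbourhood of $\overline{\mathbb D}$. Joint analyticity in $(t,z)$ follows from the analytic implicit function theorem applied to $\Phi_t(w)=z$; it requires $\partial_w\Phi_t\neq0$, which holds because $\Phi_t$ is a biholomorphism onto a neighbourhood of $\overline{\mathbb D}$. Therefore $h(t,\cdot)$ is analytic across the circle, $f$ is analytic on $(0,\infty)\times\mathbb R/2\pi\mathbb Z$, and taking real parts of the boundary form of \eqref{eq:derivative0gbis000000} on $z=e^{ix}$, where $z\partial_z=-i\partial_x$, gives \eqref{PDE} strongly. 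Indeed $\operatorname{Im}\sin(\pi\alpha\zeta)=\cos(\cdot)\sinh(\cdot)$ integrates to $\mathcal J_\alpha$, up to the constant $1/(\alpha\sin\pi\alpha)$.

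\emph{Point 2.} For a general $f_0$, I reduce to Point 1 by a time shift. The semigroup property (uniqueness) lets me restart from $f(s,\cdot)$, so it suffices to find $s>0$ with $\eta\le f(s,\cdot)\le 1/(2\pi\alpha)-\eta$. I would take $s$ small, say $s=1$, and show that $w(s,\overline{\mathbb D})$ is a compact subset of $\mathbb D$, i.e. that $V_s\Subset\mathbb D$. Then $\operatorname{Re}h_0$ restricted to $w(s,\mathbb D)$ is bounded away from $0$ and from $1/\alpha$ by the strict maximum principle applied to the harmonic function $\operatorname{Re}h_0$, which takes values in $(0,1/\alpha)$ inside $\mathbb D$. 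The key estimate is that for $|w|=r$ close to $1$ the mean of $\operatorname{Re}V(h_0)$ over the circle $|w|=r$ is at least $\operatorname{Re}V(\text{mean of }h_0)$, by superharmonicity of $\operatorname{Re}\sin(\pi\alpha\,\cdot)$ on $S$ composed with $h_0$. That mean equals $\operatorname{Re}V(1)=2\pi^2$, so radial characteristics escape on average. The \textbf{main obstacle} is making this pointwise rather than on average. Near boundary points where $\operatorname{Re}h_0\to0$ or $1/\alpha$, the factor $\sin(\pi\alpha\operatorname{Re}h_0)$ degenerates, and $V_t$ may touch $\partial\mathbb D$ for small $t$; this is exactly the singular behaviour described in the introduction. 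I would first try to exploit the $\cosh(\pi\alpha\operatorname{Im}h_0)$ factor and the Harnack inequality: $\operatorname{Re}h_0(w)\ge c(1-|w|)$ and $1/\alpha-\operatorname{Re}h_0(w)\ge c(1-|w|)$, with $c>0$ since $f_0$ is not identically extremal. This gives $\log|\Phi_t(w)|-\log|w|\ge tc'(1-|w|)$, which is not enough by itself. So I expect to need a time $t^\ast$ that depends on $f_0$, obtained from a compactness and Lipschitz argument on the angular drift $\operatorname{Im}V$. This argument would show that $\sup_{w\in V_t}|w|<1$ for $t$ large, using the relaxation in the next step, and then Point 1 applies on $(t^\ast,\infty)$.

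\emph{Relaxation (2b).} Once $f(t^\ast,\cdot)$ satisfies \eqref{eq:upperlowerf0}, the estimate $|\Phi_t(w)|\ge|w|e^{ct}$ gives $|w(t,z)|\le e^{-c(t-t^\ast)}$ on $\overline{\mathbb D}$. Since $g_0(0)=0$ and $|g_0|$ is bounded on compact sub-disks, I get $|g(t,z)|=|g(t^\ast,w(t,z))|\le Ce^{-c(t-t^\ast)}$ uniformly on $\overline{\mathbb D}$. Hence $\|f(t,\cdot)-1/(2\pi)\|_\infty\le C'e^{-ct}$. By linearizing at $h=1$ one should also recover the sharp rate $2\pi^2$ for the $n=1$ mode, consistent with Proposition~\ref{gapestimate}.
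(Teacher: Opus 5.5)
Your Point~1 is correct and is the paper's argument made quantitative. The paper observes that \eqref{eq:upperlowerf0} makes the set $\mathcal W_0$ of Lemma~\ref{lem:regulaboundary} empty, so $\partial V_t\subset\mathbb D$ and Proposition~\ref{prop:anal} applies. Your justification of $\Phi_t'\neq0$ on $\partial V_t$ is circular, though. Use the local argument instead: a critical point $w_0\in\partial V_t\cap\mathbb D$ would make $\Phi_t$ at least two-to-one near $w_0$ onto points of $\mathbb D$, contradicting injectivity on $V_t$.

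The genuine gap is Point~2(a), on which your 2(b) rests: you never prove that $V_t\Subset\mathbb D$ for some finite $t$. You set the Harnack bound aside as insufficient and fall back on an unspecified compactness/Lipschitz argument on $\operatorname{Im}V$. By your own description, that argument uses the relaxation of 2(b). But 2(b) assumes that $f(t^\ast,\cdot)$ already satisfies \eqref{eq:upperlowerf0}. This is circular, so as written neither 2(a) nor 2(b) is established.

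The paper avoids characteristics here (Proposition~\ref{prop:spontaneous}) and majorizes the explicit formula \eqref{eq:momentasymptoticsuitefaadi} directly. It bounds $\partial^{k-1}h^n_t(0)$ by a Cauchy integral over a small circle $|z|=r$ on which $\operatorname{Re}u_\alpha\ge 1/2$, then resums the Bell polynomials by Fa\`a di Bruno. This gives
\[
\sum_{n\ge1}|\mathfrak m_n(t)|\,|z|^n\le-\frac{r}{2\pi\alpha}\log\Bigl(1-\frac{2\pi\alpha}{r}\,g_0^+\bigl(|z|e^{-\pi^2t}\bigr)\Bigr),\qquad g_0^+(z)=\sum_{n\ge1}|m_n|z^n.
\]
Hence $g(t,\cdot)$ has radius of convergence larger than $1$ for $t$ large, analyticity follows from Proposition~\ref{eq:pdeanyt}, and $\|f(t,\cdot)-1/(2\pi)\|_\infty=\mathcal O(g_0^+(e^{-\pi^2t}))$.

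Your own discarded estimate also closes your route. Apply Harnack to the positive harmonic functions $\operatorname{Re}h_0$ and $1/\alpha-\operatorname{Re}h_0$, which equal $1$ and $1/\alpha-1$ at $0$. Together with concavity of $\sin$ on $[0,\pi]$, this gives $\operatorname{Re}V(h_0(w))\ge c'(1-|w|)$ with $c'=2\pi^2\min(\alpha,1-\alpha)/\sin(\pi\alpha)$. The function $r\mapsto\log r+tc'(1-r)$ vanishes at $r=1$ and decreases on $(1/(tc'),1)$, so it is positive there. Hence $V_t\subset\{|w|\le 1/(tc')\}$ for all $t>1/c'$, a threshold independent of $f_0$. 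Your Point~1 argument then gives analyticity on $(1/c',\infty)\times\mathbb R/2\pi\mathbb Z$. Once $V_t$ lies in a disk where $\operatorname{Re}V(h_0)\ge\pi^2$, one gets $V_t\subset\{|w|<e^{-\pi^2t}\}$. Therefore $\|f(t,\cdot)-1/(2\pi)\|_\infty\le \pi^{-1}e^{-\pi^2t}/(1-e^{-\pi^2t})$, with no restart or semigroup argument needed.
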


As shown in Section~\ref{sec:pdet0}, analytic initial data $f_0$ yield, locally in time, analytic solutions to~\eqref{PDE}. However, as already emphasized in the introduction, such regularity may break down at finite time, even when $f_0$ is entire. We refer  to the  example of Section~\ref{sec:periodicdens}.

We now briefly recall -- still building on the overview given in the introduction -- the mechanism linking~\eqref{PDE} and~\eqref{eq:derivative0gbis000000}. As explained there,  the density is recovered through the identity~\eqref{eq:linkgfh} via the method of characteristics (see also Section~\ref{sec:charc}). More precisely, Lemma~\ref{lem:tech1} provides a decreasing family of open sets $V_t\subset\mathbb D$, with $V_0=\mathbb D$ and $\bigcap_{t\ge0}V_t=\{0\}$, together with conformal maps $w(t,\cdot)$ from $\mathbb D$ onto $V_t$ satisfying
\begin{equation}\label{eq:implicit}
\Phi_t(w(t,z))=w(t,z)\,e^{tA_0(w(t,z))}=z,
\quad\text{with}\quad
A_0(w)=\mathcal V_\alpha\bigl(g_0(w)\bigr).
\end{equation}
The method of characteristics yields $g(t,z)=g_0(w(t,z))$. By~\eqref{eq:linkgfh}, recovering $f(t,x)$ amounts to extending $w(t,\cdot)$ to $\partial V_t$, leading to the representation~\eqref{eq:represent}. The regularity of $f(t,x)$ is thus governed by that of $\partial V_t$, as described in the introduction and illustrated in Figure~\ref{fig:boundary}.

\begin{rem}
	The method of characteristics is not merely conceptual: it also provides an effective numerical procedure to compute $f(t,x)$ from the initial generating function $g_0$ via the geometry of the associated level sets. This is the method used to obtain the density evolution in Figure~\ref{densites}.
\end{rem}

Any point $w_0=e^{ix_0}\in \partial V_t\cap\partial\mathbb D$ corresponds to an extremal values of $f_0(x_0)$. At such points, irregularities may arise, whereas points in $\partial V_t\cap\mathbb D$ correspond to non-extremal values and yield smooth evolution. A detailed analysis is given in Propositions~\ref{prop:anal}, \ref{prop:cont}, and~\ref{prop:deriv}.

We now turn to the maximally packed initial configuration
\begin{equation}\label{eq:stepprofile}
f_0(x)=\frac{1}{2\pi\alpha}\,\mathds{1}_{[-\pi\alpha,\pi\alpha]}(x),
\end{equation}
which plays the role of a Dirac mass in this constrained setting and exhibits sharp regularity and support properties. Setting $w_\alpha=e^{i\pi\alpha}$, one computes
\begin{equation}\label{eq:g0A0}
g_0(z)=\frac{1}{2i\pi\alpha}\log\left(\frac{1-z\,\overline{w_\alpha}}{1-z\,w_\alpha}\right)
\quad\text{and}\quad 
A_0(w)=2\pi^2\,\frac{(1-w)(1+w)}{(w-w_\alpha)(w-\overline{w_\alpha})}.
\end{equation}

By symmetry and particle--hole duality (see Remark~\ref{rem:alphasym}), we may assume $\alpha\leq 1/2$. The regularity properties of $f(t,x)$ are stated below. They are illustrated in Figure~\ref{densites}, while the geometry of the associated evolving domains $V_t$, which governs these properties, is depicted in Figure~\ref{fig:levelsetstep}. The proof of the following result is given in Section \ref{sec:packedproof}.

\begin{theo}\label{thm:pack}
	Let $f_0$ be given by~\eqref{eq:stepprofile}, and let $w(t,\cdot):\mathbb D\to V_t$ be the associated conformal map. For every $t>0$, the map $w(t,\cdot)$ extends continuously to $\partial\mathbb D$, and its image is a piecewise analytic Jordan curve $\partial V_t\subset\overline{\mathbb D}\setminus\{\omega_\alpha,\overline{\omega_\alpha}\}$. The density $f(t,x)$ defined by~\eqref{eq:represent} is continuous on $(0,\infty)\times\mathbb R/2\pi\mathbb Z$. Moreover, the time evolution of $f(t,x)$ admits the following description. 
	
	There exist explicit times $0<t_\ast\leq t^\ast$  (see identities \eqref{eq:tempscritiques}), with $t_\ast<t^\ast$ if $\alpha\neq 1/2$ and $t_\ast=t^\ast$ otherwise, together with smooth functions (see Section \ref{sec:final})
	\begin{equation}
	\phi_1:(0,t_\ast)\to(0,\pi\alpha),
	\qquad 
	\phi_2:(0,t^\ast)\to(\pi\alpha,\pi),
	\end{equation}
	such that $\phi_1$ decreases from $\pi\alpha$ to $0$, while $\phi_2$ increases from $\pi\alpha$ to $\pi$. The evolution then exhibits three distinct regimes:
	
	\begin{enumerate}
		\item For all $t\in(0,t_\ast]$, one has
		\begin{equation}
		f(t,x)=
		\begin{cases}
		\dfrac{1}{2\pi\alpha}, & \text{if } x\in[-\phi_1(t),\phi_1(t)],\\[10pt]
		0, & \text{if } x\notin(-\phi_2(t),\phi_2(t)),
		\end{cases}\quad\text{and}\quad 0<f(t,x)<\frac{1}{2\pi\alpha}\; \text{elsewhere},
		\end{equation}
		while  $f(t,x)$ is analytic except along the curves $(t,\pm\phi_1(t))$ and $(t,\pm\phi_2(t))$.
		
		\item For all $t\in(t_\ast,t^\ast]$, one has $f(t,x)=0$, if  $x\notin(-\phi_2(t),\phi_2(t))$, and $0<f(t,x)<1/(2\pi\alpha)$ elsewhere, while $f(t,x)$ is analytic except along the curves $(t,\pm\phi_2(t))$.
		
		\item For all $t\in(t^\ast,\infty)$, one has $0<f(t,x)<1/(2\pi\alpha)$ and $f(t,x)$ is analytic.
	\end{enumerate}
\end{theo}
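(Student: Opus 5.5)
Throughout, write $\omega_\alpha=w_\alpha=e^{i\pi\alpha}$. The whole analysis is driven by the explicit characteristic map
\[
\Phi_t(w)=w\,e^{tA_0(w)},\qquad A_0(w)=2\pi^2\frac{(1-w)(1+w)}{(w-w_\alpha)(w-\overline{w_\alpha})},
\]
from \eqref{eq:g0A0}, together with the representation \eqref{eq:represent}. Since $g(t,z)=g_0(w(t,z))$, we have $f(t,x)=\bigl(1+2\operatorname{Re}g_0(w(t,e^{ix}))\bigr)/(2\pi)$. The quantity $1+2\operatorname{Re}g_0(w)$ is $1/\alpha$ times the harmonic measure of the arc $[-\pi\alpha,\pi\alpha]$ seen from $w$. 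So the density is extremal exactly where $w(t,e^{ix})$ lies on $\partial\mathbb D$: the value $1/(2\pi\alpha)$ corresponds to the arc $|\arg w|<\pi\alpha$, and the value $0$ to the arc $|\arg w|>\pi\alpha$.

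\textbf{Step 1: describe $\partial V_t$.} By Lemma~\ref{lem:tech1}, $V_t$ is the connected component of $\{|\Phi_t|<1\}$ containing $0$. Its boundary is therefore contained in the level set
\[
\log|w|+t\operatorname{Re}A_0(w)=0 .
\]
On $\partial\mathbb D$, a direct computation shows that $A_0(e^{i\phi})$ is purely imaginary. Hence the whole circle lies in the level set, except at the poles $w_\alpha,\overline{w_\alpha}$.

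Inside $\mathbb D$, I would parametrize the level curve near each pole $e^{\pm i\pi\alpha}$. Near a pole, $A_0$ behaves like $c/(w-w_\alpha)$, so the level set is locally a circle tangent to $\partial\mathbb D$ at the pole; these are horocycle-like loops. As $t$ grows these arcs detach from the poles. The inner boundary curve then meets $\partial\mathbb D$ at four points $e^{\pm i\phi_1(t)}$ and $e^{\pm i\phi_2(t)}$, where $\phi_1(t)<\pi\alpha<\phi_2(t)$. These points are the zeros on the circle of the normal derivative
\[
\partial_r\bigl(\log r+t\operatorname{Re}A_0(re^{i\phi})\bigr)\big|_{r=1}
=1+t\,\partial_r\operatorname{Re}A_0 .
\]
This is an explicit trigonometric equation in $\phi$. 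Its solutions give $\phi_1,\phi_2$ as smooth monotone functions of $t$.

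The critical times are the times when these crossing points reach $\phi=0$ and $\phi=\pi$ respectively. At those moments the boundary arc near $1$ (respectively $-1$) pinches off the circle, so $t_\ast$ and $t^\ast$ are obtained by setting $\phi=0$ and $\phi=\pi$ in the equation above. The substitution $\alpha\mapsto1-\alpha$, i.e.\ $w\mapsto -w$, exchanges the two equations. This gives $t_\ast=t^\ast$ exactly when $\alpha=1/2$, and $t_\ast<t^\ast$ for $\alpha<1/2$, which I would check by comparing the two explicit formulas.

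\textbf{Step 2: $\partial V_t$ is a Jordan curve and $w(t,\cdot)$ extends.} The boundary consists of arcs of $\partial\mathbb D$ together with analytic level arcs. Each level arc is a regular analytic curve, since $\nabla$ of $\log|\Phi_t|$ vanishes only at isolated critical points, which I would locate explicitly. The poles are excluded from the boundary because $\log|\Phi_t|\to-\infty$ near $w_\alpha$ from the inside along non-tangential directions; this also forces $V_t$ to avoid a neighbourhood of each pole. So $\partial V_t$ is a piecewise analytic Jordan curve. By Carathéodory--Torhorst, $w(t,\cdot)$ extends to a homeomorphism $\overline{\mathbb D}\to\overline{V_t}$. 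Continuity of $f$ in $(t,x)$ then follows from Carathéodory kernel convergence: $V_t$ varies continuously in $t$ and the boundary is uniformly locally connected.

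\textbf{Step 3: read off the three regimes.} Through the correspondence $x=\arg\Phi_t(w)$, the boundary arcs of $V_t$ determine $f$ as follows.
\begin{enumerate}
\item Boundary points on the arc $|\arg w|<\phi_1(t)$ of the circle give $f=1/(2\pi\alpha)$. Their images under $\Phi_t$ form $[-\phi_1(t),\phi_1(t)]$, because $\Phi_t$ acts as a rotation on the circle and fixes $\pm\phi_1$ at the crossing points. I would verify this by checking that $t\operatorname{Im}A_0$ vanishes there, or else relabel $\phi_1$ accordingly.
\item Boundary points on the arc $|\arg w|>\phi_2(t)$ give $f=0$.
\item Interior level arcs give values strictly between the two extremes, and $f$ is analytic there by Proposition~\ref{prop:anal}.
\end{enumerate}
Singularities are confined to the junction points, by Propositions~\ref{prop:cont} and \ref{prop:deriv}. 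After $t_\ast$ the saturated arc has disappeared, and after $t^\ast$ the void arc has disappeared too, so $\overline{V_t}\subset\mathbb D$ and $f$ is analytic.

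\textbf{Main obstacle.} The hardest step is the global topology of the level set $\{|\Phi_t|=1\}$ in $\mathbb D$ for all $t$. One must show that the component containing $0$ has exactly the claimed shape and that each crossing equation has a unique root in the right range, which gives monotonicity of $\phi_1,\phi_2$. I would first try to write $\operatorname{Re}A_0(re^{i\phi})$ in closed form and show that $r\mapsto\log r+t\operatorname{Re}A_0$ has at most one zero in $(0,1)$ for each $\phi$. This would make $V_t$ radially star-shaped, which settles both the Jordan property and the ordering of the crossing points.
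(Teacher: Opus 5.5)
\textbf{Verdict: there are genuine gaps.} The central content of the theorem, the global shape of $\partial V_t$, is never established, and two local steps are wrong.

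\textbf{The global shape of $\partial V_t$.} Your Step~2 only asserts that $\partial V_t$ is a piecewise analytic Jordan curve. The route you propose at the end (each ray meets the level set at most once in $(0,1)$) amounts to starlikeness of $w(t,\cdot)$, that is, $\operatorname{Re}(1+twA_0'(w))>0$ on $V_t$. You give no way to prove this, and a maximum-principle argument would be circular. On the inner arcs of $\partial V_t$ this quantity is the radial derivative of $\operatorname{Re}\mathfrak f_t$ (with $\mathfrak f_t=tA_0+\log w$), so its sign there is exactly the property you want.

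\textbf{The poles.} Your local picture at the poles is wrong. Since $A_0(w)\sim-2\pi^2\omega_\alpha/(w-\omega_\alpha)$, one has $\operatorname{Re}A_0\to+\infty$ as $w\to\omega_\alpha$ non-tangentially from inside $\mathbb D$, as Lemma~\ref{lem:dilationrealpart} requires. With your sign, $\log|\Phi_t|\to-\infty$, points near the pole would belong to $V_t$, which is the opposite of your conclusion. There are no horocycle loops either. The function $1/\mathfrak f_t$ extends biholomorphically near $\omega_\alpha$, so near the pole the zero set of $\operatorname{Re}\mathfrak f_t$ is just the unit circle. For small $t$ the inner arc is, to leading order, the half-circle $|w-\omega_\alpha|=\pi\sqrt{2t}$.

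\textbf{What is missing: the paper's topological argument.}
(a) Level arcs can meet $\mathbb R$ only at zeros of $x\mapsto\log|x|+tA_0(x)$, which a one-variable variation study locates: none for $t<t_\ast$, then $x_0\in(0,1)$, then also $y_0\in(-1,0)$ for $t>t^\ast$.
(b) The arc leaving a critical point $\zeta_1\in\mathbb U$ inward cannot close up (maximum principle), cannot cross $\mathbb R$, cannot reach $\mathbb U$ at a non-critical point, and cannot end at a pole. Hence it must end at $\zeta_2$.
(c) One checks that $\Phi_t$ maps the resulting curve $\gamma$ homeomorphically and positively onto $\partial\mathbb D$ (Lemma~\ref{lem:homeo}). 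The argument principle then gives $\operatorname{int}\gamma=V_t$.
Knowing only that $\partial V_t$ lies in the level set does not tell you which part of the level set it is.

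\textbf{The angles $\phi_i$.} You identify the theorem's $\phi_i$ with the crossing angles in the $w$-plane. Your crossing equation $1+t\,\partial_r\operatorname{Re}A_0=0$ on $\mathbb U$ is correct: it is $P_{\alpha,t}(e^{i\theta})=0$, and setting $\theta=0,\pi$ does give $t_\ast,t^\ast$. But its roots are the arguments $\theta_i$ of the critical points $\zeta_i$, whereas the theorem's $\phi_i$ is $\arg\Phi_t(\zeta_i)$.

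Your justification ``$\Phi_t$ acts as a rotation on the circle and fixes $\pm\phi_1$'' is false. One has $\arg\Phi_t(e^{i\theta})=\theta+2\pi^2t\sin\theta/(\cos\pi\alpha-\cos\theta)$, and the correction term is strictly negative on $(0,\pi\alpha)$. The correct argument runs as follows. The map $\theta\mapsto\arg\Phi_t(e^{i\theta})$ has derivative $1+twA_0'(w)$ at $w=e^{i\theta}$, which is real on $\mathbb U$ and positive on $(-\theta_1,\theta_1)$ for $t<t_\ast$. It is therefore an increasing odd bijection of $[-\theta_1,\theta_1]$ onto $[-\phi_1,\phi_1]$.

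Consequently, the monotonicity you derive concerns $\theta_i(t)$, not $\phi_i(t)=\theta_i+t\operatorname{Im}A_0(\zeta_i)$. The limits and monotonicity of $\phi_i$ need a separate check; for example, $\phi_1(0^+)=\pi\alpha$ relies on $t\operatorname{Im}A_0(\zeta_1)=O(\sqrt t)$.

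\textbf{Regularity.} Propositions~\ref{prop:cont} and~\ref{prop:deriv} cannot be quoted here, because $A_0\notin\mathcal A(\mathbb D)$ and $\mathcal Z_0$ is the whole circle. Analyticity away from the images of the critical points must instead be argued directly from $\Phi_t'\neq0$ on the rest of $\partial V_t$.
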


\begin{rem}
	At points where $x\mapsto f(t,x)$ fails to be differentiable, the singularities are of square-root or cubic-root type. Further details are given in the corresponding parts of the proofs.
\end{rem}

\subsection{Sketch of the proof and key lemmas}
\label{sec:sketch}

The proof relies on the method of moments and, as in the low-density regime, crucially uses that the eigenfunctions of the MESSEP kernel are Schur polynomials restricted to $\mathbb U_L^N$. By a standard density argument, convergence in probability~\eqref{eq:convprob} reduces to the convergence of all complex moments  of the empirical measure. This allows us to work entirely at the level of moments, where the spectral structure of the dynamics is most effective.

\subsubsection{Schur and power-sum bases: Frobenius formulas}

A key ingredient  is the relation between Schur polynomials and the power-sum symmetric polynomials. The latter form a linear basis of symmetric polynomials indexed by partitions $\pi$:
\begin{equation}\label{eq:powersums}
p_\pi = \prod_{i \geq 1} p_{\pi_i} = \prod_{k \geq 1} p_k^{l_k}, 
\quad \text{with} \quad 
p_n(X) = X_1^n + \cdots + X_N^n,
\end{equation}
where $l_k$ denotes the multiplicity of $k$ in $\pi$. We write $\pi\vdash n$ when $n=\sum_i \pi_i=\sum_k l_k k$, and we set $\ell(\pi)=\sum_k l_k$. Note that $p_n$ extends to negative integers as a rational function.

Irreducible representations of $\mathfrak S_n$ are also indexed by partitions $\lambda\vdash n$. We denote by $\chi^\lambda_\pi$ the value of the character on permutations of cycle type $\pi$. The Frobenius character formulas are
\begin{equation}\label{eq:frobenius}
p_\pi = \sum_{\lambda \vdash n} \chi_\pi^\lambda\, s_\lambda 
\quad \text{and} \quad 
s_\lambda = \sum_{\pi \vdash n} \frac{\chi_\pi^\lambda}{z_\pi} \, p_\pi, 
\quad \text{with} \quad 
z_\pi = \prod_{k \geq 1} k^{l_k} l_k!.
\end{equation}

The Murnaghan--Nakayama rule (see \cite{Macdonald,stanley1999enumerative}) is an important tool to compute characters.  For the hook partition $\{n|k\}:=(n-k,1^k):=(n-k,1,\cdots,1)$ (see Figure \ref{fig:confighook}) and $\pi=(n)$, it is well known that  $\chi^{\{n|k\}}_{(n)}=(-1)^k$. It then follows from~\eqref{eq:frobenius} that
\begin{equation}\label{eq:powersumhook}
p_n = \sum_{k=0}^{n-1} (-1)^k s_{\{n|k\}}.
\end{equation}


\subsubsection{Complex moments and their expectation}

For all $n\in\mathbb Z$, we denote by ${\mathcal M}_n^{L,N}(t)$ the random $n$th complex moment of  $\upsilon_{L,N}(t,d\theta)$  and by $\mu_n^{L,N}(t)$ its conditional expectation given $\Theta(0)=\vartheta$. Equivalently, $\mu_n^{L,N}(t)$ is the $n$th moment of the mean occupation measure $\overline{\upsilon}_{L,N}(t,d\theta)=\mathbb{E}_{\vartheta}[\upsilon_{L,N}(t,d\theta)]$. With these notations,
\begin{equation}
{\mathcal M}_n^{L,N}(t) = \frac{p_n(\Theta(L^2 t))}{N} = \int_{\mathbb{U}} \theta^n\, \upsilon_{L,N}(t,d\theta) = \frac{1}{2\pi}\int_0^{2\pi} e^{i n x}\, \upsilon_{L,N}(t,dx),
\end{equation}
and
\begin{equation}\label{eq:expectedmoment}
\mu_n^{L,N}(t) = \mathbb{E}_{\vartheta}\left[ {\mathcal M}_n^{L,N}(t) \right] = \int_{\mathbb{U}} \theta^n\, \overline \upsilon_{L,N}(t,d\theta) = \frac{1}{2\pi}\int_0^{2\pi} e^{i n x}\, \overline \upsilon_{L,N}(t,dx).
\end{equation}

We assume that the initial empirical measure $\upsilon_{L,N}(0,\cdot)$ associated with $\vartheta$ converges in probability, in the sense of~\eqref{eq:convprob}, to a random probability measure $f_0(\theta)d\theta$ on $\mathbb U$. Denoting by $m_n$ its (random) $n$th complex moment, one has the convergence in probability
\begin{equation}\label{eq:momentinitit}
\frac{p_n(\vartheta)}{N} = \frac{\vartheta_1^n + \cdots + \vartheta_N^n}{N} \xrightarrow[L,N \to \infty]{} m_n = \int_{\mathbb{U}} \theta^n  f_0(\theta)d\theta = \frac{1}{2\pi}\int_0^{2\pi} e^{i n x} f_0(x)dx.
\end{equation}

Observe that ${\mathcal M}_{-n}^{L,N}(t)$ (resp.\ $\mu_{-n}^{L,N}(t)$) is the complex conjugate of ${\mathcal M}_n^{L,N}(t)$ (resp.\ $\mu_n^{L,N}(t)$). It therefore suffices to consider $n\geq 0$.  To go further, using~\eqref{eq:powersumhook}, one obtains
\begin{equation}\label{eq:sanspartieentiere}
\mu_n^{L,N}(t) = \frac{1}{N} \sum_{k=0}^{n-1} (-1)^k\, \br_{\{n|k\}}^{\lfloor L^2 t \rfloor} \, s_{\{n|k\}}(\vartheta) + \mathcal{O}\!\left( \frac{1}{N L} \right),\quad\text{with}\quad  \br_{\{n|k\}}=\frac{\rho_\xi}{\rho_c}.
\end{equation}
Recall that $\rho_\xi/\rho_c$ denotes the MESSEP eigenvalue associated with the configuration $\xi$ corresponding to the hook partition $\{n|k\}$. We refer to Proposition~\ref{Schur}, Remark~\ref{rem:corres}, and Figure~\ref{fig:confighook}.

\begin{figure}[!h]
	\centering
	\includegraphics[width=7.5cm]{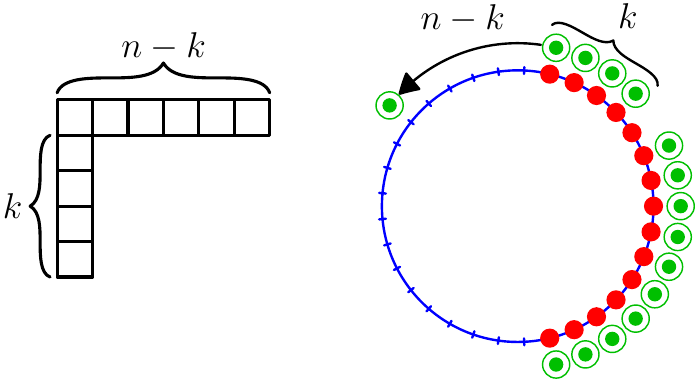}
	\captionsetup{width=13.9cm}
	\caption{\small  Correspondence between  hook partition $\{n|k\}$ and  configuration $\xi$ (circled dots). The points on the circle represent the symmetric compact configuration $\bc$.}
	
	\label{fig:confighook}
\end{figure}


\subsubsection{The key character and technical lemmas}

We now outline the derivation of the limit of $\mu_n^{L,N}(t)$ in~\eqref{eq:sanspartieentiere}. We begin with a deliberately flawed argument, which exposes the main obstacles and clarifies the essential role of the technical Lemma~\ref{lem:eigenhookdevasym} and the algebraic Lemma~\ref{lem:characteridentities}.

\bigskip

\noindent
\textit{A first heuristic but ultimately misleading approach.} Standard estimates used in the proof of Proposition~\ref{gapestimate} in Section~\ref{sec:gap} (see also \eqref{eq:hookeigenvalueprecise}) allow us to write
\begin{equation}\label{eq:eigenhook}
\br_{\{n|k\}}^{\lfloor L^2 t\rfloor}=\left(1-\frac{2\pi^2 n}{L^2}+o\left
(\frac{1}{L^2}\right)\right)^{\lfloor L^2 t\rfloor}
\xrightarrow[{L,N\to\infty}]{} 
e^{-2\pi^2 n t}.
\end{equation}
However, it would be premature to conclude that
\begin{align}
\lim_{L,N\to\infty}  \mu_n^{L,N}(t)
&\overset{\eqref{eq:sanspartieentiere}}{=} \lim_{L,N\to\infty} \frac{1}{N} \sum_{k=0}^{n-1} (-1)^k\, \br_{\{n|k\}}^{\lfloor L^2 t \rfloor} \, s_{\{n|k\}}(\vartheta)
\label{eq:avantfalse} \\[1ex]
&\overset{\eqref{eq:eigenhook}}{=} e^{-2\pi^2 n t} \lim_{L,N\to\infty} \frac{1}{N} \sum_{k=0}^{n-1} (-1)^k \, s_{\{n|k\}}(\vartheta)
\label{eq:false} \\[1ex]
&\overset{\eqref{eq:powersumhook}}{=}\;e^{-2\pi^2 n t} \lim_{L,N\to\infty} \frac{p_n(\vartheta)}{N}\\
&
\overset{\eqref{eq:momentinitit}}{=} e^{-2\pi^2 n t} \,  m_n. 
\label{eq:limite_mu}
\end{align}
Since $s_{\{n|k\}}(\vartheta)$ depends on $L$ and $N$ and is not uniformly bounded, invoking~\eqref{eq:eigenhook} to pass from~\eqref{eq:avantfalse} to~\eqref{eq:false} is not justified here. It turns out that  the conclusion~\eqref{eq:limite_mu} is indeed incorrect.

\begin{rem}
One can observe that the (incorrect) moment formula \eqref{eq:limite_mu} would lead to the classical Laplace equation on the cylinder $\mathbb{R}^+ \times \mathbb{R}/2\pi\mathbb{Z}$:
	\begin{equation}
	\frac{\partial^2 f}{\partial t^2} + (2\pi^2)^2 \frac{\partial^2 f}{\partial x^2} = 0.
	\end{equation}
\end{rem}

 \bigskip

\noindent
\textit{Asymptotic expansion of the hook-Schur eigenvalues.}
To compute rigorously the limit in~\eqref{eq:avantfalse}, we need to refine the approximation~\eqref{eq:eigenhook}. A straightforward  computation shows that
\begin{equation}\label{eq:hookeigenvalueprecise}
\br_{\{n|k\}}=1-\frac{2\sin\left(\frac{\pi}{L}\right)\sin\left(\frac{\pi n}{L}\right)\sin\left(\frac{\pi N}{L}+\frac{\pi(n-2k-1)}{L}\right)}{\sin\left(\frac{\pi N}{L}\right)}.
\end{equation}

\begin{lem}\label{lem:eigenhookdevasym}
	Let $T\geq 0$ and $n,d\geq 1$ be fixed. For all $0\leq t\leq T$ and $0\leq k\leq n-1$, one has
	\begin{equation}\label{eq:expansiorhoschurhook}
	\br_{\{n|k\}}^{\lfloor L^2 t\rfloor}=\sum_{j=0}^{d-1} \frac{a^{L,N}_{n,j}(t)}{j!}\left(\frac{\pi(n-2k-1)}{L}\right)^j+\mathcal O\left(\frac{1}{L^d} \right),
	\end{equation}
	as $L,N\to\infty$ (still under the assumption $N\sim \alpha L$), where the implicit constant in the $\mathcal O$-term depends only on $T$, $n$, and $d$.  Moreover, define
	\begin{equation}\label{eq:deriventh}
	h_t(x)=\exp\left(-2\pi^2 t \frac{\sin( \alpha \pi + x)}{\sin( \alpha \pi )}\right).
	\end{equation}
	Then, uniformly with respect to $0\leq t\leq T$, one has
	\begin{equation}\label{eq:deriventh0}
	\lim_{L,N\to\infty} a_{n,j}^{L,N}(t)=
	\left.\frac{\partial^j h_t^n(x)}{\partial x^j}
	\right|_{x=0}.
	\end{equation}
\end{lem}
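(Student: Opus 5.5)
The plan is to treat the exact formula \eqref{eq:hookeigenvalueprecise} as a smooth function of the small parameter $y:=\pi(n-2k-1)/L$, with $L,N$ as the large parameters. Set $\beta=\pi N/L$ and
\begin{equation}
F_{L,N}(y)=1-\frac{2\sin\left(\frac{\pi}{L}\right)\sin\left(\frac{\pi n}{L}\right)\sin\left(\beta+y\right)}{\sin\beta},
\end{equation}
so that $\br_{\{n|k\}}=F_{L,N}(y)$. Then define $G_{L,N}(t,y)=F_{L,N}(y)^{\lfloor L^2t\rfloor}=\exp\bigl(\lfloor L^2t\rfloor\log F_{L,N}(y)\bigr)$. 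Since $N\sim\alpha L$ with $\alpha\in(0,1)$, $\sin\beta$ stays bounded away from $0$, and $1-F_{L,N}(y)=\mathcal O(1/L^2)$ uniformly for $|y|\le 1$, say. Hence $\log F_{L,N}$ is well defined and analytic in $y$ on a fixed complex disc $|y|\le r$.

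The first step is Taylor's formula in $y$ at $y=0$ up to order $d-1$. We define $a^{L,N}_{n,j}(t)=\partial_y^j G_{L,N}(t,0)$. The remainder is bounded by $\sup_{|y|\le r}|G_{L,N}(t,y)|\,|y|^d/r^d$ via Cauchy estimates. The values of $y$ in question satisfy $|y|\le \pi n/L$, which gives the $\mathcal O(L^{-d})$ term in \eqref{eq:expansiorhoschurhook}, provided $G_{L,N}$ is uniformly bounded on the complex disc for $t\le T$.

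For that bound, expand
\begin{equation}
\lfloor L^2t\rfloor\log F_{L,N}(y)=-\lfloor L^2t\rfloor\frac{2\sin(\pi/L)\sin(\pi n/L)}{\sin\beta}\sin(\beta+y)+\mathcal O(L^{-2}),
\end{equation}
uniformly for $|y|\le r$ and $t\le T$. The prefactor equals $2\pi^2 n t/L^2\cdot L^2(1+\mathcal O(L^{-2}))+\mathcal O(L^{-2})$, where the extra $\mathcal O(L^{-2})$ comes from the floor function. Therefore
\begin{equation}
G_{L,N}(t,y)=\exp\Bigl(-2\pi^2nt\,\frac{\sin(\beta+y)}{\sin\beta}\Bigr)\bigl(1+o(1)\bigr)
\end{equation}
uniformly on $\{|y|\le r\}\times[0,T]$. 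This gives the needed boundedness.

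For \eqref{eq:deriventh0}, observe that the limiting function is $h_t^n$ evaluated with $\beta$ in place of $\pi\alpha$, and $\beta\to\pi\alpha$. So $G_{L,N}(t,\cdot)\to h_t^n$ uniformly on the complex disc, uniformly in $t\in[0,T]$. Cauchy's integral formula then turns uniform convergence of analytic functions into convergence of all derivatives at $0$, uniformly in $t$. This gives $a^{L,N}_{n,j}(t)\to\partial_x^j h_t^n(0)$.

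The main obstacle is controlling the remainder uniformly in $t$, in $k$, and across the whole complex neighbourhood. This requires care that $|\log F_{L,N}|$ is genuinely $\mathcal O(L^{-2})$ there, so that multiplying by $L^2t$ causes no blow-up. It also requires absorbing the discrepancy between $\beta$ and $\pi\alpha$, which only matters in the limit statement and not in the expansion itself, since $a^{L,N}_{n,j}$ is defined with $\beta$.
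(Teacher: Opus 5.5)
Your proof is correct, and it takes a genuinely different route from the paper's.

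\textbf{The paper's route.} It works in the real variable $x$. It shows that every $x$-derivative of $u^{(n)}_{L,N}(x)=L^2\cdot 2\sin(\pi/L)\sin(\pi n/L)\sin(\pi N/L+x)/\sin(\pi N/L)$ converges uniformly on $\mathbb R$. Separately, it proves that $\partial_z^j(1+z/L^2)^{\lfloor L^2t\rfloor}-t^je^{tz}=\mathcal O(L^{-2})$ uniformly for $z$ in compacts and $t\le T$. It then combines the two with the Fa\`a di Bruno formula, obtaining uniform convergence of all derivatives of $(1-u^{(n)}_{L,N}(x)/L^2)^{\lfloor L^2t\rfloor}$ to those of $h_t^n$. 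The expansion follows from Taylor's theorem with a real remainder, with the same coefficients $a^{L,N}_{n,j}(t)=\partial_x^j(\cdot)|_{x=0}$ that you use.

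\textbf{Your route.} You place the analyticity directly in $y=\pi(n-2k-1)/L$. A single uniform bound on $F_{L,N}(y)^{\lfloor L^2t\rfloor}$ over a fixed complex disc gives two things at once. Cauchy estimates give the $\mathcal O(L^{-d})$ remainder. Uniform convergence on the disc, via Cauchy's formula, gives convergence of all Taylor coefficients, with no Fa\`a di Bruno bookkeeping.

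\textbf{What each buys.} Your argument is shorter and gives bounds uniform in $j$, for instance $|a^{L,N}_{n,j}(t)|\le M\,j!/r^j$. It also carries over verbatim to Lemma~\ref{lem:eigendoublehookdevasym} using Cauchy estimates on a bidisc. The paper's route yields the slightly stronger but unused fact that the derivatives converge uniformly on all of $\mathbb R$. Moreover, its auxiliary estimate for $(1+z/L^2)^{\lfloor L^2t\rfloor}$ on complex compacts is itself most naturally a Cauchy-estimate argument. Your approach effectively merges the paper's two steps into one.

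\textbf{Two cosmetic fixes.} First, the Cauchy remainder bound carries an extra factor $(1-|y|/r)^{-1}$. This is harmless, since $|y|\le \pi n/L\le r/2$ for $L$ large. Second, your ``prefactor'' sentence should read $\lfloor L^2t\rfloor\cdot 2\sin(\pi/L)\sin(\pi n/L)=2\pi^2nt+\mathcal O(L^{-2})$, with $\sin\beta$ kept in the denominator.
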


\begin{rem}
	Note that the velocity field $\mathcal V_\alpha$ introduced in~\eqref{eq:derivative0gbis000000} appears in~\eqref{eq:deriventh}.
\end{rem}

Using Lemma \ref{lem:eigenhookdevasym}, we proceed as follows: we rewrite the right-hand side of \eqref{eq:avantfalse} using the Frobenius formula applied to $s_{\{n|k\}}$, and then apply the expansion \eqref{eq:expansiorhoschurhook} with $d:=\ell(\pi)$. For simplicity and readability, we omit the dependence on $\vartheta$. We obtain
\begin{multline} \label{eq:avoidmistake}
\frac{1}{N} \sum_{k=0}^{n-1} (-1)^k\, \br_{\{n|k\}}^{\lfloor L^2 t \rfloor} \, s_{\{n|k\}}=\sum_{\pi\vdash n}\frac{1}{z_\pi}\left(\sum_{k=0}^{n-1}(-1)^k \, \br_{\{n|k\}}^{\lfloor L^2 t \rfloor} \,\chi_\pi^{\{n|k\}}\right)\frac{p_\pi}{N}=\\
\sum_{\pi\vdash n}\frac{1}{z_\pi}\Bigg(\sum_{j=0}^{\ell(\pi)-1} \frac{a^{L,N}_{n,j}(t) \pi^j}{j!} \underbrace{\left(\sum_{k=0}^{n-1}(-1)^{k}\,{\chi_\pi^{\{n|k\}}} \left({n-2k-1}\right)^j\right)}_{\displaystyle =\mathfrak X_{\pi}^{(j)}}\Bigg) 
\frac{p_\pi}{N L^j} +  \mathcal O\left(\frac{1}{L}\right). 
\end{multline}

The final $\mathcal{O}$-term in \eqref{eq:avoidmistake} originates from the asymptotic estimate of $p_\pi(\vartheta)$ for partitions $\pi\vdash n$. Indeed, by \eqref{eq:momentinitit}, one has
$p_\pi(\vartheta)\sim m_\pi N^{\ell(\pi)}$,  and since $N\sim \alpha L$, this yields
\begin{equation}\label{eq:boundppi}
\frac{p_\pi(\vartheta)}{N L^{\ell(\pi)}}=\mathcal O\left(\frac{1}{L}\right).
\end{equation}

Thus, the crucial step in computing the limit of the mean empirical moments is the evaluation of the alternating weighted sum of hook characters in~\eqref{eq:avoidmistake}, denoted by $\mathfrak X_{\pi}^{(j)}$. This is the content of the next lemma. To our knowledge, the resulting identity is new and relies essentially on the hook structure and the recursive Murnaghan--Nakayama rule.

\begin{lem}\label{lem:characteridentities}
	For any partition $\pi \vdash n$ and integer $0\leq j\leq \ell(\pi)-1$, one has
	\begin{equation}\label{lem:character}
\mathfrak X_{\pi}^{(j)} =	\sum_{k=0}^{n-1} (-1)^k \chi_{\pi}^{\{n|k\}} (n - 2k - 1)^j =
	\begin{cases}
	\displaystyle 2^{\ell(\pi) - 1} (\ell(\pi) - 1)! \prod_{i=1}^{\ell(\pi)} \pi_i, & \text{if }  j = \ell(\pi) - 1,\\[5pt]
	0, & \text{if }  0 \leq j \leq \ell(\pi) - 2.
	\end{cases}
	\end{equation}
\end{lem}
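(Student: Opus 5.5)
The plan is to encode the whole family $\bigl(\mathfrak X^{(j)}_\pi\bigr)_{j\geq 0}$ in one exponential generating function and to compute that function in closed form. For a formal variable $s$, set
\begin{equation}
F_\pi(s)=\sum_{k=0}^{n-1}(-1)^k\chi_\pi^{\{n|k\}}\,e^{(n-2k-1)s}
=\sum_{j\geq 0}\frac{\mathfrak X_\pi^{(j)}}{j!}\,s^j .
\end{equation}
The statement is then equivalent to two facts: $F_\pi(s)$ vanishes to order exactly $\ell(\pi)-1$ at $s=0$, and the coefficient of $s^{\ell(\pi)-1}$ equals $2^{\ell(\pi)-1}\prod_i\pi_i$. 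Multiplying that coefficient by $(\ell(\pi)-1)!$ then gives the nonzero value in \eqref{lem:character}.

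The key algebraic input is the hook generating identity
\begin{equation}
\sum_{k=0}^{n-1}(-1)^k\chi_\pi^{\{n|k\}}\,q^k=\frac{\prod_{i=1}^{\ell(\pi)}\bigl(1-q^{\pi_i}\bigr)}{1-q}.
\end{equation}
I would prove it by specializing the Frobenius formula \eqref{eq:frobenius}, $p_\pi=\sum_\lambda\chi^\lambda_\pi s_\lambda$, through the supersymmetric (hook) specialization with one ordinary variable $x_1=1$ and one dual variable $y_1=u$. This is the ring homomorphism defined by $p_m\mapsto 1-(-u)^m$. Under it, $s_\lambda\mapsto 0$ unless $\lambda$ is a hook, and $s_{\{n|k\}}\mapsto u^k(1+u)$ for $n\geq1$. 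This follows from the bi-tableau description of super-Schur functions, or equivalently from $\sum_{k}s_{\{n|k\}}u^k=\sum_{a+b=n}h_a e_b\,u^b$ adjusted by the standard telescoping. The Frobenius formula then gives $\prod_i(1-(-u)^{\pi_i})=(1+u)\sum_k\chi^{\{n|k\}}_\pi u^k$, and substituting $u=-q$ yields the identity. As a fallback, the identity can also be proved by induction on $\ell(\pi)$ with the Murnaghan--Nakayama rule. Removing a border strip of length $\pi_\ell$ from a hook leaves a hook (or the empty shape), and the heights combine into the factor $1-q^{\pi_\ell}$.

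Next I substitute $q=e^{-2s}$ and multiply by $e^{(n-1)s}$. Since $\sum_i\pi_i=n$, this gives
\begin{equation}
F_\pi(s)=\frac{e^{(n-1)s}\prod_i\bigl(1-e^{-2\pi_i s}\bigr)}{1-e^{-2s}}
=\frac{\prod_{i=1}^{\ell(\pi)}2\sinh(\pi_i s)}{2\sinh s}.
\end{equation}
Near $s=0$, the numerator is $2^{\ell(\pi)}\bigl(\prod_i\pi_i\bigr)s^{\ell(\pi)}(1+\mathcal O(s^2))$ and the denominator is $2s(1+\mathcal O(s^2))$. Hence $F_\pi(s)=2^{\ell(\pi)-1}\bigl(\prod_i\pi_i\bigr)s^{\ell(\pi)-1}+\mathcal O(s^{\ell(\pi)+1})$. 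Reading off the coefficients gives $\mathfrak X^{(j)}_\pi=0$ for $j\leq\ell(\pi)-2$ and $\mathfrak X^{(\ell(\pi)-1)}_\pi=2^{\ell(\pi)-1}(\ell(\pi)-1)!\prod_i\pi_i$.

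I expect the main obstacle to be a clean and correctly signed justification of the hook generating identity, in particular the evaluation $s_{\{n|k\}}\mapsto u^k(1+u)$ and the vanishing of non-hook Schur functions under the specialization. I would first try the Murnaghan--Nakayama induction, since it stays entirely within tools already cited in the paper. The remaining steps are mechanical.
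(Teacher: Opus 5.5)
Your proof is correct, and it resolves the same recursion as the paper in closed form instead of iterating it. The paper works coefficient by coefficient. It takes the base case $S_{n,0}(\pi)=n\,\delta_{\pi,(n)}$ from column orthogonality. It then uses the two-strip Murnaghan--Nakayama rule \eqref{MNrecnew} for hooks to derive the binomial recursion \eqref{eq:recrelation}, and argues that after $\ell(\pi)-1$ iterations only the index chain $j,j-1,\dots,0$ ending at a one-part partition can contribute.

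Multiplying \eqref{iterate} by $s^j/j!$ and summing shows that it is exactly the identity $F_\pi(s)=2\sinh(\pi_1 s)\,F_{\pi\setminus\pi_1}(s)$, with base case $F_{(m)}(s)=\sinh(ms)/\sinh s$. Your fallback induction $G_\pi(q)=(1-q^{r})\,G_{\pi\setminus r}(q)$, with $G_{(n)}(q)=(1-q^n)/(1-q)$, is the same hook recursion written in the variable $q=e^{-2s}$. It is a two-line check, so the obstacle you anticipate is not really one.

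What your route buys is that the chain bookkeeping disappears. Both the vanishing for $j\le \ell(\pi)-2$ and the leading coefficient come from a single Taylor expansion of $\prod_i 2\sinh(\pi_i s)/(2\sinh s)$. You also get more than the lemma asks for: by parity, $\mathfrak X^{(j)}_\pi=0$ whenever $j\not\equiv \ell(\pi)-1 \pmod 2$, and every higher coefficient is explicit. The paper's route stays entirely at the level of individual moments and the recursion as written.

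Your super-Schur argument is an independent, valid proof of the same classical hook identity. Two points need stating precisely. The Pieri form is $(1+u)\sum_k s_{\{n|k\}}u^k=\sum_{a+b=n}h_ae_bu^b$. The vanishing of $s_\lambda$ for non-hooks follows from Jacobi--Trudi: your specialization sends $h_m\mapsto 1+u$ for every $m\ge 1$, so when $\lambda_2\ge 2$ the first two rows become equal.
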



Then, combining Lemma~\ref{lem:characteridentities} with \eqref{eq:avoidmistake} and \eqref{eq:deriventh}, a straightforward computation yields
\begin{equation}\label{eq:momentasymptotic}
\lim_{L,N\to\infty} \mu_n^{L,N}(t) = \underbrace{\sum_{\pi\vdash n}
	\frac{(2\pi\alpha)^{\ell(\pi)-1}}{\prod_{i}l_i!}\,
	\left.\frac{\partial^{\ell(\pi)-1} h^n_t(x)}{\partial x^{\ell(\pi)-1}}
	\right|_{x=0} \,m_\pi}_{\displaystyle =\mathfrak m_n(t)}.
\end{equation}

\begin{rem}
	Note that the term in the sum \eqref{eq:momentasymptotic} corresponding to  $\pi=(n)$ contributes exactly to the right-hand side of \eqref{eq:limite_mu}.
\end{rem}

At this stage, deriving the PDEs \eqref{eq:derivative0gbis000000} and \ref{PDE} associated with the moments  $\mathfrak m_n(t)$ requires technical computations involving the Faà di Bruno formula, combinatorial identities, generating functions, and Fourier analysis. We refer to Section~\ref{sec:proofhydro} for details.

\subsubsection{Variance of the $n$th random moment}

To prove convergence in probability of the occupation measure $\upsilon_{L,N}(t,\cdot)$, it suffices to show that the variance of its $n$th moment vanishes. This variance is given by
\begin{equation}\label{eq:variancemoment}
\mathbb V\left({\mathcal M}_n^{L,N}(t)\right)=\mathbb E_{\vartheta}\left[\frac{(p_n p_{-n})(\Theta(L^2t))}{N^2}\right] -
\left|\mu^{L,N}_n(t)\right|^2,
\end{equation}
where $p_{-n}(z_1,\cdots,z_N):=p_n(z_1^{-1},\cdots,z_N^{-1})$. Hence, it remains to prove
\begin{equation}\label{eq:varianceproduct}
\lim_{L,N\to\infty}\mathbb E_{\vartheta}\left[\frac{(p_n p_{-n})(\Theta(\lfloor L^2 t\rfloor))}{N^2}\right]
= |\mathfrak m_n(t)|^2 = \mathfrak m_n(t)\,\overline{\mathfrak m_n(t)}.
\end{equation}

The difficulty here is that $p_n(z_1,\ldots,z_N)\,p_{-n}(z_1,\ldots,z_N)$ is not a symmetric polynomial but a symmetric Laurent polynomial. This prevents us from directly using the Schur and power-sum bases of the ring of symmetric polynomials.

\begin{rem}
	Although $p_n p_{-n} = p_n p_{L-n}$ on the set of $L$th roots of unity, exploiting this identity to return to a polynomial framework would be impractical, as the degree $L$ diverges.  The product in~\eqref{eq:varianceproduct} suggests decomposing $p_n p_{-n}$ into products of hook Schur functions and their conjugates.
\end{rem}

\subsubsection{Double hook-Schur decompositions}

While the expectation in~\eqref{eq:sanspartieentiere} involves hook partitions $\{n|k\}$, the variance analysis naturally leads to double-hook partitions (see also Figure~\ref{fig:doublehook}), defined by
\begin{equation}\label{eq:doublehookk}
\{n|k,l\}=\left(L-n-(N-1-l),\,n-k+1,\,2^{k},\,1^{N-2-l-k}\right).
\end{equation}

\begin{figure}[!h]
	\centering
	\includegraphics[width=10cm]{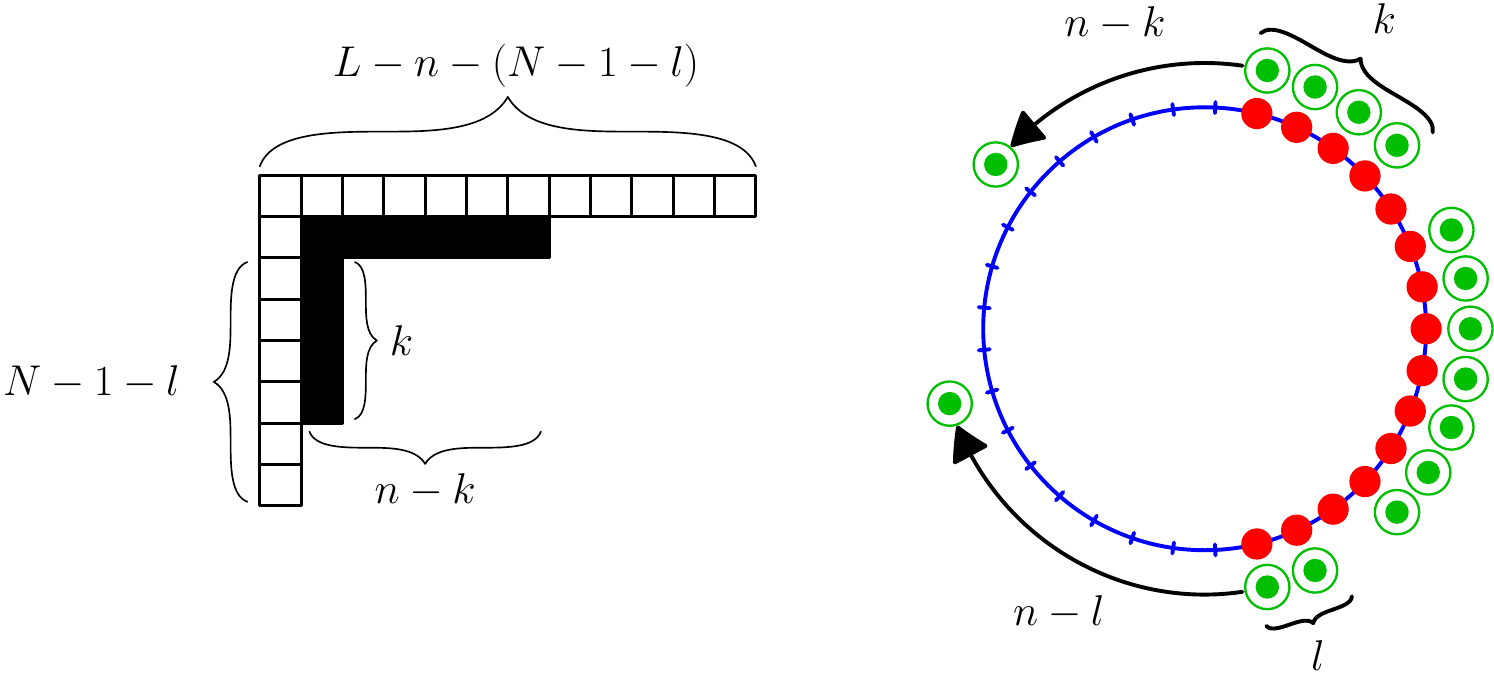}
    \captionsetup{width=11cm}
\caption{\small  Correspondence between the double-hook partition $\{n|k,l\}$ and the associated configuration (circled dot)}

	\label{fig:doublehook}
\end{figure}

To obtain the variance asymptotics, we use Lemma~\ref{lem:variancehook}, which reduces the computation to products of hook Schur polynomials. We briefly recall the definition of skew Schur polynomials. Let $\mu\subset\lambda$ be partitions, i.e.\ $\mu_i\leq\lambda_i$ for all $i$. The skew diagram $\lambda\setminus\mu$ is obtained by removing the boxes of $\mu$ from $\lambda$. The associated skew Schur polynomial $s_{\lambda\setminus\mu}$ write as
\begin{equation}\label{eq:littlewood}
s_{\lambda\setminus \mu} = \sum_{\nu} c^{\lambda}_{\mu\nu} \, s_\nu,
\end{equation}
where $c^{\lambda}_{\mu\nu}$ are the so-called Littlewood--Richardson coefficients. Combinatorially, $c^{\lambda}_{\mu\nu}$ counts semi-standard Young tableaux of shape $\lambda\setminus\mu$ and content $\nu$ whose row-reading word is a lattice word (see also Figure~\ref{fig:littlewood}). By convention, $s_{\lambda\setminus\mu}=0$ if $\mu\not\subset\lambda$. We denote by $\lambda'$ the conjugate partition  obtained by transposing the Young diagram of $\lambda$. Note that $\{n|k\}'=\{n|n-k-1\}$.

The next lemma provides the algebraic identities needed to decompose $p_n p_{-n}$ into hook Schur polynomials, allowing us  to leverage Lemma~\ref{lem:characteridentities}. 

\begin{lem}\label{lem:variancehook}
	The following identities hold for all $z = (z_1, \cdots, z_N) \in \mathbb{U}_L^N$:
	\begin{enumerate}
		\item For all $n \geq 1$ and $0 \leq k \leq n - 1$,
		\begin{equation}\label{eq:conjhook}
		\overline{s_{\{n|k\}}(z)} = (-1)^{2\bgamma} \, s_{\{L - n \,|\, N - k - 1\}}(z).
		\end{equation}
		
		\item For all $n \geq 1$, one has the decomposition:
		\begin{equation}\label{eq:powersumdoublehook}
		p_n(z) \, p_{-n}(z) = (-1)^{2\bgamma} \sum_{k=0}^{n-1} \sum_{l=0}^{n-1} (-1)^{k + l} \, s_{\{n|k,l\}}(z) + n.
		\end{equation}
		
		\item For all $n \geq 1$ and $0 \leq k, l \leq n - 1$,  one has the expansion:
		\begin{equation}\label{eq:rationalschurconj}
		s_{\{n|k,l\}}(z) = (-1)^{N - 1} \sum_{\tau} (-1)^{|\tau|} \, s_{\{n|k\} \setminus \tau}(z) \, \overline{s_{\{n|l\} \setminus \tau^\prime}(z)},
		\end{equation}
		where the sum runs over all partitions $\tau$ such that $\tau \subset \{n|k\}$ and $\tau^\prime \subset \{n|l\}$. In particular, $\tau$ is either empty or equal to $\{m|i\}$ for some $1 \leq m \leq n$ and $0 \leq i \leq m - 1$.
		
		\item For all $1 \leq m \leq n$ and $0 \leq i \leq m - 1$, one has
		\begin{equation}\label{eq:skewhooklittle1}
		s_{\{n|k\} \setminus \{m|i\}}(z) = s_{\{n - m \,|\, k - i\}}(z) + s_{\{n - m \,|\, k - i - 1\}}(z),
		\end{equation}
		and
		\begin{equation}\label{eq:skewhooklittle2}
		s_{\{n|l\} \setminus \{m|i\}^\prime}(z) = s_{\{n - m \,|\, l - (m - i - 1)\}}(z) + s_{\{n - m \,|\, l - (m - i - 1) - 1\}}(z),
		\end{equation}
		with  for all $r \geq 1$ and $j \in \mathbb{Z}$, $s_{\{r|j\}}(z) \equiv 0$, if $j < 0$ or $j > r - 1$, and $s_{\{0|j\}}(z) \equiv \delta_{0j}$.
	\end{enumerate}
\end{lem}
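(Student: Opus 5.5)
The plan is to work throughout with the determinantal description of Schur functions at $L$th roots of unity, namely $s_\lambda=Q_{\lambda+\delta}/Q_\delta$ from \eqref{eq:schurdet}--\eqref{eq:schurdef}. On $\mathbb U_L$ we have $\overline{z_j}=z_j^{-1}=z_j^{L-1}$. The proof of each item then reduces to manipulating exponent sets modulo $L$ and tracking signs.

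\textbf{Point 1.} Conjugating $Q_{\lambda+\delta}(z)$ replaces the exponent set $E=\{\lambda_i+N-i\}$ by $\{-e\}\equiv\{L-e\}$ modulo $L$. I will first normalize these exponents back into $\{0,\dots,L-1\}$ and then reorder the rows. Doing the same for $Q_\delta$ turns $\overline{Q_\delta}$ into $\prod_j z_j^{-(N-1)}\,Q_\delta$ times a sign. The ratio is therefore $(\prod z_j)^{-(N-1)}Q_{E'}/Q_\delta$. The factor $(\prod z_j)^{-(N-1)}$ is absorbed by shifting all exponents by $L-(N-1)$ modulo $L$; on $\mathbb U_L$ this amounts to multiplication by $e_N(z)^{L-N+1}$. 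This is exactly the partition/configuration complement picture of Remark~\ref{rem:corres}.

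For $\lambda=\{n|k\}$, the exponent set is $\{n-k+N-1\}\cup\{N-2,\dots,N-1-k\}\cup\{N-k-2,\dots,0\}$ with one gap. Its complement-negation modulo $L$ should read off as the hook $\{L-n\,|\,N-k-1\}$. The sign produced by the cyclic reordering of rows is $(-1)^{N-1}$ to some power. Combined with the sign from $Q_\delta$, I expect it to reduce to $(-1)^{2\bgamma}=(-1)^{N-1}$, consistent with \eqref{eq:gamma}. This is a bookkeeping check.

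\textbf{Point 2.} I will write $p_n p_{-n}=n\cdot\mathbf 1+\sum_{i\ne j}z_i^n z_j^{-n}$, where the diagonal terms give $N$. The cleaner route, however, is to use \eqref{eq:powersumhook} for $p_n$ and, via Point~1, $p_{-n}=\overline{p_n}=(-1)^{2\bgamma}\sum_l(-1)^l s_{\{L-n|N-l-1\}}$. Then $p_{-n}$ is the $n$th power sum of the inverse variables. Interpreting $z_i^n z_j^{-n}$ through the alternant, I will use the fact that $p_n\cdot Q_{\mu+\delta}=\sum_i Q_{\mu+\delta+n\epsilon_i}$ (Murnaghan--Nakayama at the alternant level). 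Applying this twice, once adding $n$ and once subtracting $n$ modulo $L$ to the exponent set of $Q_\delta$, yields a sum of alternants of exponent sets. Terms where the same row is moved by $+n$ and then $-n$ give $N$. The remaining terms either vanish through repeated exponents or produce alternants that, after sorting, are exactly the double hooks \eqref{eq:doublehookk} with signs $(-1)^{k+l}$. The discrepancy between $N$ and $n$ should come from cancellations between the $i=j$ contributions and collisions with the wrap-around. Verifying this count is where I expect subtlety.

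\textbf{Point 3.} I plan to view $s_{\{n|k,l\}}$, up to the factor $e_N^{L-\cdots}$, as the rational Schur function of a signature (mixed partition) with positive part $\{n|k\}$ and negative part $\{n|l\}$; compare Remark~\ref{rem:mini}. The expansion \eqref{eq:rationalschurconj} is then Koike's formula \cite{Koike} (see also \cite{Stembridge}) for the rational character $s_{[\alpha,\beta]}=\sum_\tau(-1)^{|\tau|}s_{\alpha/\tau}(z)\,s_{\beta/\tau'}(z^{-1})$. Point~1 and the prefactor supply the sign $(-1)^{N-1}$. The only possible $\tau$ contained in a hook with $\tau'$ contained in a hook are the empty partition and hooks.

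\textbf{Point 4.} This is a direct Littlewood--Richardson computation, using \eqref{eq:littlewood}. Removing the hook $\{m|i\}$ from $\{n|k\}$ leaves a disconnected skew shape: a row of $n-m-(k-i)$ boxes and a column of $k-i$ boxes. Consequently $s_{\{n|k\}\setminus\{m|i\}}=h_{a}e_{b}$, and the Pieri rule gives exactly two hooks, $\{n-m|k-i\}$ and $\{n-m|k-i-1\}$. The second formula follows by applying this to $\tau'=\{m|m-i-1\}$.

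The main obstacle I anticipate is the sign and count bookkeeping in Point~2, together with matching its indexing with \eqref{eq:doublehookk}.
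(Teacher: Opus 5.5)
Points 1, 3 and 4 follow the paper's route: sign bookkeeping for alternants at $L$th roots of unity, Koike's expansion of the rational Schur function with signature $[\{n|k\},\{n|l\}]$, and a Littlewood--Richardson computation. Your factorization $s_{\{n|k\}\setminus\{m|i\}}=h_ae_b$ followed by Pieri is a clean way to do the last one.

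Point 1 contains two slips to fix.
\begin{itemize}
\item Since $\overline{Q_\delta}=\pm(\prod_j z_j)^{-(N-1)}Q_\delta$, the prefactor of $\overline{s_\lambda}$ is $(\prod_j z_j)^{+(N-1)}$, not $(\prod_j z_j)^{-(N-1)}$.
\item The exponent set of $\{n|k\}+\delta$ is $\{n-k+N-1\}\cup\{N-k,\dots,N-1\}\cup\{0,\dots,N-k-2\}$.
\end{itemize}
With these corrections, $e\mapsto N-1-e \bmod L$ gives $\{L-n+k\}\cup(\{0,\dots,N-1\}\setminus\{k\})$, which is the hook $\{L-n|N-1-k\}$, with sign $(-1)^{N-1}$. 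In Point 3, the sign $(-1)^{N-1}$ comes from reducing the exponent $-(n-l)$ modulo $L$ and moving that row to the top (an $N$-cycle), not from Point 1.

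The genuine gap is in Point 2, exactly at the count you defer, and your classification of the terms there is wrong. Your opening line should read $p_np_{-n}=N+\sum_{i\ne j}z_i^nz_j^{-n}$. In $p_np_{-n}Q_\delta=\sum_{i,j}Q_{\delta+n\epsilon_i-n\epsilon_j}$, the off-diagonal terms do not split only into vanishing ones and double hooks.
\begin{itemize}
\item For the $N-n$ pairs with $\delta_j=\delta_i+n$, the vector $\delta+n\epsilon_i-n\epsilon_j$ is $\delta$ with two entries transposed. Each such term therefore equals $-Q_\delta$, and no wrap-around is involved.
\item These terms are what reduce the $N$ diagonal terms to the constant $N-(N-n)=n$. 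Taken literally, your list of surviving terms would give the constant $N$, contradicting \eqref{eq:powersumdoublehook}.
\item The remaining nonvanishing terms are exactly those with $\delta_i\in\{N-n,\dots,N-1\}$ and $\delta_j\in\{0,\dots,n-1\}$, the latter wrapping to $L+\delta_j-n$. They sort to $\{n|k,l\}$ with $k=N-1-\delta_i$, $l=\delta_j$, and sign $(-1)^{N-1+k+l}=(-1)^{2\bgamma}(-1)^{k+l}$.
\end{itemize}

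Alternatively, take the route you mention and then abandon, which is the paper's: apply the two multiplications sequentially. Then $p_{-n}Q_\delta=\sum_jQ_{\delta-n\epsilon_j}$ already kills every row with $\delta_j\ge n$ by collision. This is Point 1's expansion $p_{-n}=(-1)^{2\bgamma}\sum_l(-1)^l s_{\{L-n|N-1-l\}}$. After multiplying by $p_n$ (Murnaghan--Nakayama), the only constant contributions are the $n$ terms pushing $L+l-n$ back to $L+l\equiv l$. These correspond to the arm strip $\{L|N-1-l\}$, whose Schur function equals $(-1)^{N-1-l}$ on $\mathbb U_L^N$, so the constant $n$ appears directly.

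In either version, state that you use $N\ge 2n$ and $L\ge N+2n$. These ensure the two index ranges are disjoint and that no further exponent collisions occur.
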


Using~\eqref{eq:powersumdoublehook} and~\eqref{eq:varianceproduct}, it suffices to show that
\begin{equation}\label{eq:goealvar}
\lim_{L,N\to\infty}
\frac{(-1)^{2\bgamma}}{N^2} \sum_{k=0}^{n-1} \sum_{l=0}^{n-1} (-1)^{k+l} \, \br_{\{n|k,l\}}^{\lfloor L^2 t \rfloor} \, s_{\{n|k,l\}}
= \mathfrak m_n(t)\,\overline{\mathfrak m_n(t)},
\end{equation}
where $\br_{\{n|k,l\}}$ denotes, similarly to \eqref{eq:sanspartieentiere}, the MESSEP eigenvalue associated with $s_{\{n|k,l\}}$. Applying Lemma~\ref{lem:variancehook}, in particular~\eqref{eq:rationalschurconj}, and using $(-1)^{N-1+2\bgamma}=1$, we obtain

\begin{multline}
\frac{(-1)^{2\bgamma}}{N^2} \sum_{k=0}^{n-1} \sum_{l=0}^{n-1} (-1)^{k+l} \, \br_{\{n|k,l\}}^{\lfloor L^2 t \rfloor} \, s_{\{n|k,l\}}(\vartheta)
= \frac{1}{N^2} \sum_{k=0}^{n-1} \sum_{l=0}^{n-1} (-1)^{k+l} \, \br_{\{n|k,l\}}^{\lfloor L^2 t \rfloor} \, 
s_{\{n|k\}}(\vartheta) \, \overline{s_{\{n|l\}}(\vartheta)} \\
+ \frac{1}{N^2} \sum_{m=1}^{n}\sum_{i=0}^{m-1}  \sum_{k=0}^{n-1} \sum_{l=0}^{n-1} (-1)^m (-1)^{k+l} \, \br_{\{n|k,l\}}^{\lfloor L^2 t \rfloor} \, 
s_{\{n|k\} \setminus \{m|i\}}(\vartheta) \, \overline{s_{\{n|l\} \setminus \{m|i\}^\prime}(\vartheta)}.
\end{multline}

We also distinguish the cases $\tau=\emptyset$ and $\tau\neq\emptyset$ in \eqref{eq:rationalschurconj} and use the remark below this identity to get the latter equation. The proof of  \eqref{eq:goealvar} is therefore reduced to the following lemma.

\begin{lem}\label{lem:finallemvar}
	The following  limits hold:
	\begin{equation}\label{eq:techlem1}
	\lim_{L,N\to\infty}\frac{1}{N^2} \sum_{k=0}^{n-1} \sum_{l=0}^{n-1} (-1)^{k+l} \, \br_{\{n|k,l\}}^{\lfloor L^2 t \rfloor} \, 
	s_{\{n|k\}}(\vartheta)\, \overline{s_{\{n|l\}}(\vartheta)} = \mathfrak m_n(t)\, \overline{\mathfrak m_n(t)},
	\end{equation}
	and
	\begin{equation}\label{eq:techlem2}
	\lim_{L,N\to\infty} \frac{1}{N^2} \sum_{m=1}^{n} \sum_{i=0}^{m-1} \sum_{k=0}^{n-1} \sum_{l=0}^{n-1} (-1)^{m+k+l} \, \br_{\{n|k,l\}}^{\lfloor L^2 t \rfloor} \, 
	s_{\{n|k\} \setminus \{m|i\}}(\vartheta)\, \overline{s_{\{n|l\} \setminus \{m|i\}^\prime}(\vartheta)} = 0.
	\end{equation}
\end{lem}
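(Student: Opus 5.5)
The plan is to mimic the single-hook computation leading to \eqref{eq:momentasymptotic}, now with two independent hook indices. The first step is to compute the double-hook eigenvalue $\br_{\{n|k,l\}}$ explicitly via the correspondence of Remark~\ref{rem:corres}. The configuration attached to $\{n|k,l\}$ (Figure~\ref{fig:doublehook}) is the symmetric compact configuration $\bc+\bgamma$ with two particles displaced: one near the right edge, as for $\{n|k\}$, and one near the left edge, as for the conjugate hook $\{L-n|N-l-1\}$ of \eqref{eq:conjhook}. Since $\rho_\xi$ in \eqref{Spectre} is additive over particles, the deviation $1-\br_{\{n|k,l\}}$ splits into two terms of the form appearing in \eqref{eq:hookeigenvalueprecise}, one depending on $(n,k)$ and one on $(n,l)$, up to an error $\mathcal O(L^{-3})$. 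Consequently
\begin{equation}
\br_{\{n|k,l\}}^{\lfloor L^2t\rfloor}=\br_{\{n|k\}}^{\lfloor L^2t\rfloor}\,\overline{\br_{\{n|l\}}}^{\,\lfloor L^2t\rfloor}\,(1+\mathcal O(L^{-1})),
\end{equation}
where the second factor is the eigenvalue whose shift has $n-2l-1$ replaced by its mirror image. This yields a two-variable analogue of Lemma~\ref{lem:eigenhookdevasym}: a joint Taylor expansion in $u=\pi(n-2k-1)/L$ and $v=\pi(n-2l-1)/L$, to order $d_1+d_2$, whose coefficients converge to mixed derivatives of $h_t^n(x)\,\overline{h_t^n(y)}$ at $(0,0)$, with remainder $\mathcal O(L^{-d})$ uniform for $t\le T$. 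Its proof copies that of Lemma~\ref{lem:eigenhookdevasym}, since the exponent is a sum of two one-variable pieces plus a negligible cross term.

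For \eqref{eq:techlem1}, I would expand $s_{\{n|k\}}$ and $\overline{s_{\{n|l\}}}$ by the Frobenius formula \eqref{eq:frobenius} over $\pi,\sigma\vdash n$, and insert the joint expansion truncated at orders $\ell(\pi)-1$ in $u$ and $\ell(\sigma)-1$ in $v$. The double sum over $k,l$ then factorizes into $\mathfrak X^{(j_1)}_\pi\,\overline{\mathfrak X^{(j_2)}_\sigma}$, and Lemma~\ref{lem:characteridentities} kills every term except $j_1=\ell(\pi)-1$, $j_2=\ell(\sigma)-1$. The bound \eqref{eq:boundppi} applied to $p_\pi\overline{p_\sigma}/(N^2L^{\ell(\pi)+\ell(\sigma)-2})$ shows that the higher-order remainders are $\mathcal O(1/L)$. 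The surviving expression factorizes exactly as the product of two copies of \eqref{eq:momentasymptotic}, giving $\mathfrak m_n(t)\overline{\mathfrak m_n(t)}$, provided $p_\pi(\vartheta)/N^{\ell(\pi)}\to m_\pi$ (joint convergence in probability, as used in \eqref{eq:momentinitit}).

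For \eqref{eq:techlem2}, I would use \eqref{eq:skewhooklittle1}--\eqref{eq:skewhooklittle2} to write each skew factor as a sum of two hook Schur functions of size $n-m<n$. The gain is in degree. After the Frobenius expansion, a term involving partitions $\pi\vdash n-m$ and $\sigma\vdash n-m$ has size $p_\pi\overline{p_\sigma}=\mathcal O(N^{\ell(\pi)+\ell(\sigma)})$, but the shifted indices $k-i$ (or $k-i-1$) and $l-(m-i-1)$ now run over ranges of length governed by $n$ rather than $n-m$. I would resum in $k$ for fixed $i$: the alternating sum $\sum_k(-1)^k\chi_\pi^{\{n-m|k-i\}}(n-2k-1)^j$ is, up to sign, a shifted $\mathfrak X^{(j')}_\pi$ expanded binomially. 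Adding the two terms $k-i$ and $k-i-1$ produces a telescoping difference, and this difference contributes one extra factor of order $1/L$ through the Taylor expansion. The aim is to show that each surviving coefficient carries at least $\ell(\pi)+\ell(\sigma)-1$ powers of $1/L$ against at most $N^{\ell(\pi)+\ell(\sigma)}/N^2$, so the total is $\mathcal O(1/L)$.

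I expect the main obstacle to be this cancellation bookkeeping in \eqref{eq:techlem2}. Trivial bounds give only $\mathcal O(1)$, so one must check that the pairing of the two shifted hook terms, together with Lemma~\ref{lem:characteridentities}, removes enough low-order terms. The first thing I would try is to exploit that the sum over $(k,i)$ with $k-i$ fixed produces the discrete difference $\br(\cdot)-\br(\cdot-2\pi/L)$, which is of order $1/L$ in the joint expansion.
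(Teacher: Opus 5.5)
Your plan is essentially the paper's proof. For \eqref{eq:techlem1} it combines the joint expansion of Lemma~\ref{lem:eigendoublehookdevasym}, the Frobenius expansion, and Lemma~\ref{lem:characteridentities} to isolate $(j_1,j_2)=(\ell(\pi)-1,\ell(\sigma)-1)$; the expansion should be built from the exactly additive formula \eqref{eq:doublehookeigenvalueprecise} and truncated in total degree $j_1+j_2\le\ell(\pi)+\ell(\sigma)-2$, not separately in each variable, so that the remainder is $\mathcal O(1/L)$. For \eqref{eq:techlem2}, your telescoping of the two hook terms in \eqref{eq:skewhooklittle1} at fixed $i$ is precisely the paper's observation that $\mathcal X^{(j_1,i)}_{\pi,1}=-\mathcal X^{(j_1,i)}_{\pi,0}$ at the critical order, which pushes every surviving term to total order at least $\ell(\pi)+\ell(\sigma)-1$ and hence gives $\mathcal O(1/L)$.
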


To prove Lemma~\ref{lem:finallemvar}, we again use Lemma~\ref{lem:characteridentities} together with~\eqref{eq:skewhooklittle1} and~\eqref{eq:skewhooklittle2}. We first extend Lemma~\ref{lem:eigenhookdevasym}. As in~\eqref{eq:hookeigenvalueprecise}, a direct computation yields

\begin{equation}\label{eq:doublehookeigenvalueprecise}
\br_{\{n|k,l\}}=1-\frac{2\sin\left(\frac{\pi}{L}\right)\sin\left(\frac{\pi n}{L}\right)\left(\sin\left(\frac{\pi N}{L}+\frac{\pi(n-2k-1)}{L}\right)+\sin\left(\frac{\pi N}{L}+\frac{\pi(n-2l-1)}{L}\right)\right)}{\sin\left(\frac{\pi N}{L}\right)}.
\end{equation}

\begin{lem}\label{lem:eigendoublehookdevasym}
	Let $T \geq 0$ and $n, d \geq 1$ be fixed. For all $0 \leq t \leq T$ and $0 \leq k, l \leq n-1$, one can write
	\begin{equation}\label{eq:expansiorhoschurdoublehook}
	\br_{\{n|k,l\}}^{\lfloor L^2 t\rfloor}
	= \sum_{j_1 + j_2 \leq d-1} \frac{a^{L,N}_{n,j_1,j_2}(t)}{j_1! j_2!} \left( \frac{\pi(n - 2k - 1)}{L} \right)^{j_1} \left( \frac{\pi(n - 2l - 1)}{L} \right)^{j_2}
	+ \mathcal{O}\left( \frac{1}{L^d} \right),
	\end{equation}
	as $L, N \to \infty$, where the constant in the $\mathcal{O}$-term depends only on $T,n,d$. Moreover,
	\begin{equation}\label{eq:deriventhdouble}
	\lim_{L,N\to\infty} a_{n,j_1,j_2}^{L,N}(t)
	= \left.\frac{\partial^{j_1} h^n_t(x)}{\partial x^{j_1}}
	\right|_{x=0} \,\cdot\, \left.\frac{\partial^{j_2} h^n_t(x)}{\partial x^{j_2}}\right|_{x=0},
	\end{equation}
	where $h_t(x)$ is the function defined in \eqref{eq:expansiorhoschurhook}.
\end{lem}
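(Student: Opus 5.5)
Write $u_k=\pi(n-2k-1)/L$, $v_l=\pi(n-2l-1)/L$, $s=\sin(\pi/L)$, $\beta=\pi N/L$. The plan is to reduce everything to one smooth function of two variables and Taylor-expand it in $(u_k,v_l)$, treating $\beta$, $t$ and $n$ as parameters, in the same way as for Lemma~\ref{lem:eigenhookdevasym}. By \eqref{eq:doublehookeigenvalueprecise},
\begin{equation}
\br_{\{n|k,l\}}=1-\epsilon_L(u_k)-\epsilon_L(v_l),\qquad \epsilon_L(x):=\frac{2\sin\left(\frac{\pi}{L}\right)\sin\left(\frac{\pi n}{L}\right)\sin(\beta+x)}{\sin\beta},
\end{equation}
and $\epsilon_L(u_k)$ is exactly the correction term in the hook eigenvalue \eqref{eq:hookeigenvalueprecise}. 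Define, for real $x,y$ in a fixed neighbourhood of $0$,
\begin{equation}
F_L(x,y):=\exp\Big(\lfloor L^2t\rfloor\log\big(1-\epsilon_L(x)-\epsilon_L(y)\big)\Big).
\end{equation}
Then $\br_{\{n|k,l\}}^{\lfloor L^2 t\rfloor}=F_L(u_k,v_l)$. Since $|u_k|,|v_l|\le \pi n/L$, we set
\begin{equation}
a^{L,N}_{n,j_1,j_2}(t):=\partial_x^{j_1}\partial_y^{j_2}F_L(0,0),
\end{equation}
and apply Taylor's formula with remainder of total order $d$. This gives \eqref{eq:expansiorhoschurdoublehook}, provided all derivatives of order $d$ of $F_L$ are bounded uniformly in $L$, in $t\le T$, and in $(x,y)$ in a neighbourhood of size $O(1/L)$.

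\textbf{Step 1: uniform bounds.} Since $N/L\to\alpha\in(0,1)$, $\beta$ stays in a compact subset of $(0,\pi)$, so $\sin\beta$ is bounded below. Moreover $\epsilon_L$ and all its derivatives are $O(1/L^2)$ uniformly for $|x|\le 1$. Hence $1-\epsilon_L(x)-\epsilon_L(y)\ge 1/2$ for $L$ large, and
\begin{equation}
G_L:=\lfloor L^2t\rfloor\log(1-\epsilon_L(x)-\epsilon_L(y))
\end{equation}
has all derivatives bounded by constants depending only on $T$, $n$ and the order, because $\lfloor L^2t\rfloor\cdot O(L^{-2})=O(T)$. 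By Faà di Bruno, $F_L=e^{G_L}$ then has derivatives of order $d$ bounded uniformly as well. The Taylor remainder is therefore $O\big((|u_k|+|v_l|)^d\big)=O(L^{-d})$, with a constant depending only on $T,n,d$.

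\textbf{Step 2: identification of the limit.} Expanding the logarithm,
\begin{equation}
G_L(x,y)=-\lfloor L^2t\rfloor\big(\epsilon_L(x)+\epsilon_L(y)\big)+R_L(x,y),
\end{equation}
where $R_L$ and all its derivatives are $O(L^2\cdot L^{-4})=O(L^{-2})$ uniformly. Next, $\lfloor L^2t\rfloor\epsilon_L(x)\to 2\pi^2 nt\,\sin(\pi\alpha+x)/\sin(\pi\alpha)$ together with all $x$-derivatives, uniformly in $t\le T$ and in $x$ near $0$. This uses $L^2\sin(\pi/L)\sin(\pi n/L)\to\pi^2 n$, the bound $\lfloor L^2t\rfloor-L^2t=O(1)$, and $\beta\to\pi\alpha$. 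Hence $G_L\to\log h_t^n(x)+\log h_t^n(y)$ in $C^\infty$ near the origin, and so does the exponential:
\begin{equation}
F_L(x,y)\longrightarrow h_t^n(x)\,h_t^n(y).
\end{equation}
The mixed derivative at the origin then factorizes as the product in \eqref{eq:deriventhdouble}.

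The main point of care is uniformity. This is the same issue as in Lemma~\ref{lem:eigenhookdevasym}, and it is harmless here because all the dependence on $(x,y)$ enters only through the bounded factor $\sin(\beta+\cdot)/\sin\beta$. Everything else is routine, and the hook case of Lemma~\ref{lem:eigenhookdevasym} is recovered by taking $\epsilon_L(y)$ absent.
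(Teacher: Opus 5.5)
Your proof is correct and takes essentially the paper's route. The paper also applies the multivariate Taylor theorem at the origin to $h^{(n)}_{t,L,N}(x,y)=\bigl(1-\epsilon_L(x)-\epsilon_L(y)\bigr)^{\lfloor L^2t\rfloor}$, takes the coefficients to be its mixed derivatives there, and uses uniform convergence of those derivatives to those of $h^n_t(x)\,h^n_t(y)$. Writing the power as $\exp\bigl(\lfloor L^2t\rfloor\log(\cdot)\bigr)$ and expanding the logarithm, rather than composing $z\mapsto(1+z/L^2)^{\lfloor L^2t\rfloor}$ via Fa\`a di Bruno, is only a cosmetic variation. Your version also makes explicit the uniform remainder bounds that the paper leaves implicit.
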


\section{Proofs of the results in Section \ref{sec:1}}

\label{sec:proofsec1}

\setcounter{equation}{0}

\begin{proof}[Proof of Proposition \ref{noncollinding}] Observe that if $T>n$ then  $\sigma\cdot Y(k)\in {\mathcal W_{L,N}}$ for every $0\leq k\leq n$ and thus $(\sigma\cdot Y(k))_{0\leq k\leq n}$ is a finite path in ${\mathcal W_{L,N}}$. The probability $\mathbb P_{\theta}(T>n)$ depends only on the projection  of $\theta\in {\mathcal W_{L,N}}$ on ${\mathcal C_{L,N}}$. We set  $\mathbb P_{\xi}(T>n):=\mathbb P_{\theta}(T>n)$ for any $\xi\in {\mathcal C_{L,N}}$ and any $\theta$ such  $p_2(\theta):=\xi$.
	
	Then, we obtain easily from the Markov property that for every $\xi\in {\mathcal C_{L,N}}$ and $n\geq 0$,
	\begin{equation}\label{eq:powerA}
	\mathbb P_{\xi}(T>n)=(2N)^{-n} A^n\mathds 1(\xi),
	\end{equation}
	where $A$ is the adjacency matrix of ${\mathcal C_{L,N}}$ and $\mathds 1$ the constant function equal to one. Note that $A$ is 2-periodic and thus  ${\mathcal C_{L,N}}$ can be written as $W_1\sqcup W_2$ where $W_1$ and $W_2$ are disjoint stable state space of $A^2$. Therefore,  it comes from the spectral decomposition of $A$  that  
	\begin{equation}
	\mathbb P_{\xi}(T>2m)\underset{m\to\infty}{\sim} (2N)^{-2m}\rho^{2m}\left[\sum_{i\in\{1,2\}}\left(\frac{\sum_{\eta\in W_i}\psi(\eta)}{\sum_{\eta\in W_i}\psi^2(\eta)}\right)\mathds 1_{W_i}( \xi)\right]\psi(\xi).
	\end{equation}
	A similar asymptotic holds replacing $2m$ by $2m+1$. 
	
	We get that, for any path $(e(0),\cdots,e(n))$ in ${\mathcal W_{L,N}}$, with $e(0)=\sigma\cdot y$, and $m\geq n$,
	\begin{multline}
	\mathbb P_{\sigma\cdot y}(\sigma\cdot Y(1)=e(1),\cdots,\sigma\cdot Y(n)=e(n)|T>m)\\
	=\frac{1}{(2N)^n}\frac{\mathbb P_{\sigma\cdot e(n)}(T>m-n)}{\mathbb P_{\sigma\cdot y}(T>m)}\underset{m\to\infty}{\sim} \frac{1}{\rho^n}\frac{\psi(\sigma\cdot e(n))}{\psi(\sigma\cdot  y)},
	\end{multline}
	which completes the proof.
\end{proof}	

\label{proof:spectre}

\begin{proof}[Proof of Proposition \ref{MESSEPspectre}]
First, we shall prove that
\begin{equation}\label{ortho}
\sum_{\eta\in \{0,\cdots,L-1\}^N} \psi_{\xi}(\eta) \overline{\psi_{\xi^\prime}(\eta)} = N!\,\delta_{\xi=\xi^\prime}.
\end{equation} 
To this end, set $\omega=e^{2i\pi/L}$ and observe that $\sum_{\eta=0}^{L-1} \omega^{\eta l} = L\,\delta_{l\in L\mathbb{Z}}$ for all $l \in\mathbb Z$. Hence, the left-hand side of (\ref{ortho}) is equal to
	\begin{equation}\label{ortho2}
	\frac{1}{L^N}\sum_{\eta_1=0}^{L-1}\cdots\sum_{\eta_N=0}^{L-1}\sum_{\sigma,\sigma^\prime\in\mathfrak S_N}\epsilon(\sigma)\epsilon(\sigma^\prime)\omega^{\sum_{i=1}^N\eta_i\left(\xi_{\sigma(i)}-{\xi^\prime_{\sigma(i)}}\right)} \\
	=\sum_{\sigma,\sigma^\prime\in\mathfrak S_N}\epsilon(\sigma)\epsilon(\sigma^\prime)\delta_{\sigma\cdot\xi =\sigma^\prime\cdot \xi^\prime }.
	\end{equation} 
Since  $\xi$ and $\xi^\prime$ are both strictly ordered, the right-hand side of \eqref{ortho} is equal to $N!\, \delta_{\xi=\xi^\prime}$. We get that   the family $\psi_\xi$, for $\xi\in \mathcal C_{L,N}$, is orthonormal in $\ell^2(\mathcal C_{L,N})$ since
\begin{equation}\label{antisymsum}
\sum_{\eta\in \{0,\cdots,L-1\}^N  } \psi_\xi(\eta)\overline{\psi_{\xi^\prime}(\eta)}
= \sum_{\eta\in \mathcal C_{L,N}} \sum_{\sigma\in\mathfrak{S}_N} \psi_\xi(\sigma\cdot\eta)\overline{\psi_{\xi^\prime}(\sigma\cdot\eta)}
= N! \sum_{\eta\in \mathcal C_{L,N}} \psi_\xi(\eta)\overline{\psi_{\xi^\prime}(\eta)}.
\end{equation}
Recall that $\psi_\xi(\eta)$ vanishes as soon as  $\xi_i=\xi_j$ for $i\neq j$.  Noting that the cardinal of $\mathcal C_{L,N}$ is $\binom{L}{N}$, we obtain that this family is, in addition, a basis of $\ell^2(\mathcal C_{L,N})$.

	It remains to prove they are eigenfunctions associated with the appropriate eigenvalues. Let us introduce for generic  $N$-tuples  $Z=(Z_1,\cdots,Z_N)$ and $Y=(Y_1,\cdots,Y_N)$  the formal monomial 
	$Q_Z(Y):=Z_1^{Y_1}\cdots Z_N^{Y_N}$ and observe that 
	\begin{equation}\label{bethe}
	\sum_{\substack{1\leq i\leq N\\\epsilon\in\{\pm 1\}}}Q_Z(Y_1,\cdots,Y_i,Y_i+\epsilon,Y_{i+1},\cdots,Y_N)=\left(\sum_{i=1}^N\left(Z_i+\frac{1}{Z_i}\right)\right) Q_Z(Y).
	\end{equation}
	This relation  still holds replacing $Q_Z(Y)$ by $Q_Z^{s}(Y)={\rm det}(Z_i^{Y_j})_{1\leq i,j\leq N}$.
	Note that $Q_Z^s(Y)=0$ as soon as at least two of the $Y_i$  are equal. 
	
	Those eigenfunctions must extend to the discrete $L$-periodic Weyl chamber ${\mathcal W_{\mathbb{Z},L,N}}$ and thus  they must satisfy
\begin{equation}\label{eq:peridodic}
Q^s_Z(Y_1,\cdots,Y_N) = Q_Z^s(Y_2,\cdots,Y_N,Y_1+L).
\end{equation}
This relation holds  when we substitute  
\begin{equation}\label{eq:root}
Z_i\equiv e^\frac{2i\pi (\xi_i+\bgamma)}{L}	
\quad\mbox{and}\quad 
Y_j\equiv\eta_j.
\end{equation}

Even though this is straightforward, we provide the computations as they clarify why we require the additive parameter  $\bgamma$.  Let $\tau_L$ be define on $N$-tuples by 
	\begin{equation}\label{tauchap}
	\tau_L\cdot \eta=(\eta_2,\cdots, \eta_N,\eta_1+L)=\tau\cdot \eta +(0,\cdots,0,L),
	\end{equation}
	where  $\tau$ is the cycle $(1\, 2\,\cdots \,N)$. Since  $\varepsilon(\tau)=(-1)^{N+1}$ and $\omega^{(k+\bgamma)(l+L)}=(-1)^{2\bgamma}\omega^{(k+\bgamma)l}$ for any integers $k,l$ and $\bgamma\in\{0,1/2\}$, we get 
	\begin{equation}
	{\rm det}\left(e^{\frac{2i\pi (\xi_k+\bgamma) \tau_L\cdot \eta_j}{L}}\right)_{1\leq k,j\leq N} =(-1)^{N+1}(-1)^{2\bgamma}  
	\;{\rm det}\left(e^{\frac{2i\pi (\xi_k+\bgamma) \eta_j}{L}}\right)_{1\leq k,j\leq N}.
	\end{equation}
	This is the reason  why we set $\bgamma=0$ when $N$ is odd and $\bgamma=1/2$ when $N$ is even. 
	
Thereafter,  using \eqref{bethe}, one can check that $\psi_\xi$ is an eigenfunction with eigenvalue $\rho_\xi$.

\begin{rem}\label{rem:roots}
	The eigenfunction $\psi_\xi$ need not be real (even up to a multiplicative constant) when the roots $Z_i$, $i\in\{1,\cdots,N\}$ defined in~\eqref{eq:root} are not invariant under conjugation, equivalently when the eigenspace associated with $\rho_\xi$ is not one-dimensional. 
\end{rem}

Finally, one checks that $\rho=\rho_c$ is the spectral radius, given by the right-hand side of~\eqref{PF}. The associated positive eigenvector $\psi$ coincides with $\psi_c$ up to a multiplicative constant, namely a classical Vandermonde determinant. We provide the explicit computation.

Set $\delta=(N-1,\cdots,0)$ and let $p$ be such that  $N=2p+1$ or $N=2p$. Note that
	\begin{equation}\label{eq:tauchapappli}
	c=\tau^{p}_L \sigma\cdot\left(\delta-\left(p,\cdots,p\right)\right)=\underbrace{\,\tau_L\circ \cdots \circ \tau_L\,}_{\text{$p$ times}} \cdot \left(\sigma\cdot\left(\delta-\left(p,\cdots,p\right)\right)\right),	
	\end{equation}
 where  $\sigma$ is the permutation mapping $i$ to $N-i+1$ for every $1\leq i\leq N$.  We deduce that 
	\begin{equation}
	\psi_{c}(\eta)=\varepsilon(\tau)^p\varepsilon(\sigma)  e^{\frac{2i\pi (\bgamma-p)\sum_{k=1}^N \eta_k}{L}}   \frac{1}{L^{{N}/{2}}}\;{\rm det} \left(e^{\frac{2i\pi {(N-k)}\eta_j}{L}}\right)_{1\leq k,j\leq N}.
	\end{equation}
Then, the explicit computation of $\psi$ is straightforward, which completes the proof.
\end{proof}

\label{sec:gap}

\begin{proof}[Proof of Proposition \ref{gapestimate}] In the sequel, all constants in the $\mathcal O$-terms are uniform in $L,N,\alpha$. 
	
	First, observe that the second largest eigenvalue is given by 
	\begin{equation}
	\lambda_{L,N} = \frac{\rho_{\tilde c}}{\rho_c}, \quad \text{with}\quad \tilde c_N = c_N + 1  \quad \text{and}\quad  \forall i \in \{1,\cdots,N-1\},\;\; \tilde c_i = c_i.
	\end{equation}
Note that $c_N+\bgamma=(N-1)/2$. Besides, one has
	\begin{equation}\label{asymptoticexp0}
	\cos\left(\frac{2\pi (x+1)}{L}\right)=\cos\left(\frac{2\pi x}{L}\right)-\sin\left(\frac{2\pi x}{L}\right)\frac{2\pi}{L}-\frac{1}{2}\cos\left(\frac{2\pi x}{L}\right)\left(\frac{2\pi}{L}\right)^2
	+\mathcal O \left(\frac{|x|+1}{L^4}\right).
	\end{equation}
	
The only difference between $\rho_{c}$ and $\rho_{\tilde c}$ (see  \eqref{Spectre}) is the last term, respectively given by $2\cos\left({2\pi x_N}/{L}\right)$ and $2\cos\left({2\pi (x_N+1)}/{L}\right)$, with $x_N=(N-1)/2$. We obtain  that
	\begin{equation}\label{gap}
	1-\lambda_{L,N}=\frac{2}{\rho}
	\left[\sin\left(\frac{2\pi x_N}{L}\right)\frac{2\pi}{L}+\frac{1}{2}\cos\left(\frac{2\pi x_N}{L}\right)\left(\frac{2\pi}{L}\right)^2 +\mathcal O\left(\frac{x_N+1}{L^4}\right)\right].
	\end{equation}
In the following, we denote by $B=[\cdots]$ the bracket in \eqref{gap}.

\medskip

	\noindent
	\textit{Low-density limit.} First assume  the limit of $N/L$ is $0$. Then,  it follows from the taylor expansion of $\cos$ at $0$ and standard computations that  
	\begin{equation}\label{estimatepf}
	\rho=2N\left(1-\mathcal O\left(\frac{N^2}{L^2}\right)\right)
	\quad
	\text{and}
	\quad 
	B=	N\left(\frac{2\pi^2}{L^2}+\mathcal O\left(\frac{N^2}{L^4}\right)\right).
	\end{equation}
As a product, we get the desired upper bound \eqref{estimategapmessep}.

\medskip

\noindent
\textit{Hydrodynamic limit.}
Finally, when $N/L$ tends to $\alpha \in (0,1)$, standard estimates  yield  
\begin{equation}\label{eq:riemansum}
\rho =  2L\left(\frac{\sin\left(\frac{\pi N}{L}\right)}{\pi} + \mathcal{O}\left(\frac{1}{L^2}\right)\right)\quad \text{and}\quad B = \sin\left(\frac{\pi N}{L}\right)\frac{2\pi}{L} + \mathcal{O}\left(\frac{1}{L^2}\right).
\end{equation} 
and the upper bound \eqref{estimategapmessep2} follows easily.
\end{proof}

\label{sec:schur}

\begin{proof}[Proof of Proposition \ref{Schur}] We now make precise the correspondence between integer partitions and configurations illustrated in Figure~\ref{partition}. For $\xi\in\mathcal C_{L,N}$, let $F_\xi\subset\mathcal W_{\mathbb Z,L,N}$ denote its discrete fiber under the projection $p_2\circ p_1:\mathcal W_{\mathbb Z,L,N}\to\mathcal C_{L,N}$ (see~\eqref{diagram}). As before, we write $N=2p+1$ or $N=2p$, consistently with the definition of the compact configuration $c$ in~\eqref{compactconf}.

\begin{defin}[Minimal integer partition]  \label{def:minimal}
	For $\xi\in {\mathcal C_{L,N}}$, let $x=(x_1,\cdots, x_N) \in F_\xi$ be the unique element of $F_\xi$ for which $x_1 + p$ is non-negative and minimal. 	Then, the minimal integer partition $\lambda$ associated with the configuration $\xi$ is defined by
	\begin{equation}\label{minimalcompactconf}
	\lambda =\left\{\begin{array}{ll}
	(x_N - p,x_{N-1} - p +1, \cdots, x_2 + p - 1, x_1 + p), & \text{if $N=2p+1$},\\[10pt]
	(x_N - p + 1,x_{N-1} - p +1, \cdots, x_2 + p - 1, x_1 + p), & \text{if $N=2p$}.
	\end{array}\right.
	\end{equation}
\end{defin}

\begin{rem}\label{rem:minimalpartition}
	This partition describes the positive  displacement of the particles from the compact configuration $c$ (initially at $-p+N-1,\ldots,-p$ modulo $L$, see Remark~\ref{rem:compactrep}). The resulting configuration $\xi$ minimizes the displacement of the lowest particle (initially at $-p$). Consequently, the highest particle (initially at $-p+N-1$) can move at most $L-N$, so that $x_N-p\le L-N$.
\end{rem}

To go further, let $x\in F_\xi\subset \mathcal W_{\mathbb Z,L,N}$ be as in Definition \ref{def:minimal}.  One can see that  there exists a unique integer $k$ such that 
\begin{equation}\label{integerk}
p_1 ((x_N,\cdots, x_1))=(\dot \xi_{k},\cdots,\dot \xi_1,\dot \xi_N,\cdots,\dot \xi_{k+1})=\tau^{-k}\sigma\cdot \dot \xi,
\end{equation}
where $\sigma$ denotes the permuation that maps  $i$ to $N-i+1$ for any $1\leq i\leq N$ and $\tau$ is the cycle $(1\,2\, \cdots\, N)$. Let $\delta=(N-1,\cdots, 0)$ be as \eqref{eq:schurdet} and let $\lambda$ be the minimal integer partition corresponding to $\xi$.  Note that $(x_N+p,\cdots,x_1+p)=(\lambda_1+\delta_1,\cdots,\lambda_N+\delta_N)$.

It follows from \eqref{integerk}   that   
\begin{equation}\label{schureigenbasis}
{\rm det}\left(e^{\frac{2i\pi (\xi_m+\bgamma)\eta_j}{L}}\right)_{1\leq m,j\leq N}=\varepsilon(\sigma)\varepsilon(\tau)^{k}e^{\frac{2i\pi (\bgamma-p) \sum_{i=1}^N\eta_i}{L}}{\rm det}\left(e^{\frac{2i\pi (\lambda_m+\delta_m)\eta_j}{L}}\right)_{1\leq m,j\leq N}.
\end{equation}

Besides,  when $\xi=c$, one has $k=p+1$ or $k=p$ according with $N=2p+1$ or $N=2p$ respectively. Hence, noting that $\epsilon(\tau)=1$ if $N$ is odd, one can write by using  \eqref{eq:schurdef} that 
\begin{equation}\label{epsilon1}
\frac{\psi_\xi(\eta)}{\psi_{c}(\eta)}= \varepsilon(\tau)^{p-k} \, s_{\lambda}\left(e^{{2i\pi \eta_1}/{L}},\cdots,e^{{2i\pi \eta_N}/{L}}\right).
\end{equation}

Reciprocally, consider an integer partition $L-N\geq \lambda_1\geq \cdots\geq \lambda_N\geq 0$. Note that  
\begin{equation}
\forall 1\leq i<j\leq N,\quad L-1\geq \lambda_i+\delta_i>\lambda_j+\delta_j\geq 0.
\end{equation}
Then, by setting $\xi_i = \lambda_i + \delta_i$ for every $1 \leq i \leq N$, one can easily check that we obtain a configuration $\xi \in {\mathcal C_{L,N}}$ for which \eqref{epsilon1} holds. This ends the proof.\end{proof}

\section{Proofs of the results in Section \ref{sec:low}}
\label{sec:sec3proof}

We begin by rigorously defining the stochastic processes $(\bX(t))_{t\geq 0}$, $(\bTheta(t))_{t\geq 0}$, and $(\bXi(t))_{t\geq 0}$.

\subsection{The Unitary Dyson Brownian Motions (UDBMs)}
\label{sec:dyson}

Let $(B(t))_{t\geq 0} := (B_{1}(t), \cdots, B_{N}(t))_{t\geq 0}$ be a standard $N$-dimensional Brownian motion. We consider the interacting particle system in $\overline{\boldsymbol{\mathcal{W}}}_{\mathbb{R},2\pi,N}$ defined by
\begin{equation}\label{Dyson1}
d\boldsymbol{X}_i(t) = {\frac{2\pi}{\sqrt N}}\,dB_i(t) + \frac{2\pi^2}{N} \sum_{\substack{1 \leq j \leq N \\ j \neq i}} \cot\left( \frac{\boldsymbol{X}_i(t) - \boldsymbol{X}_j(t)}{2} \right) dt, \quad \boldsymbol{X}_i(0) = \boldsymbol{x}_i,
\end{equation}
for $1 \leq i \leq N$, where $\overline{\boldsymbol{\mathcal{W}}}_{\mathbb{R},2\pi,N}$ is defined in \eqref{eq:weylR}. Equivalently,
\begin{equation}\label{Dyson2}
d\boldsymbol{X}(t) = {\frac{2\pi}{\sqrt N}}\,dB(t) + \frac{(2\pi)^2}{N} \frac{\nabla \Psi(\boldsymbol{X}_t)}{\Psi(\boldsymbol{X}_t)} dt, \quad \boldsymbol{X}(0) = \boldsymbol{x},
\end{equation}
where $\Psi$ is given in \eqref{eq:Psi}. Let us set 
\begin{equation}
{\boldsymbol{\mathcal{W}}}_{\mathbb R,2\pi,N} := \left\{\bx \in \mathbb R^N : \bx_1 < \cdots < \bx_N < \bx_1 + 2\pi \right\},
\end{equation}
so that ${\boldsymbol{\mathcal{W}}}_{\mathbb R,2\pi,N}$ is an open set whose closure is $\overline{\boldsymbol {\mathcal W}}_{\mathbb R,2\pi,N}$ defined in \eqref{eq:weylR}. We say that a stochastic process  in $\overline{\boldsymbol {\mathcal W}}_{\mathbb R,2\pi,N}$ {collides} if, with probability one, it reaches the boundary $\partial {\boldsymbol{\mathcal{W}}}_{\mathbb R,N}$ at some time.

The following result can be found in \cite{Cepa95,CepaLepingle97,CepaLepingle01}.

\begin{prop}\label{cepa} 
	There exists a unique strong solution $\boldsymbol X$ to (\ref{Dyson2}) for every $\boldsymbol x\in \overline{\boldsymbol{\mathcal W}}_{\mathbb R,2\pi,N}$. Moreover, 
	\begin{equation}\label{estimation}
	\forall T\geq 0,\quad \sum_{1\leq i<j\leq N}\mathbb E_{\boldsymbol x}\left[\int_0^T \frac{{\rm d}t}{|\boldsymbol X_j(t)-\boldsymbol X_i(t)|}\right]<\infty.
	\end{equation}
	Furthermore, the process is continuous, strong Markov, never collides if $\boldsymbol x\notin\partial {{\boldsymbol{\mathcal W}}_{\mathbb R,N}}$, and leaves the boundary instantaneously otherwise.
\end{prop}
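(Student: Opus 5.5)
The statement is Proposition~\ref{cepa}. My plan is to deduce it from the general theory of multivalued SDEs / stochastic differential equations with monotone drifts of Cépa and Lépingle. I would transfer the problem to that framework and check its hypotheses, rather than redo the analysis.

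\emph{Setting up the monotone structure.} Write the drift of \eqref{Dyson1} as $b=-\nabla V$ on $\boldsymbol{\mathcal W}_{\mathbb R,2\pi,N}$, with
\begin{equation}
V(x)=-\frac{(2\pi)^2}{N}\log\Psi(x)=-\frac{(2\pi)^2}{N}\sum_{i<j}\log\sin\!\left(\frac{x_j-x_i}{2}\right)+\text{const}.
\end{equation}
Each map $u\mapsto -\log\sin(u/2)$ is convex on $(0,2\pi)$, since its second derivative is $\tfrac14\sin^{-2}(u/2)>0$. Hence $V$ is a convex lower semicontinuous function on $\mathbb R^N$, set equal to $+\infty$ outside the open convex domain $\boldsymbol{\mathcal W}_{\mathbb R,2\pi,N}$.

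\emph{Existence and uniqueness.} The subdifferential $\partial V$ is a maximal monotone operator whose domain has closure $\overline{\boldsymbol{\mathcal W}}_{\mathbb R,2\pi,N}$. By \cite{Cepa95}, the multivalued SDE $dX\in \frac{2\pi}{\sqrt N}dB-\partial V(X)\,dt$ then has a unique strong solution started at any point of the closure. This solution is continuous and strong Markov, because it is a pathwise unique strong solution driven by Brownian motion.

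\emph{Integrability estimate.} To get \eqref{estimation}, I would apply Itô's formula to $V$ itself, or more simply to $\sum_i x_i^2$ localized by periodicity. Alternatively, I would follow \cite{CepaLepingle97}: pair the drift with $X_j-X_i$ and use $\cot(u/2)\ge c/u-C$ for $u\in(0,\pi]$, together with its symmetric counterpart near $2\pi$. The identity $\sum_{i<j}(x_j-x_i)\cot((x_j-x_i)/2)$-type bounds plus a martingale argument give $\mathbb E\int_0^T |X_j-X_i|^{-1}dt<\infty$. This also shows that the multivalued term is single valued for a.e.\ $t$, so $X$ solves \eqref{Dyson2} in the ordinary sense.

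\emph{Boundary behaviour.} For the non-collision and instantaneous-exit statements, I would use the Doob transform structure. $\Psi$ is positive on $\boldsymbol{\mathcal W}_{\mathbb R,2\pi,N}$ and satisfies $\tfrac12\Delta\Psi=-E\Psi$, since $\Psi=\varepsilon^{-1}\Psi_{\bc}$ is a combination of exponentials. So the generator is the $h$-transform of killed Brownian motion on the chamber, and McKean's argument with $\log\Psi$ excludes collisions. Concretely, Itô's formula applied to $\log\Psi(X_t)$ shows it is a semimartingale with a linear finite-variation part, which cannot reach $-\infty$ in finite time. The key computation here is that the pairwise cotangent terms assemble into $\Delta\Psi/\Psi$; I expect this to be the main obstacle, since it needs the classical three-term cotangent identity $\cot a\cot b+\cot b\cot c+\cot c\cot a=1$ for $a+b+c=0$. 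The coefficient $\beta=2$ is exactly the threshold above which the boundary is not hit. Leaving the boundary instantaneously follows from \eqref{estimation}: the process cannot spend positive Lebesgue time where some gap vanishes. This is the strategy of \cite{CepaLepingle01}, which I would cite for the circular case.
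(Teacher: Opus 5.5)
Your plan is essentially the paper's own: the paper gives no argument for Proposition~\ref{cepa} beyond citing \cite{Cepa95,CepaLepingle97,CepaLepingle01}, which is exactly the multivalued-SDE framework for the convex potential $-\frac{(2\pi)^2}{N}\log\Psi$ that you set up, so the proposal is sound at that level. There are only small slips in your optional sketches: the drift of $\log\Psi(\boldsymbol X_t)$ is $\frac{(2\pi)^2}{N}\bigl(\tfrac12|\nabla\log\Psi|^2-E\bigr)$, not constant, but it is bounded below by $-\frac{(2\pi)^2}{N}E$, which is all McKean's argument needs (equivalently, $e^{-(2\pi)^2Et/N}/\Psi(\boldsymbol X_t)$ is a positive local martingale); $\beta=2$ lies strictly above the non-collision threshold $\beta=1$ rather than at it; and no cotangent identity is needed, since $\Delta\Psi=-2E\Psi$ already follows from the determinant form of $\Psi_{\bc}$.
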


Mimicking Section \ref{sec:extend}, we introduce two continuous processes $(\boldsymbol \Theta(t))_{t\geq 0}$ and $(\boldsymbol \Xi(t))_{t\geq 0}$, analogues of $(\Theta(n))_{n\geq 0}$ and $(\Xi(n))_{n\geq 0}$.  The continuous setting requires additional topological care.

To this end, similarly to \eqref{eq:weyldiscretes}, set
\begin{multline}\label{eq:weyl1}
\bCW_{2\pi,N}^{\scriptscriptstyle \#} := \left\{ \btheta \in \mathbb U^N : \exists\, \bx \in \bCW_{\mathbb R,2\pi,N},\ \btheta_1 = e^{i \bx_1},\cdots,\btheta_N = e^{i \bx_N} \right\} \\
\text{and} \quad \bCW_{2\pi,N}  := \left\{ \btheta \in \mathbb R_{2\pi}^N : \exists\, \bx \in \bCW_{\mathbb R,2\pi,N},\ \btheta_1 = \overline{\bx}_1, \cdots, \theta_N = \overline{\bx}_N \right\},
\end{multline}
where $\overline{x}$ denotes the class of $x \in \mathbb R$ modulo $2\pi$.  Also,  consider
\begin{multline}
\bCC_{2\pi,N} := \left\{ \bxi\in [0,2\pi)^N : 0\leq \bxi_1<\cdots<\bxi_N<2\pi\right\} \label{eq:weyl2}\\
\text{and}\quad \bCC_{2\pi,N}^{\scriptscriptstyle \#} = \left\{ \btheta \in \mathbb U^N : \exists\, \bx\in \bCC_{2\pi,N},\; \btheta_1 = e^{i \bx_1}, \cdots, \btheta_N = e^{i \bx_N} \right\}. 
\end{multline}

As in (\ref{diagram}), one has canonical projections
\begin{equation}\label{eq:proj}
\bCW_{\mathbb R,2\pi,N} \overset{p_1}{\longtwoheadrightarrow} \bCW_{2\pi,N}\simeq \bCW_{2\pi,N}^{\scriptscriptstyle \#} \overset{p_2}{\longtwoheadrightarrow} \bCC_{2\pi,N}\simeq \bCC_{2\pi,N}^{\scriptscriptstyle \#},
\end{equation}
inherited from the standard projections $\mathbb R \overset{}{\longtwoheadrightarrow} \mathbb U \simeq \mathbb R/{2\pi\mathbb Z}$ and $\mathbb U \simeq \mathbb R/{2\pi\mathbb Z} \overset{}{\longtwoheadrightarrow} [0,2\pi)$. 

The spaces $\bCW_{2\pi,N}\simeq \bCW_{2\pi,N}^{\scriptscriptstyle \#}$ and $\bCC_{2\pi,N}\simeq \bCC_{2\pi,N}^{\scriptscriptstyle \#}$ are endowed with the quotient topologies induced by the above surjections and the standard topology of $\bCW_{\mathbb R,2\pi,N}\subset \mathbb R^N$. 

These topologies are metrizable. Indeed, for $\theta,\phi\in\mathbb R/{2\pi\mathbb Z}$ with representatives $x,y\in\mathbb R$ such that $\theta=\overline{x}$ and $\phi=\overline{y}$, set $d(\theta,\phi)=|e^{ix}-e^{iy}|$. When viewed in $\mathbb U$, set $d(\theta,\phi)=|\theta-\phi|$. A compatible metric on ${\bCW}_{2\pi,N}$ or ${\bCC}_{2\pi,N}$, as well as on their $\#$-versions, is given by
\begin{equation}
d_H(\xi, \eta) := \min_{\sigma \in \mathfrak{S}_N} \max_{1 \leq i \leq N} d(\xi_{\sigma(i)}, \eta_i),
\end{equation}
for $\xi,\eta$ in the corresponding space. Their completions are denoted by adding an overline. Note that, unlike $\overline{\bCW}_{\mathbb R,2\pi,N}$, the spaces $\overline{\bCW}_{2\pi,N}$ and $\overline{\bCC}_{2\pi,N}$ are compact.

Furthermore, the UDBM \eqref{Dyson2} admits continuous projections onto $\overline{\bCW}_{2\pi,N}$ (resp. $\overline{\bCW}_{2\pi,N}^{\scriptscriptstyle \#}$) and $\overline{\bCC}_{2\pi,N}$ (resp. $\overline{\bCC}_{2\pi,N}^{\scriptscriptstyle \#}$), denoted by $(\boldsymbol{\Theta}(t))_{t \geq 0}$ and $(\boldsymbol{\Xi}(t))_{t \geq 0}$.

\begin{defin}\label{def:merwcontinuous}
	The processes $(\boldsymbol{X}(t))_{t \geq 0}$, $(\boldsymbol{\Theta}(t))_{t \geq 0}$, and $(\boldsymbol{\Xi}(t))_{t \geq 0}$, evolving respectively in $\overline{\bCW}_{\mathbb{R},2\pi,N}$, $\overline{\bCW}_{2\pi,N}$, and $\overline{\bCC}_{2\pi,N}$ (or their $\#$-counterparts), are called UDBM.
\end{defin}

As in Proposition \ref{cepa}, all of these stochastic processes never collide when they do not start from the boundary  and instantaneously leave the boundary otherwise.

\begin{coro}\label{coro:scalinggen}
	Theorem \ref{scalingthm1} admits the straighforward counterparts \eqref{scalinglim-theta}  where  $\bx\in \overline{\bCW}_{\mathbb R,2\pi,N}$ is replaced by  $\btheta\in \overline{\bCW}_{2\pi,N}^{\scriptscriptstyle \#}$ (resp.  $\boldsymbol \xi\in \overline{\bCC}_{2\pi,N}^{\scriptscriptstyle \#}$).
\end{coro}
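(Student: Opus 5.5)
The plan is to deduce Corollary~\ref{coro:scalinggen} from Theorem~\ref{scalingthm1} by the continuous mapping theorem. The maps involved are the projections in \eqref{diagram} and \eqref{eq:proj}, together with the lifting procedure of Remark~\ref{rem:lift1}.

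\emph{Step 1: reduce to the $\mathcal W_{\mathbb Z,L,N}$ setting.} Take a starting point $\vartheta\in\mathcal W_{L,N}^{\scriptscriptstyle\#}$ converging to $\btheta\in\overline{\bCW}^{\scriptscriptstyle\#}_{2\pi,N}$, or $\xi\in\mathcal C^{\scriptscriptstyle\#}_{L,N}$ converging to $\bxi$ in the metric $d_H$. Choose lifts $x\in\mathcal W_{\mathbb Z,L,N}$ with $2\pi x/L\to\bx$ for some $\bx\in\overline{\bCW}_{\mathbb R,2\pi,N}$ projecting onto $\btheta$ (resp.\ $\bxi$).
\begin{itemize}
\item For $\Theta$, pick representatives of the angles of $\btheta$ in $\mathbb R$, shift by multiples of $2\pi$ to land in the closed Weyl chamber, and then approximate by integers.
\item For $\Xi$, first choose the labelling $\sigma$ realizing the minimum in $d_H$. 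Along a subsequence this labelling is eventually constant, and the general case follows because every subsequence has a further subsequence with the same limit.
\end{itemize}
By Remark~\ref{rem:lift1} and Definition~\ref{defmessep}, the lifted process $X$ started at $x$ projects exactly onto $\Theta$ (resp.\ $\Xi$). This is $p_1$ (resp.\ $p_2\circ p_1$) applied pathwise and composed with $y\mapsto e^{2i\pi y/L}$.

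\emph{Step 2: continuity and commutation with interpolation.} Define $F:C(\mathbb R_+,\mathbb R^N)\to C(\mathbb R_+,\overline{\bCW}^{\scriptscriptstyle\#}_{2\pi,N})$ by $F(\omega)(t)=(e^{i\omega_1(t)},\dots,e^{i\omega_N(t)})$. Let $G$ be $F$ followed by the projection to $\overline{\bCC}^{\scriptscriptstyle\#}_{2\pi,N}$. Both maps are continuous for the topology of local uniform convergence, since the exponential is $1$-Lipschitz and $d_H$ is bounded by the coordinatewise maximum distance.

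There is a small subtlety with linear interpolation. The interpolation of $\Theta$ on the circle is not literally $F$ applied to the interpolation of $X$. However, for each time step the two are within distance $O(1/L)$ uniformly. The same holds for $\Xi$, where a linear interpolation must be read through the chosen lift. This uniform $O(1/L)$ discrepancy vanishes, so by Slutsky's lemma in path space both interpolations have the same limit.

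\emph{Step 3: conclude.} Theorem~\ref{scalingthm1} gives $2\pi X(L^2\cdot)/L\Rightarrow\bX$. The continuous mapping theorem then yields $F(2\pi X(L^2\cdot)/L)\Rightarrow F(\bX)=\bTheta$ and $G(\cdots)\Rightarrow\bXi$, which are exactly the limits in \eqref{scalinglim-theta}. One must also check that the limit law is independent of the chosen lift $\bx$. Two lifts differ by the symmetries \eqref{eq:sym1}--\eqref{eq:sym2}, namely a cyclic relabelling with a $2\pi$ shift. The SDE \eqref{Dyson1} is invariant under these symmetries because $\cot(\cdot/2)$ is $2\pi$-periodic, so strong uniqueness in Proposition~\ref{cepa} makes the projected laws coincide.

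\emph{Main obstacle.} The only genuinely delicate point is Step~1 for $\Xi$ when $\bxi$ lies on the boundary, that is, when particles coincide. There the lift, and hence the labelling, is not locally continuous. I would handle it with the subsequence argument above, combined with the lift-independence established in Step~3.
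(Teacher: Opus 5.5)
Your route is correct in outline but runs opposite to the paper's. In the paper, the $\Xi$-statement is not deduced from Theorem~\ref{scalingthm1}; it is the core of that theorem's proof. The steps there are:
\begin{itemize}
\item tightness comes from the martingale/Rouch\'e argument on $e_k=s_{(1^k)}$;
\item one-dimensional marginals of $\Xi(L^2\cdot)$ are identified through the eigenfunctions $f_m=\Psi_m/\Psi_{\bc}$, using \eqref{eq:approxmessep}, \eqref{energy} and the density of their span in $C(\overline{\bCC}_{2\pi,N})$;
\item finite-dimensional laws follow from the Feller property and Ethier--Kurtz;
\item only then is the $X$-statement obtained by lifting, which is where Lemma~\ref{WN} is needed.
\end{itemize}
You instead treat Theorem~\ref{scalingthm1} as a black box and push it down with the continuous mapping theorem. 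This is legitimate and short. The price is that you must construct convergent lifts of the starting points and prove the limit does not depend on the lift. In the paper's route, the initial condition for $\Xi$ enters only through $f_m(\xi_L)\to f_m(\bxi)$, i.e.\ continuity of $f_m$ on the compact space $\overline{\bCC}_{2\pi,N}$, so collisions at time $0$ cost nothing. Also, a $d_H$-optimal labelling need not be cyclically ordered when limit points coincide. It is simpler to take any lift with $x_1\in[0,L)$ and extract a convergent subsequence by compactness.

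One claim in Step~3 is wrong, and you have placed the delicate point in the wrong case. The relabelling \eqref{eq:sym2} disappears under the unlabelled projection $G$, so $\Xi$ is the easy case. It does not disappear under $F$: the process $(\bX_2,\dots,\bX_N,\bX_1+2\pi)$ is sent by $F$ to a cyclic relabelling of $F(\bX)$. When $\btheta$ has distinct coordinates, lifts differ only by \eqref{eq:sym1} and your argument is fine. At collision points it breaks, as the case $N=2$ shows:
\begin{itemize}
\item the starting points $(1,e^{2i\pi/L})$ and $(e^{2i\pi/L},1)$ both converge to $\btheta=(1,1)$ in $\mathbb U^2$;
\item their lifts in $\mathcal W_{\mathbb Z,L,2}$ are forced, up to a global shift, to be $(0,1)$ and $(1,L)$, which converge to $(0,0)$ and $(0,2\pi)$;
\item the two limit laws differ: for small $t$, $\operatorname{Im}(\bTheta_2(t)\overline{\bTheta_1(t)})$ is positive with high probability in the first case and negative in the second.
\end{itemize}
So your step of approximating a chosen lift of $\btheta$ by integers cannot be carried out. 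If $\overline{\bCW}^{\scriptscriptstyle\#}_{2\pi,N}$ is read as the closure in $\mathbb U^N$, the $\Theta$-version even fails for an alternating sequence of starting points.

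The fix is to give $\overline{\bCW}_{2\pi,N}$ the topology of $\overline{\bCW}_{\mathbb R,2\pi,N}$ modulo $2\pi\mathbb Z(1,\dots,1)$, so that boundary points remember the cyclic order. Then convergence of starting points means convergence of lifts modulo global shifts, Step~1 for $\Theta$ is immediate, and only \eqref{eq:sym1} is needed. The paper is equally silent here (its $d_H$ forgets labels, and Remark~\ref{rem:lift2} ignores the issue). This is therefore a matter of making the statement precise, not a flaw in your overall strategy.
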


	Similarly to Remark~\ref{rem:lift1}, one can state the following remark.

\begin{rem}\label{rem:lift2}
 Any continuous path $(\xi(t))_{t\geq 0}$ in $\overline{\bCC}_{2\pi,N}$ that never collides and leaves the boundary instantaneously admits a unique continuous lift $(\theta(t))_{t\geq 0}$ in $\overline{\bCW}_{2\pi,N}$ with $\theta_k(0)=x$, whenever $p_2(x)=\xi_k(0)$. Any continuous path $(\theta(t))_{t\geq 0}$ in $\overline{\bCW}_{2\pi,N}$ with the same properties admits a unique continuous lift $(x(t))_{t\geq 0}$ in $\overline{\bCW}_{\mathbb{R},2\pi,N}$ with $x_k(0)=y$, whenever $p_1(y)=\theta_k(0)$.
\end{rem}

The following follows from Remark~\ref{rem:lift2} and Proposition \ref{cepa}.

\begin{coro}\label{rem:strongmarkov}
	All representations of the UDBM in Definition~\ref{def:merwcontinuous} are continuous strong Markov processes. Moreover, $(\boldsymbol{\Theta}(t))_{t \geq 0}$, seen in $\mathbb{U}^N$, satisfies a similar SDE to $(\boldsymbol{X}(t))_{t \geq 0}$.
\end{coro}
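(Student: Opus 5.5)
The plan is to obtain both assertions of Corollary~\ref{rem:strongmarkov} by transport along the continuous projections \eqref{eq:proj}, starting from Proposition~\ref{cepa} for $(\bX(t))_{t\geq 0}$.

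\emph{Step 1: reduction to a function of the lifted process.} Let $\bx\in\overline{\bCW}_{\mathbb R,2\pi,N}$. By Proposition~\ref{cepa}, $\bX$ is a continuous strong Markov process. It leaves the boundary instantaneously and never collides afterwards. Hence $\bTheta=p_1(\bX)$ and $\bXi=p_2(p_1(\bX))$ are continuous, because $p_1,p_2$ are continuous for the quotient topologies, extended to the closures. Proving the strong Markov property of the projections amounts to checking Dynkin's criterion: the law of $p(\bX)$ started from $\bx$ must depend on $\bx$ only through $p(\bx)$. The strong Markov property of $\bX$ then transfers to $p(\bX)$ with respect to its natural filtration, since every stopping time of the latter is also a stopping time of the filtration of $\bX$.

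\emph{Step 2: invariance of the SDE.} The required invariance follows from symmetries of the drift $\nabla\Psi/\Psi$ in \eqref{Dyson2}. For $p_1$, observe that $\Psi$ is invariant under the global shift $\bx\mapsto \bx+2\pi\ell(1,\dots,1)$; this is \eqref{eq:sym1}, with $\Psi=\varepsilon\Psi_{\bc}$. Consequently, if $\bX$ solves \eqref{Dyson2} from $\bx$, then $\bX+2\pi\ell\mathbf 1$ solves it from $\bx+2\pi\ell\mathbf 1$, driven by the same Brownian motion. Two lifts of the same $\btheta$ that lie in $\overline{\bCW}_{\mathbb R,2\pi,N}$ differ by such a shift, up to the cyclic relabelling handled next.

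For $p_2$, use the cyclic symmetry \eqref{eq:sym2}. The map $\bx\mapsto(\bx_2,\dots,\bx_N,\bx_1+2\pi)$ preserves the domain, and it intertwines the drift because $\cot$ is $\pi$-periodic, so that $\cot((x_i-x_1-2\pi)/2)=\cot((x_i-x_1)/2)$. It also preserves the law of $B$ under the corresponding permutation of coordinates. Pathwise uniqueness, which is part of Proposition~\ref{cepa}, then shows that the transformed process is the solution started from the transformed point. This gives the Dynkin condition for both projections. The same argument, combined with the unique-lift statement of Remark~\ref{rem:lift2}, identifies the laws consistently across the three representations.

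\emph{Step 3: the SDE in $\mathbb U^N$.} Apply It\^o's formula to $\bTheta_k=e^{i\bX_k}$:
\begin{equation}
d\bTheta_k=i\bTheta_k\,d\bX_k-\frac{2\pi^2}{N}\bTheta_k\,dt .
\end{equation}
Next rewrite the drift using the identity
\begin{equation}
\cot\left(\frac{x_k-x_j}{2}\right)=i\,\frac{e^{ix_k}+e^{ix_j}}{e^{ix_k}-e^{ix_j}} .
\end{equation}
This yields a closed SDE for $\bTheta$ with coefficients that are smooth off the collision set:
\begin{equation}
d\bTheta_k=\frac{2i\pi}{\sqrt N}\bTheta_k\,dB_k-\frac{2\pi^2}{N}\Big(\bTheta_k+\sum_{j\neq k}\bTheta_k\frac{\bTheta_k+\bTheta_j}{\bTheta_k-\bTheta_j}\Big)dt .
\end{equation}
Its integrability follows from \eqref{estimation}.

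The main obstacle is handling starting points on the boundary, that is, colliding points. There the lift used in Step 2 is not unique, and the cyclic relabelling becomes ambiguous. I would first establish everything for interior starting points. For boundary points, I would then use the fact that the process leaves the boundary instantaneously: apply the Markov property at small times $\varepsilon>0$ and let $\varepsilon\to 0$, relying on the continuity of $\bx\mapsto$ (law of $\bX$), which follows from the strong existence and uniqueness in Proposition~\ref{cepa}. Alternatively, one can invoke pathwise uniqueness directly, noting that \eqref{Dyson2} is well posed from boundary points as well.
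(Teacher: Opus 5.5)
Your argument is correct in substance and follows the paper's overall strategy. The paper gets the corollary in one line: it projects the UDBM of Proposition~\ref{cepa} and invokes the unique continuous lifts of Remark~\ref{rem:lift2}. What you add is the ingredient the paper leaves implicit. Unique lifting shows that the projected path plus the initial tag determine $\bX$. It does not show that the conditional law of the future of $\bTheta$ or $\bXi$ depends only on the present projected state, because the current lift carries winding information. Your Dynkin-criterion argument supplies exactly this, through the equivariance of \eqref{Dyson1} under global shifts and under $T\colon\bx\mapsto(\bx_2,\dots,\bx_N,\bx_1+2\pi)$ together with pathwise uniqueness. The It\^o computation is also right: the correction $-\frac{2\pi^2}{N}\bTheta_k\,dt$ and the identity $\cot\frac{x_k-x_j}{2}=i\,\frac{e^{ix_k}+e^{ix_j}}{e^{ix_k}-e^{ix_j}}$ both check out. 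One small point: \eqref{estimation} only controls $1/|\bX_j-\bX_i|$, so for the pair $(1,N)$ near $\bX_N-\bX_1=2\pi$ you should apply it to $T\bX$.

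Your boundary discussion needs correcting.

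\emph{The map $p_1$.} The relabelling $T$ plays no role here, since it permutes tags. Interior lifts of a tagged $\btheta$ differ only by global shifts, and that is all Dynkin's criterion requires.

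\emph{The process $\bXi$.} The obstacle you anticipate does not arise. Extend a lift to the non-decreasing sequence $(y_k)_{k\in\mathbb Z}$ with $y_{k+N}=y_k+2\pi$. This shows that all lifts in $\overline{\bCW}_{\mathbb R,2\pi,N}$ of a given $\bxi$, colliding or not, form a single $\langle T\rangle$-orbit. So $T$-equivariance plus well-posedness from boundary points give the criterion directly, with no limiting argument.

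\emph{The process $\bTheta$.} Here the criterion genuinely fails at total collisions, and no continuity argument in $\bx$ can repair it. For $N=2$, take the two lifts $(0,0)$ and $T(0,0)=(0,2\pi)$ of $(\overline 0,\overline 0)$. From the first, particle $2$ emerges just ahead of particle~$1$; from the second, just behind it. The two tagged processes therefore have different laws. The statement for $\bTheta$ must be read in one of two ways. Either take the state space to be $\overline{\bCW}_{\mathbb R,2\pi,N}$ modulo global shifts, which separates these two points. Or fix the initial point and use that the process never returns to the boundary after time $0$, so that only the interior kernel is needed at strictly positive stopping times.
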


\subsection{Proofs of Theorems \ref{scalingthm1} and \ref{spectraldecompodyson}}

We prove both theorems together: first we establish tightness of the processes, then we determine the full spectrum of $\bP$, and finally we identify the law of any limit point.

\subsubsection{Tightness results}
\label{subsec:tight}

We first prove tightness of $(\Theta(L^2 t))_{t \geq 0}$ in $\mathbb{C}^N$ using  that elementary symmetric polynomials admit  expansion in the Schur basis and Lemma \ref{rouche}. By the continuous mapping theorem, this yields tightness of $(\Xi(L^2 t))_{t \geq 0}$ by projection. The tightness of $(2\pi X(L^2 t)/L)_{t \geq 0}$ then follows by lifting, which requires controlling the winding numbers (Lemma \ref{WN}).

\bigskip

\noindent
{\it Preliminaries.} Since $\mathbb{U}^N$ is a compact set, the Markov property of $(\Theta(n))_{n \geq 0}$ and the tightness criterion \cite[Theorem 8.6, p.~137]{EK} reduce the proof to showing
\begin{equation}\label{tightnesscriterion}
\lim_{\delta \to 0} \limsup_{L\geq 2}  \sup_{\theta \in {\bCW}_{L,N}^{\scriptscriptstyle \#}} \mathbb{E}_\theta\left[\sup_{0 \leq t \leq \delta}\left\|\Theta(L^2 t) - \theta \right\|_\infty\right] = 0.
\end{equation}


We view $\theta = (\theta_1, \cdots, \theta_N) = (e^{2i\pi x_1/L}, \cdots, e^{2i\pi x_N/L}) \in \bCW_{L,N}^{\scriptscriptstyle \#}$ as the set of roots of a unitary polynomial. Hence, up to a permutation $\sigma \in \mathfrak{S}_N$,  $\theta$ is characterized by
\begin{equation}
e_k(\theta_1, \cdots, \theta_N) = e_k(e^{2i\pi x_1/L}, \cdots, e^{2i\pi x_N/L}),\quad  1 \leq k \leq N,
\end{equation}
where $e_k$ denotes the $k$-th elementary symmetric polynomial. Controlling the differences between $e_k(z_1,\cdots,z_N)$ and $e_k(\theta_1,\cdots,\theta_N)$ thus controls the distance between the $z_i$ and the $\theta_i$, as in the classical continuity of polynomial roots with respect to their coefficients.

We use the following classical lemma, whose proof is given at the end  for completeness.

\begin{lem}\label{rouche}
	Let $K \subset \mathbb{C}^N$ be compact. For $\theta \in K$ and $\varepsilon > 0$, let $C_1(\theta,\varepsilon),\ldots,C_\ell(\theta,\varepsilon)$ be the connected components of the union of $B(\theta_k,\varepsilon)$, and $i_k$ the index of the component containing $\theta_k$. Then, there exists $\lambda$, depending only on $K$ and $N$, such that for all $z \in K^N$,
	\begin{equation}
	\max_{1 \leq k \leq N} |e_k(z) - e_k(\theta)| < \lambda\,{\varepsilon^N}
	\quad \Longrightarrow\quad
	\exists \sigma \in \mathfrak{S}_N,\, \forall\, 1 \leq k \leq N,\, z_{\sigma(k)} \in C_{i_k}(\theta,\varepsilon).
	\end{equation}
\end{lem}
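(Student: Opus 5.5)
Rouché's theorem applied to the monic polynomials
\[
P_\theta(X)=\prod_{k=1}^N (X-\theta_k)=\sum_{k=0}^N (-1)^k e_k(\theta)X^{N-k},\qquad P_z(X)=\prod_{k=1}^N (X-z_k),
\]
on the boundary of each connected component $C_j:=C_j(\theta,\varepsilon)$ should give the claim. (We read the hypothesis as $\theta,z\in K$, with $K\subset\mathbb C^N$ compact, or equivalently as coordinates lying in a fixed compact subset of $\mathbb C$.)

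The key steps are as follows.

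\emph{Smallness of $P_z-P_\theta$.} Fix $R$ with $|\theta_k|,|z_k|\le R$ for all points of $K$. For $|X|\le R+1$,
\[
|P_z(X)-P_\theta(X)|\le \sum_{k=1}^N |e_k(z)-e_k(\theta)|\,|X|^{N-k}< \lambda\,\varepsilon^N\,C_{R,N},
\qquad C_{R,N}=\sum_{k=1}^N (R+1)^{N-k}.
\]
We may assume $\varepsilon\le 1$. For larger $\varepsilon$ the union of the discs is contained in a single ball, and the statement reduces to the case $\varepsilon$ of order the diameter of $K$, which can be handled by adjusting $\lambda$.

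\emph{Lower bound for $|P_\theta|$ on the boundaries.} Take $X\in\partial C_j$. Then $X$ lies on some circle $|X-\theta_m|=\varepsilon$ and outside every open disc $B(\theta_k,\varepsilon)$. Hence $|X-\theta_k|\ge\varepsilon$ for every $k$, and therefore $|P_\theta(X)|\ge \varepsilon^N$. Each $\partial C_j$ is a finite union of circular arcs and lies inside $\{|X|\le R+1\}$.

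\emph{Application of Rouché.} Choose $\lambda=1/C_{R,N}$. Then $|P_z-P_\theta|<|P_\theta|$ on $\partial C_j$. The symmetric form of Rouché's theorem (argument principle on each bounded component, whose boundary is a cycle of arcs) shows that $P_z$ and $P_\theta$ have the same number of zeros in $C_j$. That number is $n_j=\#\{k: i_k=j\}$, and no zero of $P_z$ lies on $\partial C_j$.

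\emph{Matching.} Since $\sum_j n_j=N$, every root of $P_z$ lies in some $C_j$, with exactly $n_j$ of them in $C_j$. Any bijection sending the indices $k$ with $i_k=j$ to the indices of the roots $z$ located in $C_j$ defines the required $\sigma$.

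The main obstacle is rigour in applying Rouché on $C_j$, which need not be simply connected: it may have holes formed by several overlapping discs. I would handle this by applying the argument principle to the boundary cycle of $C_j$, namely the outer boundary together with the inner boundaries. The inequality holds on all of them, since every boundary point is at distance at least $\varepsilon$ from all the $\theta_k$. The zero-count identity then holds with holes included, because the holes contain no $\theta_k$ and, by the same count, no roots of $P_z$. The other point needing care is the uniformity of $\lambda$, which comes solely from the compactness of $K$ through the bound $R$.
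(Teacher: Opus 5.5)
Your proposal is correct and is essentially the paper's proof: Rouché's theorem applied on each component $C_j(\theta,\varepsilon)$, combining $|P_\theta|\ge\varepsilon^N$ on $\partial C_j$ with the coefficient bound on $P_z-P_\theta$. Your bookkeeping of the radius is in fact more careful than the paper's:
\begin{itemize}
\item the paper's choice $\lambda^{-1}=1+R+\cdots+R^N$ tacitly assumes $\partial C_j\subset\{|X|\le R\}$, which fails since boundary points can have modulus $R+\varepsilon$;
\item your use of $R+1$ for $\varepsilon\le 1$ avoids this;
\item the case $\varepsilon>2R$ is trivial, because $B(\theta_1,\varepsilon)$ already contains all the $z_k$;
\item the case $1<\varepsilon\le 2R$ is covered by replacing $R+1$ with $3R+1$.
\end{itemize}

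The only inaccuracy is your remark that the holes of $C_j$ contain no $\theta_k$. This can fail: a ring of overlapping discs can enclose a separate component together with its centres. The remark is not needed, though, because the argument principle over the full boundary cycle of $C_j$ counts zeros in $C_j$ alone.
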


This approach is fruitful since one has the classical identity
\begin{equation}
e_k(\theta_1, \cdots, \theta_N) = s_{(1^k)}(\theta_1, \cdots, \theta_N),
\end{equation}
where $(1^k) = (1, \cdots, 1, 0, \cdots, 0)$ is the partition with $k$ ones and $N-k$ zeros.

\begin{rem}\label{rem:ekxi}
	The eigenfunction $s_{(1^k)}$ corresponds to the MESSEP eigenfunction $\psi_\xi / \psi_{{c}}$, where $\xi \in \mathcal{C}_{L,N}$ is obtained from the compact configuration ${c}$ by adding $+1$ to its $k$ topmost particles (i.e., rotating them by $2\pi/L$ in the complex plane). See Remark~\ref{rem:corres} for the correspondence.
\end{rem}

	\begin{lem}\label{lem:gap-k}
	 The eigenvalue $r_k$ associated with the MESSEP eigenfunction $e_k$ satisfies: 
	 \begin{equation}\label{eq:gapk}
	 1 - r_k
	  = \frac{k}{2}\left(1 - \frac{k - 1}{N}\right)\left(\frac{2\pi}{L}\right)^2 + \mathcal{O}\left(\frac{1}{L^4}\right),
	 \end{equation}
	 	as $L \to \infty$, where the constant in the last $\mathcal{O}$-term depends only on $N$.
\end{lem}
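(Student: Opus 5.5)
The plan is to compute $r_k$ exactly from the explicit spectrum in Proposition~\ref{MESSEPspectre} and then Taylor expand, with $N$ fixed. By Remark~\ref{rem:ekxi} and the correspondence in the proof of Proposition~\ref{Schur} (see \eqref{epsilon1}), $e_k=s_{(1^k)}$ equals, up to a sign, $\psi_\xi/\psi_c$. Here $\xi$ is obtained from the compact configuration $c$ by moving its $k$ topmost particles by $+1$. Hence $r_k=\rho_\xi/\rho_c$, with $\rho_\xi$ given by \eqref{Spectre}. The sign $\varepsilon(\tau)^{p-k}$ plays no role for eigenvalues.

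Next I will compute $\rho_\xi-\rho_c$ by a telescoping observation. In the symmetric coordinates $c+\bgamma$ (Remark~\ref{rem:compactrep}), the particles sit at $a_j=\frac{N-1}{2}-j$, $0\le j\le N-1$. The $k$ topmost ones are $a_0,\dots,a_{k-1}$, and shifting them by $+1$ turns the set $\{a_0,\dots,a_{k-1}\}$ into $\{a_0+1,\dots,a_{k-1}+1\}$. As sets, this simply adds the point $\frac{N+1}{2}$ and removes the point $\frac{N-1}{2}-k+1=\frac{N+1}{2}-k$. Therefore
\begin{equation}
\rho_\xi-\rho_c=2\cos\!\left(\frac{\pi(N+1)}{L}\right)-2\cos\!\left(\frac{\pi(N+1-2k)}{L}\right),
\end{equation}
an exact identity with no sum left to estimate. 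I also need to check that the shifted configuration is admissible, i.e.\ that $\xi\in\mathcal C_{L,N}$, which holds as soon as $L\ge N+1$.

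Finally, I will expand with $N$ fixed and $L\to\infty$. Using $\cos u=1-u^2/2+\mathcal O(u^4)$,
\begin{equation}
\rho_\xi-\rho_c=-\frac{\pi^2}{L^2}\Big((N+1)^2-(N+1-2k)^2\Big)+\mathcal O\!\left(\frac{1}{L^4}\right)=-\frac{4\pi^2 k(N+1-k)}{L^2}+\mathcal O\!\left(\frac{1}{L^4}\right).
\end{equation}
By \eqref{PF} or \eqref{estimatepf}, $\rho_c=2N+\mathcal O(L^{-2})$. Dividing gives
\begin{equation}
1-r_k=\frac{2\pi^2k(N+1-k)}{NL^2}+\mathcal O(L^{-4}),
\end{equation}
and this is exactly $\frac{k}{2}\bigl(1-\frac{k-1}{N}\bigr)\bigl(\frac{2\pi}{L}\bigr)^2$. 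The constants in the $\mathcal O$-terms depend only on $N$, because $k\le N$ and all arguments of the cosines are bounded by $\pi(N+1)/L$.

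The only genuinely delicate point is the bookkeeping in the identification of $e_k$ with the shifted configuration and its symmetric coordinates, in particular for even $N$ where $\bgamma=1/2$. Once the set difference (add $\frac{N+1}{2}$, remove $\frac{N+1}{2}-k$) is confirmed in both parities, the rest is routine.
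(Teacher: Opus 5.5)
Your proof is correct and follows essentially the paper's route: identify $r_k=\rho_\xi/\rho_c$ for the configuration obtained by shifting the $k$ topmost particles, Taylor-expand the cosines, and divide by $\rho_c=2N+\mathcal O(L^{-2})$. The only difference is cosmetic. You telescope the $k$ shifted cosines into an exact two-term difference before expanding, whereas the paper expands each of the $k$ terms via \eqref{asymptoticexpsuite} and sums the resulting arithmetic progression; both give $2\pi^2k(N+1-k)/(NL^2)$.
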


		\begin{proof}[Proof of Lemma \ref{lem:gap-k}] The proof of the following lemma follows the same lines as the spectral gap estimates in Section~\ref{sec:gap}. We get from \eqref{asymptoticexp0} that, for any $x$ in a compact set $J \subset \mathbb{R}$,
		\begin{equation}\label{asymptoticexpsuite}
		\cos\left(\frac{2\pi (x+1)}{L}\right) = \cos\left(\frac{2\pi x}{L}\right) - \left(x+\frac{1}{2}\right)\left(\frac{2\pi}{L}\right)^2 + \mathcal{O}\left(\frac{1}{L^4}\right),
		\end{equation}
		as $L \to \infty$, where the constant in the $\mathcal{O}$ term depends only on $J$. Recall that $\rho_\xi$ is defined in \eqref{Spectre}, and that $r_k = \rho_\xi / \rho_c = \rho_\xi / \rho$ for some $\xi\in\bCC_{L,N}$ as in Remark~\ref{rem:ekxi}. Then, we get
		\begin{equation}
		1 - r_k
		= \frac{1}{\rho} \left[2\sum_{i = N - k + 1}^{N} \left(c_i+\bgamma + \frac{1}{2}\right) \left(\frac{2\pi}{L}\right)^2 + \mathcal{O}\left(\frac{1}{L^4}\right) \right],
		\end{equation}
		as $L \to \infty$, where the constant in the last $\mathcal{O}$-term depends only on $N$.
		
		Using the left-hand side of \eqref{estimatepf} and noting that $(c_i)_{1\leq i\leq N}$ is an arithmetic sequence with common difference $1$ satisfying $c_N + \bgamma = (N - 1)/2$, we deduce \eqref{eq:gapk}.\end{proof}

Furthermore, the discrete-time Itô's formula allows us to write, for all $n\geq 0$, 
\begin{equation}\label{ito}
e_k\left(\Theta(n)\right) =  e_k\left(\theta\right) + (r_k - 1) \sum_{i=0}^{n-1} e_k(\Theta(i)) + M_n^{(k)},
\end{equation}
where  $(M_n^{(k)})_{n \geq 0}$ is a square-integrable, complex-valued martingale such that
\begin{equation}\label{accroissementmart}
M_{n+1}^{(k)} - M_n^{(k)} = e_k(\Theta(n+1)) - e_k(\Theta(n)) + (1 - r_k)\,e_k(\Theta(n)).
\end{equation}

Hence, to prove \eqref{tightnesscriterion}, using Lemma~\ref{rouche} and \eqref{ito}, it suffices to control

\begin{align}
e_k\left(\Theta(L^2 t)\right) - e_k(\theta) 
&= \underbrace{e_k\left(\Theta(L^2 t)\right) - e_k\left(\Theta(\lfloor L^2 t \rfloor)\right)}_{=: \Delta_{\lfloor L^2 t \rfloor}} + e_k\left(\Theta(\lfloor L^2 t \rfloor)\right) - e_k(\theta) \nonumber \\[5pt]
&= \Delta_{\lfloor L^2 t \rfloor} + (r_k - 1) \sum_{i=0}^{{\lfloor L^2 t \rfloor}-1} e_k(\Theta(i)) + M_{\lfloor L^2 t \rfloor}^{(k)}. \label{threeterms}
\end{align}

\bigskip

\noindent
\textit{Bound on the first two terms in \eqref{threeterms}.} First, introduce
\begin{equation}\label{eq:alphabeta}
\alpha = \sup_{1 \leq k \leq N} \sup_{\theta \in \mathbb{U}^N} |e_k(\theta)| \quad \text{and} \quad  
\beta = \sup_{1 \leq k \leq N} \sup_{1 \leq j \leq N} \sup_{x \in \mathbb{R}^N} 
\left| \frac{\partial e_k(e^{2i\pi x_1/L}, \cdots, e^{2i\pi x_N/L})}{\partial x_j} \right|.
\end{equation}
Observe that 
\begin{equation}\label{eq:boundlow}
\forall k\in\{1,\cdots,N\},\quad  \frac{k}{2} \left(1 - \frac{k - 1}{N} \right) \leq \frac{(N+1)^2}{8N}.
\end{equation}

Then, it follows from the mean value theorem that, for all $1 \leq k \leq N$, $L \geq 2$ and $\delta\geq 0$, 
\begin{equation}\label{maj1}
\sup_{0\leq t\leq \delta }|\Delta_{\lfloor L^2 t \rfloor}| = \sup_{0\leq t\leq \delta } \left| e_k\left(\Theta(L^2 t)\right) - e_k\left(\Theta(\lfloor L^2 t \rfloor)\right) \right| \leq \frac{2\pi \beta}{L}.
\end{equation}
Moreover, combining  \eqref{eq:boundlow} and \eqref{eq:gapk}, we obtain
\begin{equation}\label{maj2}
\sup_{0\leq t\leq \delta } 
\left| (r_k - 1) \sum_{i=0}^{\lfloor L^2 t \rfloor -1} e_k(\Theta(i)) \right| 
\leq 
C_1 \delta,
\end{equation}
for some constant $C_1$ depending only on $N,\alpha,\beta$.

\bigskip

\noindent
\textit{Bound on the martingale term in \eqref{threeterms}.} Using \eqref{maj1}, \eqref{eq:boundlow} and \eqref{accroissementmart}, we obtain
\begin{equation}
\mathbb{E}_\theta\left[|M_{n+1}^{(k)} - M_n^{(k)}|^2\right] 
\leq \frac{C_2}{L^2}. \label{boundmart}
\end{equation}
for some constant $C_2$ depending only on $N,\alpha,\beta$. Applying the Doob’s maximal inequality, we deduce that, for all $1\leq k\leq N$, $L\geq 2$, $\delta\geq 0$ and  $\eta > 0$,
\begin{equation}\label{maj3}
\mathbb{P}_\theta\left( \max_{0 \leq t \leq \delta} |M_{\lfloor L^2 t \rfloor}^{(k)}| \geq {\eta} \right) =\mathbb{P}_\theta\left( \max_{0 \leq n \leq \lfloor L^2 \delta \rfloor} |M_n^{(k)}| \geq \eta  \right) 
\leq \frac{C_2 \delta}{\eta ^2}
\end{equation}

\bigskip

\noindent
{\it Tightness criterion \eqref{tightnesscriterion}.} Assume that $L$ is sufficiently large and $\delta$ is sufficiently small so that
\begin{equation}\label{eq:cond}
\frac{2\pi\beta}{L} + C_1\, \delta \leq 2\eta.
\end{equation}
It follows from \eqref{maj1}, \eqref{maj2}, \eqref{maj3}, and \eqref{threeterms}, that
\begin{equation}\label{deltasym}
\mathbb{P}_\theta\left(\max_{0 \leq t \leq \delta} \max_{1 \leq k \leq N} \left| e_k\left(\Theta(L^2 t)\right) - e_k(\theta) \right| \geq 3\eta \right)
\leq \frac{C_2 \delta}{\eta^2}.
\end{equation}

For any $\varepsilon>0$, introduce
\begin{equation}
\Lambda = \left\{ \max_{0 \leq t \leq \delta} \max_{1 \leq k \leq N} \left| e_k\left(\Theta(L^2 t)\right) - e_k(\theta) \right| < \lambda{\varepsilon^N} \right\},
\end{equation}
where $\lambda_\varepsilon$ is given by Lemma~\ref{rouche}. On the event $\Lambda_\varepsilon$, this lemma implies that for all $0 \leq t \leq \delta$, there exists $\sigma_t \in \mathfrak{S}_N$ such that 
\begin{equation}
\forall k\in\{1,\cdots,N\},\quad \Theta_{\sigma_t(k)}(L^2 t) \in C_{i_k}(\theta,\varepsilon) \subset B(\theta_k,(2N-1)\varepsilon).
\end{equation}
Since $t \mapsto \Theta(L^2 t)$ is continuous and equals $\theta$ at $t=0$, we must have $\sigma_t=\mathrm{id}$ for all $0 \leq t \leq \delta$. 

Finally, using \eqref{deltasym} with $3\eta = \lambda\varepsilon^N$, we can write
\begin{equation}\label{majtightness}
\mathbb{E}_\theta\left[ \sup_{0 \leq t \leq \delta}  \left\| \Theta(L^2 t) - \theta \right\|_\infty \right]
\leq (2N-1)\varepsilon + 2\mathbb{P}(\Lambda^c_\varepsilon) \leq (2N-1)\varepsilon + \frac{2C_2 \delta}{\eta^2}.
\end{equation}

In the latter inequality, $\eta$ (and thus $\varepsilon$) can be chosen arbitrarily small, we only need  to take $L$ sufficiently large and $\delta$ sufficiently small (depending on $\eta$) so that \eqref{eq:cond} is satisfied. Since the constants are uniform with respect to $\theta \in \bCW_{L,N}^{\scriptscriptstyle \#}$, letting $L \to \infty$ and then $\delta \to 0$  yield \eqref{tightnesscriterion}.

\bigskip

\noindent
{\it Tightness of $(2\pi X(L^2 t)/L)_{t \geq 0}$.} To apply \cite[Theorem 8.6, p.~137]{EK}, we need to prove 
\begin{equation}\label{tightnesscriterion2}
\lim_{\delta \to 0} \limsup_{L\geq 2} \sup_{x \in \mathbb{Z}} \mathbb{E}_x \left[ \sup_{0 \leq t \leq \delta} \left\| \frac{2\pi X(L^2t)}{L} - \frac{2\pi x}{L} \right\|_\infty \right] = 0,
\end{equation}
but also, since $\overline{\bCW}_{\mathbb R,2\pi,N}$ is not a compact set, the tightness of $\left({2\pi X(L^2t)}/{L}\right)_{L\geq 2}$ for each fixed $t \geq 0$.

The first step follows readily from the tightness of $\left(\Theta(L^2 t)\right)_{t \geq 0}$ established above, together with the related arguments and the inequalities
\begin{equation}\label{tightnesscriterion22}
\forall \kappa>0,\quad \mathbb{P}_x\left( \sup_{0 \leq t \leq \delta} \left\| \frac{2\pi X(L^2 t)}{L} - \frac{2\pi x}{L} \right\|_\infty > \kappa \right) 
\leq \frac{1}{\beta \kappa} \, \mathbb{E}_\theta\left[ \sup_{0 \leq t \leq \delta} \|\Theta(L^2 t) - \theta\|_\infty \right],
\end{equation}
which hold for some $\beta>0$, as a consequence of the equivalence between the geodesic distance on $\mathbb{U}$ and the Euclidean distance on $\mathbb{U}\subset \mathbb C$, combined with Markov’s inequality.

Secondly, since $|X_N(L^2t) - X_1(L^2t)| \leq L - 1$, tightness of $(2\pi X(L^2t)/L)_{L\geq 2}$ reduces to that of the center of mass
\begin{equation}\label{eq:defcentermass}
\frac{2\pi S(L^2t)}{L} = \frac{2\pi}{L} \sum_{k=1}^N X_k(L^2t).
\end{equation}

\begin{lem}\label{WN}
	Let $s \in \mathbb{Z}^{\mathbb{N}}$ be such that $s_{n+1} - s_n \in \{\pm 1\}$ for all $n \in \mathbb{N}$. Then, for any $L\geq 3$,
	\begin{equation}\label{eq:wind}
	\frac{2\pi (s_n - s_0)}{L} = \frac{2\pi/L}{\sin(2\pi/L)} \sum_{k=0}^{n-1} \Im\left(e^{\frac{2i\pi (s_{k+1} - s_k)}{L}}\right).
	\end{equation}
\end{lem}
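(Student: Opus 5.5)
The identity is a telescoping statement, so I will prove it one step at a time. Set $\epsilon_k := s_{k+1}-s_k \in \{\pm 1\}$. Then the left-hand side of \eqref{eq:wind} is $\frac{2\pi}{L}\sum_{k=0}^{n-1}\epsilon_k$, and it is enough to check, term by term, that
\begin{equation}
\frac{2\pi}{L}\,\epsilon_k = \frac{2\pi/L}{\sin(2\pi/L)}\,\Im\left(e^{\frac{2i\pi \epsilon_k}{L}}\right).
\end{equation}

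Since $\Im(e^{i\phi})=\sin\phi$ and $\sin$ is odd, we get $\Im(e^{2i\pi\epsilon_k/L}) = \sin(2\pi\epsilon_k/L) = \epsilon_k \sin(2\pi/L)$ for $\epsilon_k\in\{\pm1\}$. Substituting this into the right-hand side gives exactly $\frac{2\pi}{L}\epsilon_k$. Summing over $0\le k\le n-1$ then yields \eqref{eq:wind}.

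The only point needing care is the normalising factor. The hypothesis $L\ge 3$ guarantees $0<2\pi/L<\pi$, so $\sin(2\pi/L)\neq 0$ and the prefactor is well defined. For $L=2$ it would vanish, since $e^{\pm i\pi}=-1$ has zero imaginary part.

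There is no real obstacle in the lemma itself; it is elementary. The point of writing it this way is the intended use. Take $s_n$ to be the integer lift of one coordinate, or the centre of mass $S$ in \eqref{eq:defcentermass}, after an appropriate handling of the single moving particle per step. Then \eqref{eq:wind} expresses the lifted displacement as a function of increments of the $\mathbb{U}$-valued process $\Theta$. In the next step this makes the winding numbers, and hence the tightness of $2\pi S(L^2 t)/L$, controllable through a discrete Itô decomposition of $\Im(\Theta_k)$-type increments, much as in \eqref{ito}.
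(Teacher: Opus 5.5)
Your proof is correct and is essentially the paper's argument: both rest on $\sin(\pm 2\pi/L)=\pm\sin(2\pi/L)$, with the paper grouping the steps into $\delta^+$ up-steps and $\delta^-$ down-steps where you check each step and sum. Your observation that $L\geq 3$ is exactly what makes $\sin(2\pi/L)\neq 0$ is a useful point that the paper leaves implicit.
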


\begin{rem}\label{rem:winding}
	The integer part of the left-hand side of \eqref{eq:wind} is nothing but the algebraic winding number of the sequence $(e^{2i\pi s_k / L})_{k \geq 0}$ up to time $n$.
\end{rem}

	\begin{proof}[Proof of Lemma~\ref{WN}]
	Assume $s_0 = 0$. Let $\delta^+$ and $\delta^-$ be the numbers of indices $0 \leq k \leq n-1$ such that $s_{k+1}-s_k=+1$ and $s_{k+1}-s_k=-1$. Then $\delta^+ + \delta^- = n$, $s_n=\delta^+-\delta^-$ and
	\[
	\Im\left( \sum_{k=0}^{n-1} e^{\frac{2i\pi (s_{k+1} - s_k)}{L}} \right)
	= \Im\left( \delta^+ e^{2i\pi/L} + \delta^- e^{-2i\pi/L} \right)
	= s_n \,\sin\left( \frac{2\pi}{L} \right),
	\]
	which proves the claim.
\end{proof}

To proceed, observe that
\begin{equation}\label{eq:centermass}
e^{\frac{2i\pi }{L}\sum_{k=1}^N x_k }=e_N\left(e^{{2i\pi x_1}/{L}},\cdots,e^{{2i\pi x_N}/{L}}\right)\in\mathbb U.
\end{equation}
We then deduce from Lemma~\ref{WN}, equality \eqref{eq:centermass}, and \eqref{accroissementmart} applied with $k = N$ and $n = k$ that
\begin{align}
\frac{2\pi (S(n) - S(0))}{L} & \frac{2\pi/L}{\sin(2\pi/L)} \sum_{k=0}^{n-1} \Im\left(e_N(\Theta(k+1))\overline{e_N(\Theta(k))}\right),\\
& =   \frac{2\pi/L}{\sin(2\pi/L)} \sum_{k=0}^{n-1} \Im\left( e^{-\frac{2i\pi S_k}{L}} (M_{k+1}^{(N)} - M_k^{(N)})\right),\label{WNappli}
\end{align}
where, in the last equality, we use that the MESSEP spectrum, and hence $r_k$, is real. Note that the summands inside the imaginary part in \eqref{WNappli} are centered and uncorrelated.

Therefore, applying \eqref{boundmart}, we obtain
\begin{equation}\label{WNtight}
\mathbb V\left(\frac{2\pi (S(\lfloor L^2t\rfloor)-S(0))}{L}\right) \leq  C_3 t,
\end{equation}
for some constant $C_3$ independent on $L$. This implies the tightness of \eqref{eq:defcentermass}, and hence of each marginal of $\left({2\pi X(L^2 t)}/{L}\right)_{L \geq 2}$, thereby completing the proof.\hfill\qedsymbol\\

	\begin{proof}[Proof of  Lemma \ref{rouche}]
Let $R>0$ be such that $K \subset \{z\in\mathbb C:\ |z|\leq R\}$, and set $\lambda^{-1} = 1 + R + \cdots + R^N$. 

Let us introduce
		\begin{multline}
		P(X)=\prod_{k=1}^N(X-\theta_k)=\sum_{k=0}^N (-1)^{N-k} e_{N-k}(\theta_1,\cdots,\theta_N)X^{k}\\
		\mbox{and}\quad  Q(X)=\prod_{k=1}^N(X-z_k)=\sum_{k=0}^N (-1)^{N-k} e_{N-k}(z_1,\cdots,z_N)X^{k}.
		\end{multline} 
	Since for  all  $z\in \bigsqcup_{i=1}^\ell\partial C_{i}(\theta,\varepsilon)$ and $1\leq k\leq N$ one has  $|z-\theta_k|\geq \varepsilon$,  we obtain  
		\begin{equation}
		\kappa=\min_{1\leq i\leq \ell}\min_{z\in\partial C_i(\theta,\varepsilon)} |P(z)|\geq \varepsilon^N.
		\end{equation}

Assume that $|e_{k}(z_{1},\cdots,z_N)-e_{k}(\theta_1,\cdots,\theta_N)|\leq \delta$, for all $1\leq k\leq N$, with $0<\delta< \lambda \varepsilon^N$. Then, for every $1\leq i\leq \ell$ and $z\in\partial C_i(\theta,\varepsilon)$,
		$$|P(z)-Q(z)|\leq \delta /\lambda < \kappa\leq P(z).$$
		
Applying Rouché’s theorem, $P$ and $Q$ have the same number of roots (counted with multiplicities) in each connected component $C_i(\theta,\varepsilon)$, $1 \leq i \leq \ell$. This completes the proof.
	\end{proof}

\subsubsection{Spectral decomposition}

\label{sec:subspectraldecompocontinu}

Before showing that $\boldsymbol{\mathcal{S}}_{2\pi,N}$ forms an orthonormal set of eigenfunctions of $(\bP_t)_{t \geq 0}$ in $\mathbb L^2(\bCC_{2\pi,N},\boldsymbol{\mu})$, we first prove Proposition~\ref{orthonormal}.

\begin{proof}[Proof of Proposition \ref{orthonormal}]

As in the discrete setting, one readily checks that for all $m, m' \in \mathfrak{W}_N$,

\begin{equation}\label{ortho3}
\int_{(0,2\pi)^N} \Psi_m(\theta)\, \overline{\Psi_{m'}(\theta)}\, \lambda(d\theta)
= N! \int_{\bCC_{2\pi,N}} \Psi_m(\theta)\, \overline{\Psi_{m'}(\theta)}\, \lambda(d\theta)
= N! \; \delta_{m=m'}.
\end{equation}
We deduce that $\{\Psi_m : m \in \mathfrak{W}_N\}$ is an orthonormal set in the Hilbert space $\mathbb{L}^2(\bCC_{2\pi,N}, \lambda)$. 

Let us prove that it forms a Hilbert basis. For any $m \in \mathbb{Z}^N$, define
\begin{equation}
e_m(x) = \frac{1}{(2\pi)^{N/2}} \prod_{i=1}^N e^{i m_i x_i}.
\end{equation}
The trigonometric system $\{e_m : m \in \mathbb{Z}^N\}$ forms a complete orthonormal system in $\mathbb{L}^2((0,2\pi)^N,\lambda)$ equipped with its standard inner product $\langle \cdot, \cdot \rangle$.

In particular, for any antisymmetric function $f \in \mathbb{L}^2((0,2\pi)^N,\lambda)$ and any  $\sigma \in \mathfrak{S}_N$, one has $\langle f, e_{\sigma \cdot m} \rangle = \varepsilon(\sigma)\langle f, e_m \rangle$, which yields
\begin{equation}
f(x) = \sum_{m \in \mathbb{Z}^N} \langle f, e_m \rangle\, e_m(x) 
= \sum_{m \in \mathfrak{W}_N} \langle f, e_m \rangle {\sum_{\sigma \in \mathfrak{S}_N} \varepsilon(\sigma) e_{\sigma \cdot m}}(x) = e^{i\bgamma\sum_{i=1}^N x_i}\sum_{m \in \mathfrak{W}_N} \langle f, e_m \rangle \Psi_m(x) .
\end{equation}

We deduce from the latter identity that the set $\{\Psi_m : m \in \mathfrak{W}_N\}$ is dense in the space of square-integrable antisymmetric functions on $(0,2\pi)^N$. Since any square-integrable function on $\bCC_{2\pi,N}$ lifts naturally to an antisymmetric function on $(0,2\pi)^N$, the first part of the Proposition \ref{orthonormal} follows.

Moreover, $\boldsymbol{\mathcal{S}}_{2\pi,N}$ consists of smooth functions on $\overline{\bCC}_{2\pi,N}$ (polynomials). Since $\Psi_{\bc}$ is proportional to $\Psi$, which is bounded and strictly positive on $\bCC_{2\pi,N}$, it follows easily that $\boldsymbol{\mathcal{S}}_{2\pi,N}$ is an orthonormal set in $\mathbb{L}^2(\bCC_{2\pi,N}, \boldsymbol{\mu})$. Regarding the density of ${\rm span}(\boldsymbol{\mathcal{S}}_{2\pi,N})$ in the uniform topology, we apply the Stone--Weierstrass theorem. It suffices to note that this family is a $C^*$-algebra.

To this end, Proposition~\ref{prop:schureigen} together with standard properties of Schur polynomials (see \eqref{eq:littlewood}) ensures stability under multiplication. The fact that ${\rm span}(\boldsymbol{\mathcal{S}}_{2\pi,N})$ is closed under complex conjugation follows from the following lemma.

\begin{lem}\label{lem:closeconj}
	Let $\lambda$ be an integer partition and $\ell \geq \lambda_1$. Then $\mu_i := \ell - \lambda_{N - i + 1}$ for $1 \leq i \leq N$ defines an integer partition such that
	\begin{equation}\label{schureigenconj}
	\overline{s_\lambda(e^{i x_1},\cdots,e^{i x_N})} = e^{-i\ell \sum_{k=1}^N x_k}\, s_\mu(e^{i x_1},\cdots,e^{i x_N}).
	\end{equation}	
\end{lem}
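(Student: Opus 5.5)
We work directly from the bialternant definition \eqref{eq:schurdef}, setting $z_k=e^{ix_k}\in\mathbb U$, so that $\overline{z_k}=z_k^{-1}$. First, $\mu$ is a partition: since $\lambda_{N-i+1}\le\lambda_{N-i}$, we get $\mu_i=\ell-\lambda_{N-i+1}\ge \ell-\lambda_{N-i}=\mu_{i+1}$, and $\mu_N=\ell-\lambda_1\ge 0$ by the hypothesis $\ell\ge\lambda_1$.

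For the identity, we conjugate numerator and denominator of $s_\lambda(z)=Q_{\lambda+\delta}(z)/Q_\delta(z)$ separately. The entry in row $m$ of $\overline{Q_{\lambda+\delta}(z)}$ is $z_j^{-(\lambda_m+N-m)}$. Factor $z_j^{\ell+N-1}$ out of each column $j$; the new exponent in row $m$ becomes $\ell+N-1-\lambda_m-N+m=(\ell-\lambda_m)+m-1$. Reverse the row order with $m=N-i+1$. Row $i$ then has exponent $\ell-\lambda_{N-i+1}+N-i=\mu_i+\delta_i$. Hence
\begin{equation}
\overline{Q_{\lambda+\delta}(z)}=\varepsilon(\sigma)\,e_N(z)^{\ell+N-1}\,Q_{\mu+\delta}(z),
\end{equation}
where $\sigma$ is the order-reversing permutation and $e_N(z)=z_1\cdots z_N$.

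Applying the same computation with $\lambda=0$ and $\ell=0$ gives
\begin{equation}
\overline{Q_\delta(z)}=\varepsilon(\sigma)\,e_N(z)^{N-1}\,Q_\delta(z).
\end{equation}

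Taking the quotient, the signs $\varepsilon(\sigma)$ cancel. Using $e_N(z)^{-\ell}=e^{-i\ell\sum_k x_k}$ on $\mathbb U^N$, we obtain \eqref{schureigenconj} wherever $Q_\delta(z)\neq 0$, that is, at points with distinct coordinates. Both sides are continuous (they are polynomials times a unimodular factor), so the identity extends to all of $\mathbb U^N$ by density.

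No step is a genuine obstacle. The only care needed is the index bookkeeping in the row reversal, and checking that the same permutation sign appears in numerator and denominator so that it cancels.

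Finally, to conclude closure of $\mathrm{span}(\boldsymbol{\mathcal S}_{2\pi,N})$ under conjugation, combine this lemma with Proposition~\ref{prop:schureigen}. The conjugate of $e^{i\delta\sum x_k}s_\lambda$ equals $e^{-i(\delta+\ell)\sum x_k}s_\mu$, which is again of the form appearing in \eqref{eq:schursetequal}.
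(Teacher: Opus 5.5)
Your argument is the paper's own: the paper notes that $\delta$ is obtained from $-\delta$ by reversing the coordinates and adding $N-1$, then refers to the determinant manipulations in the proof of Proposition~\ref{MESSEPspectre}. That is exactly what you do when you conjugate the bialternant, pull a monomial out of each column, and reverse the rows, with $\varepsilon(\sigma)$ cancelling between numerator and denominator.

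There is one sign slip to fix. The conjugated entries are $z_j^{-(\lambda_m+N-m)}$, so what you factor out of column $j$ is $z_j^{-(\ell+N-1)}$, not $z_j^{\ell+N-1}$. Your computed exponent $(\ell-\lambda_m)+m-1$ already reflects this. The two displays should therefore read
$\overline{Q_{\lambda+\delta}(z)}=\varepsilon(\sigma)\,e_N(z)^{-(\ell+N-1)}Q_{\mu+\delta}(z)$ and $\overline{Q_\delta(z)}=\varepsilon(\sigma)\,e_N(z)^{-(N-1)}Q_\delta(z)$. You can check the case $N=2$: $\overline{z_1-z_2}=-(z_1-z_2)/(z_1z_2)$.

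As written, the quotient of your displays is $e_N(z)^{\ell}$. That contradicts the correct factor $e_N(z)^{-\ell}=e^{-i\ell\sum_k x_k}$ you use in the last step. With the sign corrected, the proof is complete.
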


\begin{proof}[Proof of Lemma \ref{lem:closeconj}]
	Noting that $\delta = (N-1,\cdots,0)$ can be obtained from $-\delta$ by reversing its coordinates and adding $N-1$ to each component, identity \eqref{schureigenconj} follows by standard computations similar to those employed below and above Remark~\ref{rem:roots}.
\end{proof}

Consequently, standard approximation arguments  allow us to conclude that $\boldsymbol{\mathcal{S}}_{2\pi,N}$ is a Hilbert orthonormal basis of $\mathbb{L}^2(\bCC_{2\pi,N}, \boldsymbol{\mu})$, which completes the proof.
\end{proof}

We now show that $\boldsymbol{\mathcal{S}}_{2\pi,N}$ is precisely the set of orthonormal eigenfunctions of $(\bP_t)_{t \geq 0}$.

\bigskip

\noindent{\it Eigenfunctions and eigenvalues.} 
Let $\mathcal{L}$ be the infinitesimal generator of $(\boldsymbol{X}(t))_{t \geq 0}$ and $\mathcal{L}^*$ its adjoint, defined  for sufficiently smooth functions on $\mathbb{R}^N$ by
\begin{equation}
\mathcal{L}f = \frac{(2\pi)^2}{N} \left( \frac{1}{2} \Delta f + \frac{\nabla \Psi}{\Psi} \cdot \nabla f \right)
\quad\text{and}\quad
\mathcal{L}^* g = \frac{(2\pi)^2}{N} \left( \frac{1}{2} \Delta g - \operatorname{div} \left( g \frac{\nabla \Psi}{\Psi} \right) \right),
\end{equation}

Let $H^1(\bCC_{2\pi,N})$ and $H^1_0(\bCC_{2\pi,N})$ be the  Sobolev spaces defined as the closures in $\mathbb{L}^2(\bCC_{2\pi,N},\lambda)$ of the operator $f \mapsto \nabla f$, acting respectively on smooth functions on $\overline{\bCC}_{2\pi,N}$ and on smooth functions with compact support in $\bCC_{2\pi,N}$. For any $f \in H^1(\bCC_{2\pi,N})$ and $g/\Psi \in H^1_0(\bCC_{2\pi,N})$, both $(\mathcal{L}f)\,g$ and $f\,(\mathcal{L}^*g)$ are integrable on $\bCC_{2\pi,N}$, and
\begin{equation}\label{chainrule}
\int_{\bCC_{2\pi,N}} (\mathcal{L}f)\,\overline{g} \,d\lambda = \int_{\bCC_{2\pi,N}} f\,\overline{\mathcal{L}^*g} \,d\lambda.
\end{equation}
The latter identity follows first for smooth functions and then by density.

\begin{lem}\label{eigenfunctionsIG}
	For every $m\in{\mathfrak W}_N$, one has
	\begin{equation}\label{spectruminfgene1}
	\mathcal L^* (\Psi_{\bc}\Psi_{m})=\frac{2\pi^2}{N}\left[\Psi_{\bc}\Delta\Psi_m-\Psi_m\Delta\Psi_{\bc}\right]=-E_m\Psi_{\bc}\Psi_{m},
	\end{equation}
	and
	\begin{equation}\label{spectruminfgene2}
\mathcal L f_m=-E_m f_m,\quad\text{with}\quad f_m=\frac{\Psi_{m}}{\Psi_{\bc}}.
	\end{equation}
\end{lem}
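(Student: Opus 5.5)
We prove \eqref{spectruminfgene1} and \eqref{spectruminfgene2} by explicit computation, exploiting that each $\Psi_m$ is an antisymmetrized exponential, hence an eigenfunction of the flat Laplacian, and that $\Psi_{\bc}$ is proportional to $\Psi$, so that the drift $\nabla\Psi/\Psi$ equals $\nabla\Psi_{\bc}/\Psi_{\bc}$ wherever $\Psi_{\bc}\neq 0$.

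\textbf{Step 1 (flat eigenvalues).} Every term $\prod_l e^{i(m_{\sigma(l)}+\bgamma)x_l}$ in the determinant \eqref{eq:Psim} satisfies $\Delta(\cdot)=-\sum_l (m_{\sigma(l)}+\bgamma)^2(\cdot)$, and this sum does not depend on $\sigma$. Hence
\[
\Delta\Psi_m=-\kappa_m\Psi_m,\qquad \kappa_m=\sum_{i=1}^N(m_i+\bgamma)^2 .
\]
In particular $\Delta\Psi_{\bc}=-\kappa_{\bc}\Psi_{\bc}$, and by \eqref{energyspectrum} we have $\frac{2\pi^2}{N}(\kappa_m-\kappa_{\bc})=E_m$.

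\textbf{Step 2 (the adjoint, i.e.\ the forward equation).} Set $g=\Psi_{\bc}\Psi_m$ and use $\nabla\Psi/\Psi=\nabla\Psi_{\bc}/\Psi_{\bc}$. Then $g\,\nabla\Psi/\Psi=\Psi_m\nabla\Psi_{\bc}$, and
\[
\tfrac12\Delta(\Psi_{\bc}\Psi_m)=\tfrac12\Psi_{\bc}\Delta\Psi_m+\tfrac12\Psi_m\Delta\Psi_{\bc}+\nabla\Psi_{\bc}\cdot\nabla\Psi_m ,
\]
\[
\operatorname{div}(\Psi_m\nabla\Psi_{\bc})=\nabla\Psi_m\cdot\nabla\Psi_{\bc}+\Psi_m\Delta\Psi_{\bc}.
\]
Subtracting, the cross gradient terms cancel and we are left with $\frac{(2\pi)^2}{N}\cdot\frac12(\Psi_{\bc}\Delta\Psi_m-\Psi_m\Delta\Psi_{\bc})$. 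This is the middle expression in \eqref{spectruminfgene1}. Inserting Step 1 gives $-\frac{2\pi^2}{N}(\kappa_m-\kappa_{\bc})\Psi_{\bc}\Psi_m=-E_m\Psi_{\bc}\Psi_m$. The identity is polynomial in exponentials, so it holds everywhere, including where $\Psi$ vanishes, once the drift term is written in this form.

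\textbf{Step 3 (the generator).} Write $f_m=\Psi_m/\Psi_{\bc}$ on $\bCC_{2\pi,N}$, where $\Psi_{\bc}\ne0$. This is the Doob $h$-transform computation. Expanding $\Delta(\Psi_{\bc}f_m)=\Psi_{\bc}\Delta f_m+2\nabla\Psi_{\bc}\cdot\nabla f_m+f_m\Delta\Psi_{\bc}$ gives
\[
\tfrac12\Delta f_m+\frac{\nabla\Psi_{\bc}}{\Psi_{\bc}}\cdot\nabla f_m=\frac{1}{2\Psi_{\bc}}\bigl(\Delta\Psi_m-f_m\Delta\Psi_{\bc}\bigr)=-\tfrac12(\kappa_m-\kappa_{\bc})f_m .
\]
Multiplying by $(2\pi)^2/N$ yields \eqref{spectruminfgene2}.

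By Proposition~\ref{prop:schureigen}, $f_m$ is a polynomial in $e^{ix_j}$ times an exponential, so it extends smoothly to $\overline{\bCC}_{2\pi,N}$, and the identity extends by continuity.

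\textbf{Main obstacle.} The only point that needs care is the constant: checking that $\Psi_{\bc}$ is a unimodular multiple of $\Psi$, so that the drift in \eqref{Dyson2} really is $\nabla\Psi_{\bc}/\Psi_{\bc}$. We take this from the remark preceding \eqref{eq:Psi}. We also need to track the factor $\frac{(2\pi)^2}{N}\cdot\frac12=\frac{2\pi^2}{N}$ so that it matches the normalization of $E_m$ in \eqref{energyspectrum}. The rest is routine.
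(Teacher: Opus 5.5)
Your proof is correct. For \eqref{spectruminfgene1} it is the ``direct computation'' the paper invokes without details: you show $\Delta\Psi_m=-\kappa_m\Psi_m$ with $\kappa_m=\sum_i(m_i+\boldsymbol{\gamma})^2$, and the cross terms cancel once you use $\Psi_{\bc}=\varepsilon\Psi$ with $\varepsilon$ a unimodular constant.

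For \eqref{spectruminfgene2} you take a different route from the paper. The paper never computes $\mathcal L f_m$ pointwise. Instead it argues by duality:
\begin{itemize}
\item it pairs $\mathcal L f_m$ against the family $\{f_{m'}\}$ in $\mathbb L^2(\boldsymbol{\mu})$;
\item it moves $\mathcal L$ onto the other factor with the integration-by-parts identity \eqref{chainrule};
\item it inserts \eqref{spectruminfgene1} and uses the $\mathbb L^2(\lambda)$-orthonormality of the $\Psi_{m'}$, which gives $\langle \mathcal L f_m,f_{m'}\rangle_{\boldsymbol{\mu}}=-E_m\delta_{m=m'}$;
\item it concludes from $\mathcal L f_m\in\mathbb L^2(\boldsymbol{\mu})$ and the completeness of $\boldsymbol{\mathcal{S}}_{2\pi,N}$ (Proposition~\ref{orthonormal}).
\end{itemize}

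Your Doob $h$-transform identity $\tfrac12\Delta f_m+\frac{\nabla\Psi_{\bc}}{\Psi_{\bc}}\cdot\nabla f_m=\frac{1}{2\Psi_{\bc}}\bigl(\Delta\Psi_m-f_m\Delta\Psi_{\bc}\bigr)$ gives the eigen-relation pointwise on $\bCC_{2\pi,N}$, where $\Psi>0$. It needs none of that machinery: no Sobolev spaces $H^1$ and $H^1_0$, no boundary terms, no completeness of the basis. Extending to the closure by continuity is legitimate, since $f_m$ is a trigonometric polynomial.

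The paper's route makes \eqref{spectruminfgene2} a formal consequence of \eqref{spectruminfgene1} through the same duality structure it reuses immediately afterwards for reversibility. However, it only yields an $\mathbb L^2(\boldsymbol{\mu})$, i.e.\ almost-everywhere, identity that must then be upgraded by continuity. Your argument is the more elementary and self-contained one.
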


\begin{proof}[Proof of Lemma \ref{eigenfunctionsIG}]
	Equation~\eqref{spectruminfgene1} follows from direct computations. Using \eqref{chainrule} and the orthogonality of $\{\Psi_m : m \in \mathfrak{W}_N\}$ in $\mathbb{L}^2(\bCC_{2\pi,N},\lambda)$, we obtain
	\begin{equation}
	\int_{\bCC_{2\pi,N}} \mathcal{L} \left( \frac{\Psi_m}{\Psi_{\bc}} \right) \overline{\frac{\Psi_{m'}}{\Psi_{\bc}}} \, d\boldsymbol{\mu}
	= \int_{\bCC_{2\pi,N}} \frac{\Psi_m}{\Psi_{\bc}} \, \overline{\mathcal{L}^* (\Psi_{m'} \Psi_{\bc})} \, d\lambda
	= -E_m \delta_{m = m'}.
	\end{equation}

Recall that $\Psi_{\bc}$ equals $\Psi$ up to a unit-modulus complex constant. Since $\mathcal{L}(\Psi_m / \Psi_{\bc}) \in \mathbb{L}^2(\bCC_{2\pi,N}, \boldsymbol{\mu})$ and $\boldsymbol{\mathcal{S}}_{2\pi,N}$ is an orthonormal basis of $\mathbb{L}^2(\bCC_{2\pi,N}, \boldsymbol{\mu})$ by Proposition~\ref{orthonormal}, we obtain \eqref{spectruminfgene2}.
\end{proof}

By Proposition~\ref{cepa}, for any function $f$ continuous on $\overline{\bCW}_{\mathbb{R},2\pi,N}$, twice differentiable on ${\bCW}_{\mathbb{R},2\pi,N}$, with derivatives admitting continuous extensions to the boundary, we can write
\begin{equation}\label{Ito2}
f(\boldsymbol{X}_t) = f(\bx) + \int_0^t \mathcal{L}f(\boldsymbol{X}_s)\, ds +{\frac{2\pi}{\sqrt N}} \int_0^t \nabla f(\boldsymbol{X}_s)\, dB_s.
\end{equation}
In particular, if we take $f=f_m$ in the latter identity, and then the expectation, we  obtain 
\begin{equation}\label{inv}
\mathbb{E}_{\boldsymbol{x}}\left[f_m(\boldsymbol{X}(t))\right] = \bP_t f_m(\boldsymbol{x})= e^{-E_m t} f_m(\boldsymbol{x}).
\end{equation}

\bigskip

\noindent
{\it The reversible probability measure.} It follows from \eqref{inv} that, for all $m, m' \in \mathfrak{W}_N$ and $t \geq 0$,
\begin{align}
\int_{\bCC_{2\pi,N}} (\bP_t f_{m}) \, \overline{f_{m'}} \, d\boldsymbol{\mu} 
&= e^{-E_m t} \int_{\bCC_{2\pi,N}} \Psi_m \, \overline{\Psi_{m'}} \, d\lambda \notag\\
&= e^{-E_m t} \delta_{m = m'} \notag\\
&= \int_{\bCC_{2\pi,N}} f_m \, \overline{\bP_t f_{m'}} \, d\boldsymbol{\mu}.\label{eq:rev}
\end{align}

The identities in \eqref{eq:rev} extend to ${\rm span}(\{f_m : m \in \mathfrak{W}_N\})$, and hence to $C(\overline{\bCC}_{2\pi,N})$ by density (see Proposition~\ref{orthonormal}). In particular, we obtain $\boldsymbol{\mu}\bP_t f = \boldsymbol{\mu}f$ for all continuous functions $f$ on $\overline{\bCC}_{2\pi,N}$. 

Applying Jensen's inequality, we get that $\bP_t$ maps $\mathbb{L}^2(\bCC_{2\pi,N}, \boldsymbol{\mu})$ into itself, and that the operator $f \mapsto \bP_t f$ defines a contraction on $\mathbb{L}^2(\bCC_{2\pi,N}, \boldsymbol{\mu})$. By density of ${\rm span}(\{f_m : m \in \mathfrak{W}_N\})$ in $\mathbb{L}^2(\bCC_{2\pi,N}, \boldsymbol{\mu})$, we deduce that \eqref{eq:rev} extends to  $\mathbb{L}^2(\bCC_{2\pi,N}, \boldsymbol{\mu})$, which is precisely the reversibility property we were aiming for.

To prove uniqueness, let ${\boldsymbol{\nu}}$ be another invariant probability measure. Then ${\boldsymbol{\nu}}(f_m)=0$ for all $m \neq \boldsymbol{c}$ by \eqref{inv}, that is,
$\langle \mathds{1}, f_m \rangle_{\boldsymbol{\nu}} = 0$ for all $m \neq \boldsymbol{c}$,
where $\langle \cdot, \cdot \rangle_{\boldsymbol{\nu}}$ is the inner product in $\mathbb{L}^2(\bCC_{2\pi,N},{\boldsymbol{\nu}})$. Since ${\rm span}(\{ f_m : m \in \mathfrak{W}_N \})$  is dense in $\mathbb{L}^2(\bCC_{2\pi,N}, \boldsymbol{\nu})$, one gets
\begin{equation}
\mathbb{L}^2(\bCC_{2\pi,N}, \boldsymbol{\nu}) = \mathbb{C} \cdot \mathds{1} \overset{\perp}{\oplus} \overline{{\rm span}(\{ f_m : m \neq \bc \})}^{(\boldsymbol{\nu})}.
\end{equation}

Therefore, any $f \in C(\overline{\bCC}_{2\pi,N})$ (thus square integrable with respect to $\boldsymbol{\mu}$ and $\boldsymbol{\nu}$) can be written as $f=\alpha \mathds{1}+g$, with $g$ orthogonal to $\mathbb{C}\cdot\mathds{1}$ in both $\mathbb{L}^2(\bCC_{2\pi,N},\boldsymbol{\nu})$ and $\mathbb{L}^2(\bCC_{2\pi,N},\boldsymbol{\mu})$. Consequently $\alpha=\boldsymbol{\mu}(f)=\boldsymbol{\nu}(f)$, and thus $\boldsymbol{\nu}=\boldsymbol{\mu}$.

\bigskip

\noindent
{\it Convergence of the series \eqref{decompo}.}
We prove the pointwise identity~\eqref{decompo} and the uniform convergence of all partial derivatives for $t>0$. At this stage, we have shown that \eqref{decompo} holds  in $\mathbb{L}^2(\bCC_{2\pi,N},\boldsymbol{\mu})$, since $f \mapsto \bP_t f$ is continuous on this space.

To go further, one can see that
\begin{equation}
C_m = \sup_{x \in \overline{\bCC}_{2\pi,N}} \left| f_m(x) \right|=\sup_{x \in \overline{\bCC}_{2\pi,N}} \left| \frac{\Psi_m(x)}{\Psi_{\bc}(x)} \right| = \sup_{\theta \in \mathbb{U}^N} \left| s_\lambda(\theta) \right|,
\end{equation}
where $\lambda$ is a partition satisfying $\lambda_i - \lambda_j + j - i = m_{N - i + 1} - m_{N - j + 1}$. We refer to Proposition~\ref{prop:schureigen} and Remark \ref{rem:mini} for the correspondence.

 Since $s_\lambda$ is a symmetric polynomial with non-negative coefficients, one has
\begin{equation}
C_m = s_\lambda(1,\cdots,1) = \prod_{1 \leq i < j \leq N} \frac{\lambda_i - \lambda_j + j - i}{j - i},
\end{equation}
where the last equality is a classical specialization formula \eqref{youngtab00} giving the number of semi-standard Young tableaux of shape $\lambda$ with entries in $\{1,\cdots,N\}$. 

We obtain
\begin{equation}
C_m \leq (m_N - m_1)^{\frac{N(N-1)}{2}}.
\end{equation}
Furthermore, one checks that there exists $d > 0$ such that, for all $m \in \mathfrak{W}_N$ and $\ell \geq 0$, if $E_m = \ell$, then both $m_1^2$ and $m_N^2$ are bounded  by $d (\ell + 1)$, leading to $m_N - m_1 \leq 2\sqrt{d (\ell + 1)}$ and 
\begin{equation}\label{eq:normalbound}
\mathrm{card}\left\{ m \in \mathfrak{W}_N : E_m = \ell \right\} \leq \left(2\sqrt{d (\ell + 1)} + 1\right)^{N}\quad\text{and}\quad C_m \leq {(2\sqrt{d (\ell + 1)})}^{\frac{N(N-1)}{2}}.
\end{equation}

Grouping in \eqref{decompo} the terms with $E_m=\ell$, where $\ell=2\pi^2 n/N$ with $n\geq 0$, it follows from \eqref{eq:normalbound} that the series converges normally in the supremum norm on $\overline{\bCC}_{2\pi,N}$.

Regarding the partial derivatives, the representation \eqref{schureigen} and the multivariate Leibniz rule give, allow us to write, 0for any multi-index $\alpha = (\alpha_1, \cdots, \alpha_N)$,
\begin{equation}\label{eq:leibnitzmulti}
\partial^{\alpha} f_m(x) = \sum_{\beta \leq \alpha} \binom{\alpha}{\beta} (i\delta)^{|\alpha - \beta|} e^{i\delta \sum_{j=1}^N x_j} \, \partial^{\beta} s_\lambda(e^{ix_1},\cdots,e^{i x_N}).
\end{equation}
Since the integers $t_j$ in the combinatorial Schur-identity~\eqref{youngtab00}  are bounded by $|\lambda| = \lambda_1 + \cdots + \lambda_N$, we get that   $|\partial^{\beta} s_\lambda(e^{i x_1}, \cdots, e^{i x_N})| \leq |\lambda|^{|\beta|} s_\lambda(1, \cdots, 1)$,
and then  
\begin{equation}
C_m^{\alpha}:=\sup_{x\in\overline{\bCC}_{2\pi,N}}|\partial^{\alpha} f_m(x)| \leq (|\delta|+|\lambda|)^{|\alpha|} s_\lambda(1, \cdots, 1).
\end{equation}
One can choose $\lambda$ with $\lambda_N = 0$. Then necessarily $\delta = m_1 + p$, where $N = 2p$ or $N = 2p+1$, and each $\lambda_j$ is bounded by $m_N - m_1$. We refer to Proposition~\ref{prop:schureigen} and Remark~\ref{rem:mini} for details. We deduce that $C_m^{\alpha}$ grows at most polynomially in $E_m$, and the argument proceeds as previously.

Finally, the series~\eqref{decompo}, as well as all its partial derivatives, is normally convergent, this completes the proof of Theorem~\ref{spectraldecompodyson}. 	The first part of Corollary~\ref{coro:convergenceL2} follows directly from these convergence properties. The bound \eqref{speed} is  a straightforward consequence of~\eqref{decompo}.\hfill\qed

\subsubsection{Uniqueness of the limit point}

To identify the limit, we use the spectral decomposition established in the previous section.

Given $m \in \mathfrak W_N$ and $L \geq m_N - m_1 + 1$, let $j \in \mathbb Z$ be such that $m_N + jL < L$ and $m_N + jL$ is maximal, and set $q = m + j(1,\cdots,1)$. Necessarily, there exists $d$ such that $-L < q_1 < \cdots < q_{d-1} < 0$ and $0 \leq q_d < \cdots < q_N < L$, in such a way that 
$$(\xi_{1},\cdots \xi_N)=(q_d,\cdots,q_N,q_1+L,\cdots,q_{d-1}+L)=\tau_L^{d-1}\cdot (q_1,\cdots,q_N)\in \mathcal C_{L,N},$$  
where the operator $\tau_L$ is defined in \eqref{tauchap} and $\tau$ denotes the cycle $(1\,\cdots\, N)$. Similarly, one checks that $c = \tau_L^p \cdot \bc$, where $N = 2p$ or $N = 2p+1$ (see Remark~\ref{rem:compactrep}).

This yields, for all $x \in \mathbb Z^N$,
\begin{equation}
f_m\left(\frac{2\pi x}{L}\right)
= \varepsilon(\tau)^{p+d-1}\frac{\psi_\xi(x)}{\psi_c(x)},
\end{equation}
Since the latter coincides with the MESSEP eigenfunction given in \eqref{eigenmessep}, we may write
\begin{equation}\label{eq:approxmessep}
\mathbb{E}_{x}\left[f_m\left(\frac{2\pi X(n)}{L}\right)\right] = \left(\frac{\rho_\xi}{\rho_c}\right)^n f_m\left(\frac{2\pi x}{L}\right).
\end{equation}

We shall  prove that
\begin{equation}\label{energy}
\lim_{L \to \infty} \left(\frac{\rho_\xi}{\rho_c}\right)^{\lfloor L^2 t \rfloor} = e^{-E_m t}.
\end{equation}
First, by periodicity,
\begin{equation}
\rho_\xi = 2 \sum_{i=1}^N \cos\left(\frac{2\pi (m_i + \bgamma)}{L}\right)
\quad \text{and} \quad
\rho_c = 2 \sum_{i=1}^N \cos\left(\frac{2\pi (\bc_i + \bgamma)}{L}\right).
\end{equation}
Moreover, generalizing \eqref{asymptoticexpsuite}, one uniformly for $x, h$ in compact sets:
\begin{equation}\label{estimatecoslow}
\cos\left(\frac{2\pi (x + h)}{L}\right) = \cos\left(\frac{2\pi x}{L}\right) - \frac{1}{2} \left((x + h)^2 - x^2\right) \left(\frac{2\pi}{L}\right)^2 + \mathcal{O}\left(\frac{1}{L^4}\right),
\end{equation}

 Applying \eqref{estimatecoslow} with $x = \boldsymbol{c}_k + \bgamma$ and $h = m_k - \boldsymbol{c}_k$ for all $1 \leq k \leq N$, and using the asymptotic expansion of $\rho_c = \rho$ given in \eqref{estimatepf}, we obtain
 \begin{equation}
 \frac{\rho_\xi}{\rho_c} = 1 - \frac{2\sum_{i=1}^N \left( \cos\left( \frac{2\pi (\boldsymbol{c}_i + \bgamma)}{L} \right) - \cos\left( \frac{2\pi (m_i + \bgamma)}{L} \right) \right)}{\rho_c}
 = 1 - \frac{2\pi^2}{L^2} E_m + \mathcal{O}\left( \frac{1}{L^4} \right),
 \end{equation}
 from which \eqref{energy} follows.

Assuming \eqref{eq:convstaringpoint}, we deduce from the above estimates that, for all $f \in \boldsymbol{\mathcal S}_{2\pi,N}$ and all fixed $t \geq 0$,
\begin{equation}\label{eq:convmarginal}
\lim_{L\to\infty}\mathbb{E}_{x}\left[f\left(\Xi(L^2 t)\right)\right] = \mathbb E_{\bx}[f(\bXi(t))].
\end{equation}
Since ${\rm span}(\boldsymbol{\mathcal S}_{2\pi,N})$ is dense in $C(\overline{\bCC}_{2\pi,N})$ (see Proposition~\ref{orthonormal}), one can extend \eqref{eq:convmarginal} to $C(\overline{\bCC}_{2\pi,N})$, thereby showing the convergence of every one-dimensional marginal distribution. 

In order to obtain the functional convergence of $(\Xi(L^2 t))_{t \geq 0}$ as $L \to \infty$, it remains to show that every finite-dimensional marginal distribution converges to those of $(\bXi(t))_{t \geq 0}$.

This follows from Theorem~2.5 in \cite[p.~167]{EK} (see Equations~(2.25) and~(2.26)), once we verify that $(\bP_t)_{t \geq 0}$ is a Feller semigroup on $C(\overline{\bCC}_{2\pi,N})$. The latter follows from Theorem~\ref{spectraldecompodyson} and Corollary~\ref{coro:convergenceL2}.
This ends the proof of the functional convergence for $\Xi$. 

Let $(\boldsymbol{X}(t))_{t \geq 0}$ be a limit point of the sequence $\left({2\pi X(L^2 t)}/{L}\right)_{t \geq 0}$, whose existence is guaranteed by the tightness result in Section~\ref{subsec:tight}. Necessarily, the projection of $(\boldsymbol{X}(t))_{t \geq 0}$ coincides in law with $(\bXi(t))_{t \geq 0}$. In other words, $(\boldsymbol{X}(t))_{t \geq 0}$ is a lift of $(\bXi(t))_{t \geq 0}$, and by Remark~\ref{rem:lift2}, it follows that $(\boldsymbol{X}(t))_{t \geq 0}$ is itself a UDBM on $\overline{\bCW}_{\mathbb{R},2\pi,N}$, thereby completing the proof of Theorem~\ref{scalingthm1}.\hfill\qedsymbol

\section{Proofs of the results in Section \ref{sec:hydro}}

\label{sec:proofhydro}

We now give the detailed proofs of Theorems \ref{thm:hydro}, \ref{prop:regularityPDE} and \ref{thm:pack}, as outlined in Section~\ref{sec:sketch}.

\subsection{Proofs of character Lemmas~\ref{lem:characteridentities} and~\ref{lem:variancehook}}

\begin{proof}[Proof of Lemma~\ref{lem:characteridentities}]
	Denote by $S_{n,j}(\pi)$ the sum in the left-hand side of~\eqref{lem:character}. It follows from  the identities $\chi^{\scriptscriptstyle \{n|k\}}_{\scriptscriptstyle(n)} = (-1)^k$ and the orthogonality  the  characters  that
	\begin{equation}\label{eq:inititcharacter}
	S_{n,0}(\pi)=
	\begin{cases}
	n, & \text{if } \pi=(n),\\
	0, & \text{otherwise.}
	\end{cases}
	\end{equation}

To establish the formula \eqref{lem:character} for arbitrary $j \geq 0$, we proceed by induction, relying on the recursive form of the Murnaghan--Nakayama rule, which may be stated as follows:
\begin{equation}\label{murnaghamnakyama}
\chi_\pi^\lambda
=
\sum_{\mu \in \mathrm{BS}(\lambda,\pi_1)}
(-1)^{\mathrm{ht}(\mu)}\,
\chi_{\pi\setminus \pi_1}^{\lambda\setminus \mu},
\end{equation}
where the sum ranges over the set $\mathrm{BS}(\lambda,\pi_1)$ of border strips of size $\pi_1$ in the Young diagram of shape $\lambda$. A border strip $\mu$ is a connected skew shape containing no $2\times 2$ square, whose removal from $\lambda$ yields another Young diagram. The height $\mathrm{ht}(\mu)$ is the number of rows it spans minus one.
	
In the present setting, when $\lambda$ is the hook shape $\{n|k\}$, there are at most two border strips of size $\pi_1$ that can be removed. Consequently, the general formula~\eqref{murnaghamnakyama} simplifies to
\begin{equation}\label{MNrecnew}
\chi^{\{n|k\}}_\pi
=
(-1)^{\pi_1-1}\,\chi^{\{n-\pi_1|k-\pi_1\}}_{\pi\setminus \pi_1}
+\chi^{\{n-\pi_1|k\}}_{\pi\setminus \pi_1}.
\end{equation}
The first term in \eqref{MNrecnew} vanishes when $k < \pi_1 < n$, and the second when $n - k - 1 < \pi_1 < n$.

Performing a simple change of variables, one obtains the identities:
	\begin{equation}
	\sum_{k=0}^{n-1} (-1)^k \, \chi^{\{n - \pi_1 | k - \pi_1\}}_{\pi \setminus \pi_1} \, (n - 2k - 1)^j
	= \sum_{k=0}^{n - \pi_1 - 1} (-1)^{k+\pi_1} \, \chi^{\{n - \pi_1 | k\}}_{\pi \setminus \pi_1} \, ((n - \pi_1 - 2k - 1) - \pi_1)^j,
	\end{equation}
	and
	\begin{equation}
	\sum_{k=0}^{n-1} (-1)^k \, \chi^{\{n - \pi_1 | k\}}_{\pi \setminus \pi_1} \, (n - 2k - 1)^j
	= \sum_{k=0}^{n - \pi_1 - 1} (-1)^k \, \chi^{\{n - \pi_1 | k\}}_{\pi \setminus \pi_1} \, ((n - \pi_1 - 2k - 1) + \pi_1)^j.
	\end{equation}
Combining these two identities, \eqref{MNrecnew}, and  \eqref{murnaghamnakyama}, we obtain
	\begin{equation}\label{iterate}
	S_{n,j}(\pi) = \sum_{k=0}^{n - \pi_1 - 1} (-1)^k \, \chi^{\{n - \pi_1 | k\}}_{\pi \setminus \pi_1} 
	\left[ ((n - \pi_1 - 2k - 1) + \pi_1)^j - ((n - \pi_1 - 2k - 1) - \pi_1)^j \right].
	\end{equation}
Expanding the difference of powers yields
	\begin{equation}\label{eq:recrelation}
	S_{n,j}(\pi) = \sum_{i_1=0}^{j-1} \binom{j}{i_1} \left( 1 - (-1)^{j - i_1} \right) \pi_1^{j - i_1} \, S_{n - \pi_1, i_1}(\pi \setminus \pi_1).
	\end{equation}

Assume that $0\leq j\leq \ell(\pi)-2$. Noting that $\ell(\pi\setminus \pi_1)=\ell(\pi)-1$ and $i_1\leq j-1$ in  \eqref{eq:recrelation}, iterate this recurrence relation allow to express $S_{n,j}(\pi)$ as a linear combination of terms of the form $S_{m,0}(\sigma)$ with $m\geq 2$ and $\ell(\sigma)\geq 2$. All such terms vanish by~\eqref{eq:inititcharacter}, and therefore $S_{n,j}(\pi)=0$.

It remains to consider the case $j=\ell(\pi)-1$. Here, the only sequence of indices $i_1>i_2>\cdots$ produced by the recursive application of~\eqref{eq:recrelation} that yields a non-zero terminal term $S_{m,0}(\sigma)$ is the strictly decreasing chain $j,j-1,\ldots,1,0$, with $\sigma=(m)$. This gives
\begin{equation}
\binom{j}{j-1} 2\pi_1 \, \binom{j-1}{j-2} 2\pi_2 \cdots \binom{1}{0} 2\pi_{j} \, S_{\pi_\ell,0}((\pi_\ell)).
\end{equation}
which coincides with the right-hand side of~\eqref{lem:character}. This completes the proof.
\end{proof}

\begin{proof}[Proof of Lemma~\ref{lem:variancehook}] We prove the four statements successively.

\medskip

\noindent
\textit{Point 1.} The identity~\eqref{eq:conjhook} follows from the correspondence between configurations $\xi$ and partitions $\lambda$ described in the proof of Proposition~\ref{Schur}. Set $\lambda=\{n|k\}$ and $\overline{\lambda}=\{L-n|N-1-k\}$ and let $\bc+\bgamma=(-(p-\bgamma),\ldots,(p-\bgamma))$ be the symmetric compact configuration (see Remark~\ref{rem:compactrep}).
	
\begin{figure}[!h]
	\centering
	\includegraphics[scale=0.7]{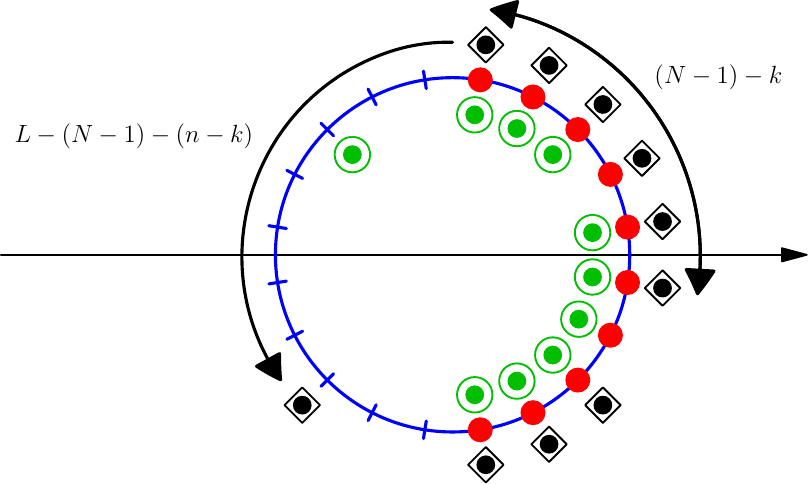}
	\captionsetup{width=13cm}
	\caption{\small The dots surrounded by a circle correspond to the configuration associated with the hook partition $\{n|k\}$ (here $n=6$ and $k=3$), whereas the dots surrounded by a square correspond to the hook partition $\{L-n|N-k-1\}$. They can be deduced from each other by conjugation in the complex plane.}
	\label{fig:hookschurconj}
\end{figure}

In the space of $\mathbb U$-configurations, those associated with $\overline{\lambda}$ and $\lambda$ are obtained from one another by conjugation (up to a permutation accounting for the order, see Figure~\ref{fig:hookschurconj}). Furthermore, one has

 \begin{align}
	\overline{s_{\{n|k\}}\left(e^{2i\pi x_1/L},\cdots,e^{2i\pi x_N/L}\right)}
	&=
	\frac{\displaystyle
		\det(e^{\frac{2i\pi}{L}(-\lambda_j-\mathbf{c}_{N-j+1}-\boldsymbol{\gamma})\,x_l})}
	{\displaystyle
		\det(e^{\frac{2i\pi}{L}(-\mathbf{c}_{N-j+1}-\boldsymbol{\gamma})x_l})} \label{eq:firste} \\
	&= \frac{\displaystyle
		\varepsilon(\overline \sigma)\det(e^{\frac{2i\pi}{L}(\overline \lambda_j+\bc_{N-j+1}+\bgamma) x_l})}
	{\displaystyle
		\varepsilon(\sigma)\det(e^{\frac{2i\pi}{L}(\mathbf{c}_{N-j+1}+\boldsymbol{\gamma})x_l})},
	\end{align}
	where $\sigma(j)=N-j+1$ for all $1\leq j\leq N$ and 
	\begin{equation}
	\overline\sigma \;=\;
	\begin{pmatrix}
	1 & 2 & 3 & \cdots & N-1 & N\\
	1 & N & N-1 & \cdots & 3   & 2
	\end{pmatrix}.
	\end{equation}
	
The first equality in \eqref{eq:firste} follows from $\bc+\bgamma=\sigma\cdot\delta-(p-\bgamma,\ldots,p-\bgamma)$, where $\delta=(N-1,\ldots,0)$, together with the definition of $s_{\{n|k\}}$ via the Vandermonde determinant~\eqref{eq:schurdef}. 

For the second one,  observe that $\sigma\cdot(\bc+\bgamma)=-(\bc+\bgamma)$,  $p-\bgamma-(N-1)=-(p-\bgamma)$, and 
\begin{equation}
\overline{\sigma}\cdot\bigl(\overline{\lambda}+\sigma\cdot(\bc+\bgamma)\bigr)
= -\lambda-\sigma\cdot\bc-\bgamma+(L,0,\ldots,0).
\end{equation}

Since $\varepsilon(\sigma)=\varepsilon(\overline{\sigma})=(-1)^p$ when $N=2p+1$, while $\varepsilon(\sigma)=(-1)^p$ and $\varepsilon(\overline{\sigma})=(-1)^{p-1}$ when $N=2p$, we obtain the desired result.

\medskip

\noindent	
{\it Point 2.} 	 By using the Frobenius formula \eqref{eq:powersumhook}, the non-recursive version of the Murnaghan--Nakayama rule \eqref{murnaghamnakyama}, and the first point \eqref{eq:conjhook}, one can write
\begin{align}
p_n(z) \, p_{-n}(z)
&= p_n(z) \sum_{l=0}^{n-1} (-1)^l\, \overline{s_{\{n|l\}}(z)} \nonumber \\
&= (-1)^{2\bgamma}\sum_{l=0}^{n-1} (-1)^l\, p_n(z)\, s_{\,\overline \lambda_l}(z)\\
&= (-1)^{2\bgamma}\sum_{l=0}^{n-1} (-1)^l\, \sum_{\mu\in{\rm BS}_n(\overline\lambda_l )}(-1)^{{\rm ht}(\mu\setminus \overline\lambda_l)} s_{\mu}(z),\label{eq:powerprod}
\end{align}
where we denote by ${\rm BS}_n(\lambda)$  the set of partitions $\mu$ such that $\mu\setminus\lambda$ is a border strip of size $n$ and $\overline{\lambda}_{l}=\{L-n|N-1-l\}$. The set ${\rm BS}_n(\overline{\lambda}_l)$ admits the following geometric description:
\begin{enumerate}
	\item 
	It contains the double-hook partition $\{n|k,l\}$, obtained by attaching the border strip $\{n|k\}$ of height $k$ at the inner corner of the Young diagram of $\overline{\lambda}_l$.
	\item 
	It also contains two additional diagrams: 
	\begin{enumerate}
	\item[i.] One obtained by attaching the border strip $(1^n)$ along the leg of $\overline{\lambda}_l$, producing a border strip of height $n-1$.
	\item[ii.] An another obtained by attaching the border strip $(n)$ along the arm of $\overline{\lambda}_l$, giving a border strip of  height $0$.
	\end{enumerate}
\end{enumerate}

 In the  case  \text{2.i.}, one has $\mu=\{L\,|\,N-1-l+n\}$. Since $l<n$, the number of rows of  $\mu$ exceeds $N$, and therefore the corresponding Schur polynomial in $N$ variables vanishes (see Remark \ref{specialization}).
 
 In the  case  \text{2.ii.},  one obtains $\mu=\{L\,|\,N-1-l\}$. This partition corresponds to a configuration on the circle in which all particles initially located in the compact configuration  return to that configuration, up to a permutation. Consequently, to compute $s_\mu$ -- which is necessarily a constant equal to $\pm1$ -- it suffices to determine the signature of this permutation.
 
  Observe that
$\mu_1+\delta_1=\delta_{N-l}$, $\mu_2+\delta_2=\delta_1,\ldots,\mu_{N-l}+\delta_{N-l}=\delta_{N-l-1}$, and $\mu_{N-l+1}+\delta_{N-l+1}=\delta_{N-l+1},\ldots,\mu_N+\delta_N=\delta_N$. Hence, the corresponding permutation is the cycle given by
\begin{equation}
\begin{pmatrix}
1 & 2 & \cdots & N-l & N-l+1 & \cdots & N\\
N-l & 1 & \cdots & N-l-1 & N-l+1 & \cdots & N
\end{pmatrix},
\end{equation}
whose signature is equal to $(-1)^{N-1-l}$.

We deduce from \eqref{eq:powerprod} that
\begin{equation}\label{eq:prodsignature}
p_n(z) \, p_{-n}(z)= (-1)^{2\bgamma}\sum_{l=0}^{n-1}\sum_{k=0}^{n-1} (-1)^{l+k}\, s_{\{n|k,l\}}(z)+\underbrace{(-1)^{N-1+2\bgamma}}_{=1} n.
\end{equation}

\medskip

\noindent
\textit{Point 3.}
We rely crucially on results of Koike~\cite{Koike} and Stembridge~\cite{Stembridge} on rational Schur functions. For the reader’s convenience, we briefly recall the key points.

First, the rational Schur function $s_\lambda$ can be defined as in~\eqref{eq:schurdef}, allowing $\lambda_1\geq\cdots\geq\lambda_N$ to be non-negative integers (see also Remark \ref{rem:mini}). Such a sequence $\lambda$ is called a {signature}. In the latter two references, it is also referred to as a {staircase} or a {mixed partition}. In this case, $s_\lambda(z)$ is a rational function of the indeterminates $z_1,\ldots,z_N$. A combinatorial generating description of $s_\lambda$, analogous to~\eqref{youngtab00}, is given in~\cite[Def.~2.3]{Stembridge}, although it is less straightforward to state. In addition, Koike~\cite{Koike} provides a Jacobi--Trudi type formula for $s_\lambda$, that is  a determinantal expression in terms of elementary symmetric functions (see Definition~2.1 in the corresponding paper).

To proceed further,  denote by $\lambda=[\alpha,\beta]$ (or $[\alpha,\beta]_N$ to indicate its length), the signature
\begin{equation}
\lambda=(\alpha_1,\ldots,\alpha_{\ell(\alpha)},0,\ldots,0,-\beta_{\ell(\beta)},\ldots,-\beta_1),
\end{equation}
 where $\alpha$ and $\beta$ are ordinary partitions of lengths $\ell(\alpha)$ and $\ell(\beta)$.
For an $N$-tuple $z$ of non-zero complex numbers,  set $z^{-1}=(z_1^{-1},\ldots,z_N^{-1})$. As shown at the beginning of the proof of Proposition~2.6 (page~74) in~\cite{Koike}, the Jacobi--Trudi type formula mentioned above is employed to establish 
\begin{equation}
s_{[\alpha,\beta]}(z)
=
\sum_{\tau}(-1)^{|\tau|}\,
s_{\alpha\setminus\tau}(z)\,
s_{\beta\setminus\tau^\prime}(z^{-1}).
\end{equation}

We now relate this to the double-hook partition $\{n|k,l\}$ and its associated Schur function. Consider the signature of length $N$ defined by
\begin{equation}
\lambda=(n-k,1,\ldots,1,0,\ldots,0,-1,\ldots,-1,-(n-l)),
\end{equation}
where $1$ appears $k$ times, $-1$ appears $l$ times, and $0$ appears $N-2n$ times. This signature parametrizes the configuration on the circle corresponding to the double hook $\{n|k,l\}$. The first $n$ entries describe the displacements, in the trigonometric direction, of the first $n$ particles from the compact configuration, while the last $n$ entries describe the displacements, in the opposite direction, of the last $n$ particles, the remaining particles stay fixed. See Figure~\ref{fig:doublehook}.

In particular, we obtain $s_{\{n|k,l\}}(z)=\pm s_{\lambda}(z)$. To determine the sign, it suffices to compute the signature of the induced permutation (see also the proof of Point~2.\@ above). One readily verifies that the relevant cycle is $(1,N,N-1,\ldots,2)$, which concludes the proof.

\medskip

\noindent
\textit{Point 4.}
The proof is a direct application of the Littlewood--Richardson rule~\eqref{eq:littlewood}. It amounts to identifying the admissible partitions $\nu$ and counting the skew semistandard Young tableaux of shape $\lambda\setminus\mu$ with content $\nu$, when $\lambda$ and $\mu$ are hook partitions.

As a matter of fact, there are at most two such admissible contents $\nu$, each giving rise to a unique skew semistandard Young tableau. See Figure~\ref{fig:littlewood}..

\begin{figure}[!h]
	\centering
	\includegraphics[width=14cm]{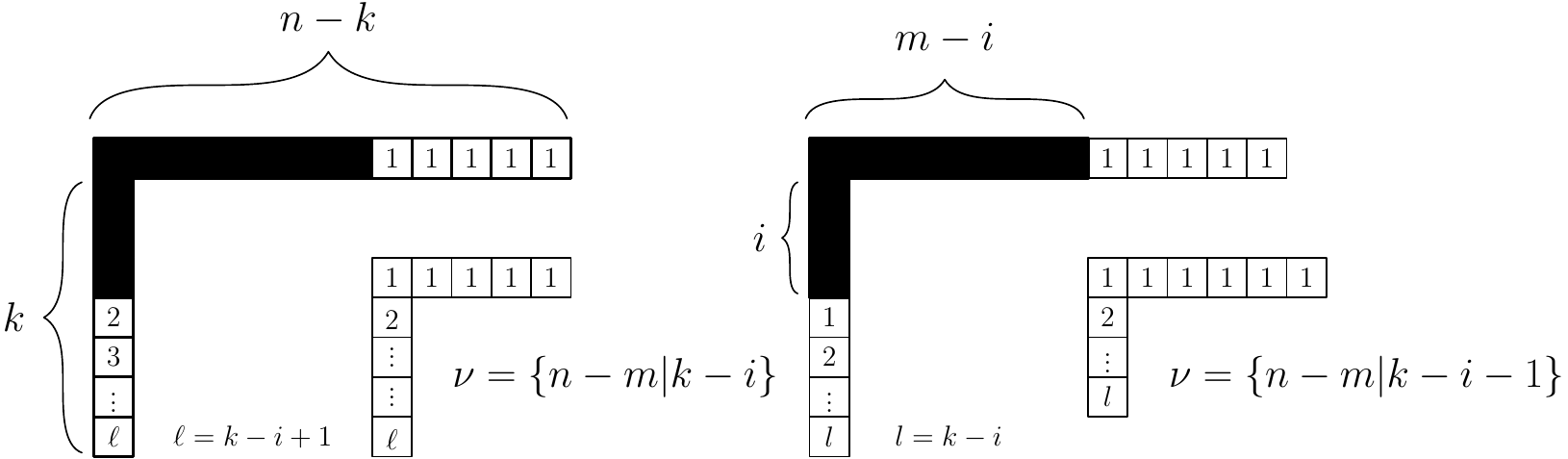}
	\captionsetup{width=16cm}
\caption{\small The two skew semistandard Young tableaux of shape $\lambda\setminus\mu$ with contents $\nu$. Here $\lambda=\{n|k\}$, $\mu=\{m|i\}$ (black squares), and $\nu$ is either $\{n-m|k-i\}$ (left) or $\{n-m|k-i-1\}$ (right).}
\label{fig:littlewood}
\end{figure}
\end{proof}

\subsection{Proofs of the technical Lemmas~\ref{lem:eigenhookdevasym}, \ref{lem:eigendoublehookdevasym} and \ref{lem:finallemvar}}

\begin{proof}[Proof of Lemma \ref{lem:eigenhookdevasym}] Let us set 
	\begin{equation}\label{eq:hookeigenvalueprecisefunction}
	u_{L,N}^{(n)}(x)
	=
	L^2\,\frac{2 \sin\!\left(\frac{\pi}{L}\right)\sin\!\left(\frac{\pi n}{L}\right)\sin\!\left(\frac{\pi N}{L}+x\right)}{\sin\!\left(\frac{\pi N}{L}\right)}\quad\text{and}\quad 	h_{t,L,N}^{(n)}(x)
	=
	\left(1-\frac{u_{L,N}^{(n)}(x)}{L^2}\right)^{\lfloor L^2t\rfloor }.	
	\end{equation}
Observe that for every $j\geq 0$, the following convergence holds uniformly on $\mathbb R$:
\begin{equation}
\lim_{L,N\to \infty} \frac{\partial^j\,u_{L,N}^{(n)}(x)}{\partial x^j}=-2\pi^2 n\frac{\partial^j  u_\alpha(x)}{\partial x^j},\quad\text{with }\;u_\alpha(x)=\frac{\sin\left(\pi\alpha +x\right)}{\sin(\pi\alpha)}.
\end{equation}
Furthermore, one checks that
\begin{equation}
\left|\frac{\partial^j }{\partial z^j}\left(1+\frac{z}{L^2}\right)^{\lfloor L^2 t\rfloor}- t^j e^{tz}\right|=\mathcal O\left(\frac{1}{L^2}\right),
\end{equation}
uniformly for $0\leq t\leq T$, $0 \leq j\leq d$, and $z\in K\subset \mathbb C$ a compact set.
	
	Hence, it follows from the Faà di Bruno formula that
	\begin{equation}
	\lim_{L,N\to\infty} \frac{\partial^j h_{t,L,N}^{(n)}(x)}{\partial x^j}=\frac{\partial^j h^n_t(x)}{\partial x^j},
	\end{equation}
	uniformly on $\mathbb R$ and uniformly for $0\leq t\leq T$ and $0\leq j\leq d$. Applying the Taylor's theorem, we deduce easily \eqref{eq:expansiorhoschurhook} and  \eqref{eq:deriventh0} with
	\begin{equation}
	a_{n,j}^{L,N}(t)=\left.\frac{\partial^j h_{t,L,N}^{(n)}(x)}{\partial x^j}\right|_{x=0}.
		\end{equation}
\end{proof}

\begin{proof}[Proof of Lemma \ref{lem:eigendoublehookdevasym}]
	The argument parallels that of Lemma \ref{lem:eigenhookdevasym}. We use the double--Schur eigenvalue formula \eqref{eq:doublehookeigenvalueprecise} and and apply the multidimensional Taylor theorem to
	\begin{equation}
	h_{t,L,N}^{(n)}(x,y)=\left(1-\frac{2\sin\left(\frac{\pi}{L}\right)\sin\left(\frac{\pi n}{L}\right)\left(\sin\left(\frac{\pi N}{L}+x\right)+\sin\left(\frac{\pi N}{L}+y\right)\right)}{\sin\left(\frac{\pi N}{L}\right)}\right)^{\lfloor L^2t\rfloor}.
	\end{equation}
\end{proof}

\begin{proof}[Proof of Lemma \ref{lem:finallemvar}]
	We shall see that \eqref{eq:techlem1} follows from Lemma~\ref{lem:eigendoublehookdevasym} together with  Lemma~\ref{lem:characteridentities}.

Similarly to \eqref{eq:avoidmistake}, the left-hand side of~\eqref{eq:techlem1} can be rewritten as
		
		\begin{equation}\label{eq:varpart}
		\mathbf V=\frac{1}{N^2}\sum_{\pi,\mu \vdash n}\frac{1}{z_\pi z_\mu} \sum_{k=0}^{n-1} \sum_{l=0}^{n-1} (-1)^{k+l} \, \br_{\{n|k,l\}}^{\lfloor L^2 t \rfloor} \, \chi^{\{n|k\}}_\pi\chi^{\{n|l\}}_\mu {p_\pi \overline{p_\mu}}.
		\end{equation}
		It then follows  that
		\begin{align}
		\mathbf V &= \sum_{\pi,\mu \vdash n}\frac{1}{z_\pi z_\mu} \sum_{j_1+j_2\leq d_{\pi,\mu}} \frac{a_{n,j_1,j_2}^{L,N}(t)\pi^{j_1}\pi^{j_2}}{j_1!j_2!}
		\; \boxed{\mathfrak X_{\pi}^{(j_1)} \mathfrak X_{\mu}^{(j_2)}}\;
		\frac{p_\pi}{N L^{j_1}}\frac{\overline{p_\mu}}{N L^{j_2}} +\mathcal O\left(\frac{1}{L}\right)\label{eq:mult0}\\
		&=\sum_{\pi,\mu \vdash n}\frac{1}{z_\pi z_\mu} \sum_{j_1+j_2\leq d_{\pi,\mu}} \frac{a_{n,j_1,j_2}^{L,N}(t)\pi^{j_1}\pi^{j_2}}{j_1!j_2!} \mathfrak X_{\pi}^{(j_1)} \mathfrak X_{\mu}^{(j_2)}\frac{p_\pi}{N L^{j_1}}\frac{\overline{p_\mu}}{N L^{j_2}} +\mathcal O\left(\frac{1}{L}\right) \\
		&= \sum_{\pi,\mu \vdash n}\frac{1}{z_\pi z_\mu} \frac{a_{n,\ell_\pi^-,\ell_\mu^-}^{L,N}(t)\pi^{\ell_\pi^-}\pi^{\ell_\mu^-}}{\ell_\pi^-!\ell_\mu^-!} \mathfrak X_{\pi}^{(\ell_\pi^-)} \mathfrak X_{\mu}^{(\ell_\mu^-)}\frac{p_\pi}{N L^{\ell_\pi^-}}\frac{\overline{p_\mu}}{N L^{\ell_\mu^-}} +\mathcal O\left(\frac{1}{L}\right),\label{eq:mult}
		\end{align}
where $\ell_\pi^-:=\ell(\pi)-1$, $\ell_\mu^-:=\ell(\mu)-1$, and $d_{\pi,\mu}:=\ell_\pi^-+\ell_\mu^-$. Above,  we use  \eqref{eq:boundppi} and therefore 
\begin{equation}
p_\pi\overline{p_\mu}=\mathcal O(N^2 L^{d_{\pi,\mu}}),
\end{equation}
together with $\mathfrak X_{\pi}^{(j_1)}\mathfrak X_{\mu}^{(j_2)}=0$ whenever $j_1<\ell_\pi^-$ or $j_2<\ell_\mu^-$.

We refer to Lemma~\ref{lem:characteridentities} for the expression of each term in this product, boxed in~\eqref{eq:mult0} for ease of reference and to facilitate the proof of~\eqref{eq:techlem2}. The convergence of~\eqref{eq:mult} to the desired limit~\eqref{eq:techlem1} then follows directly from~\eqref{eq:deriventhdouble} together with~\eqref{eq:momentasymptotic}.

 \medskip

To show \eqref{eq:techlem2}, we use the skew hook Schur decompositions~\eqref{eq:skewhooklittle1} and~\eqref{eq:skewhooklittle2}, yielding
	\begin{multline}\label{eq:fourdecompo}
	s_{\{n|k\} \setminus \{m|i\}}\, \overline{s_{\{n|l\} \setminus \{m|i\}^\prime}}= \underbrace{s_{\{n-m|k-i\}}\overline{s_{\{n-m|l-(m-i-1)\}}}}_{[0,0]}+ \underbrace{s_{\{n-m|k-i-\mathbf{1}\}}\overline{s_{\{n-m|l-(m-i-1)\}}}}_{[\mathbf{1},0]}\\
	+\underbrace{s_{\{n-m|k-i\}}\overline{s_{\{n-m|l-(m-i-1)-\mathbf{1}\}}}}_{[0,\mathbf{1}]}+ \underbrace{s_{\{n-m|k-i-\mathbf{1}\}}\overline{s_{\{n-m|l-(m-i-1)-\mathbf{1}\}}}}_{[\mathbf{1},\mathbf{1}]}.
	\end{multline}
Plugging this decomposition into~\eqref{eq:techlem2}, the sum can be rewritten as
\begin{equation}\label{eq:Cbf}
\mathbf C=\sum_{m=1}^n\sum_{i=0}^{m-1}\sum_{a,b\in\{0,1\}}\mathbf C_{[a,b]}^{\{m|i\}},
\end{equation}
where the subscript $[a,b]$ refers in a natural way to the individual terms in \eqref{eq:fourdecompo}. Each of these terms further decomposes in a manner analogous to~\eqref{eq:mult0}, namely:\begin{equation}\label{eq:boxed}
\mathbf C_{[a,b]}^{\{m|i\}}= \sum_{\pi,\mu \vdash n-m}\frac{1}{z_\pi z_\mu} \sum_{j_1+j_2\leq d_{\pi,\mu}} \frac{a_{n,j_1,j_2}^{L,N}(t)\pi^{j_1}\pi^{j_2}}{j_1!j_2!}
\;\boxed{{\mathcal X}_{\pi,a}^{(j_1,i)} \widetilde{\mathcal X}_{\mu,b}^{(j_2,i)}}\;
\frac{p_\pi}{N L^{j_1}}\frac{\overline{p_\mu}}{N L^{j_2}} +\mathcal O\left(\frac{1}{L}\right).
\end{equation}

However, due to the presence of shifts, the boxed terms in~\eqref{eq:boxed} are more delicate to handle than those in~\eqref{eq:mult0}. Nevertheless, their explicit form can still be determined using Lemma~\ref{lem:characteridentities} and the techniques developed in its proof. 

Indeed, one checks that for $a\in\{0,1\}$ one has
 	\begin{align}
{\mathcal X}_{\pi,a}^{(j_1,i)} 
&= \sum_{k=0}^{n-1} (-1)^k \chi_\pi^{\{n-m|k-i-a\}} (n-2k-1)^{j_1}\\
	&= \sum_{k=i+a}^{n-m+i-1+a} (-1)^k \chi_\pi^{\{n-m|k-i-a\}} (n-2k-1)^{j_1}\label{eq:jaiplusdidees}\\
	& = (-1)^{i+a}\sum_{k=0}^{n-m-1} (-1)^{k} \chi_\pi^{\{n-m|k\}} \big((n-m-2k-1) + (m-2(i+a))\big)^{j_1}\\
	& = (-1)^{i+a}\sum_{i_1=0}^{j_1}\binom{j_1}{i_1}(m-2(i+a))^{j_1-i_1} \,
	\mathfrak X_{\pi}^{(i_1)}\\
	& = \begin{cases}
	 0, & \text{if $j_1<\ell_\pi^-$,}\\[5pt]
	 (-1)^{i+a} \,\mathfrak X_{\pi}^{(\ell_\pi^-)}, & \text{if $j_1=\ell_\pi^-$}. 
	\end{cases} 
	\end{align}
Similarly, one has for $b\in\{0,1\}$, 
\begin{align}
\widetilde{\mathcal X}_{\mu,b}^{(j_2,i)}
& = \sum_{l=m-i-1+b}^{n-i-2+b} (-1)^l \chi_\mu^{\{n-m|l-(m-i-1)-b\}} (n-2l-1)^{j_2}\label{eq:jaiplusdideeencore}\\
& =\begin{cases}
 0, & \text{if $j_2<\ell_\mu^-$,}\\[5pt]
(-1)^{m-i-1+b} \,\mathfrak X_{\mu}^{(\ell_\mu^-)} , & \text{if $j_2=\ell_\mu^-$}. 
\end{cases} 
\end{align}

Then, the key observation is that ${\mathcal X}_{\pi,1}^{(j_1,i)}=-{\mathcal X}_{\pi,0}^{(j_1,i)}$ and $\widetilde{\mathcal X}_{\mu,1}^{(j_2,i)}=-\widetilde{\mathcal X}_{\mu,0}^{(j_2,i)}$, which  imply
\begin{equation}
\sum_{a,b\in\{0,1\}}{\mathcal X}_{\pi,a}^{(j_1,i)} \widetilde{\mathcal X}_{\mu,b}^{(j_2,i)} = 0.
\end{equation}
We deduce that $\mathbf C$ in~\eqref{eq:Cbf}, and therefore the corresponding summand in~\eqref{eq:techlem2}, is of order $\mathcal O(1/L)$, which completes the proof of Lemma~\ref{lem:finallemvar}.

\begin{rem}
	For completeness, we note a minor boundary effect. When $m=n$, the upper index in the sums  in~\eqref{eq:jaiplusdidees} and ~\eqref{eq:jaiplusdideeencore} become smaller than the lower one. This situation can  be handled separately in~\eqref{eq:techlem2} by observing that $s_{\emptyset}\equiv 1$,  yielding again contributions of order $\mathcal O(1/L)$.
\end{rem}
\end{proof}

\subsection{Proofs of Theorems \ref{thm:hydro} and \ref{prop:regularityPDE}}

\label{sec:PDEid}

At this stage, assuming the initial empirical measure converges in probability to a random probability measure with density $f_0(dx)$, the $n$th complex moment of $\upsilon_{L,N}(t,dx)$ converges in probability to
\begin{equation}\label{eq:momentasymptoticsuite}
\mathfrak m_n(t) = \sum_{\pi\vdash n}
\frac{(2\pi\alpha)^{\ell(\pi)-1}}{\prod_{i}l_i!}\,
\left.\frac{\partial^{\ell(\pi)-1} h^n_t(z)}{\partial z^{\ell(\pi)-1}}
\right|_{z=0} \,m_\pi.
\end{equation}
We recall that
\begin{equation}\label{eq:defhetu}
h_t^n(z)=e^{-2\pi^2 n t \, u_\alpha(z)},\quad\text{with}\quad u_\alpha(z)=\frac{\sin(\pi\alpha + z)}{\sin(\pi\alpha)}.
\end{equation}

It remains to identify the limiting PDEs satisfied by the corresponding density and its moment generating function, and to establish their regularity and stationarity properties.

\subsubsection{Identification of the PDEs}

\label{sec:pdet0}

For convenience,  introduce the following analyticity assumption on $f_0$, formulated in terms of the generating function $g_0$ of its complex moments.

\begin{ass}\label{ass:smooth}
	The generating function $g_0$ has a radius of convergence $r_0>1$.
\end{ass}

\begin{prop}\label{eq:pdeinit}
	Under Assumption~\ref{ass:smooth}, the PDE \eqref{eq:derivative0gbis000000} satisfied by $g(t,z)$ holds at $t=0$ for $z$ in a neighbourhood of the unit disk $\mathbb D$, while the PDE \eqref{PDE} satisfied by the density $f(t,x)$ holds in the strong sense at $t=0$.
\end{prop}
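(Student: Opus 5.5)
The plan is to compute $\partial_t \mathfrak m_n(t)|_{t=0}$ directly from the explicit moment formula \eqref{eq:momentasymptoticsuite}. I will then show that these derivatives are exactly the Taylor coefficients of $-z\,\mathcal V_\alpha(g_0(z))\,g_0'(z)$, and finally justify the termwise differentiation using Assumption~\ref{ass:smooth}.

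\textbf{Step 1: the $t$-derivative of each moment.} Since $h_t^n(z)=e^{-2\pi^2 n t\,u_\alpha(z)}=1-2\pi^2 n t\,u_\alpha(z)+\mathcal O(t^2)$, locally uniformly in $z$, only the linear term in $t$ survives. Thus, for $j\geq 0$,
\begin{equation}
\partial_t\Big(\partial_z^{j} h_t^n(z)\big|_{z=0}\Big)\Big|_{t=0}=-2\pi^2 n\,u_\alpha^{(j)}(0)
\end{equation}
(for $j\ge1$ the constant $1$ is killed by $\partial_z^j$). This gives
\begin{equation}
\mathfrak m_n'(0)=-2\pi^2 n\sum_{\pi\vdash n}\frac{(2\pi\alpha)^{\ell(\pi)-1}}{\prod_i l_i!}\,u_\alpha^{(\ell(\pi)-1)}(0)\,m_\pi .
\end{equation}

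\textbf{Step 2: matching with the transport term.} Observe that $\mathcal V_\alpha(w)=2\pi^2 u_\alpha(2\pi\alpha w)$. Expanding in powers of $w$,
\begin{equation}
\mathcal V_\alpha(g_0)=2\pi^2\sum_{j\ge0}u_\alpha^{(j)}(0)\,(2\pi\alpha)^j\,\frac{g_0^j}{j!}.
\end{equation}
Two combinatorial facts then finish the identification. First, $z\frac{d}{dz}g_0^{j+1}=(j+1)g_0^j\,z g_0'$, so $[z^n]\,g_0^j z g_0'/j! = n\,[z^n]\,g_0^{j+1}/(j+1)!$. Second, since $g_0=\sum_k m_k z^k$, the multinomial expansion gives $[z^n]\,g_0^{\ell}/\ell!=\sum_{\pi\vdash n,\ \ell(\pi)=\ell} m_\pi/\prod_i l_i!$. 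Combining these, $[z^n]\big(-z\mathcal V_\alpha(g_0)g_0'\big)$ equals the expression for $\mathfrak m_n'(0)$ obtained in Step 1, term by term in $\ell(\pi)=j+1$.

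\textbf{Step 3: analytic justification (the main obstacle).} We must show that $\partial_t g(t,z)|_{t=0}=\sum_n \mathfrak m_n'(0)z^n$ for $|z|$ slightly larger than $1$. I would bound $\partial_t\partial_z^j h_t^n(0)$ uniformly for $t\in[0,T]$ via Cauchy estimates on a circle of radius $r$, using $|h_t^n|\le e^{2\pi^2 n t C_r}$ there. Choosing $1<\rho<r_0$, we have $|m_k|\le C\rho^{-k}$. Then $\sum_n \sup_{t\le T}|\partial_t \mathfrak m_n(t)|\,|z|^n$ is dominated by the coefficients of an explicit majorant series, something like $n\,e^{c n T}\,[z^n]\exp\!\big(\tfrac{2\pi\alpha}{r}\,\widetilde g(z)\big)$ with $\widetilde g(z)=\sum_k C\rho^{-k}z^k$. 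This majorant converges for $|z|<\rho'$ with some $\rho'>1$ once $T$ is small, which permits differentiating under the sum. The delicate point is tuning $r$ and $T$ so that the exponential growth $e^{cnT}$ does not destroy the margin $\rho>1$.

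\textbf{Step 4: the density PDE.} With analyticity beyond $\overline{\mathbb D}$, $f_0$ is real analytic. The identity \eqref{eq:linkgfh} holds at $t=0$ via boundary values of $g_0$ on $|z|=1$, and likewise for $\partial_t g$. Setting $z=e^{ix}$, we have $z\partial_z=-i\partial_x$ and $1+2g=2\pi(f+i\mathcal Hf)$. Hence
\begin{equation}
\sin(\pi\alpha(1+2g))=\sin\!\big(2\pi^2\alpha f+i\,2\pi^2\alpha\mathcal Hf\big),
\end{equation}
and an antiderivative of $\mathcal V_\alpha(g)\,\partial_x g$ in $x$ is computed explicitly. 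Taking real parts of the resulting equation yields \eqref{PDE} at $t=0$ in the strong sense.
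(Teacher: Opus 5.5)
Your proof is correct in outline, but it follows a different route from the paper in Steps 2–3.

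\textbf{How the paper argues.} It rewrites \eqref{eq:momentasymptoticsuite} with partial Bell polynomials and uses Fa\`a di Bruno to obtain, for every $t$, the Lagrange-type formula $\mathfrak m_n(t)=\frac{1}{n!}\partial_z^{n-1}\bigl[g_0'(z)\,h_t^n(2\pi\alpha\,g_0(z))\bigr]_{z=0}$, together with its analogue for $\partial_t\mathfrak m_n$. The identity at $t=0$ is then immediate. Lemma~\ref{lem:cvu} follows from the associated Cauchy integral on $|z|=r\in(1,r_0)$. There $|h_t^n(2\pi\alpha g_0(re^{ix}))|$ is controlled through the boundary positivity $\operatorname{Re}\sin(\pi\alpha(1+2g_0(e^{ix})))\geq 0$, which comes from $0\le f_0\le 1/(2\pi\alpha)$.

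\textbf{How your route compares.} Your multinomial/$z\,\frac{d}{dz}$ matching is a more elementary check of the same $t=0$ identity. Your Cauchy estimates on the entire function $u_\alpha$ never use the density bounds. The price is that your argument is intrinsically short-time. The paper's contour representation, by contrast, is what it later reuses together with $\operatorname{Re}A_0>0$ to obtain analyticity on all of $[0,\infty)\times\mathbb D$ (Lemma~\ref{lem:anlytic1bis}). Your Step 4 is the paper's computation. Writing it with the antiderivative $-\frac{\pi}{\alpha\sin(\pi\alpha)}\cos(\pi\alpha(1+2g))$ is a clean way to organize it.

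\textbf{One correction in Step 3.} The Cauchy bound $|\partial_z^{j}(u_\alpha h_t^n)(0)|\le j!\,r^{-j}C_r\,e^{2\pi^2 ntC_r}$ produces a factor $(\ell(\pi)-1)!$. So your exponential majorant is too small, by up to a factor $(n-1)!$ when $\pi=(1^n)$. The correct majorant is the logarithmic one, as in the proof of Proposition~\ref{prop:spontaneous}:
\[
\sum_{\pi\vdash n}\frac{(2\pi\alpha/r)^{\ell(\pi)-1}(\ell(\pi)-1)!}{\prod_i l_i!}\prod_i|m_{\pi_i}|
=[z^n]\Bigl(-\frac{r}{2\pi\alpha}\log\Bigl(1-\frac{2\pi\alpha}{r}\,g_0^+(z)\Bigr)\Bigr),
\qquad g_0^+(z)=\sum_{k\ge1}|m_k|z^k .
\]
Its radius of convergence is governed by $\frac{2\pi\alpha}{r}g_0^+(|z|)=1$, not by $\rho$. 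So the tuning must be done in this order:
\begin{itemize}
\item fix $R\in(1,r_0)$;
\item choose $r$ so large that $2\pi\alpha\,g_0^+(R)<r$, which is possible because $u_\alpha$ is entire, at the cost of $C_r$ of order $e^r$;
\item only then choose $T$ with $e^{2\pi^2TC_r}<R$.
\end{itemize}
With these choices, $\sum_n \sup_{t\le T}|\partial_t\mathfrak m_n(t)|\,R'^n<\infty$ for every $R'\in(1,R\,e^{-2\pi^2TC_r})$. This gives the uniform convergence needed to differentiate termwise on a disk of radius larger than $1$.
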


\begin{proof}[Proof of Proposition \ref{eq:pdeinit}]
 We begin with investigate the regularity of the moment generating function $g(t,z)$ and show that it satisfies the PDE \eqref{eq:derivative0gbis000000} at $t=0$ from which \eqref{PDE} follows.

\begin{lem}\label{lem:edptransportt0}
	Assume that the power series $\sum_{n=1}^\infty\mathfrak m_n(t) z^n$ and $\sum_{n=1}^\infty {\partial_t \mathfrak m_n(t)} z^n$ are normally convergent on some neighbourhood $[0,\epsilon)\times B(0,\delta)$. Then, for all $z\in B(0,\delta)$, one has  
	\begin{equation}\label{eq:derivative0g}
	\left.\frac{\partial g(t,z)}{\partial t}\right|_{t=0}=
	-{2\pi^2   u_\alpha(2\pi\alpha\, g_0(z)) z g_0^\prime(z)}. 
	\end{equation}
\end{lem}

\begin{proof}[Proof of Lemma \ref{lem:edptransportt0}] 
	
	To begin with, one can rewrite \eqref{eq:momentasymptoticsuite} in the form
	\begin{equation}\label{eq:momentasymptoticsuitefaadi}
	\mathfrak m_n(t) = \sum_{k=1}^n \frac{B_{n,k}(1! m_1,2!m_2,\cdots )}{n!} \left.\frac{\partial^{k-1} h^n_t(z)}{\partial z^{k-1}} \right|_{z=0} (2\pi\alpha)^{k-1},
	\end{equation}
	where $B_{n,k}(x_1,\cdots, x_{n-k+1})$ denote the partial (or incomplete) exponential Bell polynomials. Thus, applying the Faà di Bruno formula, we obtain
	\begin{equation}\label{eq:momentasymptoticsuitefaadi2}
	\mathfrak m_n(t) = \frac{1}{2\pi\alpha  \,n!}\left.\frac{\partial^{n} H_{t,n}(2\pi\alpha \, g_0(z))}{\partial z^{n}}\right|_{z=0}
	= \frac{1}{n!}\left.\frac{\partial^{n-1}g_0^\prime(z)\, h_t^n(2\pi\alpha \, g_0(z))}{\partial z^{n-1}}\right|_{z=0},
	\end{equation}
	where $H_{t,n}$ denotes a primitive of $h_t^n$. 
	
	Moreover, one can also write
	\begin{equation}\label{eq:momentasymptoticderive}
	\frac{\partial \mathfrak m_n(t)}{\partial t} = -2\pi^2n \sum_{k=1}^n \frac{B_{n,k}(1! m_1,2!m_2,\cdots )}{n!} \left.\frac{\partial^{k-1} u_\alpha (z)h^n_t(z)}{\partial z^{k-1}} \right|_{z=0} (2\pi\alpha)^{k-1}.
	\end{equation}
Applying the Faà di Bruno formula once again, we similarly obtain
	\begin{equation}\label{eq:momentasymptoticderiveen00}
	\frac{\partial \mathfrak m_n(t)}{\partial t} = -2\pi^2\frac{1}{(n-1)!}\left.\frac{\partial^{n-1}g_0^\prime(z)  u_\alpha(2\pi\alpha \, g_0(z)) h_t^n(2\pi\alpha \, g_0(z))}{\partial z^{n-1}}\right|_{z=0}.
	\end{equation}
	
Thereafter, by interchanging differentiation and summation, one gets
\begin{equation}\label{eq:derivative0gexplain}
\left.\frac{\partial g(t,z)}{\partial t}\right|_{t=0}
=-2\pi^2 z \sum_{n=0}^{\infty}\frac{1}{n!}
\left.\frac{\partial^{n}\!\left(g_0^\prime(z)u_\alpha(2\pi\alpha g_0(z))\right)}{\partial z^{n}}\right|_{z=0} z^{n},
\end{equation}
from which \eqref{eq:derivative0g} follows directly.
\end{proof}

\begin{lem}\label{lem:cvu}
	Under Assumption \ref{ass:smooth}, there exist $\delta>1$ and $\varepsilon>0$ such that $\sum_{n=1}^\infty\mathfrak m_n(t) z^n$ and $\sum_{n=1}^\infty {\partial_t \mathfrak m_n(t)} z^n$  are normally convergent on $[0,\varepsilon)\times B(0,\delta)$.
\end{lem}

The proof of this lemma will be given below. It implies, together with Lemma \ref{lem:edptransportt0}, that under Assumption~\ref{ass:smooth} the moment generating function $g(t,z)$ satisfies \eqref{eq:derivative0g} in a neighbourhood of the unit disk $\mathbb D$. The corresponding PDE for the density  is then derived as follows. 

First note that $\mathfrak m_0(t)=1$ and that $\mathfrak m_{-n}(t)$ is the complex conjugate of $\mathfrak m_n(t)$. Thus
\begin{equation}\label{eq:density}
2\pi f(t,x)=\operatorname{Re}\left(1+2 g(t,e^{ix})\right)=\sum_{n\in\mathbb Z} \mathfrak m_n(t) e^{inx}.
\end{equation}
To go further, one can check that
\begin{equation}\label{eq:hilberimaginary}
2\pi \mathcal H f(t,x)=\operatorname{Im}\left(1+2 g(t,e^{ix})\right)=-i\sum_{n\in\mathbb Z} \mathrm{sgn}(n) 
\mathfrak m_n(t) e^{in x},
\end{equation}
and we get  from \eqref{eq:hilberimaginary} and \eqref{eq:density} that   
\begin{equation}\label{eq:hardy}
1+2g(t,e^{i x})=2\pi\left(f(t,x)+ i\, \mathcal Hf(t,x)\right).
\end{equation}

In addition, noting that $e^{ix}g_0^\prime(e^{ix})=-i\,\partial_x g_0(e^{ix})$, it follows from \eqref{eq:hardy} evaluated at $t=0$, the smoothness of $f_0$, and the standard properties of the Hilbert transform, that
\begin{equation}\label{eq:g}
e^{ix} g_0^\prime(e^{ix})= \pi \left(\mathcal H f_0^\prime(x)-i f_0^\prime(x)\right).
\end{equation}

Finally, combining \eqref{eq:g}, \eqref{eq:hardy} evaluated at $t=0$, and \eqref{eq:derivative0g}, 
we obtain
\begin{align}\label{eq:derivative0f}
\left.\frac{\partial f}{\partial t}\right|_{t=0}  = \frac{1}{\pi}\left.\frac{\partial \operatorname{Re}(g)}{\partial t}\right|_{t=0} & = -\frac{2\pi^2}{\sin(\pi\alpha)}\operatorname{Re}\left( \left(\mathcal H f_0^\prime -i f_0^\prime\right) \sin(2\pi^2\alpha\,(f_0+i\mathcal H f_0))\right)\\
& =-\frac{1}{\alpha\sin(\pi\alpha)}\frac{\partial \sin(2\pi^2\alpha \, f_0)\sinh(2\pi^2\alpha \,\mathcal H f_0)}{\partial x}. 
\end{align}
which completes the proof of Proposition~\ref{eq:pdeinit}.
\end{proof}

\begin{proof}[Proof of Lemma \ref{lem:cvu}]
The proof consists in expressing \eqref{eq:momentasymptoticsuitefaadi2} as the contour integral
\begin{equation}\label{eq:momentasymptoticsuitefaadi2contour}
\mathfrak m_n(t) = \frac{1}{2 i \pi n}\int_{0}^{2\pi}\frac{i g_0^\prime(re^{ix})\, h_t^n(2\pi\alpha \, g_0(r e^{ix}))}{r^{n-1}e^{i(n-1)x}}\,dx,\quad\text{with}\quad 0<r<r_0.
\end{equation}

Thereafter, a simple estimate based on the mean value inequality shows that, for all $1<\kappa<r_0$, there exists $c_\kappa>0$ such that, for all $r\in [1,\kappa]$, one has
\begin{equation}\label{eq:meanvalue}
\sup_{x\in \mathbb R}\left|\operatorname{Re}\sin(\pi\alpha\,(1+g_0(r e^{ix})))-\operatorname{Re}\sin(\pi\alpha\,(1+g_0(e^{ix})))\right|\leq \sin(\pi\alpha) c_\kappa (r-1).
\end{equation}

By using \eqref{eq:hardy} evaluated at $t=0$, together with the bounds  \eqref{eq:bounddesnity} on $f_0$, 
one has
\begin{equation}\label{eq:minorationsin}
\operatorname{Re}\sin(\pi\alpha\,(1+g_0(e^{ix})))=\sin(2\pi^2\alpha \,f_0(x))\cosh(2\pi^2\alpha\, \mathcal H f_0(x))\geq 0.
\end{equation}
In particular, it comes from \eqref{eq:minorationsin} and \eqref{eq:meanvalue}  that
\begin{equation}\label{eq:majht}
|h_t^n(2\pi\alpha g_0(r e^{ix}))|=e^{-2\pi^2 n t \, \operatorname{Re} u_\alpha(2\pi\alpha g_0(r e^{ix}))}\leq e^{{2\pi^2 c_\kappa  n t (r-1) }}.
\end{equation}

Now, let $\kappa$ sufficiently close to $1$ so that $\ln(r)\geq (r-1)/2$ holds for $1\leq r\leq \kappa$, and choose $\varepsilon>0$ such that $2\pi^2 c_\kappa \varepsilon < 1/2-\eta$ for some $0<\eta<1/2$. It follows from \eqref{eq:majht} that
\begin{equation}
\sup_{t\in[0,\varepsilon]}|\mathfrak m_n(t)|=\mathcal O\!\left(e^{-\eta(\kappa-1) n}\right).
\end{equation}

We deduce that $g(t,z)$ is normally convergent on $[0,\varepsilon)\times B(0,\delta)$ for some $\delta>1$. The proof that the generating function of the moment derivatives  is normally convergent on a similar domain follows exactly the same lines, which completes the proof
\end{proof}

\label{sec:markovmoment}

\begin{prop}\label{eq:pdeanyt}
	Let $t\ge 0$ be such that $z\mapsto g(t,z)$ has radius of convergence greater than $1$. Then there exist $\varepsilon>0$ and $\delta>1$ such that the PDE \eqref{eq:derivative0gbis000000} holds for $g(t,z)$ on $[t,t+\varepsilon)\times B(0,\delta)$, while the PDE \eqref{PDE} holds in the strong sense for the density $f(t,x)$ on $[t,t+\varepsilon)$.
\end{prop}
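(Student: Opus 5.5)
The natural route is a Markov (semigroup) property of the limiting moment map $t\mapsto(\mathfrak m_n(t))_{n\ge1}$, which reduces Proposition~\ref{eq:pdeanyt} to Proposition~\ref{eq:pdeinit} applied at the new initial time $t$.

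\emph{Step 1: time-homogeneity of the limit.} I would show that, for $s\ge 0$, the moments $(\mathfrak m_n(t+s))_n$ are given by formula \eqref{eq:momentasymptoticsuite}. In that formula, time is $s$ and the initial moments $m_\pi$ are replaced by the products $\mathfrak m_\pi(t)=\prod_i\mathfrak m_{\pi_i}(t)$. Probabilistically, this follows from the Markov property of the MESSEP at time $\lfloor L^2t\rfloor$. By the variance result (Lemma~\ref{lem:finallemvar}), the empirical measure $\upsilon_{L,N}(t,\cdot)$ converges in probability to $f(t,x)\,dx$. The hydrodynamic argument of Section~\ref{sec:sketch}, i.e.\ \eqref{eq:avoidmistake}, Lemma~\ref{lem:eigenhookdevasym} and Lemma~\ref{lem:characteridentities}, then applies verbatim with $\Theta(\lfloor L^2 t\rfloor)$ as the random starting point, by conditioning.

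One technical point needs care here. The bound \eqref{eq:boundppi} must hold in probability rather than deterministically. This is fine, because $|p_\pi|\le N^{\ell(\pi)}$ always, so the bound is even deterministic. Dominated convergence then handles the expectations. The floor discrepancy $\lfloor L^2(t+s)\rfloor$ versus $\lfloor L^2t\rfloor+\lfloor L^2 s\rfloor$ costs only an $\mathcal O(1)$ shift of exponent, which is negligible by \eqref{eq:hookeigenvalueprecise}. As an alternative, purely analytic check, the same identity should follow from the contour representation \eqref{eq:momentasymptoticsuitefaadi2contour} and the characteristic structure $h_{t+s}=h_th_s$. I would use this mainly as a consistency check.

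\emph{Step 2: applying the $t=0$ result.} Set $\tilde g_0(z)=g(t,z)$. This is the generating function of the density $f(t,\cdot)$, which satisfies \eqref{eq:bounddesnity}. By hypothesis, $\tilde g_0$ has radius of convergence $r_0>1$, which is exactly Assumption~\ref{ass:smooth} for the shifted initial datum. Lemma~\ref{lem:cvu} then gives $\varepsilon>0$ and $\delta>1$ such that $\sum_n\mathfrak m_n(t+s)z^n$ and $\sum_n\partial_s\mathfrak m_n(t+s)z^n$ converge normally on $[0,\varepsilon)\times B(0,\delta)$.

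The proof of Lemma~\ref{lem:cvu} only used \eqref{eq:minorationsin}, namely $\operatorname{Re}\sin(\pi\alpha(1+\tilde g_0))\ge0$ on the circle, together with $r_0>1$, so it transfers unchanged. Lemma~\ref{lem:edptransportt0} gives the PDE at $s=0$. It remains to get the PDE for every $s\in[0,\varepsilon)$. For this I would apply the argument at the base time $t+s$, noting that $g(t+s,\cdot)$ still has radius at least $\delta>1$ by the normal convergence. Alternatively, one can differentiate the Faà di Bruno representation \eqref{eq:momentasymptoticderiveen00} at general $s$, which yields \eqref{eq:derivative0g} with $g_0$ replaced by $g(t+s,\cdot)$.

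\emph{Step 3: the density PDE.} The strong form of \eqref{PDE} on $[t,t+\varepsilon)$ follows exactly as in the end of the proof of Proposition~\ref{eq:pdeinit}, via \eqref{eq:hardy} and \eqref{eq:g}, since $f(t+s,\cdot)$ is analytic in $x$.

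\emph{Main obstacle.} I expect Step~1 to be the main obstacle. It requires checking that the convergence in Section~\ref{sec:sketch} is robust to random initial data that converge only in probability, uniformly enough to exchange limit and conditional expectation.
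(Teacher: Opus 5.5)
Your proposal is correct and follows essentially the paper's route. The Markov property of the MESSEP gives the moment semigroup identity \eqref{eq:momentasymptoticbisbis}, and Proposition~\ref{eq:pdeinit} together with Lemmas~\ref{lem:cvu} and~\ref{lem:edptransportt0} is then applied with $g(t,\cdot)$ as the new initial datum.

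The only caveat concerns the floor shift. It cannot be dismissed term by term via \eqref{eq:hookeigenvalueprecise}, because $s_{\{n|k\}}(\vartheta)/N$ is unbounded; this is exactly the pitfall behind \eqref{eq:false}. Justify it instead pathwise, as the paper does: one step of the chain moves a single particle by one site, so each empirical moment changes by only $\mathcal O(1/L)$. Alternatively, note that the limits in Lemma~\ref{lem:eigenhookdevasym} are unchanged by a bounded shift of the exponent.
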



\begin{proof}[Proof of Proposition \ref{eq:pdeanyt}] For any $s \geq 0$, let $q$ denote the largest integer such that $q \leq L^2 s$, and define $s_\ast$ and $s^\ast$ by $q=L^2 s_\ast$ and $q+1=L^2 s^\ast$. Thereafter, set
\begin{equation}
\vartheta =
\begin{cases}
\Theta(L^2 s_\ast), & \text{if } s = s_\ast,\\[4pt]
\Theta(L^2 s^\ast), & \text{if } s > s_\ast.
\end{cases}
\end{equation}

Note that $s_\ast\leq s<s^\ast=s_\ast+1/L^2$. Moreover, since $|\Theta(L^2 s) - \vartheta| = \mathcal{O}(1/L)$, the empirical measure of $\vartheta$ converges in probability to $f(s,\theta)\,d\theta$, exactly as does the empirical measure of $\Theta(L^2 s)$. 

Let $t > s$ be fixed and observe that $L^2 t > L^2 s + 1$ for $L$ sufficiently large and thus $L^2 t>L^2 s^\ast$. Then, by the Markov property, the conditional law of $\Theta(L^2 t)$ given $\Theta(L^2 s)$ satisfies
\begin{equation}\label{eq:condi}
\mathcal{L}\big(\Theta(L^2 t)\,|\,\Theta(L^2 s)\big)=
\begin{cases}
\mathcal{L}\big(\Theta(L^2(t - s_\ast))\,|\,\Theta(0)=\vartheta\big), & \text{if } s=s_\ast,\\[5pt]
\mathcal{L}\big(\Theta(L^2(t - s^\ast))\,|\,\Theta(0)=\vartheta\big), & \text{if } s>s^\ast.
\end{cases}
\end{equation}

Since $|\Theta(L^2(t - u)) - \Theta(L^2(t - s))| = \mathcal{O}(1/L)$ for $u\in\{s_\ast,s^\ast\}$, it follows from \eqref{eq:condi}, together with the convergence in probability of the empirical measure of $\vartheta$ and of the latter results about the convergence of the empirical moments, that the empirical measure of $\Theta(L^2 t)$, conditionally on $\Theta(L^2 s)$, converges in probability to that of $\Theta(L^2 (t - s))$, with initial configuration distributed according to $f(s,\theta)\,d\theta$. Hence,  one has $f(t,\theta) = Q_t f_0(\theta)$ for some semigroup kernel $(Q_t)_{t \geq 0}$, and    
	\begin{equation}\label{eq:momentasymptoticbisbis}
	\forall s,t\geq 0,\quad \mathfrak m_n(s+t)=\sum_{\pi\vdash n}
	\frac{(2\pi\alpha)^{\ell(\pi)-1}}{\prod_{i}l_i!}\,
	\left.\frac{\partial^{\ell(\pi)-1} h^n_t(x)}{\partial x^{\ell(\pi)-1}}
	\right|_{x=0} \,\mathfrak m_\pi(s).
	\end{equation}
	
Using Proposition~\ref{eq:pdeinit}, for any time $t_0\ge 0$ such that $z\mapsto g(t_0,z)$ has radius of convergence greater than $1$, the PDE \eqref{eq:derivative0gbis000000} holds at $t=t_0$ for $z\in B(0,\delta)$, for some $\delta>1$, and the PDE \eqref{PDE} holds in the strong sense at $t=t_0$. Furthermore, Lemmas~\ref{lem:cvu} and~\ref{lem:edptransportt0}, together with their proofs, show that \eqref{eq:derivative0gbis000000} holds on $[t_0,t_0+\varepsilon)\times B(0,\delta)$ for some $\varepsilon>0$ and $\delta>1$, and hence that \eqref{PDE} holds on $[t_0,t_0+\varepsilon)$. This completes the proof of Proposition~\ref{eq:pdeanyt}.

\begin{rem}
	In general, even an entire initial density $f_0$ does not remain smooth for all $t\geq 0$. We refer to Section~\ref{sec:charc} and to the example detailed in Section~\ref{sec:periodicdens}.
\end{rem}
\end{proof}

\subsubsection{Characteristic representation of the solutions}
\label{sec:charc}

 The standard method of characteristics shows that any local smooth solution $g(t,z)$ remains constant along the trajectories $t\mapsto (t,Z_t)$ of the ODE
\begin{equation}\label{eq:ODE0}
{\partial}_t Z_t = A_0(w) Z_t, \quad Z_0 = w,\quad \text{with}\quad A_0(w)=2\pi^2 
\frac{\sin(\pi\alpha(1+2g_0(w)))}{\sin(\pi\alpha)}.
\end{equation}

The main interest of this observation is that it allows one to solve, whenever possible, the PDE \eqref{eq:derivative0gbis000000} by setting $g(t,z):=g_0(w(t,z))$, where $w(t,z)$  satisfies 
\begin{equation}\label{eq:flowimplicit}
\Phi_t(w(t,z))=z,\quad \text{with}\quad \Phi_t(w)=w e^{t A_0(w)}.
\end{equation}
Here $\Phi_t(w)$ denotes the flow of the ODE \eqref{eq:ODE0}. Note that necessarily  $w(0,z)=z$.

The goal of this section is to define the branch $w(t,z)$ for $z\in\mathbb D$, taking values in  $V_t\subset\mathbb D$ (see Lemma~\ref{lem:tech1}), and to establish the representation $g(t,z)=g_0(w(t,z))$ (see Proposition~\ref{prop:representationsolution}).

We start with a simple yet crucial observation: the flow $\Phi_t$ is a dilation inside  $\mathbb{D}$.

\begin{lem}\label{lem:dilationrealpart}
For all $w\in \mathbb D$, one has $\operatorname{Re}A_0(w)>0$.
\end{lem}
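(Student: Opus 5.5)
The plan is to write $1+2g_0(w)$ through the Herglotz representation of $f_0$ and then read off the sign of $\operatorname{Re}A_0$ from an elementary identity for $\sin$ of a complex argument.

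\textbf{Step 1: Herglotz representation.} Since $\mathfrak m_{-n}=\overline{\mathfrak m_n}$ and $\mathfrak m_0=1$, summing the geometric series inside the integral gives, for $w\in\mathbb D$,
\begin{equation}
1+2g_0(w)=\frac{1}{2\pi}\int_0^{2\pi}\frac{e^{ix}+w}{e^{ix}-w}\,f_0(x)\,dx.
\end{equation}
This is the Herglotz transform of the probability measure $f_0(x)\,dx/(2\pi)\cdot 2\pi$.

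\textbf{Step 2: a Poisson-integral bound.} Write $1+2g_0(w)=a+ib$ with $a,b$ real. Then
\begin{equation}
a=\frac{1}{2\pi}\int_0^{2\pi} P_w(x)\,2\pi f_0(x)\,dx,
\end{equation}
where $P_w$ is the Poisson kernel, which is positive and has mean one. The bound \eqref{eq:bounddesnity} gives $0\le 2\pi f_0\le 1/\alpha$. Since $f_0$ has total mass one, it is not a.e.\ equal to $0$, and since $\alpha<1$ it is not a.e.\ equal to $1/(2\pi\alpha)$. Strict positivity of $P_w$ therefore yields the strict inequality $0<a<1/\alpha$.

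\textbf{Step 3: sign of the real part.} Use
\begin{equation}
\operatorname{Re}\sin(\pi\alpha(a+ib))=\sin(\pi\alpha a)\cosh(\pi\alpha b).
\end{equation}
Because $\pi\alpha a\in(0,\pi)$, the sine factor is strictly positive, and $\cosh\ge1$. Dividing by $\sin(\pi\alpha)>0$ gives $\operatorname{Re}A_0(w)>0$.

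\textbf{Main obstacle.} The only delicate point is strictness at the extremal densities in Step 2. It is handled by positivity of $P_w$ for $|w|<1$ together with the two facts that $f_0$ integrates to one and that $\alpha<1$. This is the interior analogue of the boundary identity \eqref{eq:minorationsin}.
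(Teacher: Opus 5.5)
Your proof is correct and is essentially the paper's argument. Both write $\operatorname{Re}(1+2g_0(w))$ as a Poisson average of $2\pi f_0\in[0,1/\alpha]$ and conclude with $\operatorname{Re}\sin(u+iv)=\sin u\cosh v$. Your strictness argument (positive Poisson kernel, unit mass, $\alpha<1$) is exactly what justifies the paper's unexplained claim that $P_rf_0$ stays strictly between $0$ and $1/(2\pi\alpha)$.

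One cosmetic fix is needed in Step~1: since $f_0(x)\,dx$ is the probability measure, the display should read $\int_0^{2\pi}\frac{e^{ix}+w}{e^{ix}-w}f_0(x)\,dx$ without the prefactor $1/(2\pi)$. Depending on the sign convention for the moments, $f_0(-x)$ may replace $f_0(x)$, which does not affect the bounds. With this correction, the real part of Step~1 is exactly the quantity $a$ you use in Step~2.
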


\begin{proof}[Proof of Lemma \ref{lem:dilationrealpart}]
This result can be proved by generalizing \eqref{eq:hardy} together with the lower bound \eqref{eq:minorationsin}, which can be viewed as the boundary version of this lemma. 

To this end, one checks that, for all $0\leq r<1$ and all $x\in\mathbb R$, one has
\begin{equation}\label{eq:poisson}
1 + 2g_0(re^{ix}) = 2 \pi\left(P_r f_0(x) + i\, P_r \mathcal{H}f_0(x)\right),
\end{equation}
where $(P_r)_{0\leq r<1}$ denotes the usual Poisson convolution kernels. Recall that they form a family of smooth mollifiers as $r\uparrow 1$ and act on Fourier series according to
\begin{equation}
P_r\left( \sum_{n \in \mathbb{Z}} c_n e^{inx} \right) = \sum_{n \in \mathbb{Z}} c_n\, r^{|n|} e^{inx}.
\end{equation}

In particular, there exists $\eta_r>0$ such that the smooth density $h_r:=P_r  f_0$ satisfies 
	\begin{equation}
	\eta_r \leq \min_{x\in\mathbb R/2\pi\mathbb Z}h_r(x)\leq \max_{x\in\mathbb R/2\pi\mathbb Z}h_r(x)\leq \frac{1}{2\pi\alpha}-\eta_r.
	\end{equation}
	Straightforward computations then yield
	\begin{equation}\label{eq:minorationsinsuite}
	\operatorname{Re} \sin(\pi\alpha (1+2g_0(re^{ix})))
	= \sin(2\pi^2\alpha h_r(x)) \cosh(2\pi^2\alpha \mathcal{H}h_r(x))
	\geq \sin(2\pi^2\alpha \eta_r)>0,
	\end{equation}
	from which Lemma~\ref{lem:dilationrealpart} follows by taking $w=re^{ix}$.
\end{proof}

\begin{rem}\label{rem:herglotz}
It follows from  \eqref{eq:poisson} and \eqref{eq:minorationsinsuite} that the functions $1+2g_0(w)$ and $A_0(w)$ belong to the Carathéodory--Herglotz class $\mathcal{HC}$, i.e., holomorphic functions on $\mathbb D$ with positive real part. They admit non-tangential boundary limits and the Herglotz representation
	\begin{equation}
	\int_{0}^{2\pi}\frac{e^{ix}+w}{e^{ix}-w}\,\mu(dx),
	\end{equation}
	for some finite positive Borel measure $\mu$. Besides, since $1+2g_0(w)$ is in  the Hardy space $H^\infty$, the associated $\mu$ is absolutely continuous with bounded density. 
\end{rem}

\begin{lem}\label{lem:anlytic1bis}
	The function $g(t,z)$ is analytic on $[0,\infty)\times \mathbb D$ and satisfies \eqref{eq:derivative0gbis000000} on this domain.
\end{lem}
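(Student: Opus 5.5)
We want: $g(t,z)=\sum_n \mathfrak m_n(t)z^n$ is analytic on $[0,\infty)\times\mathbb D$ and solves \eqref{eq:derivative0gbis000000} there. The plan is to use the contour representation \eqref{eq:momentasymptoticsuitefaadi2contour}, now with radius $r<1$ instead of $r>1$. The point is that Lemma~\ref{lem:dilationrealpart} gives $\operatorname{Re}u_\alpha(2\pi\alpha g_0(w))=\operatorname{Re}A_0(w)/(2\pi^2)>0$ on $\mathbb D$, which replaces the boundary smoothness required by Assumption~\ref{ass:smooth}.

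\emph{Step 1 (moment bounds).} Fix $0<r<1$. On $|w|=r$ we have $|h_t^n(2\pi\alpha g_0(w))|=e^{-nt\operatorname{Re}A_0(w)}\le 1$, and in fact $\le e^{-n t c_r}$ with $c_r=\min_{|w|=r}\operatorname{Re}A_0(w)>0$. Also $|g_0'|\le C_r$ on that circle. Hence
\[
|\mathfrak m_n(t)|\le \frac{C_r}{n}\,r^{-(n-1)}\,e^{-ntc_r},
\qquad
|\partial_t\mathfrak m_n(t)|\le 2\pi^2 C'_r\, r^{-(n-1)}\,e^{-ntc_r}.
\]
The derivative bound comes from differentiating under the integral, which produces the factor $-2\pi^2 n\,u_\alpha(2\pi\alpha g_0)$; this factor is bounded on the circle and its $n$ is cancelled by the $1/n$. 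These bounds only give convergence for $|z|<r$, and since $r<1$ is arbitrary this yields locally normal convergence of both series on $[0,\infty)\times\mathbb D$.

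\emph{Step 2 (analyticity).} Each $\mathfrak m_n(t)$ extends holomorphically in $t$ to a neighbourhood of $[0,\infty)$, namely $\operatorname{Re} t>-\epsilon_n$. To obtain joint analyticity we need a uniform complex strip. On $|w|=r$ the quantity $|A_0|$ is bounded by some $M_r$, so for complex $t$ with $|\operatorname{Im}t|$ arbitrary and $\operatorname{Re}t\ge -\eta$ we get $|e^{-ntA_0}|\le e^{n\eta M_r}$. Choosing $\eta$ small so that $r'e^{\eta M_r}<r$ for a slightly smaller radius $r'$ gives uniform convergence on $\{\operatorname{Re} t>-\eta\}\times B(0,r')$. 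By Weierstrass, $g$ is then jointly holomorphic there, hence real-analytic on $[0,\infty)\times\mathbb D$ and termwise differentiable.

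\emph{Step 3 (the PDE).} We repeat the proof of Lemma~\ref{lem:edptransportt0} at arbitrary $t$, not only at $t=0$. By \eqref{eq:momentasymptoticderiveen00}, $\partial_t\mathfrak m_n(t)$ is the $(n-1)$th Taylor coefficient of $-2\pi^2 g_0'\,u_\alpha(2\pi\alpha g_0)\,h_t^n(2\pi\alpha g_0)$, and $n\mathfrak m_n(t)$ is the $(n-1)$th coefficient of $g_0'\,h_t^n(2\pi\alpha g_0)$. Because of the factor $h_t^n$ this is not immediately a product of generating functions. The cleanest route is the Lagrange inversion interpretation. Formula \eqref{eq:momentasymptoticsuitefaadi2} is exactly Lagrange--Bürmann for $w(t,z)$ defined by $z=w\,e^{tA_0(w)}$, i.e.\ \eqref{eq:flowimplicit}. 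Indeed, $[z^n]\,G(w(t,z))=\frac1n[w^{n-1}]\,G'(w)\,\bigl(e^{-tA_0(w)}\bigr)^n$, and $e^{-tA_0}=h_t(2\pi\alpha g_0)$. So
\[
g(t,z)=g_0(w(t,z))
\]
holds as formal power series, and analytically for small $|z|$ where $\Phi_t$ is invertible near $0$ (since $\Phi_t'(0)=e^{tA_0(0)}\ne0$).

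Differentiating $\Phi_t(w)=z$ gives $\partial_t w=-wA_0(w)/(1+tw A_0'(w))$ and $z\partial_z w=w/(1+twA_0'(w))$. Hence $\partial_t w+A_0(w)\,z\,\partial_z w=0$, and therefore $\partial_t g+\mathcal V_\alpha(g)\,z\,\partial_z g=0$ near $z=0$. Since both sides are analytic on $\mathbb D$ by Step 2, the identity propagates to the whole of $[0,\infty)\times\mathbb D$ by the identity theorem.

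The main obstacle is Step 2: obtaining uniform (complex-)time control from the positivity $\operatorname{Re}A_0>0$ only on circles strictly inside $\mathbb D$. This must also be checked consistently with \eqref{eq:momentasymptoticsuitefaadi2} for random $f_0$, which can be done pathwise.
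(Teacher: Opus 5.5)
Your argument is correct in substance and reaches the lemma by a genuinely different route for the PDE. There is one false estimate in Step 2, which is easy to repair.

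\emph{The slip in Step 2.} For $t=a+ib$ one has
\[
|e^{-ntA_0(w)}|=e^{-n(a\operatorname{Re}A_0(w)-b\operatorname{Im}A_0(w))}.
\]
The term $b\operatorname{Im}A_0(w)$ is unbounded when $|\operatorname{Im}t|$ is arbitrary, so you cannot claim uniform convergence on the half-plane $\{\operatorname{Re}t>-\eta\}$. You only need a complex neighbourhood of $[0,\infty)$. On $\{\operatorname{Re}t>-\eta,\ |\operatorname{Im}t|<\eta\}$ you get $|e^{-ntA_0}|\le e^{2n\eta M_r}$, and the condition $r'e^{2\eta M_r}<r$ then gives the joint holomorphy you want.

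\emph{Analyticity.} The paper does essentially what you do, but in real-variable form. It writes every $\partial_t^k\mathfrak m_n$ as a Cauchy integral on $|w|=r<1$. It uses $|h_t^n(2\pi\alpha g_0(w))|\le 1$, which comes from Lemma~\ref{lem:dilationrealpart}, to get $|\partial_t^k\mathfrak m_n(t)|\le Cn^kM^ke^{\varepsilon n}$. It then sums the double Taylor series in $(t,z)$. Your complexification of $t$ plus Weierstrass is equivalent.

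\emph{The PDE: the paper's route.} The paper computes $\partial_t g$ only at $t=0$. There $h_0^n\equiv 1$, so the coefficients in \eqref{eq:momentasymptoticderiveen00} sum to a generating function (Lemma~\ref{lem:edptransportt0}). It then transports the identity to every $t>0$ through the semigroup identity \eqref{eq:momentasymptoticbisbis}. That identity is derived from the Markov property of the particle system, by restarting from $f(s,\cdot)$, which again satisfies \eqref{eq:bounddesnity}.

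\emph{The PDE: your route.} You observe that $h_t(2\pi\alpha g_0)=e^{-tA_0}$, so \eqref{eq:momentasymptoticsuitefaadi2} is exactly the Lagrange--B\"urmann formula for $g_0\circ w(t,\cdot)$, where $w(t,\cdot)$ is the local inverse of $\Phi_t$. This gives $g=g_0\circ w$ near $z=0$ for all $t$ simultaneously. Here you should invoke the analytic implicit function theorem in $(t,w)$, not just $\Phi_t'(0)\neq0$ for each fixed $t$. The PDE then follows near $z=0$, and the identity theorem in $z$ extends it to $\mathbb D$.

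\emph{What each route buys.} Your route is purely analytic and deterministic and bypasses the probabilistic semigroup argument. It also yields the characteristic representation of Proposition~\ref{prop:representationsolution} (locally) directly, whereas the paper later obtains it from Cauchy--Kovalevskaya uniqueness. The paper's route gets the PDE on all of $\mathbb D$ at each time without an identity-theorem step, and keeps the time evolution tied to the particle dynamics.
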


\begin{proof}[Proof of Lemma \ref{lem:anlytic1bis}]
	First, we extend \eqref{eq:momentasymptoticderiveen00} and write
	\begin{equation}\label{eq:momentasymptoticderiveen000}
	\frac{\partial^{k} \mathfrak m_n(t)}{\partial t^k} = \frac{(-2\pi^2)^k\, n^{k-1}}{(n-1)!}\left.\frac{\partial^{n-1}g_0^\prime(z)\, u_\alpha^k(2\pi\alpha \, g_0(z))\, h_t^n(2\pi\alpha \, g_0(z))}{\partial z^{n-1}}\right|_{z=0}.
	\end{equation}
In particular, for any $0<r<1$, one has 
	\begin{equation}\label{eq:momentasymptoticsuitefaadi2contour1}
	\frac{\partial^{k} \mathfrak m_n(t)}{\partial t^k}
	= \frac{(-2\pi^2)^k\, n^{k-1}}{2 i \pi }\int_{0}^{2\pi}\frac{i g_0^\prime(re^{ix})\, u_\alpha^k(2\pi\alpha \, g_0(re^{ix}))\, h_t^n(2\pi\alpha \, g_0(r e^{ix}))}{r^{n-1}e^{i(n-1)x}}\,dx.
	\end{equation}
To proceed further, Lemma~\ref{lem:dilationrealpart} implies that, for all $w\in\mathbb D$,
\begin{equation}
|h_t^n(2\pi\alpha\, g_0(w))|=e^{-2\pi^2 n t\,\operatorname{Re} u_\alpha(2\pi\alpha\, g_0(w))}=e^{-nt \operatorname{Re}A_0(w)}\leq 1.
\end{equation}

Letting $r\to 1$ in \eqref{eq:momentasymptoticsuitefaadi2contour1}, we deduce that for any $\varepsilon>0$, there exist positive constants $C$ and $M$ such that, for all $n,k,t\ge 0$,
\begin{equation}\label{eq:momentasymptoticsuitefaadi2contour1bis}
\left|\frac{\partial^{k} \mathfrak m_n(t)}{\partial t^k}\right| \leq C\, n^k M^k e^{\varepsilon n}.
\end{equation}

Thereafter, let $\delta>0$ be such that $M\delta<\varepsilon$. It follows from \eqref{eq:momentasymptoticsuitefaadi2contour1bis} that the power series
\begin{equation}
\sum_{n=1}^{\infty} \sum_{k= 0}^\infty \frac{1}{k!}\left.\frac{\partial^{k} \mathfrak m_n(t)}{\partial t^k}\right|_{t=0} t^k z^n,
\end{equation}
is absolutely convergent for all $0\le t<\delta$ and $|z|<e^{-2\varepsilon}$. Since $\varepsilon$ is arbitrary small, we deduce that $g(t,z)$ is analytic in a neighbourhood of any point $(0,z)$ with $|z|<1$.

Arguments similar to those of Section~\ref{sec:pdet0} (see in particular Lemma~\ref{lem:edptransportt0}) show that the PDE \eqref{eq:derivative0g} holds at $t=0$ for $z\in\mathbb D$. Its extension to any time $t>0$ on $\mathbb D$ then follows from the Markov property (see \eqref{eq:momentasymptoticbisbis}), which concludes the proof.
\end{proof}

\begin{lem}\label{lem:tech1}
	For every $t\geq 0$ and $z\in \mathbb D$, there exists a unique $w(t,z)\in \mathbb D$ solving \eqref{eq:flowimplicit}. Moreover, $|w(t,z)|<|z|$ for all $t>0$, and the following properties hold:
	\begin{enumerate}
		\item For each $t\geq 0$, the map $w(t,\cdot)$ is a biholomorphism of $\mathbb D$ onto
		$V_t:=\Phi_t^{-1}(\mathbb D)\cap \mathbb D$.
		\item The family $(V_t)_{t\geq 0}$ is decreasing: for all $0\leq s\leq t$, one has $V_t\subset V_s$.
		\item The map $(t,z)\mapsto w(t,z)$ is analytic on $[0,\infty)\times \mathbb D$.
		\item One has $\Phi_t(\partial V_t)=\mathbb D$.
	\end{enumerate}
\end{lem}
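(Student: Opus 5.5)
The plan is to study, for fixed $t\ge0$, the holomorphic map $\Phi_t(w)=we^{tA_0(w)}$ on $\mathbb D$ and the open set $V_t=\Phi_t^{-1}(\mathbb D)\cap\mathbb D$. By Lemma~\ref{lem:dilationrealpart}, $|\Phi_t(w)|=|w|e^{t\operatorname{Re}A_0(w)}>|w|$ for $t>0$ and $w\in\mathbb D\setminus\{0\}$. This already gives $|w(t,z)|<|z|$ for any solution $w(t,z)$ of \eqref{eq:flowimplicit}, since $\Phi_t(0)=0$ forces $w=0$ iff $z=0$.

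Monotonicity (point 2) follows directly. The map $s\mapsto|\Phi_s(w)|=|w|e^{s\operatorname{Re}A_0(w)}$ is nondecreasing. Hence $|\Phi_t(w)|<1$ implies $|\Phi_s(w)|<1$ for $s\le t$.

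For existence and uniqueness I will use an argument-principle / Rouché count on exhaustions.

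\emph{Uniqueness.} Fix $z\in\mathbb D$ and $t>0$. The function $\log|\Phi_t(w)|=\log|w|+t\operatorname{Re}A_0(w)$ is harmonic on $\mathbb D\setminus\{0\}$ with a $\log|w|$ singularity at $0$. Consider a connected component $U$ of $V_t$. On $\partial U\cap\mathbb D$ one has $|\Phi_t|=1$. A component not containing $0$ would carry a harmonic function with boundary values $\le0$ and interior values $<0$. I expect the cleanest route, however, is to show $V_t$ is connected and simply connected by a continuity-in-$t$ argument. At $t=0$, $V_0=\mathbb D$ and $\Phi_0=\mathrm{id}$. Along each ray $r\mapsto re^{ix}$, I would like $|\Phi_t|$ to be increasing, which would make $V_t$ star-shaped.

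That star-shapedness step is the main obstacle, since $\operatorname{Re}A_0(re^{ix})$ is not obviously monotone in $r$. The first fallback is to count preimages directly via the degree. For $z\in\mathbb D$, the equation $\Phi_t(w)=z$ on the region $\{w\in\mathbb D:|\Phi_t(w)|<1\}$ is a proper holomorphic map onto $\mathbb D$, because $|\Phi_t|\to1$ at boundary points of $V_t$ inside $\mathbb D$. Boundary points on $\partial\mathbb D$ also cause no trouble, since there $|\Phi_t|\ge|w|\to1$. A proper holomorphic map between domains has a constant finite degree. Evaluating at $z=0$, the only preimage is $w=0$, which is simple since $\Phi_t'(0)=e^{tA_0(0)}\neq0$, so the degree equals $1$. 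Thus $\Phi_t:V_t\to\mathbb D$ is bijective, and it is a biholomorphism on each component that it maps onto $\mathbb D$. A component not containing $0$ would also map properly onto $\mathbb D$, hitting $0$, which is impossible. So $V_t$ is connected, and $w(t,\cdot)=\Phi_t^{-1}$ is the required biholomorphism, giving point 1. Simple connectivity is then automatic, being inherited from $\mathbb D$.

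For point 3, set $F(t,w,z)=we^{tA_0(w)}-z$. We have $\partial_wF=\Phi_t'(w)\neq0$ on $V_t$ because $\Phi_t$ is injective there. The analytic implicit function theorem then gives joint analyticity of $(t,z)\mapsto w(t,z)$ near every point, including $t=0$, where $w(0,z)=z$.

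For point 4, I read it as $\Phi_t(\partial V_t\cap\mathbb D)\subset\partial\mathbb D$ together with the surjectivity encoded in $\Phi_t(V_t)=\mathbb D$. The boundary statement follows from continuity of $|\Phi_t|$ on $\mathbb D$: points of $\partial V_t\cap\mathbb D$ satisfy $|\Phi_t|=1$. I would state precisely which version is proved.
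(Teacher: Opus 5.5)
Your argument is correct, but you reach existence and uniqueness by a different route from the paper.

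\textbf{The paper's route.} The paper never looks at $V_t$ for this step. For $z\in\mathbb D$ and any circle $|w|=r$ with $|z|\le r<1$, Lemma~\ref{lem:dilationrealpart} gives $|z|\le|w|<|\Phi_t(w)|$. By Rouch\'e, $\Phi_t-z$ then has as many zeros in $\{|w|<r\}$ as $\Phi_t$, namely the single simple zero at $0$. Letting $r\uparrow1$, and taking $r=|z|$, gives existence, uniqueness and $|w(t,z)|<|z|$ at once; bijectivity of $\Phi_t\colon V_t\to\mathbb D$ follows.

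\textbf{Your route.} You show that $\Phi_t\colon V_t\to\mathbb D$ is proper and compute its degree at $z=0$. This is sound. Local constancy of the preimage count of a proper nonconstant holomorphic map only needs the target $\mathbb D$ to be connected, so applying it before you know $V_t$ is connected is not circular. It is heavier than the paper's one-line Rouch\'e count. The star-shapedness and harmonic-function detours can simply be dropped, since neither argument uses any geometry of $V_t$. What your route buys is that one property, properness, drives both the counting and the boundary behaviour. The paper instead proves properness separately inside its argument for point 4. Your monotonicity argument and your joint analyticity via the implicit function theorem are the same as the paper's.

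\textbf{Point 4 is incomplete.} The displayed $\Phi_t(\partial V_t)=\mathbb D$ is a typo. What the paper's proof establishes is $\Phi_t(\partial V_t)=\partial\mathbb D$: the inclusion you prove, plus surjectivity onto the unit circle. That surjectivity is what makes $\mathcal I_t$ and $\mathcal I_t^c$ complementary in Proposition~\ref{prop:cont}. Replacing it by $\Phi_t(V_t)=\mathbb D$ changes the statement.

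The missing half follows from your own properness argument:
\begin{itemize}
\item For $z\in\partial\mathbb D$, take $z_n\to z$ in $\mathbb D$ and set $w_n=w(t,z_n)$.
\item Any limit point $w$ of $(w_n)$ lies in $\partial V_t$. Indeed, if $w\in V_t$, continuity would give $\Phi_t(w)=z\notin\mathbb D$, contradicting $\Phi_t(V_t)\subset\mathbb D$.
\item If $w\in\mathbb D$, continuity gives $\Phi_t(w)=z$.
\item If $w\in\partial\mathbb D$, you need $A_0$ to extend continuously there, which the paper assumes tacitly. Otherwise the claim must be read as a statement about boundary cluster values of $\Phi_t$.
\end{itemize}
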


\begin{proof}[Proof of Lemma \ref{lem:tech1}]
Using Lemma~\ref{lem:dilationrealpart}, one has for all $t>0$, $z\in \mathbb D$, and $w\in \mathbb D$ with $|w|\ge |z|$:
\begin{equation}
\left|(w e^{tA_0(w)}-z)-w e^{tA_0(w)}\right|<\left|w e^{tA_0(w)}\right|.
\end{equation}
Rouché’s Theorem (see the proof of Lemma~\ref{rouche}) then ensures the existence of a unique solution $w(t,z)\in \mathbb D$ to \eqref{eq:flowimplicit}, satisfying $|w(t,z)|<|z|$ for all $t>0$.

The restriction of $\Phi_t$ to $V_t$ thus defines a bijective analytic map from $V_t$ onto $\mathbb D$, and hence a biholomorphism. In particular, for all $w\in V_t$, one has
\begin{equation}\label{eq:implicitanal}
\Phi_t'(w) = (1+t w A_0'(w))e^{t A_0(w)} \neq 0.
\end{equation}
Since the map $(t,w)\mapsto \Phi_t(w)$ is analytic, this non-degeneracy condition allows us to apply the implicit function theorem, yielding the analyticity of $w(t,z)$.

The inclusion $\Phi_t(\partial V_t)\subset \partial \mathbb D$ is classical and follows from the fact that $\Phi_t^{-1}(K)$ is compact for every compact set $K\subset \mathbb D$. Indeed, if $\Phi_t(w)\in \mathbb D$ for some $w\in \partial V_t$, choose a sequence $(w_n)\subset V_t$ such that $w_n\to w$, and set $K=\{\Phi_t(w_n):n\geq 0\}\cup\{\Phi_t(w)\}$. Then $K$ is compact in $\mathbb D$, and consequently $\Phi_t^{-1}(K)\subset V_t$ is compact as well, contradicting the fact that $w\in \partial V_t$.

Now let $z\in \partial \mathbb D$ and choose a sequence $(z_n)\subset \mathbb D$ such that $z_n\to z$. For each $n$, pick $w_n\in V_t$ with $\Phi_t(w_n)=z_n$. By relative compactness of $V_t$, there exists a subsequence, still denoted $(w_n)$, converging to some $w\in \overline{V_t}$. By continuity of $\Phi_t$, one has $\Phi_t(w)=z$. Since $z\in \partial \mathbb D$, it follows that $w\in \partial V_t$, proving that $\Phi_t$ maps $\partial V_t$ onto $\partial \mathbb D$.

It remains to prove that the family $(V_t)_{t\geq 0}$ is decreasing. Let $w_0\in \mathbb D\cap V_s^c$. By definition of $V_s$, one has $|\Phi_s(w_0)|\geq 1$, and Lemma~\ref{lem:dilationrealpart} implies that $|\Phi_t(w_0)|\geq 1$ for all $t\geq s$. In other words, one has $w_0\in \mathbb D\cap V_t^c$ for all $t\geq s$, showing that $(V_t)_{t\geq 0}$ is decreasing and completing the proof.
\end{proof}

At this stage, for an arbitrary initial density profile, one verifies that $(t,z)\mapsto g_0(w(t,z))$ solves the PDE \eqref{eq:derivative0gbis000000} on $[0,\infty)\times \mathbb D$. It suffices to observe that
\begin{equation}\label{eq:preedp0}
\partial_t w(t,z) = -\frac{w(t,z)\, A_0(w(t,z))}{1+t\, w(t,z)\, A_0'(w(t,z))}\quad\text{and}\quad
\partial_z w(t,z) = \frac{e^{-t A_0(w(t,z))}}{1+t\, w(t,z)\, A_0'(w(t,z))},
\end{equation}
which imply
\begin{equation}\label{eq:lowner}
\partial_t w(t,z)
= -A_0(w(t,z))\, z\, \partial_z w(t,z)
= -2\pi^2 u_\alpha(2\pi\alpha\, g_0(w(t,z)))\, z\, \partial_z w(t,z).
\end{equation}
The identity \eqref{eq:derivative0gbis000000} then follows.

\begin{rem} \label{rem:loewner}
	The transport equation \eqref{eq:lowner} is closely related to Loewner--Kufarev type PDEs. From this perspective, the family $(w(t,\cdot))_{t\geq 0}$ can be interpreted as a non-increasing Loewner-type chain on the unit disk. For recent references, see \cite{Bracci,Contreras}.
\end{rem}

Moreover, since $u_\alpha$ is entire, the Cauchy--Kovalevski theorem (see \cite{Petrov}) ensures that \eqref{eq:derivative0gbis000000} admits a unique local analytic solution $(t,z)\mapsto g(t,z)$ in a neighbourhood of any  $(t_0,w_0)$ such that $z\mapsto g(t_0,z)$ is analytic at $w_0$. In particular, Lemma~\ref{lem:anlytic1bis} yields the following proposition.

\begin{prop}\label{prop:representationsolution}
	For all $t\geq 0$ and all $z\in \mathbb D$, one has $g(t,z)=g_0(w(t,z))$.

\end{prop}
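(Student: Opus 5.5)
The plan is to show that $G(t,z):=g_0(w(t,z))$ and the moment generating function $g(t,z)$ are two analytic solutions of the same transport equation \eqref{eq:derivative0gbis000000} on $[0,\infty)\times\mathbb D$ with the same initial data, and then to conclude by a local uniqueness plus continuation argument.

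First, I collect the regularity and PDE facts already available. By Lemma~\ref{lem:tech1}, $w$ is analytic on $[0,\infty)\times\mathbb D$ and takes values in $V_t\subset\mathbb D$. Since $g_0$ is holomorphic on $\mathbb D$, $G$ is analytic there. The computation \eqref{eq:preedp0}--\eqref{eq:lowner} shows that $G$ solves \eqref{eq:derivative0gbis000000}, and $G(0,z)=g_0(z)$ because $w(0,z)=z$. On the other side, Lemma~\ref{lem:anlytic1bis} gives that $g$ is analytic on $[0,\infty)\times\mathbb D$, solves the same PDE, and satisfies $g(0,\cdot)=g_0$.

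Next, I set $D:=g-G$ and let $T:=\sup\{s\ge 0: D\equiv 0 \text{ on } [0,s]\times\mathbb D\}$. I will show $T=\infty$. Suppose instead that $T<\infty$. By continuity, $g(T,\cdot)=G(T,\cdot)$ on $\mathbb D$. For each $z_0\in\mathbb D$, the Cauchy--Kovalevskaya theorem applies: the equation has the form $\partial_t g = -z\,\mathcal V_\alpha(g)\,\partial_z g$ with $\mathcal V_\alpha$ entire, so the Cauchy problem at $t=T$ with analytic data $g(T,\cdot)$ near $z_0$ has a unique analytic solution near $(T,z_0)$. The function $g$ and $G$ are both analytic solutions with this data, so they agree on some neighbourhood $(T-\eta,T+\eta)\times B(z_0,r)$. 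Because $D$ is analytic on the connected domain $(0,\infty)\times\mathbb D$ and vanishes on an open set, the identity theorem gives $D\equiv 0$ on the whole domain. This contradicts the definition of $T$. The boundary time $t=0$ is handled the same way, using one-sided analyticity in $t$ as in Lemma~\ref{lem:anlytic1bis}.

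In fact the identity theorem makes the sup argument almost unnecessary. Applying Cauchy--Kovalevskaya once at $t=0$ near some $z_0$ already yields an open set on which $g=G$, and analyticity then propagates the equality everywhere.

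The main obstacle is to make sure the uniqueness part of Cauchy--Kovalevskaya is legitimately applied. It requires both candidate solutions to be analytic jointly in $(t,z)$ up to $t=0$. For $G$ this comes from item~3 of Lemma~\ref{lem:tech1}. For $g$ it comes from the power series bound \eqref{eq:momentasymptoticsuitefaadi2contour1bis}. If one-sided analyticity at $t=0$ turns out to be delicate, I would instead apply the argument at a small time $t_0>0$, where Lemma~\ref{lem:anlytic1bis} gives joint analyticity for $g$. In that case I would first identify $g(t_0,\cdot)$ with $G(t_0,\cdot)$ by comparing Taylor coefficients in $t$ at $t=0$. These coefficients are determined recursively by the PDE from $g_0$, so they coincide for both functions near $z=0$.
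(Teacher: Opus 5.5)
Your proposal is correct and follows the paper's route. Both $g$ (analytic by Lemma~\ref{lem:anlytic1bis}) and $g_0\circ w$ (analytic by Lemma~\ref{lem:tech1}, and a solution by \eqref{eq:lowner}) solve \eqref{eq:derivative0gbis000000} with the same initial data, so Cauchy--Kovalevskaya uniqueness gives local agreement; the paper leaves the passage from local to global agreement implicit, and your identity-theorem argument on the connected domain is exactly what fills it in.
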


\subsubsection{Regularity and asymptotic properties}

\label{sec:regularprop}

Representing the density $f(t,x)$ via the method of characteristics requires extending the conformal map $z\mapsto w(t,z)$ from $\mathbb D$ to $V_t$ continuously to the unit circle $\mathbb U=\partial\mathbb D$, and determining when this extension admits an analytic or smooth continuation across the boundary.

These conditions depend on the regularity of $\partial V_t$. We refer to the continuity theorem and Carathéodory’s theorem in \cite[Chap.~2]{Pommerenke}, which ensure a continuous extension or a homeomorphism between the boundaries under the assumption that $\partial V_t$ is locally connected or a Jordan curve. See also \cite[Theorem~4, Chap.~6]{Ahlfors1966} and \cite[Chap.~3]{Pommerenke}, where local smoothness of $\partial V_t$ guarantees a local analytic extension across the boundary.

\begin{lem}\label{lem:regulaboundary}
	For all $t>0$, the set $\partial V_t\cap \mathbb D$ is an analytic curve. Moreover, one has
	\begin{equation}\label{eq:Z0}
	\partial V_t\cap \mathbb U\subset \mathcal W_0=\left\{w_0\in\mathbb U : \liminf_{w\to w_0} \operatorname{Re} A_0(w)=0\right\}.
	\end{equation}
\end{lem}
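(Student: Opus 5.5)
We need to prove that $\partial V_t\cap\mathbb D$ is an analytic curve and that $\partial V_t\cap\mathbb U\subset\mathcal W_0$.

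\textbf{Describing $\partial V_t\cap\mathbb D$.} By Lemma~\ref{lem:tech1}, $V_t=\Phi_t^{-1}(\mathbb D)\cap\mathbb D$ and $\Phi_t(\partial V_t)=\partial\mathbb D$, the latter as shown in its proof. Hence every $w\in\partial V_t\cap\mathbb D$ satisfies $|\Phi_t(w)|=1$. Consider the real-analytic function on $\mathbb D$
\[
F_t(w)=\log|\Phi_t(w)|=\log|w|+t\operatorname{Re}A_0(w).
\]
Then $\partial V_t\cap\mathbb D$ is contained in the level set $\{F_t=0\}$. Since $V_t=\{F_t<0\}$, $\partial V_t\cap\mathbb D$ is in fact exactly the part of this level set adjacent to $\{F_t<0\}$.

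\textbf{Regular points and analyticity.} Because $\Phi_t$ is holomorphic, $F_t$ is harmonic on $\mathbb D\setminus\{0\}$. Its gradient vanishes exactly where $\Phi_t'(w)=0$, since $\nabla F_t$ corresponds to $\overline{\Phi_t'/\Phi_t}$. Wherever $\Phi_t'(w)\neq0$, the implicit function theorem shows that $\{F_t=0\}$ is locally a real-analytic arc; equivalently, it is the preimage of the analytic arc $\mathbb U$ under a local biholomorphism. So it suffices to show $\Phi_t'(w)\neq0$ at every $w\in\partial V_t\cap\mathbb D$. Note $0\notin\partial V_t$, since $0\in V_t$.

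\textbf{Main obstacle: excluding critical points on the boundary.} Suppose $w_0\in\partial V_t\cap\mathbb D$ with $\Phi_t'(w_0)=0$. Then locally $\Phi_t(w)-\Phi_t(w_0)\approx c(w-w_0)^k$ with $k\ge2$, so $\{|\Phi_t|<1\}$ near $w_0$ consists of $k\ge2$ sectors meeting at $w_0$. Two cases arise.

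\emph{Two sectors lie in $V_t$.} Points arbitrarily close to $w_0$ in different sectors then have equal images. This contradicts the injectivity of $\Phi_t$ on $V_t$ (it is the inverse of the biholomorphism $w(t,\cdot)$ of Lemma~\ref{lem:tech1}).

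\emph{Only one sector lies in $V_t$.} The other sectors belong to other components of $\Phi_t^{-1}(\mathbb D)\cap\mathbb D$. Here I would use the point $z_0=\Phi_t(w_0)\in\mathbb U$ and choose $z_n\to z_0$ inside $\Phi_t$ of the $V_t$-sector. The local $k$-to-$1$ structure then forces $\Phi_t$ to fail to be a covering over a full neighbourhood of $z_0$ in $\mathbb D$. I expect this case to be the delicate step. An alternative route is to use the monotonicity $\partial_tF_t=\operatorname{Re}A_0>0$ (Lemma~\ref{lem:dilationrealpart}) to show that the sublevel set $\{F_t<0\}$ has no pinched points; I would try that first. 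Once critical points are excluded, analyticity of $\partial V_t\cap\mathbb D$ follows.

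\textbf{Boundary points on $\mathbb U$.} Let $w_0\in\partial V_t\cap\mathbb U$ and suppose $\liminf_{w\to w_0}\operatorname{Re}A_0(w)=2\eta>0$. Then $\operatorname{Re}A_0\ge\eta$ on $B(w_0,r)\cap\mathbb D$ for some $r>0$. For $w$ in this set,
\[
|\Phi_t(w)|\ge|w|e^{t\eta}>1
\]
as soon as $1-|w|$ is small enough. Hence there is a neighbourhood of $w_0$ in $\mathbb D$ disjoint from $V_t$, which contradicts $w_0\in\partial V_t$. Therefore $w_0\in\mathcal W_0$, proving \eqref{eq:Z0}.
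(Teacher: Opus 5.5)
Your argument is essentially the paper's. You exclude critical points of $\Phi_t$ on $\partial V_t\cap\mathbb D$ using the uniqueness in Lemma~\ref{lem:tech1}, conclude by the implicit function theorem, and handle $\partial V_t\cap\mathbb U$ by showing that $\operatorname{Re}A_0$ must tend to $0$ there. Your contrapositive version of that last step is equivalent to the paper's sequence argument.

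The one thing to fix is the case you leave open as ``delicate'': it cannot occur. By definition $V_t=\Phi_t^{-1}(\mathbb D)\cap\mathbb D$, and you wrote $V_t=\{F_t<0\}$ yourself. So every point of a small disk around $w_0\in\mathbb D$ at which $|\Phi_t|<1$ belongs to $V_t$. There are no ``other components of $\Phi_t^{-1}(\mathbb D)\cap\mathbb D$'' lying outside $V_t$. Equivalently, the uniqueness in Lemma~\ref{lem:tech1}, which comes from the Rouch\'e argument, holds over all $w\in\mathbb D$, not over one component.

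Hence all $k\ge 2$ sectors at $w_0$ lie in $V_t$. Taking $z\in\mathbb D$ close to $z_0=\Phi_t(w_0)\in\mathbb U$ then gives at least two distinct preimages of $z$ in $\mathbb D$ near $w_0$. So your first case already closes the argument, and this is exactly what the paper invokes. The covering argument and the $\partial_t F_t>0$ route are unnecessary.

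In the last part, also record that $\liminf_{w\to w_0}\operatorname{Re}A_0(w)\ge 0$ by Lemma~\ref{lem:dilationrealpart}. That way ``$\neq 0$'' really means ``$>0$'' (possibly $+\infty$), which is the only alternative you treat.
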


\begin{proof}[Proof of Lemma \ref{lem:regulaboundary}]
Consider $w_0\in \partial V_t\cap \mathbb D$ and set $z_0=\Phi_t(w_0)\in\mathbb U$. We claim that $w_0$ is not a critical point of $\Phi_t$, that is, $\Phi_t'(w_0)\neq 0$. Otherwise, by standard results in complex analysis, the equation $\Phi_t(w)=z$ would admit more than one solution $w\in \mathbb D$ in a neighbourhood of $w_0$ for any $z\in \mathbb D$ in some neighbourhood of $z_0$, contradicting Lemma~\ref{lem:tech1}. It then follows from the implicit function theorem that $\partial V_t$ is an analytic curve in a neighbourhood of $w_0$.
	
	Finally, assume that $w_0\in \partial V_t\cap \mathbb U$ and let $(w_n)_{n\geq 1}\subset V_t$ be a sequence such that $w_n\to w_0$. Since $|w_n|\to 1$, $w_n e^{t A_0(w_n)}\in \mathbb D$ and $\operatorname{Re} A(w_n)>0$ (see Lemma~\ref{lem:dilationrealpart}), one necessarily has $\operatorname{Re} A(w_n)\to 0$, which completes the proof of the lemma.
\end{proof}

\begin{rem}\label{rem:extremal}
	Assume that $A_0$ is continuous at some $w_0\in \overline{\mathbb D}$. Using \eqref{eq:minorationsin} and writing $w_0=e^{ix_0}$, one sees that the condition $\operatorname{Re} A_0(w_0)=0$ is equivalent to $f_0(x_0)$ attaining an extremal value, namely
	\begin{equation}
	\operatorname{Re} A_0(w_0)=0\quad \Longleftrightarrow \quad f_0(x_0)=0\quad\text{or}\quad f_0(x_0)=\frac{1}{2\pi\alpha}.
	\end{equation}
	We set
	\begin{equation}\label{eq:z0}
	\mathcal Z_0=\left\{x\in\mathbb R/2\pi\mathbb Z : f_0(x)=0\quad \text{or}\quad f_0(x)=\frac{1}{2\pi\alpha}\right\}.
	\end{equation}
\end{rem}

In view of Lemma~\ref{lem:regulaboundary} and Remark~\ref{rem:extremal}, it appears that the regularity properties  $f(t,x)$ are closely related to the set $\mathcal Z_0$.


\begin{prop}\label{prop:anal}
	Define
	\begin{equation}\label{eq:It}
	\mathcal I_t =\{x\in\mathbb R/2\pi\mathbb Z : e^{ix}\in \Phi_t(\partial V_t\cap \mathbb D)\}\quad\text{and}\quad 
	\mathcal I =\bigsqcup_{t>0} \{t\}\times \mathcal I_t.
	\end{equation}
	Then the map $(t,x)\mapsto f(t,x)$ is analytic on the open set $\mathcal I\subset (0,\infty)\times \mathbb R/2\pi\mathbb Z$ and consequently satisfies \eqref{PDE} in the strong sense on this domain.
\end{prop}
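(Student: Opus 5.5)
The plan is to show that $f(t,x)=\bigl(1+2\operatorname{Re} g(t,e^{ix})\bigr)/(2\pi)$ (i.e. \eqref{eq:represent}) is the boundary value of a function that extends analytically across the arc $\Phi_t(\partial V_t\cap\mathbb D)$. By Proposition~\ref{prop:representationsolution}, $g(t,z)=g_0(w(t,z))$ on $\mathbb D$. It therefore suffices to show that $(t,z)\mapsto w(t,z)$ continues analytically to a neighbourhood of each point $(t_0,z_0)$ with $z_0=e^{ix_0}$ and $x_0\in\mathcal I_{t_0}$, and that the continued values lie in $\mathbb D$, where $g_0$ is analytic. Then $G(t,z):=g_0(w(t,z))$ is analytic near $(t_0,z_0)$, so $f(t,x)=\bigl(1+2\operatorname{Re}G(t,e^{ix})\bigr)/(2\pi)$ is real-analytic near $(t_0,x_0)$. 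I will also check that $G$ agrees with the $L^2$ boundary values of $g(t,\cdot)$; this holds because $G$ is continuous up to the boundary there and $g(t,\cdot)$ is bounded, being in $H^\infty$ by Remark~\ref{rem:herglotz}.

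\textbf{Step 1 (local inversion at the boundary).} Fix $t_0>0$ and $w_0\in\partial V_{t_0}\cap\mathbb D$ with $\Phi_{t_0}(w_0)=z_0\in\mathbb U$. As in the proof of Lemma~\ref{lem:regulaboundary}, $\Phi_{t_0}'(w_0)\neq0$. The map $(t,w)\mapsto\Phi_t(w)=we^{tA_0(w)}$ is jointly analytic on $\mathbb R\times\mathbb D$. The holomorphic implicit function theorem, applied to $\Phi_t(w)=z$ with complexified $t$, gives an analytic $\tilde w(t,z)$ on a neighbourhood $U$ of $(t_0,z_0)$ with $\tilde w(t_0,z_0)=w_0\in\mathbb D$. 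Shrinking $U$, we may assume $\tilde w(U)\subset\mathbb D$.

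\textbf{Step 2 (gluing with the interior branch).} I must show that $\tilde w=w$ on $U\cap([0,\infty)\times\mathbb D)$. For $(t,z)$ in this set, $w(t,z)$ is the \emph{unique} solution of $\Phi_t(w)=z$ in $\mathbb D$ given by Lemma~\ref{lem:tech1}. Since $\tilde w(t,z)\in\mathbb D$ also solves it, the two coincide. This step is immediate because uniqueness holds on all of $\mathbb D$.

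\textbf{Step 3 (openness and the PDE).} Step 1 also yields openness of $\mathcal I$. By continuity, nearby points $(t,e^{ix})$ have preimages $\tilde w(t,e^{ix})$ that lie in $\mathbb D$, and these belong to $\partial V_t$ because $|\Phi_t|=1$ there while $\tilde w$ is a limit of points of $V_t$. Hence $x\in\mathcal I_t$. Finally, $G$ satisfies the transport equation \eqref{eq:derivative0gbis000000} on $U\cap(\mathbb R\times\mathbb D)$ by Lemma~\ref{lem:anlytic1bis}, hence on all of $U$ by analytic continuation. Taking real parts on $|z|=1$ and using \eqref{eq:hardy}, together with the computation \eqref{eq:derivative0f}, which is valid at any time once $G$ is smooth, gives \eqref{PDE} in the strong sense.

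The main obstacle is Step 1's non-degeneracy claim $\Phi_{t_0}'(w_0)\ne0$ at boundary points of $V_{t_0}$ inside $\mathbb D$. This was argued only briefly in Lemma~\ref{lem:regulaboundary}, so I will redo it. If $\Phi_{t_0}$ had a critical point of order $k\ge2$ at $w_0$, then $\Phi_{t_0}$ would be locally $k$-to-$1$, and the preimage of a small half-disc $\{|z-z_0|<r\}\cap\mathbb D$ would consist of $k$ disjoint sectors near $w_0$, all contained in $\mathbb D$. Because $w_0\in\partial V_{t_0}$, at least one sector meets $V_{t_0}$, and the local covering structure forces points of distinct sectors to have the same image. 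Both such points lie in $\mathbb D$, which contradicts injectivity of $\Phi_{t_0}$ on $\{w\in\mathbb D:\Phi_{t_0}(w)\in\mathbb D\}$, i.e. the uniqueness in Lemma~\ref{lem:tech1}.
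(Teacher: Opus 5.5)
Your proposal is correct and follows the paper's own argument. Non-degeneracy $\Phi_t'(w_0)\neq 0$ at points of $\partial V_t\cap\mathbb D$ follows from the global uniqueness in Lemma~\ref{lem:tech1}, as in Lemma~\ref{lem:regulaboundary}, and the implicit function theorem then continues $w(t,\cdot)$ analytically across the arc with values in $\mathbb D$; your gluing, openness, boundary-value and PDE steps spell out what the paper's short proof leaves implicit.

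One small correction concerns the boundary-value step. Remark~\ref{rem:herglotz} concerns $g_0$, not $g(t,\cdot)$, and its $H^\infty$ claim can fail: for the step profile, $g_0$ has logarithmic singularities at $e^{\pm i\pi\alpha}$. You only need $g(t,\cdot)\in H^2$, which always holds because the $\mathfrak m_n(t)$ are Fourier coefficients of a bounded density.
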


\begin{proof}[Proof of Proposition \ref{prop:anal}]
	The proof follows from the references given above Lemma~\ref{lem:regulaboundary} and from the arguments in its proof. Recall that $\Phi_t'(w)\neq 0$ for all $w\in \partial V_t\cap \mathbb D$. The implicit function theorem  ensures that $w(t,z)$ extends to a one-to-one analytic function in a neighbourhood of $(t,e^{ix})$ for $x\in \mathcal I_t$, with values in $\mathbb D$, which imply that $f(t,x)$ si analytic around such point.
\end{proof}

As a consequence of Proposition~\ref{prop:anal} and Lemma~\ref{lem:regulaboundary}, if $\mathcal W_0$ is empty -- for instance, if $f_0$ satisfies the bounds \eqref{eq:upperlowerf0} -- then the map $(t,x)\mapsto f(t,x)$ is analytic on $(0,\infty)\times \mathbb R/2\pi\mathbb Z$ and satisfies \eqref{PDE} in the strong sense on this domain. This can also be proved by standard estimates, as in Lemmas~\ref{lem:cvu}, showing that $\mathfrak m_n(t)$ decays geometrically as $n\to\infty$.

There is always a spontaneous transition to smoothness and convergence to the equilibrium.

\begin{prop}\label{prop:spontaneous}
	For any initial density profile, there exists $t^\ast\geq 0$ such that $(t,x)\mapsto f(t,x)$ is analytic on $(t^\ast,\infty)\times \mathbb R/2\pi\mathbb Z$. Besides, there exists $C,\lambda>0$ such that for all $t\geq 0$, 
	\begin{equation}\label{eq:equilibrium}
	\sup_{x\in\mathbb R}\left|f(t,x)-\frac{1}{2\pi}\right|
	 \leq C e^{-\lambda t}.
	\end{equation}
\end{prop}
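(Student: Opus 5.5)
We would prove Proposition~\ref{prop:spontaneous} by exploiting the Markov/semigroup structure \eqref{eq:momentasymptoticbisbis} together with the analyticity criterion already established: by Proposition~\ref{eq:pdeanyt} and the discussion after Proposition~\ref{prop:anal}, it suffices to find a time $t_1$ at which the density $f(t_1,\cdot)$ satisfies the strict bounds \eqref{eq:upperlowerf0} for some $\eta>0$. Restarting the dynamics at $t_1$ with initial profile $f(t_1,\cdot)$, the corresponding set $\mathcal W_0$ is empty, so $(t,x)\mapsto f(t,x)$ is analytic on $(t_1,\infty)\times\mathbb R/2\pi\mathbb Z$. We then take $t^\ast=t_1$. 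Everything therefore reduces to a quantitative decay estimate for the moments $\mathfrak m_n(t)$, uniform in $n$, which gives both the strict bounds and the exponential relaxation \eqref{eq:equilibrium}.

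The key step is a geometric bound on the conformal maps $w(t,\cdot)$. Since $g(t,z)=g_0(w(t,z))$ by Proposition~\ref{prop:representationsolution}, and $g_0(0)=0$ with $|g_0|$ bounded on $\mathbb D$ (because $1+2g_0$ lies in $H^\infty$ by Remark~\ref{rem:herglotz}, with $|g_0|\le C_0$), it is enough to show that $\sup_{|z|<1}|w(t,z)|\le \rho(t)$ with $\rho(t)\le Ce^{-\lambda t}$. The Schwarz lemma then gives $|g(t,z)|\le C_0\rho(t)$ on $\mathbb D$. Consequently $|\mathfrak m_n(t)|\le C_0\rho(t)$ for all $n$, and also $|\mathfrak m_n(t)|\le C_0\rho(t)^n$ by Cauchy estimates on the disk of radius $1/\rho(t)$, because $g(t,\cdot)$ extends holomorphically to $\{|z|<1/\rho(t)\}$ via $w(t,z)=w(1,\cdot)$-type composition. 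The geometric decay of the moments yields, through \eqref{eq:density},
\[
\sup_x\left|f(t,x)-\tfrac{1}{2\pi}\right|\le \tfrac{1}{\pi}\sum_{n\ge1}|\mathfrak m_n(t)|\le C\rho(t).
\]
This proves \eqref{eq:equilibrium}. It also gives \eqref{eq:upperlowerf0} as soon as $C\rho(t)<\min\{1/(2\pi),\,1/(2\pi\alpha)-1/(2\pi)\}$, which fixes $t_1$.

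The main obstacle is bounding $\rho(t)$, i.e.\ showing that $V_t$ shrinks geometrically to $\{0\}$. For this we would use Lemma~\ref{lem:dilationrealpart} quantitatively near the origin. Since $A_0(0)=\mathcal V_\alpha(0)=2\pi^2>0$ and $A_0$ is continuous, there is $r_0\in(0,1)$ with $\operatorname{Re}A_0(w)\ge\pi^2$ on $|w|\le r_0$. We first show that $V_{t_0}\subset\{|w|\le r_0\}$ for some finite $t_0$. If $w\in V_t$, then $|w|e^{t\operatorname{Re}A_0(w)}<1$, so it suffices that $\inf_{r_0\le|w|<1}|w|e^{t\operatorname{Re}A_0(w)}\ge1$ for large $t$. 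The difficulty is that $\operatorname{Re}A_0$ may degenerate to $0$ at the boundary, at the extremal points of $f_0$. To handle this we would first apply the semigroup property: replace $f_0$ by $f(s,\cdot)$ for a small $s>0$. Alternatively, we would note directly from Lemma~\ref{lem:tech1} that $\bigcap_t V_t=\{0\}$ and that $V_t$ is decreasing with relatively compact closures $\overline{V_t}\cap\mathbb D$. A compactness argument on the analytic boundary curves from Lemma~\ref{lem:regulaboundary} then shows that $\sup_{w\in V_t}|w|\to0$.

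Once $V_{t_0}\subset\{|w|\le r_0\}$, the decay is immediate for $t\ge t_0$: for $w\in V_t$ we get $|w|<e^{-t\operatorname{Re}A_0(w)}\le e^{-\pi^2t}$. This gives $\rho(t)\le e^{-\pi^2 t}$ for large $t$, hence $\lambda=\pi^2$, while the constant $C$ absorbs the bounded range $t\le t_0$.
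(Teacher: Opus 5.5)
Your reduction (strict bounds at some time $t_1$ followed by a restart) is fine, and so is your last step: once $V_{t_0}\subset\{|w|\le r_0\}$, you do get $|w|<e^{-\pi^2 t}$ on $V_t$. The gap is the step you yourself flag as the main obstacle, namely that $V_{t_0}\subset\{|w|\le r_0\}$ for some finite $t_0$. Neither of your two remedies proves it.

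\begin{itemize}
\item Restarting at a small time $s>0$ does not remove the degeneracy of $\operatorname{Re}A_0$ at the circle. For the step profile with $\alpha\le 1/2$, Theorem~\ref{thm:pack} shows that $f(s,\cdot)$ keeps a macroscopic void for every $s\le t^\ast$.
\item Knowing that $\bigcap_t V_t=\{0\}$ and that $V_t$ is decreasing only gives pointwise shrinking. Nothing in Lemmas~\ref{lem:tech1} or~\ref{lem:regulaboundary} prevents a point of $\mathcal W_0$ from staying on $\partial V_t$ for all $t$. The analyticity of $\partial V_t\cap\mathbb D$ is irrelevant to this. In the paper, the finiteness of $t_0^\ast$ in Proposition~\ref{prop:deriv} is itself deduced from the proposition you are proving, so you cannot use it.
\end{itemize}

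What is missing is a quantitative lower bound on $\operatorname{Re}A_0$ near $\mathbb U$. Such a bound exists. $\operatorname{Re}A_0$ is a positive harmonic function on $\mathbb D$ with value $2\pi^2$ at $0$, so Harnack's inequality gives $\operatorname{Re}A_0(w)\ge 2\pi^2\frac{1-|w|}{1+|w|}\ge \pi^2(1-|w|)$. Combined with $\log s\ge -(1-s)/r_0$ for $s\in[r_0,1)$, this yields $\log|\Phi_t(w)|\ge (1-|w|)(\pi^2 t-1/r_0)>0$ on $r_0\le|w|<1$ once $t>1/(\pi^2 r_0)$. Hence $V_t\subset\{|w|<r_0\}$ for such $t$.

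The passage from $\rho(t)$ to the sup bound also contains two unsupported claims.

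\begin{itemize}
\item The bound $|g_0|\le C_0$ on $\mathbb D$ is false in general. Only $\operatorname{Re}(1+2g_0)=2\pi P_rf_0$ is bounded, and for the step profile \eqref{eq:g0A0}, $g_0$ has logarithmic singularities at $w_\alpha$ and $\overline{w_\alpha}$. You must use $M=\sup_{|w|\le r_0}|g_0|$ instead, which suffices once $\rho(t)\le r_0$.
\item The bound $\sup_{\mathbb D}|w(t,\cdot)|\le\rho(t)$ gives only $|\mathfrak m_n(t)|\le C\rho(t)$, which is not summable in $n$. The claimed continuation of $g(t,\cdot)$ to $\{|z|<1/\rho(t)\}$ ``via $w(1,\cdot)$-type composition'' has no basis: a bounded holomorphic map on $\mathbb D$ need not continue past the circle, and $w$ satisfies no such semigroup identity.
\end{itemize}

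The clean repair is Rouch\'e on the small circle $|w|=r_0$, where $|\Phi_t(w)|\ge r_0e^{\pi^2 t}$.

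\begin{itemize}
\item For every $|z|<r_0e^{\pi^2t}$, the equation $\Phi_t(w)=z$ has a unique root in $\{|w|<r_0\}$, depending holomorphically on $z$.
\item For small $|z|$ this root is $w(t,z)$, since $|w(t,z)|<|z|$.
\item Therefore $g(t,\cdot)=g_0\circ w(t,\cdot)$ continues holomorphically to that disk with modulus at most $M$.
\item Cauchy estimates then give $|\mathfrak m_n(t)|\le M\,(r_0e^{\pi^2t})^{-n}$.
\end{itemize}

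This alone gives analyticity for $t>\pi^{-2}\log(1/r_0)$ and the $\mathcal O(e^{-\pi^2 t})$ relaxation, so the global shrinking of $V_t$ becomes unnecessary. It is the conformal-map counterpart of the paper's proof. The paper never uses $V_t$ here: it bounds the moment formula \eqref{eq:momentasymptoticsuitefaadi} by a Cauchy integral on a small circle where $\operatorname{Re}u_\alpha\ge 1/2$, together with the Fa\`a di Bruno majorant $S_r\circ g_0^+(ze^{-\pi^2 t})$.
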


\begin{proof}[Proof of Proposition \ref{prop:spontaneous}]
We begin by estimating the derivative term in \eqref{eq:momentasymptoticsuitefaadi} via the following Cauchy integral representation, with $0<r<1$:
	\begin{equation}\label{eq:newcontour}
	\left.\frac{\partial^{k-1} h^n_t(z)}{\partial z^{k-1}} \right|_{z=0}
	= \frac{(k-1)!}{2i\pi} \int_{0}^{2\pi} \frac{ i e^{-2\pi^2 n t\, u_\alpha(r e^{ix})}}{r^{k-1}e^{i(k-1)x}}\, dx.
	\end{equation}
	
	Since $u_\alpha(0) = 1$, one can choose $r$, arbitrarily small, such that $|e^{-2\pi^2 n t\, u_\alpha(r e^{ix})}| \leq e^{-\pi^2 n t}$ uniformly in $x$, $n$, and $t$. In particular, we obtain for such $r$:
	\begin{equation}
	\left|\left.\frac{\partial^{k-1} h^n_t(z)}{\partial z^{k-1}} \right|_{z=0}\right| \leq \frac{(k-1)!\, e^{-\pi^2 n t}}{r^{k-1}}.
	\end{equation}
Next, regarding the exponential partial Bell polynomials in \eqref{eq:momentasymptoticsuitefaadi}, one can write
	\begin{equation}
	|B_{n,k}(1!m_1,2!m_2,\cdots)| \leq B_{n,k}(1!|m_1|,2!|m_2|,\cdots).
	\end{equation}

Then, using  $B_{n,k}(x_1 a^1,x_2 a^2,\cdots) = B_{n,k}(x_1,x_2,\cdots)a^n$ and the Faà di Bruno formula, we get
	\begin{align}
	|\mathfrak m_n(t)| 
	&\leq \frac{1}{n!} \sum_{k=1}^n B_{n,k}\big(1!|m_1|e^{-\pi^2t\times 1},2!|m_2|e^{-\pi^2t\times 2},\cdots\big) (k-1)!\frac{(2\pi\alpha)^{k-1}}{r^{k-1}} \nonumber\\
	&\leq \frac{1}{n!} 
	\left.\frac{\partial^{n} S_r \circ g_0^+(z e^{-\pi^2 t})}{\partial z^n} 
	\right|_{z=0},
	\end{align}
	where we set
	\begin{equation}
	S_r(z) = \sum_{k=1}^\infty \frac{1}{k}\frac{(2\pi\alpha)^{k-1}}{r^{k-1}}z^k = -\frac{r}{2\pi\alpha}\ln\left(1-\frac{2\pi\alpha z}{r}\right)
	\quad\text{and}\quad 
	g_0^+(z) = \sum_{n=1}^\infty |m_n| z^n.
	\end{equation}
One then obtains the identity
	\begin{equation}\label{eq:powerseriesmnradius}
	\sum_{n=1}^\infty |\mathfrak m_n(t)|\, |z|^n \leq  -\frac{r}{2\pi\alpha}\ln\left(1-\frac{2\pi\alpha\, g_0^+(|z|e^{-\pi^2 t})}{r}\right).
	\end{equation}

Let $R>1$ be arbitrary, and choose $t^\ast>0$ such that $2\pi\alpha\, g_0^+(R e^{-\pi^2 t^\ast})< r$. 
Then the power series in \eqref{eq:powerseriesmnradius} is absolutely convergent for all $t\ge t^\ast$ and $|z|\le R$. It follows from  Section~\ref{sec:pdet0} that $f(t,x)$ is smooth on $[t^\ast,\infty)\times\mathbb R$ and satisfies the PDE~\eqref{PDE} in the strong sense on this domain.
	
It remains to study the convergence of $f(t,x)$ as $t \to \infty$. 
	By using the upper bound in \eqref{eq:powerseriesmnradius} and noting that $g_0^+(0)=0$, we obtain 
	\begin{equation}
	\lim_{t\to\infty} \sum_{n=1}^\infty |\mathfrak m_n(t)| = 0.
	\end{equation}
	Since $\mathfrak m_0(t)=1$ and $\mathfrak m_{-n}(t)$ is the complexe conjugate of ${\mathfrak m_{n}(t)}$, we deduce \eqref{eq:equilibrium} from
	\begin{equation}
\sup_{x\in\mathbb R}\left|f(t,x)-\frac{1}{2\pi}\right|
	\leq \frac{1}{\pi} \sum_{n=1}^\infty |\mathfrak m_n(t)| = \mathcal O\left(g_0^{+}(e^{-\pi^2 t})\right).
	\end{equation}
\end{proof}

To proceed, recall that the disk algebra $\mathcal A(\mathbb D)$ consists of analytic functions on $\mathbb D$ that extend continuously to $\overline{\mathbb D}$. The next result is a consequence of the correspondence between the regularity of the boundary $\partial V_t$ and that of $f(t,x)$. Its proof is left to the reader.

\begin{prop}\label{prop:cont}
	Assume that $A_0\in \mathcal A(\mathbb D)$. If  $\mathcal Z_0$ 
	is finite, then $(t,x)\mapsto f(t,x)$ is continuous on $[0,\infty)\times \mathbb R/2\pi\mathbb Z$. Moreover, for all $t>0$, the map $x\mapsto f(t,x)$ is analytic, except possibly on 
	\begin{equation}\label{eq:Ic}
	\mathcal I_t^c =\{x\in\mathbb R/2\pi\mathbb Z : e^{ix}\in \Phi_t(\partial V_t\cap  \mathbb U)\}= \mathcal Z_t,
	\end{equation}
	where
	\begin{equation}
	\mathcal Z_t=\left\{x\in\mathbb R/2\pi\mathbb Z : f(t,x)=0\quad \text{or}\quad f(t,x)=\frac{1}{2\pi\alpha}\right\}.
	\end{equation}
\end{prop}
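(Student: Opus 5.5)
Since $A_0\in\mathcal A(\mathbb D)$, the map $\Phi_t(w)=we^{tA_0(w)}$ extends continuously to $\overline{\mathbb D}$, so the first task is to show that $\partial V_t$ is a Jordan curve. Then the Carathéodory theorem gives a homeomorphic extension of $w(t,\cdot)$ from $\overline{\mathbb D}$ onto $\overline{V_t}$. By Remark~\ref{rem:extremal}, continuity of $A_0$ gives $\mathcal W_0=\{e^{ix}:x\in\mathcal Z_0\}$, which is finite. Lemma~\ref{lem:regulaboundary} then says that $\partial V_t\cap\mathbb U$ is a finite set and that $\partial V_t\cap\mathbb D$ is an analytic curve. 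By Lemma~\ref{lem:tech1}, $\Phi_t$ maps it into $\mathbb U$ and has nonvanishing derivative there.

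To show the Jordan property, I would use that $V_t$ is simply connected, being the image of $\mathbb D$ under $w(t,\cdot)$. I would parametrize $\partial V_t\cap\mathbb D$ via $\Phi_t$. Each arc of $\mathbb U$ between consecutive points of $\Phi_t(\partial V_t\cap\mathbb U)$ lifts, by the implicit function theorem and the uniqueness in Lemma~\ref{lem:tech1}, to a single analytic arc of $\partial V_t\cap\mathbb D$. These arcs are glued at the finitely many points of $\mathcal W_0$, or join arcs of $\mathbb U$ itself that lie in $\overline{V_t}$. The result is a closed curve without self-intersections.

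Given the homeomorphic extension, define $w(t,e^{ix})$ on the boundary. Then $g(t,\cdot)=g_0(w(t,\cdot))$ extends continuously to $\overline{\mathbb D}$, using $A_0\in\mathcal A(\mathbb D)$ and the relation $1+2g_0=\frac{1}{\pi\alpha}\arcsin(\cdots)$ on a suitable branch, or directly because $g_0$ is continuous up to the boundary. Consequently \eqref{eq:represent} defines a continuous $x\mapsto f(t,x)$. Joint continuity in $(t,x)$ follows from the joint continuity of $(t,w)\mapsto\Phi_t(w)$ on $[0,\infty)\times\overline{\mathbb D}$ and a uniform version of the Carathéodory extension, that is, the Carathéodory kernel convergence theorem applied to $V_s\to V_t$ as $s\to t$. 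At $t=0$ one uses $V_0=\mathbb D$.

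Analyticity off $\mathcal Z_t$ splits into two steps. First, $\mathcal I_t^c=\{x:e^{ix}\in\Phi_t(\partial V_t\cap\mathbb U)\}$ holds because $\Phi_t(\partial V_t)=\mathbb U$ by Lemma~\ref{lem:tech1} together with the Jordan bijection. On $\mathcal I_t$, Proposition~\ref{prop:anal} gives analyticity. Second, to identify $\mathcal I_t^c$ with $\mathcal Z_t$, take $w_0=e^{ix_0}\in\partial V_t\cap\mathbb U$. Then $|\Phi_t(w_0)|=1$ forces $\operatorname{Re}A_0(w_0)=0$. By \eqref{eq:minorationsin} and the constancy of $g$ along characteristics, $\operatorname{Re}\sin(\pi\alpha(1+2g(t,z)))=0$ at $z=\Phi_t(w_0)$, so $f(t,x_t)\in\{0,1/(2\pi\alpha)\}$. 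Conversely, if $x\in\mathcal I_t$, then $w(t,e^{ix})\in\mathbb D$, and Lemma~\ref{lem:dilationrealpart} gives $\operatorname{Re}A_0>0$ there. Hence $\sin(2\pi^2\alpha f)\cosh(\cdot)>0$, so $f$ is non-extremal.

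The main obstacle is the Jordan-curve and joint-continuity step. Near points of $\mathcal W_0$ the arcs could accumulate or touch, and I would rule this out using the finiteness of $\mathcal Z_0$ and the monotone radial behaviour $|\Phi_t(re^{ix})|$ increasing in $t$, which comes from Lemma~\ref{lem:dilationrealpart}.
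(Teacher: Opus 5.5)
Your overall route is the one the paper has in mind. The paper leaves this proof to the reader, pointing to the Carathéodory boundary correspondence for $w(t,\cdot)$ together with Lemma~\ref{lem:regulaboundary}, Proposition~\ref{prop:anal} and Remark~\ref{rem:extremal}. Your identification $\mathcal I_t^c=\mathcal Z_t$ is fine.

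\textbf{The Jordan step is only asserted.} Made precise, your lifting argument shows that $\Phi_t$ is a bijective local homeomorphism from $\partial V_t\cap\mathbb D$ onto $\mathbb U\setminus\Phi_t(\partial V_t\cap\mathbb U)$; injectivity comes from uniqueness in $V_t$. So $\partial V_t$ is a finite union of analytic arcs whose ends converge, because the cluster set at an end is connected and contained in the finite set $\mathcal W_0$. Your alternative of ``arcs of $\mathbb U$ lying in $\overline{V_t}$'' cannot occur, since $\partial V_t\cap\mathbb U$ is finite. This gives local connectedness but not the absence of pinch points, and radial monotonicity of $|\Phi_t|$ does not rule those out. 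What does rule them out is the following. Carathéodory's continuity theorem needs only local connectedness. Once $w(t,\cdot)$ extends continuously to $\overline{\mathbb D}$, continuity of $\Phi_t$ on $\overline{\mathbb D}$ gives $\Phi_t\circ w(t,\cdot)=\mathrm{id}$ there. Hence the extension is injective and $\partial V_t$ is automatically a Jordan curve.

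\textbf{Continuity of $g_0$ needs a short argument.} It is not immediate from $A_0\in\mathcal A(\mathbb D)$ that $g_0$ is continuous on $\overline{\mathbb D}$, since $\sin$ is two-to-one on $\{0<\operatorname{Re}\zeta<\pi\}$. It holds because the cluster set of $\pi\alpha(1+2g_0)$ at a boundary point is connected and contained in the finite set of solutions of $\sin\zeta=c$ with $0\le\operatorname{Re}\zeta\le\pi$.

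\textbf{The genuine gap is joint continuity.} The kernel theorem only gives $w(s,\cdot)\to w(t,\cdot)$ locally uniformly on compact subsets of $\mathbb D$, which you already have from Lemma~\ref{lem:tech1}. It says nothing about the boundary values on which $f$ depends. Uniform convergence on $\overline{\mathbb D}$ would need a uniform local-connectedness input that you have not established.

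A direct argument works. Suppose $(t_n,z_n)\to(t,z)$. Every limit point $w^\ast$ of $w(t_n,z_n)$ satisfies $\Phi_t(w^\ast)=z$, hence equals $w(t,z)$ by injectivity of $\Phi_t$ on $\overline{V_t}$, \emph{provided} $w^\ast\in\overline{V_t}$. This is automatic when $t_n\downarrow t$. When $t_n\uparrow t$ you only know $w^\ast\in\bigcap_{s<t}\overline{V_s}$. A point of $\mathcal W_0$ that lies in $\overline{V_s}$ for all $s<t$ but not in $\overline{V_t}$ could a priori produce a jump of $f$ between $0$ and $1/(2\pi\alpha)$.

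So you must show $\bigcap_{s<t}\overline{V_s}=\overline{V_t}$. This intersection is a continuum contained in $\{w\in\overline{\mathbb D}:|\Phi_t(w)|\le 1\}$. Its points in $\mathbb D$ lie in $\overline{V_t}$, because $\log|\Phi_t|$ is harmonic and non-constant on $\mathbb D\setminus\{0\}$ and so has no local minimum equal to $0$. Its points on $\mathbb U$ satisfy $\operatorname{Re}A_0=0$, so they lie in the finite set $\mathcal W_0$. Any point of $\mathcal W_0\setminus\overline{V_t}$ would therefore be isolated in this continuum, contradicting connectedness.

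With this in hand, $w$ is jointly continuous on $[0,\infty)\times\overline{\mathbb D}$. Joint continuity of $f$ then follows from \eqref{eq:represent} and continuity of $g_0$ on $\overline{\mathbb D}$.
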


Under the assumptions of Proposition~\ref{prop:cont}, the monotonicity of $(V_t)_{t\geq 0}$ yields $|\mathcal Z_t|\leq |\mathcal Z_0|$ for all $t\geq 0$, together with
\begin{equation}\label{eq:carac0}
x\in\mathcal Z_t
\quad\Longleftrightarrow\quad
\exists\,x_0\in \mathcal Z_0 \; \text{such that} \; 
e^{ix_0}\in \partial V_{t}\; \text{and} \;  e^{ix}=e^{i\left(x_0 + t\,\operatorname{Im} A_0(e^{ix_0})\right)},
\end{equation}
and
\begin{equation}
f(t,x)=0 \;\left(\text{resp.} \; \frac{1}{2\pi\alpha}\right) \quad \Longrightarrow\quad  \forall\, 0\leq s\leq t,\; f(s,x)=0 \;\left(\text{resp.} \; \frac{1}{2\pi\alpha}\right).
\end{equation}

It is natural to ask under which conditions a zero $w_0=e^{ix_0}\in\mathcal W_0$ of $\operatorname{Re} A_0$ -- equivalently, an extremal point $x_0\in \mathcal Z_0$ of $f_0$ (see Remark~\ref{rem:extremal}) -- may generate an irregularity in $f(t,x)$. The next proposition shows, roughly speaking, that macroscopic voids or congestions propagate at linear speed until they reach a critical point, where a loss of regularity in the derivative occurs.

\begin{prop}\label{prop:deriv}
	Assume that both $A_0$ and $A_0'$ belong to $\mathcal A(\mathbb D)$. For any $x_0\in \mathcal Z_0$, set $w_0=e^{ix_0}$ and $v_0=\operatorname{Im} A_0(w_0)$. Then the following properties hold:
	\begin{enumerate}
		\item For sufficiently small $t\geq 0$, the boundary $\partial V_t$ is a $\mathcal C^1$ closed curve  tangent to the unit circle at $w_0$. Moreover, one has $x_0+t v_0\in\mathcal Z_t$.
		\item Let $t_0^\ast=\inf\{t>0 : w_0\notin \partial V_t\}$. Then $t_0^\ast<\infty$, and necessarily
		$1+t_0^\ast\, w_0 A_0'(w_0)=0$.
	\end{enumerate}
\end{prop}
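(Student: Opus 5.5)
The plan is to work entirely near the boundary point $w_0=e^{ix_0}$, using the local geometry of the flow $\Phi_t(w)=we^{tA_0(w)}$. Since $x_0\in\mathcal Z_0$, Remark~\ref{rem:extremal} gives $\operatorname{Re}A_0(w_0)=0$, so $|\Phi_t(w_0)|=1$ for all $t$ and $\Phi_t(w_0)=e^{i(x_0+tv_0)}$. Because $A_0,A_0'\in\mathcal A(\mathbb D)$, the map $\Phi_t$ is $\mathcal C^1$ up to $\overline{\mathbb D}$ near $w_0$. Its derivative there is
\[
\Phi_t'(w_0)=(1+tw_0A_0'(w_0))e^{tA_0(w_0)} .
\]
For small $t$ this derivative is nonzero.

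\textbf{Step 1 (part 1).} For $t$ small, $\Phi_t$ is a $\mathcal C^1$ local diffeomorphism from a relative neighbourhood of $w_0$ in $\overline{\mathbb D}$ onto its image. The set $V_t$ near $w_0$ is $\{w\in\mathbb D:|\Phi_t(w)|<1\}$. Its boundary inside $\mathbb D$ is the level curve $\{\log|w|+t\operatorname{Re}A_0(w)=0\}$, whose gradient at $w_0$ is close to that of $\log|w|$, hence nonzero. By the $\mathcal C^1$ implicit function theorem, this level curve is a $\mathcal C^1$ arc through $w_0$.

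Since $\operatorname{Re}A_0\ge 0$ on $\overline{\mathbb D}$ (Lemma~\ref{lem:dilationrealpart}) and vanishes at $w_0$, the point $w_0$ is a minimum of $\operatorname{Re}A_0$ restricted to the circle. Hence the tangential derivative of $\operatorname{Re}A_0$ at $w_0$ vanishes. The tangent to the level curve at $w_0$ is therefore the tangent to $\mathbb U$, which gives the tangency claim.

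Globally, for small $t$, I will patch this local description with Lemma~\ref{lem:regulaboundary}, which says $\partial V_t\cap\mathbb D$ is analytic, to get a closed $\mathcal C^1$ curve. Then $w_0\in\partial V_t\cap\mathbb U$, and Proposition~\ref{prop:cont} together with \eqref{eq:carac0} gives $x_0+tv_0\in\mathcal Z_t$.

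\textbf{Step 2 (finiteness of $t_0^\ast$).} The point $w_0\in\partial V_t$ requires points of $\mathbb D$ near $w_0$ to satisfy $|\Phi_t|<1$, that is, $\log|w|+t\operatorname{Re}A_0(w)<0$. Write $w=(1-s)w_0$ and use $\operatorname{Re}A_0(w)\ge c\,s$ for some $c>0$ along the radius. The constant $c$ is the inward normal derivative of $\operatorname{Re}A_0$, which equals $-\operatorname{Re}(w_0A_0'(w_0))$ and is $\ge 0$ by the Hopf lemma and harmonicity. Along the radius, $\log|w|+t\operatorname{Re}A_0(w)\approx s(tc-1)$.

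Once $t>1/c$, this is positive on small radial segments. I expect it is also positive on a whole one-sided neighbourhood of $w_0$, using a second-order expansion in the tangential direction, so $w_0\notin\partial V_t$. Thus $t_0^\ast\le 1/c<\infty$, provided $c>0$.

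The case $c=0$ is the main obstacle. For that case I would instead invoke Proposition~\ref{prop:spontaneous}: for $t>t^\ast$ the density is analytic and takes no extremal values, so $\mathcal Z_t=\emptyset$. By \eqref{eq:carac0} this forces $w_0\notin\partial V_t$, and hence $t_0^\ast\le t^\ast$ in all cases.

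\textbf{Step 3 (degeneracy at $t_0^\ast$).} Suppose, for contradiction, that $1+t_0^\ast w_0A_0'(w_0)\neq 0$. Then $\Phi_t'(w_0)\ne0$ for $t$ near $t_0^\ast$, and the local analysis of Step 1 applies uniformly for $t$ in a neighbourhood of $t_0^\ast$. The level curve through $w_0$ persists as a $\mathcal C^1$ arc tangent to $\mathbb U$, still bounding a region of $V_t$ adjacent to $w_0$, because $\Phi_t$ remains a local diffeomorphism and $\operatorname{Re}A_0>0$ inside. So $w_0\in\partial V_t$ for $t$ slightly larger than $t_0^\ast$, contradicting the definition of $t_0^\ast$. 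Hence $1+t_0^\ast w_0A_0'(w_0)=0$.

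The delicate point here is making the persistence uniform in $t$. I plan to do this by applying the inverse function theorem to $(t,w)\mapsto(t,\Phi_t(w))$, which is $\mathcal C^1$ on $[0,\infty)\times(\overline{\mathbb D}\cap B(w_0,\epsilon))$, and then reading off $V_t$ as the preimage of $\mathbb D$.
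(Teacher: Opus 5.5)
Your proposal follows the same route as the paper: the implicit function theorem (after a local $\mathcal C^1$ extension) for part~1, Proposition~\ref{prop:spontaneous} for $t_0^\ast<\infty$, and a persistence-by-contradiction argument at $t_0^\ast$. It is correct at the paper's level of detail.

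Two refinements are worth recording. First, the case $c=0$ you fear cannot occur: Harnack's inequality for the positive harmonic function $\operatorname{Re}A_0$ gives $\operatorname{Re}A_0(rw_0)\ge\frac{1-r}{1+r}\operatorname{Re}A_0(0)=\frac{1-r}{1+r}\,2\pi^2$, hence $c\ge\pi^2$. Second, your Step~1 already shows $\operatorname{Im}(w_0A_0'(w_0))=0$, so $1+tw_0A_0'(w_0)=1-tc$. Your radial computation then combines with $\log|w|+t\operatorname{Re}A_0(w)\ge 0$ on $\mathbb U$: when $tc>1$, this places the $\mathcal C^1$ zero curve through $w_0$ outside $\mathbb D$, so $V_t$ avoids a neighbourhood of $w_0$. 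No second-order expansion is needed. This yields $t_0^\ast=1/c$ directly, and it supplies the sign information that your Step~3 persistence argument uses tacitly.
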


\begin{proof}[Proof of Proposition \ref{prop:deriv}]
	The first claim follows from the implicit function theorem, after applying Whitney’s extension theorem to work locally on an open set. The tangency property is a consequence of the differentiability of $\partial V_t$ at $w_0$, since the boundary must remain inside the unit disk. 
	
	From \eqref{eq:carac0} and local uniqueness at noncritical points, we deduce that $x_0+t v_0\in\mathcal Z_t$ for sufficiently small $t\geq 0$. Proposition~\ref{prop:spontaneous} ensures that $t_0^\ast<\infty$. Arguing by contradiction, $w_0$ must be a critical point at time $t_0^\ast$. Otherwise, for small $s>0$, one would have $w_0\in\partial V_{t_0^\ast+s}$.
\end{proof}

\begin{rem}\label{rem:extend0}
	The global assumptions on $A_0$ in Propositions~\ref{prop:cont} and~\ref{prop:deriv} can be localized, yielding corresponding local regularity results. In addition, higher regularity assumptions on $A_0$  entail enhanced boundary regularity.
\end{rem}

\subsection{Single-mode periodic density profile with saturation}

\label{sec:periodicdens}

The following example complements Proposition~\ref{prop:deriv} above  and shows that even an entire analytic initial density profile may generate irregularities. 

For $\alpha=1/2$, consider the following initial density profile, together with its moment generating function and  characteristic coefficient:
\begin{equation}
f_0(x)=\frac{1+\cos(px)}{2\pi},\quad g_0(z)=\frac{z^p}{2},\quad A_0(w)=2\pi^2\cos\left(\frac{\pi}{2}w^p\right),
\end{equation}
with $p\geq1$. Note that $\mathcal W_0=\mathbb U_p\sqcup \zeta\,\mathbb U_p$, where $\mathbb U_p=\{\omega_1,\cdots,\omega_p\}$ denotes the set of $p$th roots of unity and $\zeta^p=-1$. Consequently,
\begin{equation}
\mathcal Z_0=\left\{\frac{2k\pi}{p}:0\leq k\leq p-1\right\}\sqcup\left\{\frac{(2k+1)\pi}{p}:0\leq k\leq p-1\right\}.
\end{equation}

Note that $\mathcal W_0$ is invariant under complex conjugation and sign change. Moreover, $A_0(w_0)=0$ for all $w_0\in\mathcal W_0$, so the extremal points of $f(t,x)$ are locally stationary.

We claim that for any $w_0\in\mathcal W_0$,
\begin{equation}
t^\ast=\inf\{t>0 : w_0\notin \partial V_t\}=\frac{1}{p\pi^3}.
\end{equation}

Indeed, observe  that $\Phi_t'(w_0)=0$ if and only if $t=1/(p\pi^3)$. By using Proposition \ref{prop:deriv}, we have necessarily $w_0\in \partial V_t$ for all $t\leq t^\ast$. Furthermore, one can check  that $w_0\notin\partial V_t$ for all $t>t^\ast$. To this end, we study the map $\lambda\mapsto \Phi_t(\lambda w_0)\overline{w_0}$ on $[0,1]$ and we  note that there exists a unique $\lambda_t\in(0,1)$ such that $\lambda_t w_0\in\partial V_t$. By symmetry, one easily checks that $\lambda w_0\in V_t$ for all $\lambda\in(-\lambda_t,\lambda_t)$, whereas $\lambda w_0\in V_t^c$ for all $\lambda\in(\lambda_t,1)\cup(-1,-\lambda_t)$. Note also that the points $\lambda_t w_0$ are regular points of $\Phi_t$, as are the points $w_0$. As a consequence, none of the points $w_0$ can belong to the boundary, since this would contradict one of the preceding properties.

More precisely, the geometry of the curve $\partial V_t$, invariant under complex conjugation, sign change, and rotation by an angle $2\pi/p$, depends on the relative position of $t$ with respect to $t^\ast$:
\begin{enumerate}
	\item If $0<t<t^\ast$, the curve $\partial V_t$ is analytic and connects $w_0$ to $e^{2i\pi/p}w_0$ for any $w_0\in\mathcal W_0$.
	\item If $t=t^\ast$, the curve $\partial V_{t^\ast}$ is continuous and analytic except at the points $w_0$, and has the same geometric configuration as in the previous case.
	\item If $t>t^\ast$, the curve $\partial V_t$ is analytic and connects $\lambda_t w_0$ to $\lambda_t e^{2i\pi/p}w_0$ for any $w_0\in\mathcal W_0$.
\end{enumerate}

\begin{figure}[!h]
	\centering
	\includegraphics[width=0.425\textwidth]{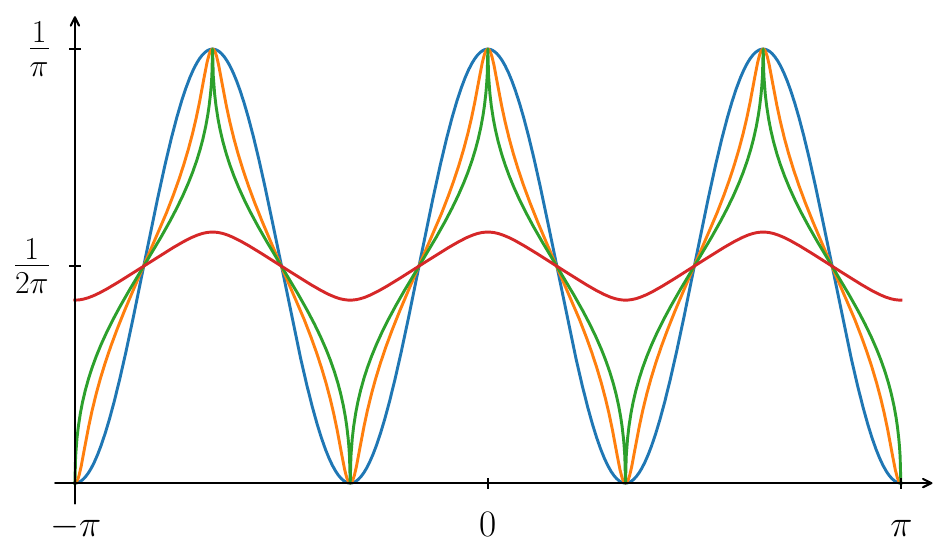}
	\includegraphics[width=0.325\textwidth]{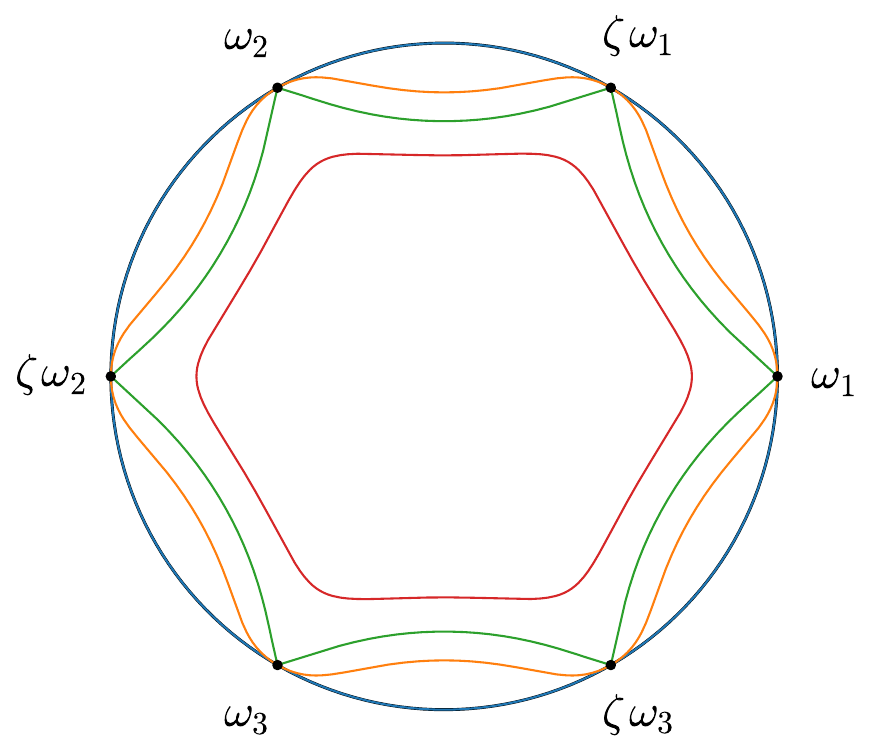}
	\captionsetup{width=15.1cm}
	\caption{\small  Evolution of the density $f(t,x)$ associated with a periodic harmonic profile with $p=3$,  at four different times $0<t_1<t^\ast<t_3$, and corresponding evolution of the boundary $\partial V_t$.
	}
	\label{fig:levels}
\end{figure}

We conclude that $f(t,x)$ is analytic except at $t=t^\ast$ and $x\in\mathcal Z_0$. The function remains continuous at these points, but does not admit a spatial derivative there (see  Figure~\ref{fig:levels}).

To go further, the order of the resulting irregularity can be determined as follows. First, the Lagrange--Bürmann formula (see, for instance, \cite{Flajolet09}) allows us to write around the origin:
\begin{equation}
w(t,z)=\sum_{n=1}^\infty \mathfrak a_n(t)\, z^n,
\quad\text{with}\quad 
\mathfrak a_n(t)=\frac{1}{n}\,[w^{n-1}]\,e^{-n t A_0(w)}.
\end{equation}
Here, as usual, $[w^{k}]S(w)$ denotes the coefficient of $w^k$ in the power series expansion of $S(w)$.

 Equivalently, one has
\begin{equation}\label{eq:contoura}
\mathfrak a_n(t)=\frac{1}{2\pi i\, n}\oint_{\Gamma}\frac{e^{-n t A_0(w)}}{w^n}\,dw
=\frac{1}{2\pi i\, n}\oint_{\Gamma}e^{-n\{t A_0(w)+\log(w)\}}\,dw,
\end{equation}
where $\Gamma$ is a positively oriented simple closed contour enclosing $0$.

To derive the precise asymptotics of $\mathfrak a_n(t)$, we apply the classical method of steepest descent to~\eqref{eq:contoura}, also known as the saddle-point, Laplace, or stationary phase method (see, e.g., \cite{Flajolet09,Bleistein86,Wong}). In the present setting, we take $\Gamma=\partial V_t$. Along this contour, the integrand in~\eqref{eq:contoura} has modulus one, so the phase function
\begin{equation}\label{eq:fgot}
\mathfrak f_t(w)=tA_0(w)+\log(w),
\end{equation}
is purely imaginary, since $\partial V_t$ lies in the level set $\operatorname{Re} A_0(w)=0$. Moreover, as $\Phi_t$, $\mathfrak f_t$, and $\operatorname{Im}\mathfrak f_t$ have the same critical points, the stationary phase method is the natural tool for the asymptotic analysis when $t=t^\ast$, where the critical points $w_0$ lie on $\partial V_{t^\ast}$.

In addition, one verifies that
\begin{equation}
\mathfrak f_{t^\ast}(w_0)=i\frac{(2 k_0+\epsilon_0)\pi}{p}\quad\text{and}\quad  \mathfrak f_{t^\ast}''(w_0)=-p\,\overline w_0^{\,2},
\end{equation}
for some $0\leq k_0\leq p-1$ and $\epsilon_0\in\{0,1\}$. 

Consequently, at each non-degenerate critical point $w_0$, the curve $\partial V_{t^\ast}$ meets the unit circle with angle $\pi/4$, and the two branches issuing from $w_0$ are orthogonal. 

Combining straightforward computations with geometric considerations, one finds that the contribution of $w_0$ (from the two sides of the branch) to the integral in~\eqref{eq:contoura} is asymptotically
\begin{equation}
i\,\overline w_0^{\,n-1}\sqrt{\frac{\pi}{2np}}+\mathcal O\left(\frac{1}{n}\right).
\end{equation}

Summing all these contributions and dividing by $2i\pi n$ yields
\begin{equation}
\mathfrak a_n(t^\ast)=
\begin{cases}
\displaystyle \frac{1}{n\sqrt{2\pi np}}+\mathcal O\!\left(\frac{1}{n^{2}}\right), & \text{if } 2p\mid(n-1),\\[15pt]
\displaystyle \mathcal O\!\left(\frac{1}{n^{2}}\right), & \text{otherwise}.
\end{cases}
\end{equation}

Standard estimates 
then show that $x\mapsto f(t^\ast,x)$ exhibits a square-root behaviour in a neighbourhood of each extremal point $x\in\mathcal Z_0$, just as  $x\mapsto w(t^\ast,e^{ix})$ does.

\subsection{Proof of Theorem \ref{thm:pack}}

\label{sec:packedproof}

We now consider the case where the initial density $f_0$ is given by~\eqref{eq:stepprofile}. Unlike the previous example, $f_0$ is discontinuous and its saturation set $\mathcal Z_0$ coincides with the whole domain. The associated analytic structure exhibits new features, requiring a separate analysis.

\subsubsection{The critical points} 

First, it comes from \eqref{eq:g0A0} that $\Phi_t(w)=we^{t A_0(w)}$ is well defined on $\mathcal U_{\alpha}=\mathbb C\setminus\{\omega_\alpha,\overline{\omega_\alpha}\}$. We shall give a precise description of  $\mathcal C_{\alpha,t}=\{w\in\mathcal U_\alpha : 1+ t w A_0'(w)=0\}$.

To this end, one readily checks that
\begin{equation}
A_0'(w)=2\pi^2 \times 
\left\{\begin{array}{ll}
\displaystyle  \frac{-4w}{(w-i)^2(w+i)^2}, &\text{ if $\alpha=1/2$,}\\[15pt] 
\displaystyle 2 \cos(\pi\alpha) \frac{(w-\beta)(w-1/\beta)}{(w-\omega_\alpha)^2(w-\overline{\omega_\alpha})^2}, & \text{ if $\alpha\neq 1/2$,}
\end{array}\right.
\end{equation}
where
\begin{equation}
\beta=\frac{1-\sin(\pi\alpha)}{\cos{\pi\alpha}}
\quad\text{and}\quad
1/\beta=\frac{1+\sin(\pi\alpha)}{\cos(\pi\alpha)}.
\end{equation}

We then obtain
\begin{equation}\label{eq:critical}
1+t w A_0^\prime(w)  
=\frac{P_{\alpha,t}(w)}{(w-\omega_\alpha)^2(w-\overline{ \omega_\alpha})^2},
\end{equation}
where
\begin{equation}\label{eq:polrootsaddle}
P_{\alpha,t}(w)=w^4+a w^3 + b w^2 + a w + 1, 
\quad \text{with}\quad 
\begin{cases}
a = 4\cos(\pi\alpha)(\pi^2 t-1), \\
b = 4\cos^2(\pi\alpha) + 2 - 8\pi^2 t.
\end{cases}
\end{equation}

As a consequence,  $\mathcal C_{\alpha,t}$ coincides with the set of roots of $P_{\alpha,t}$. Since this polynomial has real coefficients and is palindromic (that is, reciprocal), it follows that if $w\in\mathcal C_{\alpha,t}$, then both $\overline w$ and $1/w$ also belong to $\mathcal C_{\alpha,t}$. Accordingly, only four distinct configurations may occur:
\begin{equation}\label{eq:roots}
\mathcal{C}_{\alpha,t} =
\begin{cases}
\{\zeta_1,\overline{\zeta}_1,\zeta_2,\overline{\zeta}_2\}, & \text{with } \zeta_1, \zeta_2 \in \mathbb{U}, \\[5pt]
\{x_1,1/x_1,\zeta_2,\overline{\zeta}_2\}, & \text{with } x_1 \in \mathbb{R}^\ast,\, \zeta_2 \in \mathbb{U}, \\[5pt]
\{x_1,1/x_1,x_2,1/x_2\}, & \text{with } x_1, x_2 \in \mathbb{R}^\ast, \\[5pt]
\{w,1/w,\overline{w},1/\overline{w}\}, & \text{with } w \in \mathbb{C}^\ast.
\end{cases}
\end{equation}

As a matter of fact, only the first three cases in~\eqref{eq:roots} are possible. More precisely, these configurations arise according to the position of $t$ relative to $t_\ast$ and $t^\ast$, given by
\begin{equation}\label{eq:tempscritiques}
t_\ast = \min\left(\frac{1-\cos(\pi\alpha)}{2\pi^2},\, \frac{1+\cos(\pi\alpha)}{2\pi^2}\right),
\;\;\; 
t^\ast = \max\left(\frac{1-\cos(\pi\alpha)}{2\pi^2},\, \frac{1+\cos(\pi\alpha)}{2\pi^2}\right).
\end{equation}

\begin{rem}\label{rem:alphasym}
	One can check that $\mathcal C_{1-\alpha,t} = -\mathcal C_{\alpha,t}$. This symmetry reflects the duality between the evolution of the particle system and that of the empty sites: both processes follow the same maximal entropy dynamics, with initial densities easily deduced from one another by exchanging $\alpha$ with $1-\alpha$. 
\end{rem}

Hence,   one may assume without loss of generality that $\alpha \leq 1/2$.

\begin{lem}\label{lem:parametric}
When $\alpha\leq 1/2$,	the set of critical points $\mathcal{C}_{\alpha,t}$ has the following structure:
	\begin{enumerate}
		\item For all $0<t<t_\ast$, there exist four distinct critical points $\zeta_1,\overline{\zeta}_1,\zeta_2,\overline{\zeta}_2$ on the unit circle, with $\arg(\zeta_1)\in(0,\pi\alpha)$ and $\arg(\zeta_2)\in(\pi\alpha,\pi)$.
		
		\item When $t=t_\ast$ and $\alpha< 1/2$, there exist two distinct critical points $\zeta_2,\overline{\zeta}_2$ on the unit circle, with $\arg(\zeta_2)\in(\pi\alpha,\pi)$, and $1$ is a critical point of multiplicity two.
		
		\item For all $t_\ast<t<t^\ast$, there exist two distinct critical points $\zeta_2,\overline{\zeta}_2$ on the unit circle, with $\arg(\zeta_2)\in(\pi\alpha,\pi)$, and two distinct positive critical points $x_1$ and $1/x_1$, with $x_1<1$.
		
		\item When $t=t^\ast$ and $\alpha< 1/2$, one has $-1$ is a critical point of multiplicity two and there exist two distinct positive critical points $x_1$ and $1/x_1$ with $x_1<1$. 
		
		\item For all $t>t^\ast$, there exist four distinct real critical points $x_1,1/x_1,x_2,1/x_2$, where $0<x_1<1$ and $-1<x_2<0$.
		
		\item When $\alpha=1/2$ and $t=t_\ast=t^\ast$, both $-1$ and $1$ are critical points of multiplicity two.
	\end{enumerate}
\end{lem}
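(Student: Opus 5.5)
The plan is to exploit the palindromic structure of $P_{\alpha,t}$ in \eqref{eq:polrootsaddle}. Since $P_{\alpha,t}(0)=1$, we may divide by $w^2$ and substitute $s=w+1/w$. This reduces $P_{\alpha,t}(w)=0$ to the quadratic
\begin{equation}
Q_t(s)=s^2+a s+(b-2)=0,
\end{equation}
with $a=4c(\pi^2t-1)$, $b-2=4c^2-8\pi^2 t$ and $c=\cos(\pi\alpha)\in[0,1)$, since $\alpha\leq 1/2$. Each root $s$ of $Q_t$ gives a pair $\{w,1/w\}$, and the type of this pair is read off from $s$:
\begin{enumerate}
\item[--] if $s\in(-2,2)$, then $w=e^{\pm i\theta}$ with $s=2\cos\theta$, i.e.\ a conjugate pair on $\mathbb U$;
\item[--] if $s>2$, then the pair is positive real, $x,1/x$ with $0<x<1$;
\item[--] if $s<-2$, then the pair is negative real;
\item[--] if $s=\pm2$, then $w=\pm1$ is a double root of $P_{\alpha,t}$.
\end{enumerate}
So the whole lemma reduces to locating the two roots of $Q_t$ relative to $-2$, $2$ and $2c$.

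First, I compute the discriminant:
\begin{equation}
a^2-4(b-2)=16\bigl[c^2\bigl((\pi^2t-1)^2-1\bigr)+2\pi^2t\bigr]=16\pi^2t\bigl(c^2\pi^2t+2(1-c^2)\bigr),
\end{equation}
which is $>0$ for $t>0$. Hence both roots $s_\pm$ are real and distinct, and the fourth case of \eqref{eq:roots} (non-real $w$ off the circle) is ruled out. Next, I evaluate $Q_t$ at the endpoints:
\begin{equation}
Q_t(2)=4(1-c)(1-c-2\pi^2t),\qquad Q_t(-2)=4(1+c)(1+c-2\pi^2t).
\end{equation}
These vanish exactly at $t=(1-c)/(2\pi^2)=t_\ast$ and $t=(1+c)/(2\pi^2)=t^\ast$, consistent with \eqref{eq:tempscritiques} when $c\geq0$. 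The sign pattern then gives the regimes:
\begin{enumerate}
\item[--] For $t<t_\ast$, $Q_t(\pm2)>0$. The vertex $-a/2=2c(1-\pi^2t)$ lies in $[0,2c]\subset(-2,2)$, because $t<t^\ast\le 1/\pi^2$. Hence both roots lie in $(-2,2)$, giving two conjugate pairs on $\mathbb U$.
\item[--] For $t_\ast<t<t^\ast$, $Q_t(2)<0<Q_t(-2)$. Exactly one root is $>2$, giving $x_1,1/x_1$ with $x_1\in(0,1)$, and the other root is in $(-2,2)$.
\item[--] For $t>t^\ast$, both $Q_t(\pm2)<0$, so $s_+>2$ and $s_-<-2$. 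This gives a positive pair and a negative pair, hence $-1<x_2<0$.
\item[--] At $t=t_\ast$ (resp.\ $t^\ast$), $s=2$ (resp.\ $-2$) is a root and produces the double critical point $1$ (resp.\ $-1$).
\item[--] When $\alpha=1/2$, $c=0$ and $t_\ast=t^\ast$, so $s=\pm2$ are the two roots simultaneously.
\end{enumerate}

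It remains to locate the arguments relative to $\pi\alpha$; this is the step I expect to need a trick. The point $e^{i\pi\alpha}$ corresponds to $s=2\cos(\pi\alpha)=2c$, and
\begin{equation}
Q_t(2c)=4c^2+2ac+b-2=8\pi^2t(c^2-1)<0\quad\text{for } t>0.
\end{equation}
So $2c$ always lies strictly between $s_-$ and $s_+$. For $t<t_\ast$, this gives $s_+\in(2c,2)$, i.e.\ $\arg\zeta_1\in(0,\pi\alpha)$, and $s_-\in(-2,2c)$, i.e.\ $\arg\zeta_2\in(\pi\alpha,\pi)$. On $(t_\ast,t^\ast)$ and at $t=t_\ast$, the same inequality puts the remaining unit-circle root in $(-2,2c)$, so $\arg\zeta_2\in(\pi\alpha,\pi)$. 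Distinctness of the points in each case follows from $s_+\neq s_-$ together with $s\neq\pm2$ away from the critical times. The expected obstacle is only bookkeeping, namely checking the vertex bound and the boundary cases $t=t_\ast,t^\ast$; the inequality $Q_t(2c)<0$ is what separates $\zeta_1$ from $\zeta_2$.
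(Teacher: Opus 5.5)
Your proof is correct and follows the paper's route. Both reduce the palindromic polynomial $P_{\alpha,t}$ to a real quadratic in $z+1/z$ with the same discriminant; the paper's variable $w=\frac12(z+1/z)$ is your $s/2$.

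The only difference is how the roots are located. The paper shows that the explicit roots $w^\pm(t)$ are monotone in $t$, starting from $\cos(\pi\alpha)$ at $t=0$ and reaching $1$ at $t_\ast$ and $-1$ at $t^\ast$. You instead evaluate the sign of $Q_t$ at $\pm2$ and at $2\cos(\pi\alpha)$, which reaches the same conclusions more directly. Your inequality $Q_t(2\cos(\pi\alpha))<0$ also shows that $\omega_\alpha$ is never a root of $P_{\alpha,t}$, so $\mathcal C_{\alpha,t}$ is exactly its root set. Once that inequality is known, your vertex check is redundant.
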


\begin{proof}[Proof of Lemma \ref{lem:parametric}]
Using $P_{\alpha,t}$ is reciprocal, one can see that $P_{\alpha,t}(z)=0$ if and only if
	$$
	w^2 + \frac{a}{2}w + \frac{b - 2}{4} = 0,\quad \text{with} \quad w = \frac{1}{2}\left(z + \frac{1}{z}\right).\label{eq:wpol}
	$$
	
	Besides, since $z^2 - 2wz + 1 = 0$, the roots of $P_{\alpha,t}(z)$ can be obtained from those of the associated polynomial in $w$ by setting $z = w \pm \sqrt{w^2 - 1}$. Note that the discriminant is given by
	\begin{equation}
	\Delta = \frac{a^2 - 4(b - 2)}{4} = 4\pi^2 t \left(\pi^2 \cos^2(\pi\alpha)\, t + 2\sin^2(\pi\alpha)\right).
	\end{equation}
	
	Hence, the corresponding roots are given by
	\begin{equation}\label{eq:eqwt}
	w^{\pm}(t) = \cos(\pi\alpha)(1 - \pi^2 t) \pm \sqrt{\pi^2 t\left(\pi^2 \cos^2(\pi\alpha)\, t + 2\sin^2(\pi\alpha)\right)}.
	\end{equation}

Straightforward computations show that $w^+(t)$ increases, with $w^+(0) = \cos(\pi\alpha)$ and $w^+(t_\ast) = 1$, whereas $w^-(t)$ decreases, with $w^-(0) = \cos(\pi\alpha)$ and $w^-(t^\ast) = -1$. The structure of $\mathcal C_{\alpha,t}$ then follows immediately from these monotonicity properties.
\end{proof}

\subsubsection{The boundary $\partial V_t$} 

Let us introduce  $\mathfrak f_t(w)$ as in \eqref{eq:fgot}, so that $\Phi_t(w)=e^{\mathfrak f_t(w)}$, and the level set
\begin{equation}\label{eq:levelset}
\mathcal R_t = \left\{w \in \mathcal U_\alpha\setminus\{0\} : \operatorname{Re}\mathfrak f_t(w) = 0\right\}.
\end{equation}

First note that $\mathcal R_t$ is locally a one-dimensional smooth curve, except at the critical points of $\operatorname{Re}\mathfrak f_t$, which coincide with those of $\mathfrak f_t$ and  $\Phi_t$. Moreover, observe that $\mathbb U\setminus \{\omega_\alpha,\overline{\omega_\alpha}\}\subset \mathcal R_t$, since for all $\theta\neq  \pm\pi\alpha$ modulo  $2\pi$, one has 
\begin{equation}\label{eq:unitcirclecurve}
A_0(e^{i\theta}) = \frac{2\pi^2 \sin(\theta)\, i}{\cos(\pi\alpha)-\cos(\theta)}.
\end{equation}

We describe the geometry of $\mathcal R_t$ as $t>0$ varies, and in particular show that it contains a simple closed curve lying in $\mathcal R_t\cap \overline{\mathbb D}$. We will later justify that this curve coincides with $\partial V_t$.

Moreover, following Remark~\ref{rem:alphasym}, we still assume without loss of generality that $\alpha\leq 1/2$. We refer to Figure~\ref{fig:levelsetstep}, which illustrates the evolution of the boundary, and to Lemmas~\ref{lem:path1}, \ref{lem:path1bis}, \ref{lem:path1bisbis}, and \ref{lem:path1bisbisfin} for the precise mathematical description.

\begin{figure}[!h]
	\centering
	\includegraphics[width=0.35\textwidth]{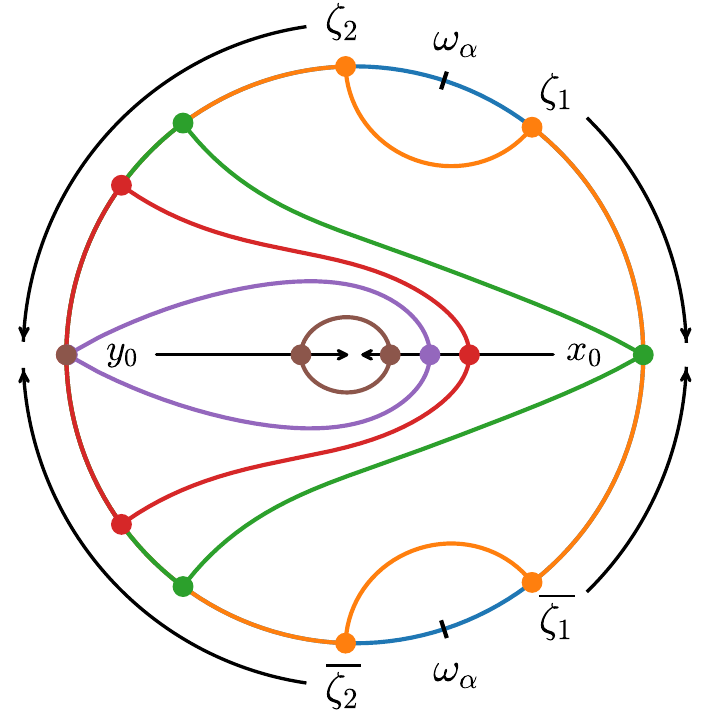}
		\captionsetup{width=14.75cm}
	\caption{\small  Evolution of the boundary $\partial V_t$ as $t$ varies. The critical points $\zeta_1$ and $\overline{\zeta_1}$ collide at $1$ when $t=t_\ast$ and give rise to a regular point $x_0>0$ moving toward $0$, while $\zeta_2$ and $\overline{\zeta_2}$ collide at $t=t^\ast$ and produce a negative regular point $y_0<0$ also converging to $0$.}
	\label{fig:levelsetstep}
\end{figure}

\medskip

\noindent
{\it Case $0<t\leq t_\ast$.} Let $w_c$ be a critical point on the unit circle. It follows from \eqref{eq:critical} that $w_c$ is a non-degenerate critical point of $\Phi_t$ if and only if $w_c$ is a simple root of  $P_{\alpha,t}$. The only situation in which this fails occurs is when $w_c=\pm 1$ (See Lemma~\ref{lem:parametric} and its proof). In this case, the critical point is of order two, that is, $w_c$ is a root of multiplicity two of $P_{\alpha,t}$.

Elementary results from complex analysis show that the level set $\mathcal R_t$ has a simple local structure near its critical points. Near a non-degenerate critical point, $\mathcal R_t$ consists locally of two smooth branches intersecting transversely, one of which coincides with the unit circle. In contrast, near a degenerate critical point of order two, $\mathcal R_t$ consists locally of three smooth branches meeting at equal angles of $\pi/3$, which are mapped onto each other by rotations of angle $\pi/3$, with one of these branches again coinciding with the unit circle.

\begin{lem}[case $t < t_\ast$]\label{lem:path1}
	Following the level set $\mathcal R_0$ starting from $\zeta_1$ (resp.\ $\overline{\zeta_1}$) and moving inward or outward across the unit circle, the path reaches $\zeta_2$ (resp.\ $\overline{\zeta_2}$). Moreover, each such path remains strictly above (resp.\ below) the real axis.
\end{lem}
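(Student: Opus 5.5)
We work with the level set $\mathcal R_t=\{\operatorname{Re}\mathfrak f_t=0\}$, with $\mathfrak f_t(w)=tA_0(w)+\log w$, and consider the harmonic function $U_t(w)=\operatorname{Re}\mathfrak f_t(w)=\log|w|+t\operatorname{Re}A_0(w)$ on $\mathcal U_\alpha\setminus\{0\}$. (The statement writes $\mathcal R_0$; we read it as $\mathcal R_t$ at the fixed time $0<t<t_\ast$.)

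\emph{Symmetries.} Since $A_0$ has real coefficients and $A_0(1/w)=-A_0(w)$, a direct check from \eqref{eq:g0A0} gives $U_t(\overline w)=U_t(w)$ and $U_t(1/w)=-U_t(w)$. Hence $\mathcal R_t$ is invariant under $w\mapsto\overline w$ and under the inversion $w\mapsto 1/\overline w$. It therefore suffices to follow the inward branch from $\zeta_1$ inside the upper half-disk. The outward branch is then its image under $1/\overline w$, and the lower paths are obtained by conjugation.

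\emph{Local structure at $\zeta_1$.} By Lemma~\ref{lem:parametric}, $\zeta_1$ is a simple root of $P_{\alpha,t}$, so it is a non-degenerate saddle of $U_t$. At $\zeta_1$, $\mathcal R_t$ consists of the unit circle and one transversal smooth branch $\gamma$ entering $\mathbb D$.

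\emph{Trapping the branch in a region.} Set $\Omega=\mathbb D\cap\{\operatorname{Im}w>0\}$. We first determine the sign of $U_t$ on $\partial\Omega$.
\begin{itemize}
\item On the real segment $(-1,1)$, $A_0$ is real. Near $0$ we have $U_t\to-\infty$. Moreover $U_t$ restricted to $(0,1)$ and to $(-1,0)$ has derivative proportional to $P_{\alpha,t}(x)/\bigl(x(x-\omega_\alpha)^2(x-\overline{\omega_\alpha})^2\bigr)$. For $t<t_\ast$ all roots of $P_{\alpha,t}$ lie on $\mathbb U$, so this derivative has constant sign on each interval. As $U_t(\pm1)=0$, we get $U_t<0$ on $(-1,0)\cup(0,1)$.
\item Near the pole $\omega_\alpha$, $\operatorname{Re}A_0$ takes both signs. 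By \eqref{eq:unitcirclecurve}, $A_0$ is purely imaginary on $\mathbb U$. A Laurent expansion at $\omega_\alpha$ shows that $\mathcal R_t$ has a branch entering $\mathbb D$ at $\omega_\alpha$ tangentially to $\mathbb U$.
\end{itemize}

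We then examine the nodal domains of $U_t$ in $\Omega$. Inside $\Omega$ every critical point of $U_t$ is a zero of $P_{\alpha,t}$, and there is none there. So $\gamma$ is a smooth arc without self-intersections or endpoints in $\Omega$. By the maximum principle it cannot bound a compact region, nor can it return to the real segment, where $U_t<0$. It can only end on $\mathbb U$ at a point where $\mathcal R_t$ branches off the circle, that is, at a critical point in $\{\zeta_1,\zeta_2\}$, or at $\omega_\alpha$. This uses that on $\mathbb U$ the function $\partial_r U_t=1+t\,\partial_r\operatorname{Re}A_0$ vanishes exactly at critical points, since on $\mathbb U$ one has $\partial_rU_t=\operatorname{Re}(w\mathfrak f_t'(w))=\operatorname{Re}\bigl(1+twA_0'(w)\bigr)$.

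\emph{Ruling out the wrong endpoints.} This is the main obstacle: excluding that $\gamma$ returns to $\zeta_1$ or ends at $\omega_\alpha$.
\begin{itemize}
\item \emph{Return to $\zeta_1$.} A loop from $\zeta_1$ to itself would enclose, together with an arc of $\mathbb U$, a region on whose boundary $U_t=0$. The maximum principle would then force $U_t\equiv0$ there, which is impossible.
\item \emph{Ending at $\omega_\alpha$.} We count sign changes of $\partial_rU_t$ along the upper arc of $\mathbb U$, using the $\pm$ signs it takes on the subarcs $(0,\arg\zeta_1)$, $(\arg\zeta_1,\pi\alpha)$, $(\pi\alpha,\arg\zeta_2)$, $(\arg\zeta_2,\pi)$. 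Near $1$ we have $U_t<0$ inside the disk. Across the pole, $\operatorname{Re}A_0\to\pm\infty$ on the two sides, with the nodal branch at $\omega_\alpha$ separating a positive and a negative region.
\end{itemize}
Consistency of these signs around $\partial\Omega$ then leaves exactly one possibility: $\gamma$ is the interface separating the negative region adjacent to $(0,\arg\zeta_1)$ and the real axis from the positive region near $\omega_\alpha$. This forces $\gamma$ to end at $\zeta_2$, and it stays strictly above the real axis because it lies in $\Omega$. As a fallback for this sign bookkeeping, we can deform $t\downarrow0$, where $\mathcal R_t\cap\mathbb D$ shrinks to the arc $\{\operatorname{Re}A_0=0\}$ through $\omega_\alpha$, and track $\gamma$ by continuity using the non-degeneracy of $\zeta_1,\zeta_2$ for $t<t_\ast$.
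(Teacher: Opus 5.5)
Your reduction follows the paper's proof, and most of it is sound. The symmetries $U_t(\overline w)=U_t(w)$ and $U_t(1/\overline w)=-U_t(w)$ are correct. So is the sign $U_t<0$ on $(-1,0)\cup(0,1)$, which follows from $P_{\alpha,t}>0$ on $\mathbb R$, and so is the saddle structure at $\zeta_1$. Your conclusion that the inward branch $\gamma$ can only end at $\zeta_2$ or at $\omega_\alpha$ also holds.

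The gap is in the one step that carries the lemma: excluding $\omega_\alpha$.
\begin{itemize}
\item Your claim that a Laurent expansion produces a branch of $\mathcal R_t$ entering $\mathbb D$ at $\omega_\alpha$ is false. Near $\omega_\alpha$, $\log w$ is holomorphic and $tA_0$ has a simple pole with nonzero residue, so $\mathfrak f_t$ has a simple pole there. The nodal set of $\operatorname{Re}\mathfrak f_t$ near a simple pole consists of exactly two half-branches, and both are taken by the unit circle.
\item Your fallback rests on a set that does not exist. By Lemma~\ref{lem:dilationrealpart}, $\operatorname{Re}A_0>0$ on $\mathbb D$, so $\{\operatorname{Re}A_0=0\}$ is just $\mathbb U$ minus the poles. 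Moreover $w^\pm(0)=\cos(\pi\alpha)$ in \eqref{eq:eqwt}, so both $\zeta_1$ and $\zeta_2$ tend to $\omega_\alpha$ as $t\downarrow0$. The interior arc therefore collapses to a point, and there is nothing to track by continuity.
\item The ``sign consistency around $\partial\Omega$'' cannot settle the question on its own. Suppose the inward branches from $\zeta_1$ and from $\zeta_2$ both end at $\omega_\alpha$, with a negative region between them reaching $\omega_\alpha$. This configuration is compatible with every boundary sign available to you: positive just inside the arc from $\zeta_1$ to $\zeta_2$, negative just inside the two outer arcs and on the real segment.
\end{itemize}

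The missing ingredient is the local analysis at the pole, which is exactly the paper's argument.
\begin{itemize}
\item Put $\mathfrak g_t=1/\mathfrak f_t$. It extends holomorphically to a neighbourhood $\mathcal B$ of $\omega_\alpha$ with $\mathfrak g_t(\omega_\alpha)=0$ and $\mathfrak g_t'(\omega_\alpha)\neq0$, so it is biholomorphic on $\mathcal B$ once $\mathcal B$ is small enough.
\item Since $\operatorname{Re}\mathfrak g_t=\operatorname{Re}\mathfrak f_t/|\mathfrak f_t|^2$, the set $(\mathcal R_t\cap\mathcal B)\cup\{\omega_\alpha\}=\mathfrak g_t^{-1}(i\mathbb R)\cap\mathcal B$ is a single smooth arc.
\item By \eqref{eq:unitcirclecurve} this arc contains $\mathbb U\cap\mathcal B$, so it coincides with it. In particular $U_t$ has constant sign on $\mathcal B\cap\mathbb D$.
\end{itemize}
Hence no branch of $\mathcal R_t$ inside $\mathbb D$ terminates at $\omega_\alpha$. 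With this fact your own bookkeeping closes the argument. The only interior half-branches issuing from $\partial\Omega$ are one at $\zeta_1$ and one at $\zeta_2$, so $\gamma$ must join them. It stays strictly above the real axis because $U_t<0$ on the real segment.
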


\begin{proof}[Proof of Lemma~\ref{lem:path1}] By symmetry, it suffices to establish this result for the non-conjugate critical points. The fact that the level set $\mathcal R_0$ cannot cross the real axis, except at the points $\pm1$ which are excluded here, can be readily deduced from the variation of $\operatorname{Re} \mathfrak f_t(x)=\mathfrak f_t(x)$ on $\mathbb R^\ast$. 

Based on this observation, starting from $\zeta_1$ and following the level set orthogonally to the unit circle and inward to it, there are only two possible scenarios: either the level set reaches $\zeta_2$, or it ends at $\omega_\alpha$. Hence, it suffices to show that the latter case cannot occur.

	To this end, observe that $\mathfrak g_t(w) = (\mathfrak f_t(w))^{-1}$ extends as a holomorphic function in a neighbourhood of $\omega_\alpha$, and satisfies
	\begin{equation}
	\mathfrak g_t(\omega_\alpha)=0
	\quad\text{and}\quad
	\mathfrak g_t'(\omega_\alpha)=-\frac{\sin(\pi\alpha)}{\pi^2 t}\, i \neq 0.
	\end{equation}
It follows that $\mathfrak g_t$ is biholomorphic in some neighbourhood $\mathcal B$ of $\omega_\alpha$.

Moreover, since
	\begin{equation}
	\mathcal R_t \cap \mathcal B
	=
	\{\operatorname{Re}\mathfrak g_t(w)=0\}\cap \mathcal B \setminus \{\omega_\alpha\},
	\end{equation}
	we deduce that $(\mathcal R_t \cap \mathcal B)\sqcup \{\omega_\alpha\}$ is a smooth curve. 
	
	By local uniqueness, it must  coincide with the circular arc $\mathbb U \cap \mathcal B$, which shows that the branch issuing from $\zeta_1$ and pointing orthogonally inward to the unit circle cannot terminate at $\omega_\alpha$.\end{proof}

The proof of the following lemma follows exactly the same arguments as in Lemma~\ref{lem:path1} above, its proof is omitted.

\begin{lem}[case $t = t_\ast$]\label{lem:path1bis}
	Following the level set $\mathcal R_t$ from $1$ and moving inward or outward across the unit circle in the upper (resp.\ lower) direction, the path reaches $\zeta_2$ (resp.\ $\overline{\zeta_2}$). Moreover, it remains strictly above (resp.\ below) the real axis, except at the starting point $1$ and, when $\alpha=1/2$ (equivalently $t_\ast=t^\ast$), at the endpoint $-1$.
\end{lem}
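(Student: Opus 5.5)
The plan is to reproduce the argument of Lemma~\ref{lem:path1}, adapted to the degenerate critical point at $1$. By Lemma~\ref{lem:parametric}, at $t=t_\ast$ the point $1$ is a double root of $P_{\alpha,t_\ast}$, hence a critical point of order two of $\mathfrak f_{t_\ast}$. Near $1$, the level set $\mathcal R_{t_\ast}$ therefore consists of three smooth branches meeting at angles $\pi/3$.

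\emph{Local structure at $1$.} One of the three branches is the unit circle, which lies in $\mathcal R_t$ by \eqref{eq:unitcirclecurve}. The real axis is not a branch: the conjugation symmetry $\mathfrak f_t(\overline w)=\overline{\mathfrak f_t(w)}$ makes $\mathcal R_t$ symmetric about $\mathbb R$, and $\operatorname{Re}\mathfrak f_t(x)=\mathfrak f_t(x)$ is real and monotone on a punctured real neighbourhood of $1$. The tangent directions at $1$ are therefore $\pm i$ (the circle) and $e^{\pm i\pi/6}$, $e^{\pm i 5\pi/6}$. So, besides the circle, there is exactly one branch entering the upper half plane (inward or outward) and its mirror image in the lower half plane. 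These are the two paths to follow, one for each direction.

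\emph{No crossing of the real axis.} I will study the real function $x\mapsto tA_0(x)+\log|x|$ on $\mathbb R^\ast$ using the explicit formula \eqref{eq:g0A0}. Its only zeros are at $\pm1$, and its critical points on $\mathbb R$ are the real roots of $P_{\alpha,t_\ast}$. For $\alpha<1/2$ these roots are only the double root $1$; for $\alpha=1/2$ they are $\pm1$, by Lemma~\ref{lem:parametric}. Consequently the level set meets $\mathbb R$ only at $\pm1$, and the upper path stays in $\{\operatorname{Im} w>0\}$ except at its endpoints.

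\emph{Reaching the endpoint.} The level curve is a smooth one-dimensional curve away from critical points and the poles $\omega_\alpha,\overline{\omega_\alpha}$. Since $\operatorname{Re}\mathfrak f_t\to\pm\infty$ as $w\to0$ or $w\to\infty$, the path cannot escape to $0$ or $\infty$, so it must end at a critical point or at $\omega_\alpha$. The behaviour at $\omega_\alpha$ is handled exactly as in the proof of Lemma~\ref{lem:path1}: $\mathfrak g_t=1/\mathfrak f_t$ is biholomorphic near $\omega_\alpha$, so near $\omega_\alpha$ the only piece of $\mathcal R_t$ is the unit circle arc, and the path cannot terminate there. It also cannot return to $1$, since the branches at $1$ are exhausted by the circle and this path. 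In the upper half plane the remaining candidate is $\zeta_2$, or $-1$ when $\alpha=1/2$, where $\zeta_2$ has merged into $-1$.

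The main obstacle is excluding a closed loop in the open upper half plane. I would rule this out with the maximum principle applied to the harmonic function $\operatorname{Re}\mathfrak f_t$, which would vanish on the boundary of a bounded region avoiding $0$ and the poles, forcing it to vanish identically. If a loop enclosed $\omega_\alpha$ instead, this would contradict the local picture established above.
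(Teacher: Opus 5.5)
Your argument is essentially the paper's, which simply refers back to Lemma~\ref{lem:path1}: monotonicity of $\operatorname{Re}\mathfrak f_t$ on $\mathbb R^\ast$ forbids crossing the real axis away from $\pm1$, and biholomorphy of $1/\mathfrak f_t$ near $\omega_\alpha$ forbids ending at the pole. It is correct up to one slip in the local count: at $1$ there are two non-circle half-branches in the upper half-plane, $e^{i\pi/6}$ (outward) and $e^{i5\pi/6}$ (inward). So there are four paths rather than two, and ``the branches at $1$ are exhausted by the circle and this path'' is not literally true.

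The no-return step is repaired by noting that each open arc of $\mathbb U$ between critical points and poles is a whole component of the regular part of $\mathcal R_t$, since a regular point carries a single branch. Hence the inward (resp.\ outward) path never leaves $\mathbb D$ (resp.\ $\mathbb C\setminus\overline{\mathbb D}$) before reaching a critical point, and it cannot re-enter $1$ through the other germ. This also makes your closed-loop/maximum-principle discussion unnecessary.
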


\medskip

\noindent
{\it Case $t_\ast<t\leq t^\ast$.} 
Studying the variation of $\operatorname{Re}\mathfrak f_t(x)=\mathfrak f_t(x)$ on $\mathbb R^\ast$ shows that there exists a unique point $x_0\in(0,1)$ such that $\mathfrak f_t(x_0)=0$. More precisely, one finds that $0<x_0<1/x_1$, where $1/x_1$ denotes the unique real critical point in $(0,1)$ introduced in Lemma~\ref{lem:parametric}. In particular, $x_0$ is a regular point of the level set, and it can be  shown that the latter is orthogonal to the real axis at this point.

\begin{lem}\label{lem:path1bisbis}
	Following the level set $\mathcal R_t$ from $x_0$ and moving in the upper (resp.\ lower) direction, the path reaches $\zeta_2$ (resp.\ $\overline{\zeta_2}$). Moreover, it  remains strictly above (resp.\ below) the real axis, except at the starting point $x_0$ and, when $t=t^\ast$, at the endpoint $-1$.
\end{lem}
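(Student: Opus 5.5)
The plan is to mimic the proof of Lemma~\ref{lem:path1}, with the regular real point $x_0$ replacing the critical point $\zeta_1$. By conjugation symmetry of $\mathfrak f_t$ (real coefficients), it suffices to treat the upper branch.

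\textbf{Local start at $x_0$.} Since $x_0$ is a regular point of $\operatorname{Re}\mathfrak f_t$ with $\mathfrak f_t(x_0)=0$, the implicit function theorem gives a smooth local branch of $\mathcal R_t$ through $x_0$. On $\mathbb R^\ast$, $\mathfrak f_t$ is real with $\mathfrak f_t'(x_0)\neq 0$, so $\operatorname{Im}\mathfrak f_t$ vanishes on the real axis while $\operatorname{Re}\mathfrak f_t$ changes sign there. By Cauchy--Riemann, the level curve is orthogonal to $\mathbb R$ at $x_0$, and it therefore enters the upper half-plane.

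\textbf{The branch stays in the upper half-plane.} A crossing of $\mathbb R$ would occur at a real zero of $\mathfrak f_t$. From the variation of $\mathfrak f_t$ on $\mathbb R^\ast$ (the same analysis that produced uniqueness of $x_0$), the only real zeros are $x_0$ and possibly $1/x_0$, together with $-1$ exactly when $t=t^\ast$. I will use the inversion symmetry $\mathfrak f_t(1/w)=-\mathfrak f_t(w)$: since $A_0(1/w)=-A_0(w)$, this symmetry maps $\mathcal R_t$ to itself. Consequently, a first return to the axis at $1/x_0$ would force the curve to be symmetric and to cross $\mathbb U$ at some point. However, $\mathbb U\setminus\{\omega_\alpha,\overline{\omega_\alpha}\}\subset\mathcal R_t$ by \eqref{eq:unitcirclecurve}, and a transverse intersection with $\mathbb U$ can only happen at a critical point. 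By Lemma~\ref{lem:parametric}, the only critical points on $\mathbb U$ in this range of $t$ are $\zeta_2,\overline{\zeta_2}$ (and $-1$ when $t=t^\ast$).

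\textbf{Termination.} Follow the branch as the maximal smooth arc of $\mathcal R_t$ in $\{\operatorname{Im} w>0\}\cap\mathbb D$, which it must enter first because $x_0\in(0,1)$. $\operatorname{Re}\mathfrak f_t$ is harmonic and nonconstant, so a level arc cannot close up into a loop avoiding poles and critical points. The maximum principle, applied on the bounded region such a loop would enclose, rules this out, using that $\operatorname{Re}\mathfrak f_t\to-\infty$ near $0$ and behaves singularly near $\omega_\alpha$. The arc also cannot accumulate at $0$, since $\operatorname{Re}\mathfrak f_t\sim\log|w|\to-\infty$ there. It cannot end at $\omega_\alpha$ either: exactly as in Lemma~\ref{lem:path1}, $\mathfrak g_t=1/\mathfrak f_t$ is biholomorphic near $\omega_\alpha$, so $\mathcal R_t$ coincides locally with $\mathbb U$. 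Interior critical points off $\mathbb R$ do not exist, because for $t_\ast<t\le t^\ast$ all critical points lie on $\mathbb U\cup\mathbb R$. Hence the arc must reach $\mathbb U$, and it can only do so at a critical point, namely $\zeta_2$, or $-1$ when $t=t^\ast$ (which is the degenerate limit $\zeta_2=\overline{\zeta_2}=-1$).

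\textbf{Main obstacle.} The delicate step is excluding a first return to the real axis before reaching $\mathbb U$, together with the precise bookkeeping of the real zeros of $\mathfrak f_t$ relative to $x_1$. I would first attempt this using the sign pattern of $\mathfrak f_t$ on $(0,1)$ and $(-1,0)$, combined with the inversion symmetry.
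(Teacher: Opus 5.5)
Your proposal is correct and follows the same route as the paper, whose proof just reuses the argument of Lemma~\ref{lem:path1}. In both, the variation of $\operatorname{Re}\mathfrak f_t$ on $\mathbb R^\ast$ forbids crossing the real axis, the arc can only leave through a critical point on $\mathbb U$ or through $\omega_\alpha$, and $\omega_\alpha$ is excluded via $\mathfrak g_t=1/\mathfrak f_t$; your maximum-principle and behaviour-near-$0$ arguments make explicit what the paper leaves implicit. One small correction: $\pm1$ are always zeros of $\operatorname{Re}\mathfrak f_t$ on $\mathbb R$ (because $\mathbb U\subset\mathcal R_t$), and so is $1/x_0$, so $-1$ is special at $t=t^\ast$ only in being a \emph{critical} point. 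This does not affect your argument, since you already rule out reaching $\mathbb U$ at regular points.
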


The proof of the latter lemma follows exactly the same arguments as in Lemma~\ref{lem:path1} above, as Lemma \ref{lem:path1bis}, its proof is omitted.

\medskip

\noindent
{\it Case $t>t^\ast$.} 
Here, the variation of $\operatorname{Re}\mathfrak f_t(x)=\mathfrak f_t(x)$ shows that, in addition to $x_0$, there exists a second point $y_0\in(0,1)$ such that $\mathfrak f_t(y_0)=0$. These are the only zeros  on $(-1,1)$. Furthermore, the same arguments as in the previous lemmas yield the following result.

\begin{lem}\label{lem:path1bisbisfin}
	Following the level set $\mathcal R_t$ from $x_0$ and moving in the upper (resp.\ lower) direction, the path reaches $y_0$. Moreover, it remains strictly above (resp.\ below) the real axis, except at the starting point $x_0$ and at the endpoint $y_0$.
\end{lem}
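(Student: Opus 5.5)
We follow the scheme of Lemma~\ref{lem:path1}, i.e.\ a topological continuation of the level set $\mathcal R_t$ combined with a sign analysis of $\operatorname{Re}\mathfrak f_t$ on the real axis.

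The first step is to pin down the real zeros of $\mathfrak f_t$ on $(-1,1)\setminus\{0\}$. On $\mathbb R^\ast$ one has $\operatorname{Re}\mathfrak f_t(x)=tA_0(x)+\log|x|$, with $A_0(x)=2\pi^2(1-x^2)/(x^2-2x\cos(\pi\alpha)+1)$. This function tends to $-\infty$ as $x\to0$ and vanishes at $\pm1$. For $t>t^\ast$, Lemma~\ref{lem:parametric} shows that its critical points in $(-1,1)$ are exactly $x_1\in(0,1)$ and $x_2\in(-1,0)$, both non-degenerate. The function is therefore unimodal on each of $(0,1)$ and $(-1,0)$. Since it vanishes at the endpoints $\pm1$, the derivative at $\pm1$ has the sign of $1+tA_0'(\pm1)$, which is negative once $t$ exceeds the collision times $t_\ast,t^\ast$. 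Consequently there is exactly one zero $x_0\in(0,1)$ and one zero $y_0\in(-1,0)$, both regular. This gives the structure quoted before the statement, with $y_0<0$ as in Figure~\ref{fig:levelsetstep}. At each of these points $\mathcal R_t$ is a smooth curve crossing the real axis orthogonally, since $\mathfrak f_t$ is real on $\mathbb R$ and $\mathfrak f_t'(x_0)\neq0$.

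The second step is the continuation argument. Start at $x_0$ and follow the branch of $\mathcal R_t$ into the upper half-plane. Since $\operatorname{Im}\mathfrak f_t$ is strictly monotone along regular arcs of $\mathcal R_t$, the branch can neither close up on itself nor accumulate in the interior of a regular region. Moreover, all critical points now lie on $\mathbb R$ by Lemma~\ref{lem:parametric}(5), so the open upper half-plane contains no critical points and the branch stays a smooth arc there. It can therefore only terminate at $\omega_\alpha$, at $0$, at $\infty$, or back on the real axis at a zero of $\operatorname{Re}\mathfrak f_t$.

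The third step eliminates the unwanted terminations. At $\omega_\alpha$, the local biholomorphism argument with $\mathfrak g_t=1/\mathfrak f_t$ from the proof of Lemma~\ref{lem:path1} shows that near $\omega_\alpha$ the set $\mathcal R_t$ coincides with the unit circle. Since $\mathbb U\setminus\{\omega_\alpha,\overline{\omega_\alpha}\}$ is itself a separate component of regular points, our branch cannot reach $\omega_\alpha$ without merging into the unit circle, which is impossible away from critical points. Near $0$ we have $\operatorname{Re}\mathfrak f_t\to-\infty$, and near $\infty$ we have $\operatorname{Re}\mathfrak f_t\to+\infty$ (because $A_0$ is bounded there), so neither endpoint is possible. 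The remaining real zeros are $\pm1$, $x_0$, $y_0$, $1/x_0$ and $1/y_0$. The points $\pm1$ are excluded because there $\mathcal R_t$ is locally just the unit circle: $\pm1$ is regular for $t>t^\ast$, and the circle is one branch of $\mathcal R_t$. The point $x_0$ itself is excluded by monotonicity of $\operatorname{Im}\mathfrak f_t$. Finally, the branch stays inside $\mathbb D$: it starts inside, and it cannot cross $\mathbb U$ except at a critical point of $\mathbb U$, of which there are none. This rules out $1/x_0$ and $1/y_0$, so the endpoint must be $y_0$. The lower branch is handled by complex conjugation symmetry.

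The main obstacle is the monotonicity step, namely showing that the arc cannot wander forever or return to $x_0$. I would settle this by applying the argument principle to $\Phi_t$ on the region bounded by the arc and $[y_0,x_0]$ (or $[x_0,\ldots]$): since $|\Phi_t|=1$ on the arc and $\Phi_t$ is locally injective there, the argument of $\Phi_t$ strictly increases along the arc, which excludes both cycling and accumulation.
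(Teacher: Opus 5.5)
Your proposal is correct and follows the paper's own route. The paper proves this lemma only by saying that the argument of Lemma~\ref{lem:path1} carries over (sign analysis of $\operatorname{Re}\mathfrak f_t$ on $\mathbb R^\ast$, continuation of the level curve, exclusion of $\omega_\alpha$ via $\mathfrak g_t=1/\mathfrak f_t$), and you supply the details it leaves implicit: you exclude $0$, $\infty$, $\pm1$, $1/x_0$, $1/y_0$, and loops in the upper half-plane via monotonicity of $\operatorname{Im}\mathfrak f_t$, which follows directly from $\mathfrak f_t'\neq0$ along the arc, so your closing argument-principle remark, which presupposes that the arc already ends at $y_0$, is unnecessary.

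There is one sign slip. Since $\frac{d}{dx}\operatorname{Re}\mathfrak f_t(x)=(1+txA_0'(x))/x$, the derivative at $x=-1$ equals $\frac{2\pi^2t}{1+\cos(\pi\alpha)}-1>0$ for $t>t^\ast$, not a negative number. It is this positive sign, or equivalently your unimodality observation together with $\operatorname{Re}\mathfrak f_t\to-\infty$ as $x\to0^-$, that yields the zero $y_0\in(-1,0)$; this is consistent with your reading of the paper's ``$y_0\in(0,1)$'' as a typo.
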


It remains to show that the simple closed curve described above, whose interior we denote by $V_t$ by abuse of notation, indeed coincides with the boundary $\partial V_t$ characterized in Lemma~\ref{lem:tech1}. In fact, it suffices to prove the following result.

\begin{lem}\label{lem:homeo}
	$\Phi_t$ induces an orientation-preserving homeomorphism from $\partial V_t$ onto $\partial \mathbb D$.
\end{lem}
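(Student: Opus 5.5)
The plan is to exploit that the simple closed curve $\Gamma_t:=\partial V_t$ constructed in Lemmas~\ref{lem:path1}--\ref{lem:path1bisbisfin} lies in the level set $\mathcal R_t$, on which $|\Phi_t|=e^{\operatorname{Re}\mathfrak f_t}=1$. Hence $\Phi_t$ maps $\Gamma_t$ into $\partial\mathbb D$, and it suffices to show that $\theta\mapsto\arg\Phi_t(\gamma(\theta))=\operatorname{Im}\mathfrak f_t(\gamma(\theta))$, along a positively oriented parametrization $\gamma$ of $\Gamma_t$, is strictly monotone increasing with total increase exactly $2\pi$. A continuous map of a circle onto a circle with monotone lift of total variation $2\pi$ is then an orientation-preserving homeomorphism.

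The first step is the winding count. Since $\Gamma_t\subset\overline{\mathbb D}$ avoids $\omega_\alpha,\overline{\omega_\alpha}$ and $A_0$ is holomorphic inside $\Gamma_t$, we have $\Phi_t=we^{tA_0}$ with a single-valued exponential factor. By the argument principle, the degree of $\Phi_t|_{\Gamma_t}:\Gamma_t\to\partial\mathbb D$ equals the number of zeros of $\Phi_t$ inside $\Gamma_t$, namely only $w=0$ (the curves enclose $0$ by construction: they pass through $\zeta_1,\zeta_2$ above and below the axis, or cross the real axis at $x_0>0$ and at $\zeta_2$, $-1$, or $y_0<0$). So the degree is $1$. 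If a circular arc piece of $\Gamma_t$ touches $\omega_\alpha$ through the arc, I would instead use that $\mathfrak g_t=1/\mathfrak f_t$ is biholomorphic there, as in Lemma~\ref{lem:path1}. Then $\operatorname{Im}\mathfrak f_t\to\pm\infty$ at $\omega_\alpha$, and the arcs of $\mathbb U$ in $\Gamma_t$ stay away from $\omega_\alpha$ since $\arg\zeta_1<\pi\alpha<\arg\zeta_2$. That keeps the argument-principle count valid.

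The second step is local monotonicity. On the regular part of $\mathcal R_t$, $\mathfrak f_t'\neq0$, and by Cauchy--Riemann $\operatorname{Im}\mathfrak f_t$ is strictly monotone along any level curve of $\operatorname{Re}\mathfrak f_t$. The direction of increase is the one in which the region $\{\operatorname{Re}\mathfrak f_t<0\}$ lies on the left. We need $V_t$, where $|\Phi_t|<1$, to be locally on the left of the positively oriented $\Gamma_t$ near every regular point. I would check this near $0$, where $\operatorname{Re}\mathfrak f_t\to-\infty$, and then propagate it along $\Gamma_t$. The sign of $\operatorname{Re}\mathfrak f_t$ cannot flip across a regular arc, because $\Gamma_t$ is simple and its interior contains no other component of $\mathcal R_t$. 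I would argue the latter from the fact that $\operatorname{Re}\mathfrak f_t$ is harmonic with only the singularity at $0$ inside, plus the maximum principle.

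The main obstacle is the finitely many critical points on $\Gamma_t$: $\zeta_1,\zeta_2$, their conjugates, and $\pm1$ at $t_\ast,t^\ast$. There the curve turns by the angle $\pi/2$ or $\pi/3$ described before Lemma~\ref{lem:path1}, and monotonicity must survive the corner. I would use the local normal form $\mathfrak f_t(w)-\mathfrak f_t(w_c)\sim c(w-w_c)^{k+1}$ with $k\in\{1,2\}$. Of the $2(k+1)$ half-branches of $\{\operatorname{Re}=0\}$, the two consecutive ones used by $\Gamma_t$ (one on $\mathbb U$, one entering $\mathbb D$) bound a sector where $\operatorname{Re}\mathfrak f_t<0$. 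Inside that sector $\operatorname{Im}\mathfrak f_t$ passes monotonically through $\operatorname{Im}\mathfrak f_t(w_c)$, so the lift is continuous and strictly increasing through $w_c$. Combined with degree one, the monotone lift has total increase $2\pi$, which yields injectivity and surjectivity, hence the homeomorphism. This also identifies the curve's interior with $\Phi_t^{-1}(\mathbb D)\cap\mathbb D$ of Lemma~\ref{lem:tech1}.
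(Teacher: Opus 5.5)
Your proposal is correct and takes essentially the paper's route: strict monotonicity of the lifted argument $\operatorname{Im}\mathfrak f_t$ along each regular arc of $\partial V_t$ (via Cauchy--Riemann orthogonality of the level sets), combined with a degree-one count to turn monotonicity into bijectivity. Your argument-principle count is the paper's $\frac{1}{2i\pi}\oint z'/z=1$. The one genuine difference is how you fix the direction of monotonicity: the maximum principle gives $\operatorname{Re}\mathfrak f_t<0$ on the whole interior (using the $\log|w|$ singularity at $0$), which treats the circle arcs and the corners uniformly, whereas the paper uses the explicit formula \eqref{eq:varim} on the circle arcs and local Taylor expansions at the critical points. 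One small correction: at the double critical points $\pm1$ (for $t=t_\ast,t^\ast$), both half-branches used by $\partial V_t$ enter $\mathbb D$, rather than one lying on $\mathbb U$ as you state, but your maximum-principle argument already covers that case.
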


Assuming this lemma, the argument principle shows that
\begin{equation}
\#\{w\in V_t : \Phi_t(w)=z\}
=\frac{1}{2i\pi}\int_{\partial V_t}\frac{\Phi_t'(w)}{\Phi_t(w)-z}\,dw
=\frac{1}{2i\pi}\int_{\partial \mathbb D}\frac{dw}{w-z}
=1,
\end{equation}
for all $z\in\mathbb D$ and all $t>0$, which is precisely the characterisation of the open set $V_t$.

\begin{proof}[Proof of Lemma~\ref{lem:homeo}]
By analyzing the variation of the function
\begin{equation}\label{eq:varim}
\theta\longmapsto \arg(\Phi_t(e^{i\theta}))
=\theta+\frac{2 \pi^2 t\,\sin(\theta)}{\cos(\pi\alpha)-\cos(\theta)},
\end{equation}
whose critical points coincide with the arguments of the critical points of $\Phi_t$ lying on the unit circle, one observes that, for $t\le t_\ast$, following the portion of $\partial V_t$ from $-\arg(\zeta_1)$ to $\arg(\zeta_1)$ in the trigonometric direction, its image under $\Phi_t$ traces positively the corresponding arc of $\partial\mathbb D$, from $\xi_1=\Phi_t(\overline{\zeta_1})$ to $\overline{\xi_1}=\Phi_t(\zeta_1)$. The same reasoning applies to the other portions of $\partial V_t$ lying on the unit circle, for instance the arc between $\zeta_2$ and $\overline{\zeta_2}$.

	We now consider an arc of $\partial V_t$ not contained in the unit circle, for instance the arc joining $\zeta_1$ to $\zeta_2$ inside the disk when $t\leq t_\ast$. Let $\gamma$ be a one-to-one parametrization of this arc, and set $z(s)=\Phi_t(\gamma(s))$. Then $z(s)$ runs along $\partial\mathbb D$ from $\xi_1$ to $\xi_2$. Since the level sets of the real and imaginary parts of a holomorphic function are orthogonal away from critical points, the function $s\mapsto \operatorname{Im}\mathfrak f_t(\gamma(s))$ is monotone. The correct orientation follows from a local Taylor expansion of $\mathfrak f_t$ at $\zeta_1$ (of order two when $\zeta_1\neq1$, and of order three otherwise). The same reasoning applies to all remaining arcs of $\partial V_t$.
	
	Altogether, this yields a positively oriented continuous parametrization $s\in[0,1]\mapsto\gamma(s)$ of $\partial V_t$ such that $z(s)=\Phi_t(\gamma(s))$ parametrizes $\partial\mathbb D$ positively. To conclude, it suffices to check that this parametrization is one-to-one. This follows from  the computation of the winding number
	\begin{equation}
	\frac{1}{2i\pi}\int_0^1\frac{z'(s)}{z(s)}\,ds
	=\frac{1}{2i\pi}\int_0^1\frac{\gamma'(s)}{\gamma(s)}\,ds
	+\frac{1}{2i\pi}\int_0^1 t\,A_0'(\gamma(s))\,ds
	=1.
	\end{equation}

This proves that $\Phi_t$ induces an orientation-preserving homeomorphism from $\partial V_t$ onto $\partial\mathbb D$.
\end{proof}

\subsubsection{Regularity and saturation properties}

\label{sec:final}

Whenever it is well defined, we set
\begin{equation}
\xi_i(t)=\Phi_t(\zeta_i)\in\mathbb D
\quad\text{and}\quad
\phi_i(t)=\arg(\xi_i(t))\in(-\pi,\pi),
\quad i\in\{1,2\}.
\end{equation} 

One can check that $\phi_1$ decreases smoothly from $\pi\alpha$ to $0$ on $(0,t_\ast)$, while $\phi_2$ increases smoothly from $\pi\alpha$ to $\pi$ on $(0,t^\ast)$. We recall that we assume $\alpha\leq 1/2$.

It then follows from the method of characteristics that  $x\mapsto f(t,x)$ is continuous for all $t>0$, and analytic except at the points $x\in\{\pm\phi_1(t),\pm\phi_2(t)\}$. These points are precisely the boundaries of the saturation sets. 

The density attains the maximal  value $1/(2\pi\alpha)$ on the interval $(-\phi_1(t),\phi_1(t))$, for all $0\leq t\leq t_\ast$, and vanishes on the complement of $(-\phi_2(t),\phi_2(t))$, for all $0\leq t\leq t^\ast$. For all $t>t^\ast$, the density becomes smooth and remains uniformly bounded away from these extremal values.

Regarding the order of the irregularities, the same method as in Section~\ref{sec:periodicdens} can be applied to determine the asymptotic behaviour of the complex moments of $x\mapsto f(t,x)$. More precisely, one can show that  $\mathfrak m_n(t)$ satisfy the following asymptotic expansions, depending on the regime:
\begin{equation}\label{eq:asymp-cases}
\mathfrak m_n(t)
=
\begin{cases}
\dfrac{\sum_{i\in\{1,2\},\,\pm}\lambda_{i,\pm}\,e^{\pm i n \phi_i(t)}}{n^{3/2}}
+\mathcal O\!\left(\dfrac{1}{n^{2}}\right),
& \quad\text{if } t<t_\ast, \\[15pt]
\dfrac{\lambda}{n^{4/3}}
+\mathcal O\!\left(\dfrac{1}{n^{5/3}}\right),
& \quad\text{if } t=t_\ast \ \text{or}\ t=t^\ast, \\[15pt]
\dfrac{\sum_{\pm}\lambda_{2,\pm}\,e^{\pm i n \phi_2(t)}}{n^{3/2}}
+\mathcal O\!\left(\dfrac{1}{n^{2}}\right),
& \quad\text{if } t_\ast<t<t^\ast, \\[15pt]
\dfrac{\lambda\,q(t)^n}{n^{3/2}}
+\mathcal O\!\left(\dfrac{q(t)^n}{n^{2}}\right),
& \quad\text{if } t>t^\ast.
\end{cases}
\end{equation}

Above, one has $q(t)=\Phi_t(x_1)$, where $x_1$ denotes the unique critical point of $\Phi_t$ in $(0,1)$, and the coefficients $\lambda_{i,\pm}$ and $\lambda$ are some nonzero complex constants depending on $t$. 

Since the proof relies on standard but rather technical asymptotic arguments and yields no further insight, it is omitted and left to the interested reader. 

Finally, these  asymptotics show that the singularities of the density at these points are of square-root type, except at $0\!\!\mod \pi$, when $t\in\{t_\ast, t^\ast\}$, where they display cubic-root behaviour.

\bigskip

\noindent
{\bf Acknowledgements.} The author thanks Kilian Raschel for drawing his attention to reference~\cite{GLT2023}.

\bibliographystyle{alpha}
\bibliography{bibliomesep}

\newcommand{\etalchar}[1]{$^{#1}$}
\begin{thebibliography}{CDMG10}

\bibitem[Aba06]{Abanov2}
Alexander~G. Abanov.
\newblock Hydrodynamics of correlated systems.
\newblock In {\em Applications of random matrices in physics}, volume 221 of
  {\em NATO Sci. Ser. II Math. Phys. Chem.}, pages 139--161. Springer,
  Dordrecht, 2006.

\bibitem[AGS08]{Ambrosio}
Luigi Ambrosio, Nicola Gigli, and Giuseppe Savar\'e.
\newblock {\em Gradient flows in metric spaces and in the space of probability
  measures}.
\newblock Lectures in Mathematics ETH Z\"urich. Birkh\"auser Verlag, Basel,
  second edition, 2008.

\bibitem[Ahl]{Ahlfors1966}
L.~V. Ahlfors.
\newblock {\em Complex Analysis}.
\newblock McGraw-Hill Book Company, 2 edition.

\bibitem[AK16]{Katori}
Sergio Andraus and Makoto Katori.
\newblock Characterizations of the hydrodynamic limit of the {D}yson model.
\newblock In {\em Stochastic analysis on large scale interacting systems},
  volume B59 of {\em RIMS K\^oky\^uroku Bessatsu}, pages 157--173. Res. Inst.
  Math. Sci. (RIMS), Kyoto, 2016.

\bibitem[BBMP14]{Majumdar}
J.~Bun, J.~P. Bouchaud, S.~N. Majumdar, and M.~Potters.
\newblock Instanton approach to large $n$ harish-chandra-itzykson-zuber
  integrals.
\newblock {\em Phys. Rev. Lett.}, 113:070201, Aug 2014.

\bibitem[BCDM12]{Bracci}
Filippo Bracci, Manuel~D. Contreras, and Santiago D\'iaz-Madrigal.
\newblock Evolution families and the {L}oewner equation {I}: the unit disc.
\newblock {\em J. Reine Angew. Math.}, 672:1--37, 2012.

\bibitem[BDL{\etalchar{+}}20]{SimonS}
Oriane Blondel, Aurelia Deshayes, Cyril Labb\'e, Laure Mar\^ech\'e, and
  Marielle Simon.
\newblock Dynamics of interacting particle systems.
\newblock In {\em Journ\'ees {MAS} 2018---sampling and processes}, volume~68 of
  {\em ESAIM Proc. Surveys}, pages 52--72. EDP Sci., Les Ulis, 2020.

\bibitem[BDLL22]{Bertucci1-1}
Charles Bertucci, M\'erouane Debbah, Jean-Michel Lasry, and Pierre-Louis Lions.
\newblock A spectral dominance approach to large random matrices.
\newblock {\em J. Math. Pures Appl. (9)}, 164:27--56, 2022.

\bibitem[BDLW10]{BurdaReview}
Z.~Burda, J.~Duda, J.~M. Luck, and B.~Waclaw.
\newblock The various facets of random walk entropy.
\newblock {\em Acta Physica Polonica B}, 41(5):949--987, 2010.
\newblock Presented by Z. Burda at the XXII Marian Smoluchowski Symposium on
  Statistical Physics.

\bibitem[BH86]{Bleistein86}
Norman Bleistein and Richard~A. Handelsman.
\newblock {\em Asymptotic expansions of integrals}.
\newblock Dover Publications, Inc., New York, second edition, 1986.

\bibitem[Bia97a]{BianeFree}
Philippe Biane.
\newblock Free {B}rownian motion, free stochastic calculus and random matrices.
\newblock In {\em Free probability theory ({W}aterloo, {ON}, 1995)}, volume~12
  of {\em Fields Inst. Commun.}, pages 1--19. Amer. Math. Soc., Providence, RI,
  1997.

\bibitem[Bia97b]{BianeAnalogue}
Philippe Biane.
\newblock Segal-{B}argmann transform, functional calculus on matrix spaces and
  the theory of semi-circular and circular systems.
\newblock {\em J. Funct. Anal.}, 144(1):232--286, 1997.

\bibitem[BKM10]{BilerCrystal}
Piotr Biler, Grzegorz Karch, and R\'egis Monneau.
\newblock Nonlinear diffusion of dislocation density and self-similar
  solutions.
\newblock {\em Comm. Math. Phys.}, 294(1):145--168, 2010.

\bibitem[BLL24]{Bertucci1-2}
Charles Bertucci, Jean-Michel Lasry, and Pierre-Louis Lions.
\newblock A spectral dominance approach to large random matrices: {P}art {II}.
\newblock {\em J. Math. Pures Appl. (9)}, 192:Paper No. 103630, 30, 2024.

\bibitem[Bor11]{Boro}
Alexei Borodin.
\newblock Schur dynamics of the {S}chur processes.
\newblock {\em Adv. Math.}, 228(4):2268--2291, 2011.

\bibitem[BP14]{BorodimPetrov}
Alexei Borodin and Leonid Petrov.
\newblock Integrable probability: from representation theory to {M}acdonald
  processes.
\newblock {\em Probab. Surv.}, 11:1--58, 2014.

\bibitem[BP25]{Bertucci2hal}
Charles Bertucci and Valentin Pesce.
\newblock {A new approach for the unitary Dyson Brownian motion through the
  theory of viscosity solutions}.
\newblock Working Paper or Preprint, 2025.

\bibitem[C\'95]{Cepa95}
Emmanuel C\'{e}pa.
\newblock \'{E}quations diff\'{e}rentielles stochastiques multivoques.
\newblock In {\em S\'{e}minaire de {P}robabilit\'{e}s, {XXIX}}, volume 1613 of
  {\em Lecture Notes in Math.}, pages 86--107. Springer, Berlin, 1995.

\bibitem[CDMG10]{Contreras}
Manuel~D. Contreras, Santiago D\'iaz-Madrigal, and Pavel Gumenyuk.
\newblock Loewner chains in the unit disk.
\newblock {\em Rev. Mat. Iberoam.}, 26(3):975--1012, 2010.

\bibitem[CFP12]{Ferreira2}
Jos\'e{}~A. Carrillo, Lucas C.~F. Ferreira, and Juliana~C. Precioso.
\newblock A mass-transportation approach to a one dimensional fluid mechanics
  model with nonlocal velocity.
\newblock {\em Adv. Math.}, 231(1):306--327, 2012.

\bibitem[CL97]{CepaLepingle97}
Emmanuel C\'{e}pa and Dominique L\'{e}pingle.
\newblock Diffusing particles with electrostatic repulsion.
\newblock {\em Probab. Theory Related Fields}, 107(4):429--449, 1997.

\bibitem[CL01]{CepaLepingle01}
Emmanuel C\'{e}pa and Dominique L\'{e}pingle.
\newblock Brownian particles with electrostatic repulsion on the circle:
  {D}yson's model for unitary random matrices revisited.
\newblock {\em ESAIM Probab. Statist.}, 5:203--224, 2001.

\bibitem[Col03]{Collins}
Beno\^it Collins.
\newblock Moments and cumulants of polynomial random variables on unitary
  groups, the {I}tzykson-{Z}uber integral, and free probability.
\newblock {\em Int. Math. Res. Not.}, (17):953--982, 2003.

\bibitem[DH12]{Nizar}
Nizar Demni and Taoufik Hmidi.
\newblock Spectral distribution of the free unitary {B}rownian motion: another
  approach.
\newblock In {\em S\'eminaire de {P}robabilit\'es {XLIV}}, volume 2046 of {\em
  Lecture Notes in Math.}, pages 191--206. Springer, Heidelberg, 2012.

\bibitem[DKM24]{Mallick}
Rahul Dandekar, P.~L. Krapivsky, and Kirone Mallick.
\newblock Current fluctuations in the {D}yson gas.
\newblock {\em Phys. Rev. E}, 110(6):Paper No. 064153, 16, 2024.

\bibitem[DO24]{Duboux}
Thibaut Duboux and Yoann Offret.
\newblock {Maximum Entropy Random Walks: the Infinite Setting and the Example
  of Spider Networks with their Scaling Limits}.
\newblock Working Paper or Preprint, 2024.

\bibitem[DO25]{Dovgal}
Sergey Dovgal and Yoann Offret.
\newblock Maximal entropy random walks and central {M}arkov chains.
\newblock Preprint available at \url{https://arxiv.org/abs/2503.08172}, 2025.

\bibitem[Dud12]{DudaPhd}
Jarosław Duda.
\newblock {\em Extended Maximal Entropy Random Walk. Uniwersytet Jagielloński.
  Instytut Fizyki}.
\newblock PhD thesis, Kraków, Nov 2012.

\bibitem[Dys62]{Dyson}
Freeman~J. Dyson.
\newblock A {B}rownian-motion model for the eigenvalues of a random matrix.
\newblock {\em J. Mathematical Phys.}, 3:1191--1198, 1962.

\bibitem[EK86]{EK}
Stewart~N. Ethier and Thomas~G. Kurtz.
\newblock {\em Markov processes}.
\newblock Wiley Series in Probability and Mathematical Statistics: Probability
  and Mathematical Statistics. John Wiley \& Sons, Inc., New York, 1986.
\newblock Characterization and convergence.

\bibitem[FGcN13]{Goncalvez}
Tertuliano Franco, Patr\'icia Gon\c~calves, and Adriana Neumann.
\newblock Hydrodynamical behavior of symmetric exclusion with slow bonds.
\newblock {\em Ann. Inst. Henri Poincar\'e{} Probab. Stat.}, 49(2):402--427,
  2013.

\bibitem[For98]{ForestCalo}
Peter~J. Forrester.
\newblock Random matrices, log-gases and the {C}alogero-{S}utherland model.
\newblock In {\em Quantum many-body problems and representation theory},
  volume~1 of {\em MSJ Mem.}, pages 97--181. Math. Soc. Japan, Tokyo, 1998.

\bibitem[For10]{Forest}
P.~J. Forrester.
\newblock {\em Log-gases and random matrices}, volume~34 of {\em London
  Mathematical Society Monographs Series}.
\newblock Princeton University Press, Princeton, NJ, 2010.

\bibitem[FS09]{Flajolet09}
Philippe Flajolet and Robert Sedgewick.
\newblock {\em Analytic Combinatorics}.
\newblock Cambridge University Press, 2009.

\bibitem[FS16]{SimonF}
Max Fathi and Marielle Simon.
\newblock The gradient flow approach to hydrodynamic limits for the simple
  exclusion process.
\newblock In {\em From particle systems to partial differential equations.
  {III}}, volume 162 of {\em Springer Proc. Math. Stat.}, pages 167--184.
  Springer, [Cham], 2016.

\bibitem[Ful97]{fulton1997young}
William Fulton.
\newblock {\em Young Tableaux: With Applications to Representation Theory and
  Geometry}, volume~35 of {\em London Mathematical Society Student Texts}.
\newblock Cambridge University Press, Cambridge, 1997.

\bibitem[FVG16]{Ferreira1}
Lucas C.~F. Ferreira and Julio~C. Valencia-Guevara.
\newblock Periodic solutions for a 1{D}-model with nonlocal velocity via mass
  transport.
\newblock {\em J. Differential Equations}, 260(10):7093--7114, 2016.

\bibitem[GGPN14]{Goulden}
I.~P. Goulden, Mathieu Guay-Paquet, and Jonathan Novak.
\newblock Monotone {H}urwitz numbers and the {HCIZ} integral.
\newblock {\em Ann. Math. Blaise Pascal}, 21(1):71--89, 2014.

\bibitem[GLBM23]{Guillin}
Arnaud Guillin, Pierre Le~Bris, and Pierre Monmarch\'e.
\newblock On systems of particles in singular repulsive interaction in
  dimension one: log and {R}iesz gas.
\newblock {\em J. \'Ec. polytech. Math.}, 10:867--916, 2023.

\bibitem[GLT23]{GLT2023}
J{\'e}r{\'e}mie Guilhot, C{\'e}dric Lecouvey, and Pierre Tarrago.
\newblock Quantum cohomology of the grassmannian and unitary dyson brownian
  motion.
\newblock {\em arXiv preprint arXiv:2211.12836}, 2023.
\newblock Submitted on 23 Nov 2022, last revised 12 May 2023.

\bibitem[GM05]{GuionnetChar}
Alice Guionnet and Myl\`ene Ma\"ida.
\newblock Character expansion method for the first order asymptotics of a
  matrix integral.
\newblock {\em Probab. Theory Related Fields}, 132(4):539--578, 2005.

\bibitem[GS15]{Gorin}
Vadim Gorin and Mykhaylo Shkolnikov.
\newblock Multilevel {D}yson {B}rownian motions via {J}ack polynomials.
\newblock {\em Probab. Theory Related Fields}, 163(3-4):413--463, 2015.

\bibitem[Gui04]{Guionnet2}
Alice Guionnet.
\newblock First order asymptotics of matrix integrals; a rigorous approach
  towards the understanding of matrix models.
\newblock {\em Comm. Math. Phys.}, 244(3):527--569, 2004.

\bibitem[GW09]{Gangbo}
Wilfrid Gangbo and Michael Westdickenberg.
\newblock Optimal transport for the system of isentropic {E}uler equations.
\newblock {\em Comm. Partial Differential Equations}, 34(7-9):1041--1073, 2009.

\bibitem[Ham18]{Hamdi}
Tarek Hamdi.
\newblock Spectral distribution of the free {J}acobi process, revisited.
\newblock {\em Anal. PDE}, 11(8):2137--2148, 2018.

\bibitem[HW96]{Werner}
David~G. Hobson and Wendelin Werner.
\newblock Non-colliding {B}rownian motions on the circle.
\newblock {\em Bull. London Math. Soc.}, 28(6):643--650, 1996.

\bibitem[Ker96]{kerov}
S.~Kerov.
\newblock The boundary of {Y}oung lattice and random {Y}oung tableaux.
\newblock In {\em Formal power series and algebraic combinatorics ({N}ew
  {B}runswick, {NJ}, 1994)}, volume~24 of {\em DIMACS Ser. Discrete Math.
  Theoret. Comput. Sci.}, pages 133--158. Amer. Math. Soc., Providence, RI,
  1996.

\bibitem[KL99]{KipnisLandim}
Claude Kipnis and Claudio Landim.
\newblock {\em Scaling limits of interacting particle systems}, volume 320 of
  {\em Grundlehren der mathematischen Wissenschaften [Fundamental Principles of
  Mathematical Sciences]}.
\newblock Springer-Verlag, Berlin, 1999.

\bibitem[Koi89]{Koike}
Kazuhiko Koike.
\newblock On the decomposition of tensor products of the representations of the
  classical groups: by means of the universal characters.
\newblock {\em Adv. Math.}, 74(1):57--86, 1989.

\bibitem[LW16]{WangNCB}
Karl Liechty and Dong Wang.
\newblock Nonintersecting {B}rownian motions on the unit circle.
\newblock {\em Ann. Probab.}, 44(2):1134--1211, 2016.

\bibitem[Mac15]{Macdonald}
I.~G. Macdonald.
\newblock {\em Symmetric functions and {H}all polynomials}.
\newblock Oxford Classic Texts in the Physical Sciences. The Clarendon Press,
  Oxford University Press, New York, second edition, 2015.
\newblock With contribution by A. V. Zelevinsky and a foreword by Richard
  Stanley.

\bibitem[Mat94]{Matytsin}
A.~Matytsin.
\newblock On the large-{$N$} limit of the {I}tzykson-{Z}uber integral.
\newblock {\em Nuclear Phys. B}, 411(2-3):805--820, 1994.

\bibitem[Men17]{Menon}
Govind Menon.
\newblock The complex burgers equation, the hciz integral and the
  calogero-moser system.
\newblock 2017.

\bibitem[MS25]{Marcolli25}
Matilde Marcolli and David Skigin.
\newblock Merge on workspaces as hopf algebra markov chain.
\newblock {\em arXiv preprint}, 2025.

\bibitem[Nov20]{Novak}
Jonathan Novak.
\newblock On the complex asymptotics of the {HCIZ} and {BGW} integrals.
\newblock {\em arXiv preprint}, 2020.
\newblock Preliminary version.

\bibitem[Pet12]{Petrov}
I.G. Petrovsky.
\newblock {\em Lectures on Partial Differential Equations}.
\newblock Dover Books on Mathematics. Dover Publications, 2012.

\bibitem[Pom92]{Pommerenke}
Ch. Pommerenke.
\newblock {\em Boundary behaviour of conformal maps}, volume 299 of {\em
  Grundlehren der mathematischen Wissenschaften [Fundamental Principles of
  Mathematical Sciences]}.
\newblock Springer-Verlag, Berlin, 1992.

\bibitem[RS15]{Romik}
Dan Romik and Piotr Sniady.
\newblock {Jeu de taquin dynamics on infinite Young tableaux and second class
  particles}.
\newblock {\em The Annals of Probability}, 43(2):682 -- 737, 2015.

\bibitem[Ser19]{Serfaty}
Sylvia Serfaty.
\newblock Microscopic description of {L}og and {C}oulomb gases.
\newblock In {\em Random matrices}, volume~26 of {\em IAS/Park City Math.
  Ser.}, pages 341--387. Amer. Math. Soc., Providence, RI, 2019.

\bibitem[Sta99]{stanley1999enumerative}
Richard~P. Stanley.
\newblock {\em Enumerative Combinatorics, Volume 2}, volume~62 of {\em
  Cambridge Studies in Advanced Mathematics}.
\newblock Cambridge University Press, Cambridge, 1999.

\bibitem[Ste87]{Stembridge}
John~R. Stembridge.
\newblock Rational tableaux and the tensor algebra of {${\rm gl}_n$}.
\newblock {\em J. Combin. Theory Ser. A}, 46(1):79--120, 1987.

\bibitem[VJ67]{VJ}
D.~Vere-Jones.
\newblock Ergodic properties of nonnegative matrices. {I}.
\newblock {\em Pacific J. Math.}, 22:361--386, 1967.

\bibitem[Wan10]{Coulomb}
Tower Wang.
\newblock Coulomb force as an entropic force.
\newblock {\em Phys. Rev. D}, 81:104045, May 2010.

\bibitem[Won01]{Wong}
R.~Wong.
\newblock {\em Asymptotic Approximations of Integrals}.
\newblock Society for Industrial and Applied Mathematics, 2001.

\end{thebibliography}


\end{document}